\documentclass[11pt]{amsart}

 \usepackage{amssymb}
 \usepackage[colorlinks,linkcolor={blue}]{hyperref}
 \usepackage{graphicx}
 \usepackage{amscd}
 \usepackage{mathtools}
 \usepackage{enumerate}
  \usepackage{wasysym}
 \usepackage[dvipsnames]{xcolor}
 \usepackage{yhmath}
 \usepackage{psfrag}
 \usepackage{scalerel}


\textwidth       15.0cm
\evensidemargin   0.6cm
\oddsidemargin    0.6cm

 \openup 3pt

 \def\a{\alpha}
 \def\da{{\dot\alpha}}
 \def\be{\beta}
 \def\dbe{{\dot\beta}}
 \def\C{{\mathbb C}}
 \def\de{\delta}
 \def\De{\Delta}
 \def\e{\varepsilon}
 \def\deta{{\dot{\eta}}}

 \def\ga{\gamma}
 \def\dga{{\dot{\gamma}}}
 
 \def\tga{{\tilde{\gamma}}}
 \def\Ga{\Gamma}
 
 \def\vr{\varphi}
 \def\vrt{\vartheta}

 \def\la{\lambda}
 
 \def\La{\Lambda}
 \def\bLa{{\mathbf{\Lambda}}}
 
 \def\si{\sigma}
 \def\Si{\Sigma}
 \def\om{\omega}
 \def\Om{\Omega}

 \def\tt{\theta}

 \def\otau{{\ov\tau}}
 
 \def\dxi{{\dot{\xi}}}

 \def\re{{\mathbb R}}
 \def\na{{\mathbb N}}

 \def\then{\Longrightarrow}
 \def\ov{\overline}
 \def\Z{{\mathbb Z}}
 \def\Pii{{\pi}}

\def\Á{\textexclamdown}

 \def\A{{\mathbb A}}
 \def\cA{{\mathcal A}}

 \def\B{{\mathbb B}}
 \def\cB{{\mathcal B}}
 
 \def\cC{{\mathcal C}}

 \def\D{{\mathbb D}}

 \def\cE{{\mathcal E}}

 \def\E{{\mathbb E}}

 \def\cG{{\mathcal G}}
 
 \def\cH{{\mathcal H}}

 \def\I{{\mathbb I}}

 \def\K{{\mathbb K}}
 
 \def\L{{\mathbb L}}

 \def\cM{{\mathcal M}}

 \def\cN{{\mathcal N}}
 \def\mN{{\widetilde{\mathcal N}}}

 \def\cO{{\mathcal O}}
 \def\bO{{\mathbb O}}
 \def\cP{{\mathcal P}}

 \def\cR{{\mathcal R}}

 \def\cS{{\mathcal S}}
 \def\SS{{\mathbb S}}

 \def\cT{{\mathcal T}}
 \def\fT{{\mathfrak T}}

 \def\cU{{\mathcal U}}

 \def\ou{{\ov{u}}}

 \def\cV{{\mathcal V}}

 \def\cW{{\mathcal W}}
 \def\ow{{\ov{w}}}

 \def\tq{{\tilde{q}}}
 
 \def\tq1{{\tilde{q}_1}}

 \def\dx{{\dot{x}}}
 \def\ox{{\ov{x}}}

 \def\X{{\mathbb X}}
 \def\fX{{\mathfrak X}}

 \def\oy{{\ov{y}}}

 \def\dy{{\dot{y}}}
 \def\dz{{\dot{z}}}

 \def\oz{{\ov{z}}}

 \def\ogd{{\ov\ga_\de}}
 \def\gad{{\ga_\de}}
 
 \def\0{{\mathbf 0}}
 
 \def\vD{{\breve D}}

 \def \lv{\left\vert}
 \def \rv{\right\vert}
 \def \lV{\left\Vert}
 \def \rV{\right\Vert}
 \def \ov{\overline}
 
 \def \then{\Longrightarrow}

 \definecolor{dgreen}{rgb}{0,0.3,0}
 \definecolor{dred}{rgb}{0.8,0,0}

 \def\ee{\text{\rm\large  e}}

 \DeclareMathOperator*{\tsum}{{\textstyle \sum}}
 \DeclareMathOperator{\supp}{supp}
 
 \DeclareMathOperator{\diam}{diam}

 \DeclareMathOperator{\interior}{int}
 \DeclareMathOperator{\intt}{int}

 \DeclareMathOperator{\Lip}{Lip}

 \DeclareMathOperator{\pr}{pr}

  \renewcommand{\proofname}{{\bf Proof:}}

 \theoremstyle{plain}

 \newtheorem{MainThm}{Theorem}
 \newtheorem{MainCor}[MainThm]{Corollary}
 \newtheorem{MainProp}[MainThm]{Proposition}

 \newtheoremstyle{Cl}
  {5pt}
  {3pt}
  {\sl}
  {}
  {\it}
  {:}
  {.5em}
  {}

 \newtheoremstyle{St}
  {5pt}
  {3pt}
  {\sl}
  {}
  {\bf}
  {.}
 {\newline}
  {}

 \def\begincproof{
                  \renewcommand{\proofname}{\it Proof:}
                  \begin{proof}
                 }

 \def\endcproof{
                \renewcommand{\qedsymbol}{$\diamondsuit$}
                \end{proof}
                \renewcommand{\qedsymbol}{\openbox}
                \renewcommand{\proofname}{\bf Proof:}
               }

  \swapnumbers

 \newtheorem{Thm}{Theorem}[section]
 
 \newtheorem{Lemma}[Thm]{\bf Lemma}
 
 \newtheorem{Corollary}[Thm]{\bf Corollary}
 \newtheorem{Theorem}[Thm]{\bf Theorem}
 \newtheorem{Proposition}[Thm]{\bf Proposition}

 \newtheorem{claim}[Thm]{\it Claim}

\swapnumbers
 \theoremstyle{Cl}
 \newtheorem{Claim}{Claim} [Thm]

\swapnumbers

 \theoremstyle{St}
\newtheorem{Statement}[Thm]{\bf Statement}

 \theoremstyle{definition}
 \newtheorem{Definition}[Thm]{\bf Definition}

 \theoremstyle{remark}
 \newtheorem{Remark}[Thm]{\bf Remark}
 
 \newtheorem{mysec}[Thm]{}

\usepackage[colorinlistoftodos]{todonotes}
\makeatletter
\providecommand\@dotsep{5}
\makeatother
\setlength{\marginparwidth}{2.6cm}



 \renewcommand{\proofname}{{\bf Proof:}}

 \title
 {Generic Ma\~n\'e sets.}

 \author[G. Contreras]{Gonzalo Contreras}

\address{CIMAT \\
          A.P. 402, 36.000 \\
          Guanajuato. GTO \\
          M\'exico.}
\email{gonzalo@cimat.mx}
\thanks{Partially supported by CONACYT, Mexico, grant 
A1-S-10145.}

\begin{document}

\makeatother

\parskip +5pt

\begin{abstract}
We prove that $C^2$ generic hyperbolic Ma\~n\'e sets contain a periodic orbit.
In dimension 2, adding a result with A. Figalli and L. Rifford, 
which states that $C^2$ generic Ma\~n\'e sets are hyperbolic
we obtain  Ma\~n\'e's Conjecture for surfaces in the $C^2$ topology: 
Given a Tonelli Lagrangian $L$ on a compact 
surface $M$ there is a $C^2$ open and dense set of functions $f:M\to\re$ such
that the Ma\~n\'e set of the Lagrangian $L+f$ is a hyperbolic periodic orbit.
\end{abstract}

\maketitle

\tableofcontents


Let $M$ be a closed riemannian manifold.
A Tonelli Lagrangian is a $C^2$ function
\linebreak
 $L:TM\to\re$ that  is 
\begin{enumerate}[(i)]
\openup 5pt
\item {\it Convex:} $\exists a>0$\;
$\forall (x,v), (x,w)\in TM$, \;
$w\cdot \partial^2_{vv} L(x,v)\cdot w \ge a |w|_x^2$.
\end{enumerate}
The uniform convexity assumption and the compactness of $M$ imply that $L$ is
\begin{enumerate}[(i)]
\setcounter{enumi}{1}
\item {\it Superlinear:}
$\forall A>0$ $\exists B>0$ such that
$\forall (x,v)\in TM$: $L(x,v)> A\,|v|_x-B$.
\end{enumerate}

Given $k\in\re$, the Ma\~n\'e {\it action potential} is defined as
$\Phi_k:M\times M\to\re\cup\{-\infty\}$, 
\begin{equation}\label{actionpotential}
\Phi_k(x,y):=\inf_{\ga\in \cC(x,y)}
\int k+L(\ga,\dga),
\end{equation}
where 
\begin{equation}\label{defcxy}
\cC(x,y):=\{\ga:[0,T]\to M\; {\text{absolutely continuous }}\vert\;
T>0,\; \ga(0)=x,\;\ga(T)=y\;\}.
\end{equation}
The  Ma\~n\'e {\it critical value} is
\begin{equation}\label{defcL}
c(L) := \sup\{\, k\in \re \;|\; \exists\, x,y\in M :\;\Phi_k(x,y)=-\infty\;\}.
\end{equation}
See  \cite{CILib} for several characterizations of $c(L)$.

A curve $\ga:\re\to M$ is {\it semi-static} if 
$$
\forall s<t  \qquad
\int_s^t c(L)+L(\ga,\dga) = \Phi_{c(L)}(\ga(s),\ga(t)).
$$
Also $\ga:\re\to M$ is {\it static} if
$$
\forall s<t  \qquad
\int_s^t c(L)+L(\ga,\dga) = -\Phi_{c(L)}(\ga(t),\ga(s)).
$$
The {\it Ma\~n\'e set} of $L$ is
$$
\mN(L):=\{(\ga(t),\dga(t))\in TM \;|\; t\in\re,\; \ga:\re\to M\text{ is semi-static }\},
$$
and the {\it Aubry set } is
$$
\cA(L):=\{(\ga(t),\dga(t))\in TM \;|\; t\in\re,\; \ga:\re\to M\text{ is static }\}.
$$

The Euler-Lagrange equation
$$
\tfrac d{dt}\, \partial_vL = \partial_x L
$$
defines the Lagrangian flow $\vr_t$ on $TM$.
The {\it energy function} $E:TM\to\re$,
$$
E(x,v):= \partial_v L(x,v)\cdot v -L(x,v),
$$
is invariant under the Lagrangian flow.
The Ma\~n\'e set $\mN(L)$ is invariant under the Lagrangian flow and
it is contained in the energy level $\cE:=[E=c(L)]$ (see e.g. Ma\~n\'e \cite[p.~146]{Ma7} or \cite{CILib}).

Let $\cM_{\text{inv}}(L)$ be the set of Borel probabilities in $TM$ which are invariant under the Lagrangian flow.
Define the {\it action } functional   $A_L:\cM_{\text{inv}}(L)\to\re\cup\{+\infty\}$ as 
$$
A_L(\mu):=\int L\,d\mu.
$$
The set of {\it minimizing measures} is
$$
\cM_{\min}(L):=\arg\min_{\cM_{\text{inv}}(L)} A_L,
$$
and the {\it Mather set } $\cM(L)$ is the union of the support of minimizing measures:
$$
\cM(L):=\bigcup_{\mu\in\cM_{\min}(L)} \supp(\mu).
$$
Ma\~n\'e proves (cf. Ma\~n\'e~\cite[Thm. IV]{Ma7} also \cite[p. 165]{CDI}) that an invariant measure is minimizing if and only if it is supported in the Aubry set. Therefore we get the set of inclusions
\begin{equation}\label{mane}
\cM\subseteq\cA\subseteq\mN\subseteq\cE.
\end{equation}

\begin{Definition}\label{defhipL}\quad

We say that $\mN(L)$ is {\it hyperbolic} if there are sub-bundles
$E^s$, $E^u$ of $T\cE|_{\mN(L)}$ and $T_0>0$ such that
\begin{enumerate}[(i)]
\item $T\cE|_{\mN(L)} = E^s \oplus\langle \frac d{dt}\vr_t\rangle \oplus E^u$.
\item $\lV D\vr_{T_0}|_{E^s}\rV < 1$, $\lV D\vr_{-T_0}|_{E^u}\rV <1$.
\item $\forall t\in\re$ \quad $(D\vr_t)^*(E^s)=E^s$, $(D\vr_t)^*(E^u)=E^u$.
\end{enumerate}
\end{Definition}

 Hyperbolicity for {\sl autonomous}
lagrangian or hamiltonian flows is  always understood as hyperbolicity
for the flow restricted to the energy level.

Fix a Tonelli Lagrangian $L_0$. Let
$$
\cH^k(L_0):=\{\,\phi\in C^k(M,\re)\;|\; \mN(L_0+\phi) \text{ is hyperbolic }\},
$$
endowed with the $C^k$ topology.
By \cite[Lemma 5.2, p. 661]{CP} the map $\phi\mapsto \mN(L_0+\phi)$ is upper
semi-continuous. Therefore $\cH^k(L_0)$ is an open set for any $k\ge 2$.

Let 
$$
\cP^2(L_0) := \{\, \phi \in C^2(M,\re) \;|\;
\mN(L_0+\phi)\text{ contains a periodic orbit  or a singularity}\},
$$
and let $\ov{\cP^2(L_0)}$ be its closure in $C^2(M,\re)$.
We will prove
\begin{MainThm}\label{HYP}
$\cH^2(L_0)\subset \ov{\cP^2(L_0)}$.
\end{MainThm}

In \cite{ham} we proved that if $\Ga\subset \mN(L)$ is a periodic orbit, 
adding a potential $\phi_0\ge 0$ which is locally of the form $\phi_0(x) = \e\, d(x,\pi(\Ga))^2$
makes $\Ga$ a hyperbolic periodic orbit (or hyperbolic singularity) for the Lagrangian flow of $L+\phi_0$ and
also $\mN(L_0+\phi_0)=\Ga$. Therefore defining
$$
\cH\cP^2(L_0):= \{\, \phi \in C^2(M,\re) \;|\;
\mN(L_0+\phi)\text{ is a hyperbolic periodic orbit or singularity}\}
$$
and using the semicontinuity of $\mN(L)$ and the expansivity of $\Ga$, 
we get
\begin{MainCor}
The set $\cH\cP^2(L_0)$ contains an open and dense set in $\cH^2(L_0)$.
\end{MainCor}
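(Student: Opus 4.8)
The plan is to deduce the Corollary formally from Theorem~\ref{HYP}, the perturbation construction of \cite{ham} recalled above, the non-emptiness of the Ma\~n\'e set, the upper semi-continuity of $\phi\mapsto\mN(L_0+\phi)$, and the robustness of a hyperbolic periodic orbit. Note first that $\cH\cP^2(L_0)\subset\cH^2(L_0)$, since a hyperbolic periodic orbit, viewed as a subset of $TM$, is a hyperbolic invariant set in the sense of the definition above; thus ``dense in $\cH^2(L_0)$'' is meaningful.

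For \emph{density}, fix $\phi\in\cH^2(L_0)$ and $\e>0$. Theorem~\ref{HYP} provides $\phi_1\in\cP^2(L_0)$ with $\lV\phi_1-\phi\rV_{C^2}<\e/2$, so $\mN(L_0+\phi_1)$ contains a periodic orbit $\Ga$. I then apply the construction of \cite{ham} to the Tonelli Lagrangian $L_0+\phi_1$: there is $\phi_0\ge 0$, equal to $\delta\,d(\,\cdot\,,\pi(\Ga))^2$ near $\pi(\Ga)$ with $\delta>0$ so small that $\lV\phi_0\rV_{C^2}<\e/2$, for which $\Ga$ is a hyperbolic periodic orbit of the Euler-Lagrange flow of $L_0+\phi_1+\phi_0$ and $\mN(L_0+\phi_1+\phi_0)=\Ga$. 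Setting $\psi:=\phi_1+\phi_0$, we get that $\mN(L_0+\psi)=\Ga$ is a hyperbolic periodic orbit, so $\psi\in\cH\cP^2(L_0)$, while $\lV\psi-\phi\rV_{C^2}<\e$.

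For \emph{openness}, fix $\phi\in\cH\cP^2(L_0)$ and put $\Ga:=\mN(L_0+\phi)$, a hyperbolic periodic orbit. Being hyperbolic, $\Ga$ is a locally maximal invariant set (equivalently, it is expansive), so it has an open isolating neighbourhood $U\subset TM$: the maximal invariant subset of $U$ for the flow of $L_0+\phi$ is $\Ga$. The key point is that replacing $\phi$ by a $C^2$-close potential $\psi$ perturbs the Euler-Lagrange vector field of $L_0+\phi$ only by a $C^1$-small amount, because only the first derivative of $\psi-\phi$ enters the Euler-Lagrange equation. Hence, by persistence of locally maximal hyperbolic sets, there is a $C^2$-neighbourhood $\cV$ of $\phi$ such that for every $\psi\in\cV$ the set $U$ is still isolating for the flow of $L_0+\psi$ and its maximal invariant subset $\Ga_\psi\subset U$ is a single hyperbolic periodic orbit near $\Ga$. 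Shrinking $\cV$ and using the upper semi-continuity of $\phi\mapsto\mN(L_0+\phi)$, we may also assume $\mN(L_0+\psi)\subset U$ for all $\psi\in\cV$. Since $\mN(L_0+\psi)$ is non-empty (it contains the Aubry set) and invariant under the flow of $L_0+\psi$, it is a non-empty invariant subset of $U$, hence contained in $\Ga_\psi$; as $\Ga_\psi$ is a single periodic orbit this forces $\mN(L_0+\psi)=\Ga_\psi$, that is, $\psi\in\cH\cP^2(L_0)$.

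The only ingredient here that is not a purely formal rearrangement is the persistence step: one must verify that the $C^1$-small perturbation of the Euler-Lagrange flow produced by a $C^2$-small change of potential does transport the hyperbolic periodic orbit $\Ga$, together with an isolating neighbourhood, to a nearby hyperbolic periodic orbit $\Ga_\psi$. This is classical in hyperbolic dynamics and is of the same nature as the openness of $\cH^2(L_0)$ already used above; I expect it to be the only point requiring genuine (though standard) dynamical input, everything else being a direct assembly of Theorem~\ref{HYP}, the construction of \cite{ham}, non-emptiness of the Ma\~n\'e set, and semi-continuity of $\mN$.
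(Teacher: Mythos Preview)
Your proof is correct and follows the same approach the paper indicates: density via Theorem~\ref{HYP} followed by the perturbation from \cite{ham}, and openness via upper semi-continuity of the Ma\~n\'e set together with the local maximality (expansivity) of the hyperbolic periodic orbit $\Ga$. The paper only sketches this in one line (``using the semicontinuity of $\mN(L)$ and the expansivity of $\Ga$''); you have spelled out exactly the intended argument. One small terminological slip: ``locally maximal'' and ``expansive'' are not equivalent in general, though a hyperbolic periodic orbit enjoys both, so your use of local maximality is the right one here.
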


With A. Figalli and L. Rifford in \cite{CFR} we prove

\begin{MainThm}\label{CFR}
If $\dim M=2$ then
$\cH^2(L_0)$ is open and dense.
\end{MainThm}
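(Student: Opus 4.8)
Only density requires proof, since $\cH^2(L_0)$ is already open. The plan is this: given $\phi\in C^2(M,\re)$, write $L:=L_0+\phi$ and, for each $\e>0$, produce $\psi$ with $\lV\psi\rV_{C^2}<\e$ and $\mN(L+\psi)$ hyperbolic. The first move is a reduction: it suffices to find $\psi$ with $\lV\psi\rV_{C^2}<\e/2$ such that $\mN(L+\psi)$ contains a periodic orbit $\Ga$ (or a rest point of the flow). Indeed, superimposing on $L+\psi$ the potential $\e'\,\dist(\cdot,\pi(\Ga))^2$ of \cite{ham} with $\e'$ small collapses the Ma\~n\'e set onto $\Ga$ and turns $\Ga$ into a hyperbolic periodic orbit, so $L+\psi+\e'\dist(\cdot,\pi(\Ga))^2$ lies in $\cH^2(L_0)$ and is within $\e$ of $L$. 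Thus the theorem is equivalent to a \emph{closing lemma for Ma\~n\'e sets on surfaces}: arbitrarily near any surface Tonelli Lagrangian there is one whose Ma\~n\'e set contains a periodic orbit or a rest point.

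If $\mN(L)$ already contains such an orbit there is nothing to do, and if $\mN(L)$ is hyperbolic then $L\in\cH^2(L_0)$ and again we are done; so suppose neither. Here two-dimensionality is used to pin down the obstruction. Semistatic curves minimize the action, hence have no conjugate points, so along the orbits of $\mN(L)$ the two continuous Green line subbundles $G^-,G^+$ of $T(TM)|_{\mN(L)}$ are defined, lying in the energy surface and transverse to the flow (see \cite{CILib}); a standard argument (a Riccati estimate along orbits) shows that, on a surface, a compact invariant set without conjugate points is hyperbolic exactly when $G^-(\theta)\ne G^+(\theta)$ at every $\theta$ in it. Hence, $\mN(L)$ not being hyperbolic, the compact invariant set $\mathcal I:=\{\theta\in\mN(L):G^-(\theta)=G^+(\theta)\}$ is nonempty — the "zero-exponent" part — and the perturbation must act through $\mathcal I$.

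To manufacture the periodic orbit I would split into cases according to the structure of the Aubry set $\cA(L)\subseteq\mN(L)$, which by Mather's theorem is a Lipschitz graph over its projection $\pi(\cA)\subseteq M$. When $\cA(L)$ is already a finite union of periodic orbits and rest points, the reduction above finishes. Otherwise $\cA(L)$ carries a non-periodic recurrent orbit, which one closes up by a connecting-lemma construction: because an energy surface of a surface Lagrangian is only $3$-dimensional, the perturbation needed to splice a returning orbit segment to itself can be kept $C^2$-small and controlled, and combining it with the Ma\~n\'e cut-off trick — a non-negative potential vanishing on the loop just created, which forces that loop to be minimizing — yields a nearby $L+\psi$ with a periodic orbit in $\mN(L+\psi)$. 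The "thick" sub-case, where $\pi(\cA)$ has nonempty interior $U$ and $\cA$ over $U$ is an invariant Lipschitz graph swept by recurrent orbits, is handled separately: a well-placed potential bump inside $U$ makes the minimizers circulate and creates a periodic orbit while pushing $\cA$ out of most of $U$. Upper semicontinuity of $\phi\mapsto\mN(L_0+\phi)$ keeps the perturbed Ma\~n\'e set near $\mN(L)$, so $\psi$ stays $C^2$-small. The full construction is the content of \cite{CFR}.

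The main obstacle is precisely this creation of a periodic orbit outside the hyperbolic world. Within $\cH^2(L_0)$ one has Theorem~\ref{HYP} at hand and hyperbolicity permits shadowing; here $\mN(L)$ has, on $\mathcal I$, no hyperbolicity to lean on and no shadowing lemma, so the orbit must be built out of the variational structure itself. The delicate point is that the orbit produced by the connecting perturbation must be \emph{minimizing} for $L+\psi$, not merely an orbit of its flow; securing this is what forces $\dim M=2$, where a returning orbit can be closed up inside a genuinely two-parameter family of competing curves — which is why the statement is restricted to surfaces.
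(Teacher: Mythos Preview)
This statement is not proved in the present paper; it is quoted from \cite{CFR}. Your proposal is a strategy sketch, and its logic is inverted relative to how the result is actually used here. Your reduction --- density of $\cH^2(L_0)$ would follow from a closing lemma for \emph{arbitrary} (possibly non-hyperbolic) Ma\~n\'e sets on surfaces, via the bump potential of \cite{ham} --- is valid, but it reduces the statement to something at least as hard: density of $\cP^2(L_0)$ in all of $C^2(M,\re)$. That is precisely Corollary~D, Ma\~n\'e's conjecture for surfaces, which in this paper is obtained as a \emph{consequence} of the present theorem together with Theorem~\ref{HYP}, not the other way around.

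You then attribute to \cite{CFR} a direct closing-lemma construction (splicing recurrent orbit segments, potential bumps in a ``thick'' Aubry set), but that is not what \cite{CFR} does. The Green-bundle criterion you identify is indeed the starting point in \cite{CFR}, but the perturbation there is not designed to manufacture a periodic orbit; it is designed to make the Aubry set hyperbolic directly, exploiting the regularity theory for critical subsolutions of the Hamilton--Jacobi equation and the low dimension. No connecting-lemma argument of the type you describe appears. The steps you outline (``splice a returning orbit segment to itself \ldots\ kept $C^2$-small and controlled'', ``a well-placed potential bump inside $U$ makes the minimizers circulate'') are exactly the kind of thing Theorem~\ref{HYP} achieves \emph{under} a hyperbolicity hypothesis using shadowing; without hyperbolicity they are not justified by anything in your sketch, and producing a \emph{minimizing} closed orbit by a $C^2$-small local perturbation in the non-hyperbolic regime is the whole difficulty.
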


Thus for surfaces in the $C^2$ topology we obtain Ma\~n\'e's Conjecture
\cite[p. 143]{Ma7}:

\begin{MainCor}
If $\dim M =2$ then $\cH\cP^2(L_0)$ contains an  open and dense set in $C^2(M,\re)$.
\end{MainCor}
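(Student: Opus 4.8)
\emph{Approach.} The plan is to obtain this corollary as a formal consequence of the two density statements already in hand, so the proof itself is short; its substance is entirely contained in the results it quotes. The chain of inclusions is $\cH\cP^2(L_0)\subseteq\cH^2(L_0)\subseteq C^2(M,\re)$, and the idea is simply that ``open and dense'' composes along such a chain once the intermediate set is itself open.

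\emph{Steps.} First I would invoke Theorem~B: when $\dim M=2$, the set $\cH^2(L_0)$ of potentials $\phi$ with $\mN(L_0+\phi)$ hyperbolic is open and dense in $C^2(M,\re)$; this is the joint result with Figalli and Rifford \cite{CFR} and is taken as given. Second I would invoke the Main Corollary to Theorem~\ref{HYP}: $\cH\cP^2(L_0)$ is open and dense in $\cH^2(L_0)$. (For completeness one recalls why: given $\phi\in\cH^2(L_0)$, Theorem~\ref{HYP} produces $\psi$ arbitrarily $C^2$-close to $\phi$ with $\mN(L_0+\psi)$ containing a periodic orbit $\Ga$; by \cite{ham} one then adds a further small potential $\phi_0\ge0$, locally of the form $\phi_0(x)=\e\,d(x,\pi(\Ga))^2$, so that $\Ga$ becomes a hyperbolic periodic orbit for $L_0+\psi+\phi_0$ and moreover $\mN(L_0+\psi+\phi_0)=\Ga$; this gives density, while openness follows from the upper semicontinuity of $\phi\mapsto\mN(L_0+\phi)$ together with the expansivity of a hyperbolic periodic orbit, which pins the Ma\~n\'e set of nearby Lagrangians down to that single orbit.) Third I would conclude by the elementary point-set fact: since $\cH^2(L_0)$ is open in $C^2(M,\re)$ (as noted after the definition of $\cH^k(L_0)$) and $\cH\cP^2(L_0)$ is open in $\cH^2(L_0)$, the set $\cH\cP^2(L_0)$ is open in $C^2(M,\re)$; and for density, any nonempty open $U\subseteq C^2(M,\re)$ meets $\cH^2(L_0)$ in a set $U\cap\cH^2(L_0)$ which is nonempty (Theorem~B) and open in $\cH^2(L_0)$, hence meets $\cH\cP^2(L_0)$ by density of the latter in $\cH^2(L_0)$, so $U\cap\cH\cP^2(L_0)\neq\emptyset$.

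\emph{Main obstacle.} At the level of this corollary there is essentially no obstacle: the only point to be careful about is the bookkeeping of how openness and density transfer along the two inclusions, and in particular that ``open inside an open subset'' already yields ``open'' in the ambient space $C^2(M,\re)$, so no Baire-category or residual-set subtlety intervenes and one genuinely gets \emph{open and dense}, not merely residual. The real difficulty of the statement is displaced entirely into its two inputs --- Theorem~\ref{HYP} of this paper (periodic orbits are $C^2$-dense among hyperbolic Ma\~n\'e sets) and the two-dimensional openness--density of $\cH^2(L_0)$ from \cite{CFR} --- and combining them gives precisely Ma\~n\'e's Conjecture \cite[p.~143]{Ma7} for surfaces in the $C^2$ topology.
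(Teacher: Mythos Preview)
Your proposal is correct and matches the paper's reasoning exactly: the corollary is stated there without proof, as an immediate consequence of combining Corollary~B ($\cH\cP^2(L_0)$ open and dense in $\cH^2(L_0)$) with the Figalli--Rifford result ($\cH^2(L_0)$ open and dense in $C^2(M,\re)$ when $\dim M=2$). One minor bookkeeping point: in the paper's alphabetic numbering the Figalli--Rifford theorem is Theorem~C, not Theorem~B, so adjust the label accordingly.
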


Observe that from the inclusions in \eqref{mane}, for potentials $\phi\in \cH\cP^2(L_0)$
the lagrangian $L+\phi$ has a unique minimizing measure and it is supported on a
hyperbolic periodic orbit or a hyperbolic singularity. The set $\cH\cP^2(L_0)$
is open in the $C^2$ topology, so we can approximate the lagrangian $L_0$ with a
$C^\infty$ potential $\phi$ to obtain a periodic minimizing measure, but 
the approximation is only proved to be $C^2$ small.

\bigskip
\medskip

The proof of Theorem~\ref{HYP} follows the lines of our proof \cite{ground} of the
corresponding conjecture in Ergodic Optimization. As such it is supported on the
work by G.~C. Yuan and B.~R. Hunt \cite{YH}, X. Bressaud and A. Quas \cite{BQ},
I. Morris \cite{Morris} and A. Quas and J. Siefken \cite{QS}.

For possible applications we want to remark that all the perturbing 
potentials in this paper are locally of the
form 
\begin{equation}\label{canal}
\phi(x) = \e\, d(x,\pi(\Ga))^k, 
\quad k\ge 2,
\end{equation}
 where $\Ga$ is a 
suitably chosen periodic orbit of the Lagrangian flow nearby the Ma\~n\'e set.

The proof of Theorem~\ref{HYP} has two main steps corresponding to sections
 \ref{Szeroentropy} and \ref{Sclosing}.
 In section~\ref{Szeroentropy} we prove that for $k\ge 2$, 
 $C^k$ generic hyperbolic Ma\~n\'e sets have zero topological entropy.
 Namely,
 
  \begin{MainThm}\label{Tzeroentropy}
If $L_0$ is a Tonelli lagrangian and $k\ge 2$, 
the set
$$
\cE_0(L_0) =\{\,\phi\in\cH^k(L_0)\;|\;
\mN(L_0+\phi)\text{ has zero topological entropy }\}
$$
contains a residual subset of $\cH^k(L_0)$.
\end{MainThm}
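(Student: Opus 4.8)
The plan is to reduce Theorem~\ref{Tzeroentropy} to a Morris-type genericity statement about minimizing measures \cite{Morris}, and then to adapt the complexity-reduction scheme of Bressaud--Quas \cite{BQ}, exactly as in \cite{ground}. The first observation is that the topological entropy of a Ma\~n\'e set is carried by its Aubry set: for any Tonelli Lagrangian $L$, any $\vr$-invariant probability $\mu$ with $\supp\mu\subseteq\mN(L)$ is supported on the Aubry set $\cA(L)\subseteq\mN(L)$, because by Poincar\'e recurrence $\mu$-a.e.\ point is recurrent and a recurrent semistatic orbit is static. Hence, by the variational principle, $h_{top}(\mN(L))=\sup\{h_\mu(\vr):\supp\mu\subseteq\mN(L)\}=h_{top}(\cA(L))$, and all the competing measures are $c(L)$-minimizing Mather measures. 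So it suffices to exhibit a residual set of $\phi\in\cH^k(L_0)$ for which every minimizing measure of $L_0+\phi$ has zero metric entropy.

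Write $O_n:=\{\phi\in\cH^k(L_0):h_{top}(\mN(L_0+\phi))<1/n\}$, so that $\cE_0(L_0)=\bigcap_n O_n$. I would first show each $O_n$ is open, i.e.\ that $\phi\mapsto h_{top}(\mN(L_0+\phi))$ is upper semicontinuous on $\cH^k(L_0)$: if $\phi\in\cH^k(L_0)$ then $\mN(L_0+\phi)$ is a hyperbolic set, and by the upper semicontinuity of $\psi\mapsto\mN(L_0+\psi)$ \cite[Lemma 5.2]{homo} together with the local product structure and shadowing of hyperbolic Ma\~n\'e sets, for $\psi$ close to $\phi$ the set $\mN(L_0+\psi)$ lies inside a hyperbolic block for the flow of $L_0+\phi$ whose topological entropy is arbitrarily close to $h_{top}(\mN(L_0+\phi))$. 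Granting this, $\cE_0(L_0)$ is a $G_\delta$ subset of $\cH^k(L_0)$, and it remains to prove it is dense.

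For density, fix $\phi_0\in\cH^k(L_0)$ and $\delta>0$, put $L:=L_0+\phi_0$ and $\Lambda:=\mN(L)$. Passing to a global cross-section of the hyperbolic set $\Lambda$ and a Markov partition, one covers $\cA(L)$ at each scale $N$ by $\exp((h_{top}(\Lambda)+o(1))N)$ Bowen pieces of order $N$. The engine, following \cite{BQ,YH,QS}, is that a potential $\psi\ge0$ on $M$ vanishing on a chosen subfamily of these pieces and bounded below by a small constant off it forces every orbit minimizing $L+\psi$ to spend an overwhelming fraction of its time in the subfamily; choosing the subfamily to still carry a minimizing measure but to have strictly sub-maximal complexity reduces, by a definite factor, the exponential complexity available to minimizing measures, at the cost of an arbitrarily small $C^k$ perturbation. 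Here one uses Mather's graph theorem --- $\pi|_{\cA(L)}$ is a bi-Lipschitz graph --- so that potentials depending only on $x\in M$ suffice to realize the required cost modifications on $\cA(L)$, and one takes $\psi$ locally of the form $\e\,d(x,\cdot)^k$ so that $\lV\psi\rV_{C^k}$ is controlled. Iterating over a sparse sequence of scales with $C^k$-summable perturbations, while keeping $\mN(L_0+\phi_0+\psi)$ hyperbolic by openness of $\cH^k(L_0)$, produces arbitrarily close to $\phi_0$ a potential whose Ma\~n\'e set has zero topological entropy. Hence $\cE_0(L_0)$ is dense, and being $G_\delta$ it is residual.

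The main obstacle is this last step: transporting the Bressaud--Quas complexity-reduction estimate and its Baire-category bookkeeping from symbolic dynamics with arbitrary Lipschitz observables to (i) the Lagrangian flow, which requires uniform hyperbolicity estimates and a Bowen-ball/Markov description adapted to flows and to \emph{minimizing} (not merely invariant) orbits, with careful control that the perturbed Ma\~n\'e sets remain inside the hyperbolic block; and (ii) the restricted perturbation class, since only potentials $\psi=\psi(x)$ on $M$ are admissible, so one must check that their pullbacks to the phase space separate the dynamical pieces finely enough --- which is precisely why the reduction to $\cA(L)$ in the first step, combined with Mather's graph property, is indispensable. Keeping hyperbolicity along the inductive perturbations, so that the final potential lies in $\cH^k(L_0)$ rather than merely in $C^k(M,\re)$, and making the upper semicontinuity of entropy precise, are the remaining technical points.
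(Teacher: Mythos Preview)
Your overall architecture matches the paper's: reduce to showing that every minimizing measure has small metric entropy (the paper writes $\cE_\gamma:=\{\phi:\forall\mu\in\cM_{\min}(L+\phi),\ h(\mu)<\gamma\}$ and takes $\bigcap_\gamma\cE_\gamma$), prove openness via upper semicontinuity of $\phi\mapsto\mN(L+\phi)$ together with a \emph{common} symbolic model for all nearby $\phi$ (Fisher's theorem gives a Markov partition, one lifts to a fixed subshift, and Abramov's formula transports entropy upper semicontinuity through the suspension), and prove density using a Bressaud--Quas ingredient.

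But your density argument is not the paper's, and as written it has a real gap. You propose an \emph{iterative} complexity reduction: at each scale pick a subfamily of Bowen pieces with strictly smaller growth that ``still carries a minimizing measure'', add a potential vanishing on it, and sum a sequence of such perturbations. The difficulty is that the minimizing measure for $L$ may well be uniquely ergodic with positive entropy, so no proper subfamily of its support carries a minimizing measure for $L$; you need one for the \emph{perturbed} Lagrangian, and you give no mechanism producing it. Worse, each step changes the Aubry set, and nothing in your sketch controls the entropy of the \emph{limit} Aubry set or guarantees the limit potential stays in $\cH^k(L_0)$.

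The paper sidesteps iteration entirely with a \emph{single} perturbation. Its Bressaud--Quas input is not complexity reduction but Lemma~\ref{Lper}: a subshift on $M$ symbols with entropy $h$ contains a periodic orbit of period $\le 1+M e^{1-h}$. Coding a neighbourhood of $\cA(L)$ and shadowing yields Proposition~\ref{Pper} and Corollary~\ref{cmn}: periodic orbits $\Gamma_n$ for the flow with periods $T_n$ and integers $m_n$ satisfying $\int d(\theta,\cA(L))\,d\mu_n=o(\be^{k m_n})$ for every $k$, and $\log T_n/m_n\to 0$. One then adds a \emph{single} potential $\phi_n\ge 0$ with $\phi_n\equiv 0$ near $\pi(\Gamma_n)$ and $\phi_n=\be^{(k+1)m_n}$ where $d(x,\pi(\Gamma_n))\ge\tfrac1D\be^{m_n}$, with $\lV\phi_n\rV_{C^k}$ uniformly bounded. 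Two claims finish: Claim~\ref{c1} is a counting argument (Lemma~\ref{A.1} plus the bound $\#W_n\le(T_n/a)(\#\P)^2$ on atoms of $\P^{(m_n)}$ touching the $\be^{m_n}$-tube) showing that any invariant measure with entropy $\ge\tfrac{2\gamma}{a}h_{top}(F)$ puts mass $>\gamma$ outside that tube; Claim~\ref{c2} converts ``$\be^{m_n}$-far in $TM$'' into ``$\tfrac1D\be^{m_n}$-far in $M$'' for points of a minimizing support, using Mather's \emph{Crossing Lemma} (Lemma~\ref{CL}) and the action estimate $A_{L+c(L)}(\Gamma_n)=o(\be^{2m_n})$ --- not the graph theorem you invoke. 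Then for a putative high-entropy minimizer $\nu_n$ one gets $\int(L_0+\phi_n)\,d\nu_n\ge -c(L_0)+\gamma\be^{(k+1)m_n}$, whereas $\int(L_0+\phi_n)\,d\mu_n\le -c(L_0)+o(\be^{(k+1)m_n})$ by Lemma~\ref{LA2}, contradicting minimality of $\nu_n$. No limit of perturbations is taken, and hyperbolicity is preserved simply because $\e\lV f_n\rV_{C^k}$ is small.
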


On surfaces Theorem~\ref{Tzeroentropy} follows from

\begin{MainProp}
If $L$ is a Tonelli Lagrangian, $k\ge 2$ and $\dim M=2$, \\
then
$\mN(L)$ has zero topological entropy.
\end{MainProp}
\begin{proof}
The topological entropy of $\mN(L)$ is the supremum of the metric entropies of 
the invariant measures supported on $\mN(L)$.  
By \cite[Theorem V(c)]{Ma7} the $\om$-limit of any orbit in $\mN(L)$ is in the Aubry set $\cA(L)$.
Thus, by the Poincar\'e Recurrence Theorem, any invariant measure on $\mN(L)$ is supported on $\cA(L)$.
Therefore it is enough to
prove that the Aubry set has zero topological entropy. 
By the Graph Property \cite[Theorem VI(b)]{Ma7}, the flow on $\cA(L)$ is the image   
under a Lipschitz (conjugacy) map $(\pi|_{\cA(L)})^{-1}$
of a flow on a (Lipschitz) continuous  lamination on the surface $M$.
By Fathi~\cite[Lemma~3.3]{Fa7} and Young~\cite{lsy1} the flow on the 
projected Aubry set $\pi(\cA(L))$ has zero entropy and then 
(Walters~\cite[Theorem~7.2]{Walters}) $\cA(L)$ has zero topological entropy.
\end{proof}

The proof of Theorem~\ref{Tzeroentropy} is done in two steps. 
 The first step in subsection~\ref{ssappperorb} is the study of 
 how well hyperbolic minimizing measures can be approximated 
 by closed  orbits of a given period. For approximation with large periods
 on generic lagrangians see Ma\~n\'e~\cite[Theorem~F]{Ma6}.
 We found that the arguments of this proof follow elegantly using
 symbolic dynamics on the Ma\~n\'e set.
 
 Symbolic dynamics are usually constructed for locally maximal 
 hyperbolic sets.   In order to use Theorem~\ref{CFR} we only
 assume hyperbolicity of the Ma\~n\'e set, not local maximality.
 See Crovisier~\cite{crovisier} and Fisher~\cite{Fisher} for examples 
 of diffeomorphisms with hyperbolic sets which are not contained 
 in nearby locally maximal hyperbolic sets. Fisher~\cite{Fisher}
 constructs Markov partitions for general hyperbolic sets but this 
 construction has not been done for hyperbolic flows.  
 In Appendix~\ref{AMP} we define Markov partitions for hyperbolic flows.
 In Appendix~\ref{ALM} we construct Markov partitions for 
 non locally maximal hyperbolic sets for flows. In Appendix~\ref{asst} 
 we recall from de la Llave, Marco, Moriy\'on~\cite{LlMM} a useful
 version of the Structural Stability for flows and extend the symbolic
 dynamics to an invariant set containing the hyperbolic set.
 In Appendix~\ref{ashms} we apply all this to obtain a single symbolic
 dynamics for a neighbourhood of hyperbolic Ma\~n\'e sets as
 used in subsection~\ref{ssappperorb}.
 
 For the second part of the proof of Theorem~\ref{Tzeroentropy}
 we show that for $\ga>0$ the set 
 $$
 \cT_\ga:=\{\,\phi\in C^k(M,\re)\;|\;
 h_{top}(\mN(L+\phi))\le \ga\,\}
 $$
 contains an open and dense set in $C^k(M,\re)$.
 For the open part we use the upper semicontinuity of the 
 Ma\~n\'e set and the uniform upper semicontinuity of the 
 metric entropy for h-expansive maps, which we prove in
 Appendix~\ref{AE}. The uniform h-expansivity needed is
 proved in Appendix~\ref{asha}. For the density we use 
 a short closed orbit with small action obtained in the first 
 step, perturb the lagrangian with a canal as in~\eqref{canal}
 and show that the new minimizing measures have to accumulate
 nearby the periodic orbit. Then we show that the entropy nearby
 a short periodic orbit must be small.
 
  We also include a fundamental Appendix~\ref{asha} on shadowing
  which is used for approximating lagrangian actions, 
  to define approximating segments in section~\ref{Sclosing}, for
  the construction of Markov partitions and for the uniform h-expansivity.
 
   The second step of the proof of Theorem~\ref{HYP} is also split in two parts.
   The first part in subsection~\ref{sscsr} is the observation that the existence of
   a special ``solitary'' returns and the shadowing lemma 
   allow that a perturbation by a canal as in~\eqref{canal} with $k=2$
   includes a periodic orbit in the Aubry set. The second part shows that 
   the support of a minimizing measure with zero entropy\footnote{The
   argument actually uses the zero dimension of the measure.}  
   always contain
   those solitary returns.

    Since we assume hyperbolicity of the Aubry set, which is chain recurrent
    (cf. Ma\~n\'e~\cite[Theorem V]{Ma7}), the shadowing lemma implies the 
    existence of many periodic orbits nearby. Along this paper we do not perturb
    recurrent orbits to close them. We just choose carefully an existing periodic
    orbit and perturb the lagrangian by a canal as in~\eqref{canal}.
    
    \bigskip

\section[Zero entropy]{Generic hyperbolic Ma\~n\'e sets have zero entropy.}
\label{Szeroentropy}

We begin by proving the following analogous of a Theorem by I. Morris \cite{Morris}:

\noindent{\bf Theorem~\ref{Tzeroentropy}:}
{\it If $L_0$ is a Tonelli lagrangian and $k\ge 2$, 
the set
$$
\cE_0(L_0) =\{\,\phi\in\cH^k(L_0)\;|\;
\mN(L_0+\phi)\text{ has zero topological entropy }\}
$$
contains a residual subset of $\cH^k(L_0)$.
}

\subsection{The Aubry set.}\quad

We say that a curve $\ga:\re\to M$ is {\it static} for a Tonelli Lagrangian $L$
if 
$$
s<t \quad \then\quad
\int_s^t L(\ga,\dga) = -\Phi_{c(L)}(\ga(t),\ga(s));
$$
equivalently (cf. Ma\~n\'e \cite[pp. 142--143]{Ma7}), 
if $\ga$ is semi-static and 
\begin{equation}\label{aubry}
 s<t\quad\then\quad \Phi_{c(L)}(\ga(s),\ga(t))+\Phi_{c(L)}(\ga(t),\ga(s))=0.
\end{equation}
The {\it Aubry set} is defined as
$$
\cA(L):=\{\,(\ga(t),\dga(t))\;|\; t\in\re, \;\ga\text{ is static}\;\},
$$
its elements are called {\it static vectors}.

\begin{Lemma}[A priori bound]\label{priori}\quad

For $C>0$ there exists $A_0=A_0(C)>0$ such that 
if $\ga:[0,T]\to M$
is a solution of the Euler-Lagrange equation with $A_L(\ga)<C\, T$, then 
$$
\lv\dga(t)\rv < A_0\qquad\text{ for all }t\in[0,T].
$$
\end{Lemma}

\begin{proof}
The Euler-Lagrange flow preserves the {\it energy function} 
\begin{equation}\label{EL}
E_L:= v \cdot \partial_v L -L.
\end{equation}
We have that
\begin{align}
\hskip -0.7cm\forall s\ge0\qquad
\tfrac{d\,}{ds}E_L(x,sv)\big|_s&=s\, v\cdot \partial_{vv}L(x,v)\cdot v \ge s\, a |v|_x^2.
\notag\\
E_L(x,v) &= E_L(x,0)+\int_0^{1} \tfrac{d\,}{ds}E_L(x,s v) \, ds
\notag\\
&\ge \min_{x\in M}E_L(x,0)+ \tfrac 12 a |v|_x^2.
\label{lbel}
\end{align}
Let
$$
g(r):= \sup\big\{w\cdot \partial_{vv}L(x,v)\cdot w \,:\, |v|_x\le r,\,|w|_x= 1\big\}.
$$
Then $g(r)\ge a$ and 
\begin{equation}\label{ubel}
E_L(x,v)\le \max_{x\in M}E_L(x,0) + \tfrac 12\, g(|v|_x)\, |v|_x^2.
\end{equation}

By the superlinearity there is $B>0$ such that $L(x,v)>|v|_x-B$ for all $(x,v)\in TM$.
Since $A_L(\ga)<C\,T$, the mean value theorem implies that there is $t_0\in]0,T[$ 
such that $|\dga(t_0)|<B+C$. Then~\eqref{ubel} gives an upper bound 
on the energy of $\ga$ and~\eqref{lbel} bounds the speed of $\ga$.

\end{proof}

 For $x,\,y\in M$ and $T>0$ define
 \begin{equation*}\label{defctxy}
 \cC_T(x,y):=
 \{\,\ga:[0,T]\to M\,|\, \ga \text{ is absolutely continuous}, \ga(0)=x,\,\ga(T)=y\,\}.
 \end{equation*}

 \begin{Corollary}\label{CAPB}\quad
 
 There exists $A_1>0$ such that if  $x,\,y\in M$ and $\ga\in\cC_T(x,y)$ is
 a solution of the Euler-Lagrange equation with 
 $$
 A_{L+c}(\ga) \le \Phi_c(x,y) + \max\{T,d(x,y)\},
 $$
 where $c=c(L)$,
 then
 \begin{enumerate}[(a)]
 \item $T\,\ge\,\tfrac 1A_1\; d(x,y)$.
 \label{CAPB1}
 \item  \;$|\dga(t)|\,\le\,A_1$ \; for all $t\in[0,T]$.
 \label{CAPB2}
 \end{enumerate}
 \end{Corollary}
 
 \begin{proof}
 First suppose that  $d(x,y)\le T$. Then item~\eqref{CAPB1} holds with $A_1=1$.
 Let
 \begin{equation}\label{llr}
 \ell(r):=|c|+\sup\{\,L(x,v)\,|\, (x,v)\in TM,\, |v|\le r\,\}.
 \end{equation}
 Since $d(x,y)\le T$, there exists a $C^1$ curve $\eta:[0,T]\to M$  joining $x$ to $y$ with $|\deta|\le 1$.
 We have that 
 $$
 A_{L+c}(\ga)\le \Phi_c(x,y)+T\le A_{L+c}(\eta)+T
 \le \big(\ell(1)+c\big)\,T+ T.
 $$
 Then item~\eqref{CAPB2} holds for $A=A_0(\ell(1)+c+1)$ where $A_0$ is from Lemma~\ref{priori}.
 
 Now suppose that $d(x,y)\ge T$.
 Let $\eta:[0,d(x,y)]\to M$ be a minimal geodesic with 
 $|\deta|\equiv 1$ joining $x$ to $y$. Let 
 $D:=\ell(1)+c +2>0$. From the superlinearity property there is 
 $B>1$ such that
 $$
 L(x,v) + c > D\, |v| - B, \qquad \forall (x,v)\in TM.
 $$
 Then
 \begin{align}
 [\ell(1)+c]\;d(x,y) 
    &\ge A_{L+c}(\eta) \ge \Phi_c(x,y)         
    \label{ap-1}\\
    &\ge A_{L+c}(\ga)-d(x,y)
    \label{ap-2}\\
    &\ge\int_0^T\bigl( D\;|\dga|-B\,\bigr)\,dt - d(x,y)
    \notag\\
    &\ge D\;d(x,y) - B\,T - d(x,y).
    \notag
 \end{align}
 Hence
 $$
 T \ge \tfrac{D-\ell(1)-c-1}{B}\; d(x,y) = \tfrac 1B \; d(x,y).
 $$
 From~\eqref{ap-1} and~\eqref{ap-2}, we get that
 \begin{align*}
 A_L(\ga)&\le \bigl[\,\ell(1)+c+1\,\bigr]\;d(x,y)-c\,T ,
        \\
         &\le \bigl\{\,B\,[\,\ell(1)+c+1\,]-c\,\bigr\}\;T.
 \end{align*}
 Then Lemma~\ref{priori} completes the proof.
 
 \end{proof}
 
 We say that a curve $\ga:[0,T]\to M$ is a {\it Tonelli minimizer} if it minimizes the action functional
 on $\cC_T(\ga(0),\ga(T))$, i.e. if it is a minimizer with fixed endpoints and fixed time interval.

 \begin{Corollary}\label{Cbtm}
 There is $A>0$ such that if $x,\,y\in M$ and $\eta_n\in\cC_{T_n}(x,y)$, $n\in\na^+$ is a Tonelli minimizer
 with
 $$
 A_{L+c}(\eta_n)\le \Phi_c(x,y)+\tfrac1n,
 $$
 then there is $N_0>0$ such that $\forall n>N_0$, $\forall t\in[0,T_n]$, $|\deta_n(t)|<A$.
 \end{Corollary}

 \begin{proof}
 If $d(x,y)>0$ then for $n$ large enough $d(x,y)>\tfrac 1n$.
 In this case Corollary~\ref{CAPB} implies the result with the constant $A_1$.
 If $d(x,y)=0$ let $\xi_n:[0,T_n]\to\{x\}$ be the constant curve.
 Since $\eta_n$ is a Tonelli minimizer, we have that
 $$
 A_L(\eta_n)\le A_L(\xi_n) =\int_0^{T_n}L(x,0)\,dt\le |L(x,0)|\, T_n.
 $$
 Lemma~\ref{priori} implies that $|\deta_n|\le A_0(C)$ with $C=\sup_{x\in M}|L(x,0)|$.
 Now take $A=\max\{A_0(C),A_1\}$.
 \end{proof}

\begin{Lemma}\label{ALinv}\quad

If $(x,v)$ is a static vector then $\ga:\re\to M$, $\ga(t)=\pi\vr_t(x,v)$ 
is a static curve, i.e. the Aubry set $\cA(L)$ is invariant.
\end{Lemma}

 \begin{proof}\quad
 
 Let $\ga(t)=\pi\,\vr_t(x,v)$ and suppose that $\ga|_{[a,b]}$ is static.
 We have to prove that all $\ga|_\re$ is static. 
 Let
 $\eta_n\in\cC_{T_n}(\ga(b),\ga(a))$
 be a Tonelli minimizer  with
  $$
 A_{L+c}(\eta_n)<\Phi_c(\ga(b),\ga(a))+\tfrac 1n.
 $$
 
 By Corollary~\ref{Cbtm}, for $n$ large enough,
 $|\deta_n|<A$. We can assume that 
 $\deta_n(0)\to w$. Let $\xi(s)=\pi\,\vr_s(w)$. If $w\ne \dga(b)$
 then for some $\e>0$ the curve $\ga|_{[b-\e,b]}*\xi|_{[0,\e]}$ is not $C^1$, and
 hence  it
 can not be a (Tonelli) minimizer of $A_{L+c}$ in 
 $\cC_{2\e}\big(\ga(b-\e),\xi(\e)\big)$. Thus
 $$
 \Phi_c(\ga(b-\e),\xi(\e))
        < A_{L+c}(\ga|_{[b-\e,b]})
        + A_{L+c}(\xi|_{[0,\e]}).
 $$
 \begin{align*}
 &\Phi_c(\ga(a),\ga(a))
    \le \Phi_c(\ga(a),\ga(b-\e))
         +\Phi_c(\ga(b-\e),\xi(\e))
         +\Phi_c(\xi(\e),\ga(a))
    \\
    &\;< A_{L+c}(\ga_{[a,b-\e]})
       +A_{L+c}(\ga|_{[b-\e,b]})
       +A_{L+c}(\xi|_{[0,\e]})
       +\liminf_n A_{L+c}(\eta_n|_{[\e,T_n]})
    \\
    &\;\le A_{L+c}(\ga|_{[a,b]})
      +\lim_nA_{L+c}\bigl(\,\eta_n|_{[0,\e]}*\eta_n|_{[\e,T_n]}\bigr)       
    \\
    &\;= -\Phi_c(\ga(b),\ga(a))+\Phi_c(\ga(b),\ga(a))
    =0.
 \end{align*}
 Thus there is a closed curve, from $\ga(a)$ to itself, with negative
 $L+c$ action, and also negative $L+k$ action for some $k>c(L)$.  
 Concatenating the curve with itself many times  
 shows that $\Phi_k(\ga(a),\ga(a))=-\infty$.
 By~\eqref{defcL} this implies that $k\le c(L)$, which is 
 a contradiction.
 Thus $w=\dga(b)$ and similarly $\lim_n\deta_n(T_n)=\dga(a)$.
 
 If $\limsup T_n<+\infty$, we can assume that $\tau=\lim_n T_n>0$
 exists. In this case $\ga$ is a semi-static
 periodic orbit of period $\tau+b-a$ and then $\ga|_\re$  is static.
 
 Now suppose that $\lim_n T_n=+\infty$.
 If $s>0$, we have that
 \begin{align*}
 A_{L+c}&(\ga|_{[a-s,b+s]})
   +\Phi_c(\ga(b+s),\ga(a-s))\le
   \\
   &\le\lim_n\big\{\,A_{L+c}(\eta_n|_{[T_n-s,T_n]})
       +A_{L+c}(\ga|_{[a,b]})
       \begin{aligned}[t]
       &+A_{L+c}(\eta_n|_{[0,s]})\,\big\}\\
       &+\Phi_c(\ga(b+s),\ga(a-s))
       \end{aligned}
    \\
    &\le\begin{aligned}[t]
    &-\Phi_c(\ga(b),\ga(a)) \\
    &+\lim_n \big\{\,A_{L+c}(\eta_n|_{[0,s]})
                    +A_{L+c}(\eta_n|_{[s,T_n-s]})
                    +A_{L+c}(\eta_n|_{[T_n-s,T_n]})
             \,\big\}
    \end{aligned}         
    \\
    &\le -\Phi_c(\ga(b),\ga(a))+\Phi_c(\ga(b),\ga(a))              
    =0.
 \end{align*}
 Thus $\ga_{[a-s,b+s]}$ is static for all $s>0$.

\end{proof}

Let  $\cM_{\text{inv}}(L)$ be the set of Borel probabilities in $TM$ invariant under the 
 Lagrangian flow.
Denote by $\cM_{\min}(L)$ the set of minimizing measures for the Lagrangian $L$, i.e.
\begin{equation}\label{Mmin}
\cM_{\min}(L):=\Big\{\,\mu\in\cM_{\text{inv}}(L)\;\Big|\;
\int_{TM}L\,d\mu = -c(L)\;\Big\}.
\end{equation}
Their name is justified (cf. Ma\~n\'e \cite[Theorem II]{Ma7}) by 
\begin{equation}\label{minmeas}
-c(L) = \min_{\mu\in\cM_{\text{inv}}(L)}\int_{TM} L \; d\mu 
= \min_{\mu\in\cC(TM)}\int_{TM} L \; d\mu.
\end{equation}
 Fathi and Siconolfi \cite[Theorem 1.6]{FaSi} prove the second equality in \eqref{minmeas}
 where the set of  {\it closed measures} is defined by 
$$
{\mathcal C}(TM):=\Big\{ \, \mu \text{ Borel probability on }TM\; \Big\vert\;
\forall \phi\in C^1(M,\re) \; \int_{TM} d\phi\; d\mu =0\,\Big\}.
$$

Recall
\begin{Theorem}[Theorem~IV in \cite{Ma7} or~\cite{CDI}]\label{ecmm}\quad

A probability $\mu\in\cM_{\text{inv}}(L)$ is minimizing if and only if 
$\supp\mu\subset\cA(L)$.
\end{Theorem}

\begin{Corollary}\label{cecmm}\quad

A probability $\mu\in\cM_{\text{inv}}(L)$ is minimizing if and only if 
$\supp\mu\subset\mN(L)$.
\end{Corollary}

\begin{proof}
It is enough to prove that invariant probabilities supported in $\mN(L)$ are 
actually supported in $\cA(L)$.
Denote by $\vr_t=\vr^L_t$  the Lagrangian flow.
We first prove that the non-wandering set of the restriction $\vr_t|_{\mN(L)}$
satisfies $\Om(\vr_t|_{\mN(L)})\subset\cA(L)$. If $\vrt\in\Om(\vr^L_t|_{\mN(L)})$ then there is a sequence $\tt_n\in\mN(L)$ and $t_n\ge 2$ such that $\lim_n\tt_n=\vrt=\lim_n\vr_{t_n}(\tt_n)$. 
The action potential $\Phi_k$ in~\eqref{actionpotential} is Lipschitz by Theorem~I in Ma\~n\'e~\cite{Ma7} or~\cite{CDI}.
Then
\begin{align*}
A_{L+c}(\vr_{[0,1]}(\vrt))+\Phi_{c(L)}\big(\pi\vr_1(\vrt),\pi(\vrt)\big)
&\le \lim_n A_{L+c}(\vr_{[0,1]}(\tt_n))+ \lim_n A_{L+c}(\vr_{[1,t_n]}(\tt_n))
\\
&\le \lim_n A_{L+c}(\vr_{[0,t_n]}(\tt_n)) =\lim_n\Phi_{c(L)}(\pi\tt_n,\pi\vr_{t_n}(\tt_n))
\\
&\le \Phi_{c(L)}(\pi\vrt,\pi\vrt)=0.
\end{align*}
And hence $\vrt\in\cA(L)$.
Now, if $\mu\in\cM_{\text{inv}}(L)$ has $\supp\mu\subset \mN(L)$, by Poincar\'e recurrence theorem
$\supp\mu\subset \Om(\vr|_{\mN(L)})\subset\cA(L)$.

\end{proof}

\subsection{Symbolic Dynamics for the Aubry set.}\quad

Throughout the rest of the section we will identify a periodic orbit with the 
invariant probability supported on
the periodic orbit. 

The first two results, Lemma~\ref{Lper} and Proposition~\ref{Pper} follow
arguments by X. Bressaud and A. Quas~\cite{BQ}.

Let $A\in\{0,1\}^{M\times M}$ be a $M\times M$ matrix  with entries in $\{0,1\}$.
The subshift  of finite type $\Si_A$ associated to $A$ is the set
$$
\Si_A=\big\{\;\ox=(x_i)_{i\in\Z}\in \{1,\ldots,M\}^\Z \;\big|\quad \forall i\in\Z\quad A(x_i,x_{i+1})=1\;\big\},
$$
endowed with the metric
$$
d_a(\ox,\oy) = a^{-i}, \qquad i=\max\{\;k\in\na\;|\; x_i=y_i\;\;\forall |i|\le k\;\}
$$
for some $a>1$ and the {\it shift transformation} is
$$
\si:\Si_A\to\Si_A, \qquad \forall i\in\Z\quad \si(\ox)_i = x_{i+1}.
$$

\bigskip

\begin{Lemma}\label{Lper}
Let $\Si_A$ be a shift of finite type  with $M$ symbols and topological
entropy $h$. 
Then $\Si_A$ contains a periodic orbit of period at most $1+M \ee^{1-h}$.
\end{Lemma}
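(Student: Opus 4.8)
The plan is to reduce the statement to a bound on the topological entropy $h=\log\rho$ in terms of the length of a shortest cycle in the digraph $G$ on the $M$ symbols with adjacency matrix $A$ (here $\rho=\rho(A)$ is the spectral radius, and $h=\log\rho$). If $\Sigma_A=\emptyset$ there is nothing to prove (and $h=-\infty$); otherwise $G$ contains a cycle and $\rho\ge 1$. The reduction rests on the following claim: \emph{if $G$ has girth at least $p$, i.e.\ no closed walk of length $\le p-1$, then $\rho\le eM/(p-1)$.} Granting this, observe that "$\Sigma_A$ has no periodic orbit of period $\le L$" means exactly that $G$ has no closed walk of length $\le L$, i.e.\ girth $\ge L+1$; the claim then gives $\rho\le eM/L$, that is $L\le eMe^{-h}=Me^{1-h}$. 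Taking the contrapositive and choosing $L$ to be the least integer strictly greater than $Me^{1-h}$ — so that $L\le 1+Me^{1-h}$ — we conclude that $\Sigma_A$ carries a periodic orbit of period $\le L\le 1+Me^{1-h}$.

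Everything thus comes down to the claim. One half is soft: let $N_n$ be the number of admissible words of length $n$. A word of length $m+n-1$ is determined by its first $m$ and its last $n$ letters, so $N_{m+n-1}\le N_mN_n$; hence $\log N_{n+1}$ is subadditive in $n$, and since its exponential rate is $h$, Fekete's lemma gives $N_n\ge \rho^{n-1}$ for all $n$. So the claim is equivalent to the assertion that a digraph on $M$ vertices of girth $\ge p$ has word-growth rate at most $eM/(p-1)$, i.e.\ $N_n$ grows no faster than $\bigl(eM/(p-1)\bigr)^{n}$.

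This last bound is the heart of the matter and is a spectral/complexity version of the Caccetta--H\"aggkvist phenomenon: large spectral radius (many edges) forces a short cycle. The easy consequence of the girth hypothesis — any $p$ consecutive letters of a word are pairwise distinct, since a repetition within distance $<p$ would be a closed walk of length $<p$ — only yields $N_n\lesssim (M-p+1)^n$, hence $\rho\le M-p+1$, which is hopelessly weak once $p$ is large; one really must exploit that large girth makes $G$ sparse. I would first pass to the dominant strongly connected component $C$, which satisfies $\rho(A|_C)=\rho$, $|C|\le M$, and has girth $\ge p$. When the period of $C$ equals its girth, $C$ is layered into $p$ classes and $\rho^{p}$ is the spectral radius of a $p$‑step return map whose row sums are bounded by the product of the layer sizes, which by AM--GM is $\le (|C|/p)^{p}$, giving $\rho\le M/p$ outright. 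The general (essentially aperiodic) case is the real obstacle; here I would use the Parry measure $\pi$ on $C$ together with the chain of inequalities $\log\rho=h(\text{Parry chain})\le\sum_{v}\pi_v\log d^{+}(v)\le\log\mathbb{E}_\pi[d^{+}]$ (where $d^{+}(v)$ is the out-degree of $v$ in $C$), and then estimate the Parry‑weighted average out-degree using that the first $p$ states of a stationary Parry trajectory are almost surely distinct, so that $p\,\mathbb{E}_\pi[d^{+}]=\mathbb{E}\bigl[\sum_{i<p}d^{+}(X_i)\bigr]$ counts only edges incident to $p$ distinct vertices. The extremal configuration — the blow-up of a $p$-cycle into $p$ independent sets of size $M/p$, which has $\rho=M/p$ — shows $eM/(p-1)$ is sharp up to the constant $e$, and it is precisely this slack, which should enter through a Stirling-type estimate, that I expect makes the required elementary counting (or eigenvector) argument go through.
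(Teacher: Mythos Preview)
Your reduction is correct: the lemma is equivalent to the claim that a digraph on $M$ vertices with girth $\ge k+1$ has spectral radius at most $eM/k$. But you do not prove this claim. The periodic (layered) case you handle is fine, yet you explicitly flag the aperiodic case as ``the real obstacle'' and then offer only a heuristic --- a Parry-measure inequality together with the hope that ``a Stirling-type estimate \ldots\ makes the required elementary counting (or eigenvector) argument go through.'' That is not a proof; the chain $\log\rho\le\sum_v\pi_v\log d^{+}(v)$ combined with ``the first $p$ states are distinct'' does not by itself bound $\mathbb{E}_\pi[d^{+}]$ by $eM/p$, and you give no mechanism to close this gap. As you yourself note, the only consequence you actually extract from large girth --- that any $k$ consecutive letters are distinct --- gives at best $\rho\le M-k+1$, which is far too weak.

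The paper's argument supplies exactly the missing combinatorial idea, and it is much simpler than the spectral route you sketch. The observation is that when the shortest cycle has length $k+1$, an admissible word of length $k$ is determined not merely up to $k$ distinct symbols but \emph{uniquely by the set of symbols it uses}. Indeed, if two distinct length-$k$ words $u$ and $v$ use the same symbol set, then (since both are permutations of that set and differ) some consecutive pair $ab$ in $v$ occurs with $b$ before $a$ in $u$; the segment of $u$ from $b$ to $a$, followed by the edge $a\to b$, is then a cycle of length $\le k$, contradicting the girth hypothesis. Hence there are at most $\binom{M}{k}$ admissible $k$-words, so
\[
e^{hk}\le\binom{M}{k}\le\frac{M^k}{k!}\le\Big(\frac{Me}{k}\Big)^{k},
\]
and taking $k$-th roots gives $k\le Me^{1-h}$. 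This is the step you were reaching for; the Stirling factor you anticipated enters through $k!\ge(k/e)^k$, but the crucial point --- counting $k$-words by subsets rather than by injections --- is what you are missing.
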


\begin{proof}
Let $k+1$ be the period of the shortest periodic orbit in $\Si_A$.
We claim that a word of length $k$ in $\Si_A$ is determined by the set
of symbols that it contains.
First note that since there are no periodic orbits of period $k$ or less,
any allowed $k$-word must contain $k$ distinct symbols.
Now suppose that $u$ and $v$ are two distinct words of length $k$ 
in $\Si_A$ containing the same symbols. Then, since the words are different, 
there is a consecutive pair of   symbols, say $a$ and $b$, in $v$ which 
occur in the opposite order (not necessarily consecutively) in $u$.
Then the infinite concatenation of the segment of $u$ starting at $b$ 
and ending at $a$  gives a word in $\Si_A$ of period at most $k$, which 
contradicts the choice of $k$.

It follows that there are at most $\binom Mk$ words of length $k$.
Using the basic properties of topological entropy
\begin{align*}
\ee^{hk}\le \binom Mk\le \frac {M^k}{k!}
\le \left(\frac {M \ee} k\right)^k.
\end{align*}
Taking $k$th roots, we see that $k\le M\ee^{1-h}$.

\end{proof}

In Appendix~\ref{ALM} we prove that if $\cA(L)$ is hyperbolic then there is a
hyperbolic set $\La$ in the energy level $c(L)$ which contains $\cA(L)$,
$\cA(L)\subset \La\subset E^{-1}\{c(L)\}$, and which has a Markov partition,
as defined in Appendix~\ref{AMP}. The Markov partition consists of a finite 
set of rectangles $\fT=\{T_i\}_{i=1}^r$ included in mutually disjoint transversal disks 
$D_i\supset T_i$. There is a Lipschitz first return time 
$\tau:\cup\fT\to]0,\a]$, $\cup\fT:=\cup_{T\in\fT}\, T$, 
$$
\tau(\tt):=\min\{\,t>0\;|\; \vr_t(\tt)\in\cup\fT\,\},
$$
and first return map (also called Poincar\'e map) $F:\cup\fT \to\cup\fT$,
$$
F(\tt):=\vr_{\tau(\tt)}(\tt).
$$
By Lemma~\ref{LD4} and Theorem~\ref{TB22}
there is a subshift of finite type $\Om$ and a Lipschitz map
$\Pi:\Om\to \cup\fT$ which is a semiconjugacy between
the shift map $\si$ and the Poincar\'e map $F$.
Also $\Pi$ extends to a time preserving Lipschitz semiconjugacy
$\Pi:S(\Om,\otau)\to \La$ from the suspension $S(\Om,\otau)$ 
of the shift with return time $\otau:=\tau\circ\Pi:\Om\to]0,\a]$, 
as defined in~\eqref{som} in Appendix~\ref{ASD}, with suspended flow $S_t$
defined in~\eqref{sust}, to the lagrangian flow $\vr_t|_\La$ on $\La$.
In other words, the following diagram commutes for every $t\in\re$ and
$\Pi$ is Lipschitz.
$$
\begin{CD}
S(\Om,\otau) @>S_t>> S(\Om,\otau)
\\
@V\Pi VV   @VV \Pi V
\\
\La @> \vr_t >> \La
\end{CD}
$$

A $\si$-invariant measure $\nu$ on $\Om$ induces a $\vr_t$-invariant
measure $\mu_\nu$ on $\La$ by
$$
\int_\La f\,d\mu_\nu :=
\int_\Om\left[\int_0^{\otau(\ow)} f\big(\vr_t(\Pi(\ow))\big)\,dt\right]d\nu(\ow).
$$
Define $B:\Om\to\re$ by 
$$
B(\ow):=\int_0^{\otau(\ow)} \Big[L\big(\vr_t(\Pi(\ow))\big)+c(L)\Big]\,dt.
$$
Then we have that
$$
A_{L+c(L)}(\mu_\nu)=\int_\La (L+c(L))\,d\mu_\nu =\int_\Om B\, d\nu.
$$
To fix notation we use the metric $d_a$ from~\eqref{da} on $\Om$, namely
\begin{equation}\label{da1}
d_a(\ou,\ow):=a^{-n},
\qquad  
n:=\max\{\,k\in\na\;:\;\forall|i|\le k,\quad u_i=w_i\,\},
\end{equation}
and $a>1$ is chosen as in Lemma~\ref{LD4}\eqref{LD4i} such that $\Pi:\Om\to \cup\fT$ 
is Lipschitz.

As constructed before Theorem~\ref{TB22}, the subshift $\Om$ has
symbols in $\fT$ and a transition matrix $A:\fT\times\fT\to\{0,1\}$ such that
$$
\Om =\Si(A)=\{\,\ow\in \fT^\Z\;:\; \forall i\in\Z\quad A(w_i,w_{i+1})=1\,\}.
$$
We say that $(w_1,\ldots,w_n)\in\fT^n$ is a {\it legal word in $\Om$} iff
$A(w_i,w_{i+1})=1$ for all $i=1,\ldots,n-1$.
A {\it cylinder} in $\Om$ is a set of the form
$$
C(n,m,\ow):=\{\,\oz\in\Om : z_i=w_i \quad \forall i=n,\ldots,m\,\},
$$
where $\ow\in\Om$ or $\ow=(w_n,\ldots,w_m)$ is a legal word in $\Om$.
A {\it subshift} of $\Om$ is  a closed $\si$-invariant subset of $\Om$.
If $Y\subset\Om$ is a subshift of $\Om$ we say that  
$\ow=(w_n,\ldots,w_m)$ is a legal word in $Y$ iff
 $C(n,m,\ow)\cap Y\ne\emptyset$. 
  
 Since the diagram
 $$
 \begin{CD}
 \Om @> \si >> \Om
 \\
 @V \Pi VV @VV \Pi V
 \\
 \cup\fT @> F >> \cup\fT
 \end{CD}
 $$
 commutes and $\Pi$ is continuous, the set 
 \begin{equation}
\label{subshY}
 Y:=\Pi^{-1}\big((\cup\fT)\cap\cA(L)\big)
  \end{equation}
  is a subshift of $\Om$ and
  \begin{equation}\label{YAL}
 \begin{CD}
 Y @> \si >> Y
 \\
 @V \Pi VV @VV \Pi V
 \\
 (\cup\fT)\cap\cA(L) @> F >> (\cup\fT)\cap \cA(L)
 \end{CD}  
  \end{equation}
 commutes.

\subsection{Approximation by periodic orbits.}\quad
\label{ssappperorb}

Let $\cP_L(T)$ be the set of  Borel invariant probabilities for $L$ which are
supported on a periodic orbit with period $\le T$. For $\mu\in\cP_L(T)$ 
write
$$
c(\mu,\cA(L)) :=\sup_{\theta\in\supp(\mu)} d(\theta,\cA(L)).
$$

\medskip    
    
\begin{Proposition}\label{Pper}
Suppose that the Aubry set $\cA(L)$ is hyperbolic.
Then for all $\ell\in \na^+$ 
$$
\liminf_{T\to+\infty} \;T^\ell \left(\inf_{\mu\in\cP_L(T)} c(\mu,\cA(L))\right) = 0.
$$
\end{Proposition}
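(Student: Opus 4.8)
The plan is to reduce the whole statement to one application of the Anosov closing lemma, fed by a pigeonhole argument whose efficiency is governed by the topological entropy of $\cA(L)$: vanishing entropy makes the relevant combinatorial count subexponential, which is exactly what is needed to beat the polynomial weight $T^\ell$. Two preliminary reductions dispose of the easy cases. If $\cA(L)$ already contains a periodic orbit, that orbit belongs to $\cM_L(T)$ for $T$ large with $c(\cdot,\cA(L))=0$, so the liminf is trivially $0$. If $h_{\mathrm{top}}(\cA(L))>0$, pick an ergodic $\mu$ on $\cA(L)$ with $h_\mu>0$; a horseshoe argument (Katok's theorem, in the uniformly hyperbolic category) produces, for every $\e>0$, a periodic orbit of the Lagrangian flow within $\e$ of $\supp\mu\subseteq\cA(L)$ of period bounded independently of $\e$, and letting $\e\to0$ with compactness one accumulates a periodic orbit inside $\cA(L)$ — back to the first case. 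So from now on assume $\cA(L)$ is hyperbolic, fixed‑point free, carries no periodic orbit, and has $h_{\mathrm{top}}(\cA(L))=0$.

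Fix $\e_1>0$ below the expansivity constant and the shadowing radius of the hyperbolic set $\cA(L)$, and let $C,\la>1$ be the associated shadowing/contraction constants. For $n\ge1$ let $s_n$ be the largest cardinality of a subset of $\cA(L)$ that is $(n,\e_1)$‑separated for $\vr_1$ over the two‑sided window $\{-n,\dots,n\}$; vanishing topological entropy gives $\tfrac1n\log s_n\to0$, i.e. $s_n=2^{o(n)}$. Pick any $\theta\in\cA(L)$: among the $s_n+1$ points $\vr_0(\theta),\dots,\vr_{s_n}(\theta)\in\cA(L)$ two cannot be $(n,\e_1)$‑separated, so there are $0\le a<b\le s_n$ with $d(\vr_{a+i}\theta,\vr_{b+i}\theta)<\e_1$ for all $|i|\le n$. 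Put $\theta':=\vr_a(\theta)\in\cA(L)$ and $t':=b-a\le s_n$: the orbit of $\theta'$ stays within $\e_1$ of its own time‑$t'$ translate over a window of length $2n$. Since $\e_1$ is below the expansivity scale, the unstable part of the near‑coincidence $\vr_{t'}(\theta')\approx\theta'$ has been expanded from one end of the window and the stable part from the other, whence $d(\vr_{t'+\si}(\theta'),\theta')\le C\la^{-n}$ for some $|\si|<\e_1$. The Anosov closing lemma then yields a genuine periodic orbit $\Ga_n$ of the Lagrangian flow with period $p_n$, $|p_n-(t'+\si)|\le C\la^{-n}$, every point of which lies within $C\la^{-n}$ of the orbit segment $\vr_{[0,\,t'+\si]}(\theta')\subseteq\cA(L)$. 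Thus $\Ga_n\in\cM_L(p_n)$, $p_n\le 2s_n$ for $n$ large, and $c(\Ga_n,\cA(L))\le C\la^{-n}$; if some subsequence of $(p_n)$ stayed bounded, the corresponding $\Ga_n$ would subconverge to a periodic orbit contained in $\cA(L)$ (because $c(\Ga_n,\cA(L))\to0$), against our assumption, so $p_n\to\infty$.

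Since the periods $p_n$ are arbitrarily large, for every $\ell\in\na^+$
$$
0\;\le\;\liminf_{T\to+\infty} T^\ell\Big(\inf_{\mu\in\cM_L(T)}c(\mu,\cA(L))\Big)
\;\le\;\liminf_{n\to\infty}\, p_n^{\,\ell}\,c(\Ga_n,\cA(L))
\;\le\;\liminf_{n\to\infty}\,(2s_n)^{\ell}\,C\la^{-n}\;=\;0 ,
$$
because $(2s_n)^{\ell}=2^{o(n)}$ while $\la^{-n}$ decays exponentially — and note that a single sequence $(\Ga_n)$ works for all $\ell$ at once. The main obstacle I anticipate is the passage from the approximate recurrence $d(\vr_{t'}\theta',\theta')<\e_1$ on a long window to a true nearby periodic orbit with the correct period and the exponential gain $\la^{-n}$: this is the Anosov closing lemma for flows, standard but requiring the usual care with the flow‑direction reparametrization, and one must use that a hyperbolic set — even one that need not be locally maximal, as $\cA(L)$ may fail to be — still enjoys expansivity and shadowing, the closing periodic orbit living near $\cA(L)$ rather than inside it (harmless here, since only closeness to $\cA(L)$ is needed). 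The only other delicate point is the positive‑entropy reduction, which needs horseshoes localized near the support of a positive‑entropy measure; this is routine for uniformly hyperbolic sets but should be stated keeping in mind that the Euler–Lagrange flow of a $C^2$ Lagrangian is only $C^1$.
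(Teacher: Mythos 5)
Your zero‑entropy argument is correct and is a genuinely different route from the paper's: you pigeonhole directly among $s_n+1$ consecutive points of an orbit in $\cA(L)$ to get an approximate recurrence, upgrade it to an exponentially small recurrence via the two‑sided hyperbolicity estimate (the paper's \eqref{cate}), and invoke the closing lemma, whereas the paper codes $\cA(L)$ by a subshift of finite type on a spanning set and applies the Bressaud--Quas combinatorial Lemma~\ref{Lper} followed by shadowing. The difficulty is that your main argument is intrinsically limited to $h_{\mathrm{top}}(\cA(L))=0$: your bound on the period is $p_n\le 2s_n$ with $s_n$ the cardinality of a maximal $(n,\e_1)$‑separated set, and when $h_{\mathrm{top}}(\cA(L))=h>0$ one has $s_n\sim \ee^{2hn}$, so $p_n^{\ell}\la^{n}$ does not tend to zero and the conclusion fails. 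The paper avoids this by exploiting a cancellation that your pigeonhole misses: Lemma~\ref{Lper} bounds the shortest period in a subshift with $M$ symbols and entropy $h(\Si)$ by $1+M\ee^{1-h(\Si)}$, and in the application $M\sim K_T\ee^{2Th}$ while $h(\Si)\ge 2Th$, so the two exponentials cancel and the period is subexponential in $T$ \emph{for every value of} $h_{\mathrm{top}}(\cA(L))$.

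Because of this, the weight of your proof falls on the reduction to zero entropy, and that step has a genuine gap. You claim that for an ergodic $\mu$ on $\cA(L)$ with $h_\mu>0$, Katok's theorem gives, for every $\e>0$, a periodic orbit within $\e$ of $\supp\mu$ \emph{of period bounded independently of $\e$}, and then you let $\e\to0$. This is not what Katok's theorem (or the uniformly hyperbolic closing/specification machinery) provides: the horseshoes approximating $\supp\mu$ are built from a return map of high order $n$, their shortest periodic orbit has period $\sim n$, and $n\to\infty$ as $\e\to0$. With unbounded periods, the limit of periodic orbits need not be periodic, so the compactness argument does not place a periodic orbit inside $\cA(L)$. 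Indeed the desired conclusion of your reduction is false in general: a hyperbolic invariant set that is not locally maximal may have positive topological entropy and contain no periodic orbit at all (take a minimal subshift of positive entropy --- such subshifts exist --- and realize it inside a smooth horseshoe), and the paper explicitly notes that $\cA(L)$ need not be locally maximal (it invokes Fisher's theorem to embed it in a larger set with a Markov partition). So the positive‑entropy case is not disposed of, and since your pigeonhole cannot absorb it, the proof as written does not establish the proposition. To repair it you would either need the Bressaud--Quas cancellation (i.e.\ essentially the paper's argument), or a correct separate argument producing, for $h>0$, periodic orbits near $\cA(L)$ with period subexponential in the shadowing scale.
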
    

\begin{proof}\quad

For $n\in\na^+$ let $Z^{(n)}$ be the 1-step subshift of finite type
whose symbols are the legal words of size $n$ in $Y:=\Pi^{-1}(\cA(L))$.
A transition from the word 
\begin{equation}\label{concate}
\text{$u$ to $v$ is allowed in $Z^{(n)}$  iff the word $uv$
of size $2n$ is a legal word in $Y$.}
\end{equation}
 (Observe that this is not the standard
transition matrix for the $n$-word recoding of a subshift of finite type.) 
There is a natural semiconjugacy $Z^{(n)}\to Y$ from the shift of finite type 
$Z^{(n)}$ to $\si^n$ on $Y$, and hence the topological entropy of $Z^{(n)}$ 
exceeds $n \,h_{\text{top}}(Y)$. 
Let $h=\tfrac 1n h_{top}(Z^{(n)})\ge h_{top}(Y)$.
We have that 
\begin{align*}
\#\text{ symbols of $Z^{(n)}$}
= \#\text{ $n$-words in $Y$}
=\#\text{ $n$-cylinders in $Y$}
= K_n\, \ee^{n h_{top}(Y)}
\le K_n \,\ee^{nh},
\end{align*}
where $K_n$ has subexponential growth.
 By Lemma~\ref{Lper} there is a periodic orbit $\Ga_n$
in $Z^{(n)}$ with period at most $1+ K_n \ee^{nh} \ee^{1-nh}= 1+\ee\,K_n$.
Since each symbol in $Z^{(n)}$ corresponds to a word of length $n$ in the original
shift space $\Om=\Si(A)$, the periodic orbit $\Ga_n$ corresponds to a periodic orbit in $\Si(A)$
of period at most 
$$
P(n):=n\, (1+\ee\, K_n),
$$
which has subexponential growth.

We claim that any $n$-word in the projection of $\Ga_n$
 is a legal $n$-word in $Y$.
For this, observe that if the word is a symbol of $Z^{(n)}$ then it is a
legal $n$-word in $Y$ by the definition of $Z^{(n)}$. If the $n$-word is 
inside a concatenation of two symbols of $Z^{(n)}$, the transition 
rule~\eqref{concate} defining $Z^{(n)}$ implies that it is a legal 
$n$-word in $Y$. It follows, using the metric~\eqref{da1}, and identifying $\Ga_n$
with its projection to $\Si(A)$, that
\begin{equation}\label{dgay}
c(\Ga_n,Y):=\sup_{\ow\in\Ga_n}d_a(\ow,Y) \le a^{-n}.
\end{equation}

Recall that the return time $\tau$ to $\cup\fT$ and the ceiling function
in $S(\Om,\otau)$ are bounded above by $\a$.
Let $B>0$ be such that 
\begin{equation}\label{blipphi}
\tt_1,\tt_2\in\La, \quad  |t|\le \a 
\quad\then \quad d(\vr_t(\tt_1),\vr_t(\tt_2))< B\, d(\tt_1,\tt_2).
\end{equation}
Let $\De_n:=\vr_{\re}(\Pi(\Ga_n))=\vr_{[0,\a]}(\Pi(\Ga_n))
\subset \La=\Pi(S(\Om,\otau))$ 
be the periodic orbit in $\La$ which corresponds to $\Ga_n$.
Recall that $\Pi$ is Lipschitz with the metric~\eqref{da1}.
Since $\cA(L)=\vr_{[0,\a]}(\Pi(Y))$, from~\eqref{dgay} and~\eqref{blipphi}
we get that 
\begin{equation}\label{deDen}
c(\De_n,\cA(L))=\sup_{\tt\in\De_n}d(\tt,\cA(L))
\le B\, \Lip(\Pi)\, a^{-n}.
\end{equation}
The period $T(\De_n)$ of $\De_n$ is bounded by
\begin{equation}\label{TDen}
T(\De_n)\le \a\, P(n).
\end{equation}
Since $P(n)$ has subexponential growth from 
\eqref{TDen} and \eqref{deDen} we have that
\begin{align*}
\liminf_{T\to+\infty} \;T^\ell \Big[\inf_{\mu\in\cP_L(T)} c(\mu,\cA(L))\Big] 
\le \liminf_n T(\De_n)^\ell \, c(\De_n,\cA(L))
=0.
\end{align*}
\end{proof}

\medskip

\begin{Corollary}\label{cmn}
Suppose that the Aubry set $\cA(L)$ is hyperbolic. 
There is   a sequence
of periodic orbits $\mu_n$ with periods $T_n$ and $m_n\in\na^+$, $m_n>n$,
  such that 
for any $0<\be<1$  and any
$k\in\na^+$
\begin{equation}\label{ecmn}
\int d\big(\tt,\cA(L)\big)\,d\mu_n(\tt) = o(\be^{k\, m_n}) 
\qquad {\rm and} \qquad
\lim_n \frac{\log T_n}{m_n}=0.
\end{equation}
\end{Corollary}

\begin{proof}
Observe that it is enough to prove the Lemma for $k=1$.
By Proposition~\ref{Pper} there is a sequence of periodic orbits $\mu_n$ 
with periods $T_n\to\infty$ such that for any $\ell\in\na^+$
$$
\lim_n T_n^\ell \left(\int d(\tt,\cA(L))\; d\mu_n(\tt)\right) =0.
$$
Let 
$$
r_n:= \log_\be\left( \int d(\tt,\cA(L))\, d\mu_n(\tt)\right).
$$
Since 
$$
\be^{r_n}\le T^\ell_n \,\be^{r_n}\le 1
\qquad \Longleftrightarrow \qquad
-\frac 1{\ell}\le \frac{\log_\be T_n}{r_n} \le 0
$$
we have that $r_n^{-1}\log T_n \to0$.
Define $m_n:=\lfloor \tfrac 12 r_n\rfloor$, then $m_n^{-1}\log T_n \to0$ and
$$
\int d(\tt,\cA(L))\, d\mu_n(\tt) =\be^{r_n}\le \be^{m_n+\frac 12 r_n}=o(\be^{m_n}).
$$
\end{proof}

\medskip

 \begin{Lemma}\label{A.1}
  Let $a_1,\ldots, a_n$ be non-negative real numbers, and let $A=\sum_{i=1}^n a_i\ge 0$.
  Then
  $$
  \sum_{i=1}^n -a_i\log a_i \le 1 + A\,\log n,
  $$
  where we use the convention $0\, \log 0 = 0$.
  Moreover,
    $$
  \text{if  $A=1$ then \qquad}\sum_{i=1}^n -a_i\,\log a_i \le \log n.
  $$
  \end{Lemma}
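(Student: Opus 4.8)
The plan is to invoke concavity of the function $f(x) := -x\log x$ on $[0,\infty)$ (with $f(0)=0$ per the stated convention): indeed $f''(x) = -1/x < 0$ on $(0,\infty)$ and $f$ is continuous at $0$, so $f$ is concave. Applying Jensen's inequality to $a_1,\dots,a_n$ with equal weights $1/n$ gives
$$
\frac1n\sum_{i=1}^n f(a_i) \le f\!\Big(\tfrac1n\sum_{i=1}^n a_i\Big) = f\!\big(\tfrac An\big),
$$
hence
$$
\sum_{i=1}^n -a_i\log a_i \le n\, f\!\big(\tfrac An\big) = -A\log\tfrac An = -A\log A + A\log n .
$$

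It then remains to absorb the term $-A\log A$ into the additive constant $1$, which is elementary calculus: the function $g(A) := -A\log A$ on $[0,\infty)$ has $g'(A) = -\log A - 1$, vanishing only at $A = e^{-1}$, where $g(e^{-1}) = e^{-1} < 1$; since $g(0)=0$ and $g(A)\to-\infty$ as $A\to\infty$, one gets $-A\log A \le e^{-1}\le 1$ for every $A\ge 0$. Combining the two displays yields $\sum_{i=1}^n -a_i\log a_i \le 1 + A\log n$, as claimed.

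Alternatively, one can avoid Jensen by normalizing: if $A=0$ then all $a_i$ vanish and the inequality reads $0\le 1$; if $A>0$, set $p_i := a_i/A$, so $(p_i)_{i=1}^n$ is a probability vector and $-a_i\log a_i = -Ap_i\log A - Ap_i\log p_i$. Summing, $\sum_{i=1}^n -a_i\log a_i = -A\log A + A\,H(p)$ where $H(p) := -\sum_i p_i\log p_i$ is the Shannon entropy; one then uses the standard bound $H(p)\le\log n$ (a consequence of Gibbs' inequality against the uniform law, or of concavity of $\log$) together with $-A\log A\le 1$ as above.

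I do not foresee a genuine obstacle here; the estimate is essentially a routine max-entropy bound. The only points needing a word of care are the degenerate case $A=0$ (equivalently, all $a_i=0$) under the convention $0\log 0=0$, and the observation that $\max_{A\ge0}(-A\log A)=e^{-1}<1$, so that the constant $1$ in the statement provides ample slack.
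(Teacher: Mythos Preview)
Your proof is correct and follows essentially the same approach as the paper: apply Jensen's inequality to the concave function $x\mapsto -x\log x$ with equal weights, obtain $\sum -a_i\log a_i \le -A\log A + A\log n$, and then absorb $-A\log A$ into the constant $1$. You are simply more explicit than the paper about this last step (verifying $\max_{A\ge 0}(-A\log A)=e^{-1}<1$), which the paper leaves as ``from which the result follows.''
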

  
  \begin{proof}
  Applying Jensen's inequality to the concave function $x\mapsto -x \log x$ yields
  $$
  \frac 1n \sum_{i=1}^n-a_i\log a_i 
  \le -\left(\frac 1n \sum_{i=1}^n a_i\right)
  \log\left(\frac 1n \sum_{i=1}^n a_i\right)
  =-\frac An\,\log A + \frac An\, \log n
  $$
  from which the result follows.
  When $A=1$ use that $1\cdot\log 1=0$ in the previous inequality.
 \end{proof}

 \medskip

 Recall that  $\cM_{\text{inv}}(L)$ is the set of Borel probabilities in $TM$ invariant under the 
 Lagrangian flow.

 \medskip                     
 
 \begin{Lemma}\label{LA2}\quad
 
 Let  $L$ be a Tonelli Lagrangian and $e>c(L_0)$.
  There is  $B=B(L,e)>0$ 
 such that for every 
 $\nu\in \cM_{\text{\rm inv}}(L)$ with $\supp(\nu)\subset [\text{\sl Energy}(L)<e]$,
 $$
 \int L\, d\nu \le -c(L) + B(L,e)\int d\big(\tt,\cA(L)\big)\; d\nu(\tt).
 $$
 \end{Lemma}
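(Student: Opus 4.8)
The proof runs parallel to the use of sub-actions in ergodic optimization: realize $-c(L_0)$ as an infimum of actions via a calibrating function, and control the gap by the Lipschitz constant of a regular sub-action. Let $H$ be the Hamiltonian dual to $L_0$, put $c_0:=c(L_0)$, and let $\vr_t$ be the Lagrangian flow of $L_0$. By Bernard's theorem on the existence of $C^{1,1}$ critical subsolutions there is $u\in C^{1,1}(M,\re)$ with $H(x,d_xu)\le c_0$ for all $x$. Set
$$
g(x,v):=L_0(x,v)+c_0-d_xu(v),\qquad (x,v)\in TM .
$$
Fenchel's inequality $L_0(x,v)+H(x,p)\ge p(v)$ together with $H(x,d_xu)\le c_0$ gives $g\ge 0$ on $TM$, and since $u\in C^{1,1}$ the function $g$ is locally Lipschitz.

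I would then record two facts. \emph{(a)} If $\nu$ is a $\vr_t$-invariant probability with compact support then $\int d_xu(v)\,d\nu=0$, because $d_xu(v)=\tfrac{d}{dt}\big|_{t=0}u(\pi\vr_t(x,v))$ while $t\mapsto\int u\circ\pi\circ\vr_t\,d\nu$ is constant; hence $\int L_0\,d\nu+c_0=\int g\,d\nu$. \emph{(b)} $g$ vanishes on $\cA(L_0)$, and $\cA(L_0)$ lies in the energy level $[\text{Energy}(L_0)=c_0]$. Indeed, for a static curve $\ga$ and $s<t$ one has $u(\ga(t))-u(\ga(s))\le\phi_{c_0}(\ga(s),\ga(t))$ and the reverse inequality with $s,t$ exchanged; adding these and using \eqref{aubry} forces equality, so $u(\ga(t))-u(\ga(s))=\int_s^t c_0+L_0(\ga,\dga)$, and differentiating in $t$ gives $d_\ga u(\dga)=L_0(\ga,\dga)+c_0$, i.e. $g\equiv 0$ along $\ga$. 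Since $v\mapsto g(\ga(t),v)\ge 0$ attains the minimum $0$ at $\dga(t)$, also $\partial_vL_0(\ga,\dga)=d_\ga u$, whence $\text{Energy}(L_0)(\ga,\dga)=d_\ga u(\dga)-L_0(\ga,\dga)=c_0$.

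To finish, fix $e>c_0$ and let $K:=[\text{Energy}(L_0)\le e]$, which is compact because $H$ is superlinear and the Legendre transform is a homeomorphism $TM\to T^*M$. By \emph{(b)}, $\cA(L_0)\subset K$. As $g\ge 0$ is locally Lipschitz and vanishes on the compact set $\cA(L_0)\subset K$, a routine compactness argument — the Lipschitz constant of $g$ on a fixed compact neighbourhood of $K$ controls $g$ at points within some $\rho>0$ of $\cA(L_0)$, and $\sup_Kg$ controls it elsewhere — yields $B=B(L_0,e)>0$ with $g(\tt)\le B\,d\big(\tt,\cA(L_0)\big)$ for every $\tt\in K$. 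Hence, for $\nu\in\cM(L_0)$ with $\supp(\nu)\subset[\text{Energy}(L_0)<e]\subset K$, fact \emph{(a)} gives
$$
\int L_0\,d\nu+c_0=\int g\,d\nu\le B\int d\big(\tt,\cA(L_0)\big)\,d\nu(\tt),
$$
which is the stated estimate. The one delicate point is the need for a $C^{1,1}$, rather than merely $C^1$, critical subsolution: for a $C^1$ one the function $g$ need not be Lipschitz and the linear-in-distance bound can fail; this regularity is exactly what Bernard's theorem supplies, and everything else is Fenchel's inequality, invariance of $\nu$, and compactness of energy sublevels.
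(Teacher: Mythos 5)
Your proof is correct and follows essentially the same route as the paper: both use Bernard's $C^{1,1}$ critical subsolution $u$, form the nonnegative calibrated Lagrangian $\L = L_0 + c(L_0) - du$, observe that $\L$ vanishes on the compact Aubry set and is Lipschitz on the energy sublevel $[E(L_0)<e]$, and kill $\int du\,d\nu$ by invariance of $\nu$. You supply a bit more explicit detail on why $\L\equiv 0$ on $\cA(L_0)$ and why the $C^{1,1}$ regularity is indispensable, but the argument is the same.
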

 
 \begin{proof}
 From Bernard~\cite{Be3} after Fathi and Siconolfi~\cite{FaSi} 
 we know that there is a critical
 subsolution $u$ of the Hamilton-Jacobi equation  for the Hamiltonian $H$ of $L$, i.e.
  \begin{equation}\label{sHJ}
 H(x,d_xu)\le c(L),
 \end{equation}
  which is
 $C^{1}$ with Lipschitz derivatives. Let 
 $$
 \L(x,v):= L(x,v)+c(L)-d_xu(v).
 $$
 Inequality~\eqref{sHJ} implies that $\L\ge 0$. 
 Also  from \eqref{aubry}, $\L|_{\cA(L)}\equiv 0$. 
 The Aubry set $\cA(L)$ is included in the 
 energy level $c(L)$ (e.g. Ma\~n\'e~\cite[p. 146]{Ma7}), hence it is compact.
There is a Lipschitz constant $B$ for the function $\L$ on the convex $[E(L)<e]$:
 $$
 \forall\tt\in [E(L)<e]\qquad \L(\tt) \le 0 + B\, d(\tt,\cA(L)) .
 $$ 
 By Birkhoff ergodic theorem every invariant probability is closed:
 \begin{align*}
 \int du(x,v) \;d\nu &
 = \int\left[ \lim_{T\to\infty}\frac 1T\int_0^T du(\vr_t(\tt))\; dt\right] d\nu(\tt)
 \\
 &=\int \lim_{T\to\infty}\left[\frac{u(\pi(\vr_T(\tt)))-u(\pi(\tt))}T\right] d\nu(\tt) =0.
 \end{align*}
 Therefore
 $$
 \int \big(L+c(L)\big) \;d\nu =\int\L\,d\nu \le B \int d(\tt,\cA(L))\;d\nu.
 $$
 \end{proof}

\bigskip

\medskip
\begin{Lemma}\label{partAB}\quad

Let $N$ be a compact riemannian manifold and $\mu$ a 
Borel probability on $N$.

Given $h>0$ there exists a finite Borel partition 
$\A=\{A_1,\ldots, A_r\}$ of $N$ with the following properties:
\begin{enumerate}
\item\label{partAB1} $ \diam\A< h$,
\item\label{partAB2} $\forall A\in\A\quad\mu(\partial A)=0$,
\item\label{partAB3} $\forall\e>0$ \; $\forall A_i\in\A$ \; $\exists B_i\subset A_i$ such that 
$B_i$ is compact, $\mu(\partial B_i)=0$, $\mu(A_i\setminus B_i)<\e$. 
\end{enumerate}

\begin{proof}
We first show that
for any $x\in N$ there is a ball $B(x,r)$, $r<\tfrac 12 h$ such that $\mu(\partial B(x,r))=0$.
Indeed if $h$ is small the sets $F_r:=\partial B(x,r)$, $0<r<h$ are disjoint. Since $\mu$ is
finite, at most a countable number of the sets $F_r$ can have positive measure.   
Let $\cO=\{U_i\}_{i=1}^m$ be a finite cover of $N$ by open balls with $\mu(\partial U_i)=0$, $\diam U_i<h$ and such that $U_i\setminus\cup_{j\ne i}\ov{U_j}\ne \emptyset$. Define inductively $A_1:=U_1$, $A_{i+1}:=U_{i+1}\setminus\cup_{j\le i}U_j$. Then 
$\ov{A_i}=\ov{\intt A_i}$ and 
$\A:=\{A_i\}_{i=1}^m$ is a Borel partition of $N$ 
satisfying~\eqref{partAB1} and~\eqref{partAB2}.

For $r>0$ small let 
$$
B_i(r):=\{\,x\in A_i\;:\; d(x,A_i^c)\ge r\,\}.
$$
We have that $B_i(r)$ is compact and $B_i(r)\uparrow \intt A_i$. Thus $\lim_{r\to 0}\mu(B_i(r))=\mu(\intt A_i)=\mu(A_i)$. Also $\partial B_i(r_1)\cap\partial B_i(r_2)
=\emptyset$ if $r_1\ne r_2$ because
$$
\partial B_i(r)=\{x\in A_i\;:\;d(x,A^c)=r\}.
$$ 
Therefore there is $r_i>0$ such that $B_i:=B_i(r_i)$ satisfies
$\mu(A_i\setminus B_i)<\e$ and $\mu(\partial B_i)=0$.

\end{proof}

\end{Lemma}

\bigskip
\subsection{Small entropy nearby closed orbits.}\quad
\label{sssenco}

Recall that $\cM_{\text{inv}}(L)$ is the set of  Borel probabilities in $TM$ which 
are invariant under the Lagrangian flow and $\cM_{\min}(L)$ is the set 
of minimizing measures~\eqref{Mmin}.
For $\mu\in\cM_{\text{inv}}(L)$ denote by $h(L,\mu)$ the entropy of $\mu$ under the Lagrangian flow $\vr_t$.

\medskip

\noindent{\bf Proof of Theorem~\ref{Tzeroentropy}:}

For $\ga>0$ write
\begin{align*}
\cH^k(L_0):&=\{\,\phi\in C^k(M,\re)\;|\; \mN(L_0+\phi) \text{ is hyperbolic }\},
\\
\cE_\ga:&=\{\,\phi\in \cH^k(L_0)\;|\; 
 \forall \mu\in\cM_{\min}(L_0+\phi)
\quad  h(L_0+\phi,\mu) < \ga  \;\}.
\end{align*}

The upper semicontinuity of the Ma\~n\'e set (cf. \cite[Lemma~5.2]{CP}) 
and properties of hyperbolic sets 
(see e.g. Theorem~\ref{SDL} in Appendix~\ref{ashms})
imply that the set $\cH^k(L_0)$ is open.
As an open subset $\cH^k(L_0)\subset C^k(M,\re)$ of a complete metric
space we have that $\cH^k(L_0)$ is a Baire space.

It is enough to prove that for every $\ga>0$ the set $\cE_\ga$ is  open and dense in $\cH^k(L_0)$ because in that case using  Corollary~\ref{cecmm} and the variational principle (Theorem~\ref{VarPrin}),
we have that
\begin{align*}
\bigcap\limits_{n\in\na^+}\cE_{\frac 1n}
&=\big\{\,\phi\in\cH^k(L_0)\;\big|\;\forall \mu\in
\cM_{\min}(L_0+\phi)
\quad 
h(L_0+\phi,\mu)=0\,\big\}
\\ 
&=\big\{\, \phi\in\cH^k(L_0)\;\big|\; \mN(L_0+\phi) \text{ has zero topological entropy}\,\big\}
\end{align*}
is a residual set.

\medskip

{\it Step 1.} {\sl $\cE_\ga$ is $C^k$ open.}

The map $C^k(M,\re)\ni\phi\mapsto\mN(L_0+\phi)$ is upper semicontinuous 
(cf. \cite[Lemma~5.2]{CP}).
Therefore, given $\phi_0\in\cH^k(L_0)$, there are neighbourhoods 
$U$ of $\mN(L_0+\phi_0)$ in $TM$
and $\cU$ of $\phi_0$ in $C^k(M,\re)$ 
such that for any $\phi\in\cU$, $\mN(L_0+\phi)\subset U$ and
$\mN(L_0+\phi)$ is a hyperbolic set for $\vr^{L_0+\phi}_t$ 
restricted to
$\E_\phi:=E^{-1}_{L+\phi}\{c(L+\phi)\}$.
Moreover, by Theorem~\ref{SDL} and Remark~\ref{rue}
we can choose $\cU$ and $U$ such that for every $\phi\in\cU$
the lagrangian flow $\vr^{L_0+\phi}_t$ of $L_0+\phi$ is hyperbolic in the maximal invariant
subset of $\ov U$ and by defiition \ref{hexpan}, the set of maps   
$\{\,\vr^{L_0+\phi}_1|_{\E_\phi\cap\ov U}\,:\,\phi\in\cU\,\}$ 
is a
uniformly h-expansive family on $\ov U$.
In particular $\cH^k(L_0)$ is open in $C^k(M,\re)$.

Consider the subset $\cS\subset C^k(M,\re)$ of potentials $\phi$
for which $\mN(L_0+\phi)$
contains a singular point for the Lagrangian flow. By \cite[Theorem~C]{CP}
there is an open and dense subset $\cO_1\subset \cS$ such that 
if $\phi\in\cO_1$ then $\mN(L_0+\phi)$ is a single singularity of $\vr^{L_0+\phi}$.
Then by \cite[Theorem~D]{ham} generically this singularity is hyperbolic,
i.e. $\cO_1\cap\cH^k(L_0)$ is open and dense in $\cO_1$.

So we can restrict our arguments to Ma\~n\'e sets without singularities.
In this case by Corollary~\ref{CSH1} we can identify the energy level
$\E_\phi\supset\mN(L_0+\phi)$ with the unit tangent bundle $SM$ by the
radial projection.

Suppose that $\phi_n \in \cH^k(L_0)\setminus \cE_\ga$, $\phi_0\in \cH^k(L_0)$
and $\lim_n\phi_n=\phi_0$ in $C^k(M,\re)$.  Then there are 
$\nu_n\in\cM_{\min}(L_0+\phi_n)$ with $h(L_0+\phi_n,\nu_n)\ge \ga$.
The map $\phi\mapsto c(L_0+\phi)$ is continuous (cf. \cite[Lemma~5.1]{CP})
and $\supp\nu_n$ is in the energy level 
$\E_{\phi_n}=E_{L_0+\phi_n}^{-1}\{c(L_0+\phi_n)\}$
(cf. Carneiro~\cite{Carneiro}). Thus we can assume that all the probabilities
$\nu_n$ are supported on a fixed compact subset $\K$ of $TM$.
 Taking a subsequence if necessary we can assume that 
 $\nu_n\to\nu\in \cM(L_0+\phi_0)$ in the weak* topology.

 The map $(\mu,\phi)\mapsto \int (L_0+\phi)\,d\mu$ is continuous 
 with respect to  $\lV \phi\rV_{\sup}$ and to the weak*
 topology on the set of Borel probabilities on $\K$.
 Also the map $\phi\mapsto c(L_0+\phi)$ is 
 continuous with respect to $\lV \phi\rV_{\sup}$ 
(cf. \cite[Lemma~5.1]{CP}). Using that [see eq.~\eqref{minmeas}]
$$
c(L_0+\phi) = -\min_{\mu\in\cM(L_0+\phi)}\int \bigl(L_0+\phi\big)\,d\mu,
$$
we obtain that the limit $\nu\in\cM_{\min}(L_0+\phi_0)$.

  We can identify the energy levels $\E_{\phi_n}$ with the unit 
tangent bundle $SM$ under the radial projection 
$R(\phi_n):\E_{\phi_n}\to SM$. 
Since $\phi\mapsto c(L_0+\phi)$ is continuous, we have that 
the projected lagrangian vector fields 
$R(\phi_n)_*X(L_0+\phi_n)|_{\E_{\phi_n}}\to R(\phi_0)_*X(L_0+\phi_0)\vert_{\E_{\phi_0}}$
converge in the $C^{k-1}$ topology on $SM$ (see Remark~\ref{remell}).

By Corollary~\ref{cecmm} we have that $\supp \nu_n\subset\ov U$.
By Remark~\ref{rue} the family of conjugated  lagrangian flows 
$\psi^n_t := R(\phi_n)\circ\vr^{L+\phi_n}_t\circ R(\phi_n)^{-1}$, $n\ge 0$
in $SM$ is uniformly h-expansive on their maximal invariant sets of
$R(\ov U)$.
Applying Theorem~\ref{ESS} we get that
$$
\ga\le \limsup_n h({L_0+\phi_n},\nu_n)\le h({L_0+\phi_0},\nu).
$$
Therefore $\cH^k(L_0)\setminus\cE_\ga$ is relatively closed in 
(the open set) $\cH^k(L_0)$
and hence $\cE_\ga$ is open in $C^k(M,\re)$.

\bigskip

{\it Step 2.} {\sl Density.}

We have to prove that $\cE_\ga$ intersects every non-empty
open subset of $\cH^k(L)$.
Let 
\linebreak
$\cU_1\subset \cH^k(L)$ be open and non-empty. 
 By Ma\~n\'e~\cite[Thm.~C.(a)]{Ma6} there is $\phi_0\in\cU_1$ such that  
 $\#\cM_{\min}(L_0+\phi_0)=1$.
 Write 
 \begin{equation}\label{LL0p0}
 L:=L_0+\phi_0.
 \end{equation}

Let $m_n$, $T_n$, $\mu_n$ be given by Corollary~\ref{cmn} for $L$.
Let $\Ga_n:=\supp\mu_n$ and let $\Ga_n(t)$ be the associated periodic orbit
with its parametrization. Corollary~\ref{cmn} and Lemma~\ref{LA2} are proven
for a single lagrangian. The reader can check that in the following proof they
are only applied to the lagrangian $L$ in~\eqref{LL0p0}.

 For $\phi\in C^k(M,\re)$ near $0$ write
$\E_\phi:=E_{L+\phi}^{-1}\{c(L+\phi)\}$.

\refstepcounter{Thm}
\begin{Claim}\label{c1}\quad

There are $0<\be<1$, 
$N^1_\ga>0$, $C(\cU_1,\be)>0$
 and a neighbourhood 
$0\in\cU_2\subset\cU_1-\phi_0$ such that if 
$n>N^1_\ga$, $\phi\in\cU_2$, 
$\mu\in\cM_{\min}(L+\phi)$,
$\Ga_n(t)$ is also a periodic orbit for $L+\phi$ and
$h(\mu)> 3 \ga \;C(\cU_1,\be)$ then 
\begin{equation*}\label{mu>ga}
\mu(\{\,\tt\in \E_\phi \;|\; d(\tt,\Ga_n)\ge \be^{m_n}\,\})>\ga.
\end{equation*}
\end{Claim}

\noindent{\it Proof  Claim~\ref{c1}:}

 The hyperbolicity of $\mN(L)$ and the upper semicontinuity of 
 the Ma\~n\'e set implies that there is $h>0$ and an open subset
  $0\in\cU_{12}\subset\cU_1-\phi_0$ such that $h$ is a uniform 
 h-expansivity constant (cf. Definition~\ref{duhe}, Remark~\ref{rue}) 
 for all $\vr^{L+\phi}|_{\mN(L+\phi)}$, with $\phi\in\cU_{12}$.
 
  Using Corollary~\ref{CSH1} identify the energy levels $\E_\phi:=E_{L+\phi}^{-1}\{c(L+\phi)\}$ 
  with the unit tangent bundle $SM$ using the radial projection $R(v) = \tfrac{v}{|v|}$.
 Let $\mu_0$ be the minimizing measure for $L$:
  \begin{equation}\label{uniqmu0}
  \cM_{\min}(L)=\{\mu_0\}.
  \end{equation}
  Let $\A=\{A_1,\ldots,A_r\}$ be a finite Borel partition of the energy level
  $\E_\phi$
  with 
  \begin{equation}\label{diamah}
  \diam\A<h
  \end{equation}
  and $\mu_0(\partial A)=0$ for all $A\in\A$.
  Let 
  \begin{equation}\label{defer}
  0<\e<  \tfrac 12 (r\,\log r)^{-1},
  \end{equation}
  where $r=\#\A$.
  Using Lemma~\ref{partAB}, 
 for each $A_i\in \A$ let $B_i\subset A_i$ be a compact set such that 
 $\mu_0(A_i\setminus B_i)<\e$ and $\mu_0(\partial B_i)=0$. 
 Define the partition $\B:=\{B_0, B_1,\cdots,B_r\}$,
 where $B_0:=\E_\phi\setminus \bigcup_{i=1}^r B_i$.

  By the continuity of the critical value map $\phi\mapsto c(L+\phi)$ and 
  the ergodic characterization~\eqref{minmeas} we have that any weak* limit
  of minimizing measures is a minimizing measure.
  By the uniqueness~\eqref{uniqmu0} and the compactness of $\E_\phi$,
  if $\lim_k\phi_k= 0$ and $\nu_k\in\cM_{\min}(L+\phi_k)$ then 
  $\lim_k\nu_k=\mu_0$. Since $\mu_0(\partial A_i)=0=\mu_0(\partial B_i)$
  and $\partial(A_i\setminus B_i)\subset \partial A_i\cup \partial B_i$ 
  we have that
  $$
  \lim_k\phi_k=0,\; \nu_k\in\cM_{\min}(L+\phi_k) 
  \qquad \then \qquad\forall i \quad 
   \lim_k\nu_k(A_i\setminus B_i) =\mu_0(A_i\setminus B_i).
  $$
  Then there is an open set $0\in\cU_{13}\subset \cU_{12}$ such that 
  \begin{equation}\label{cU13}
  \forall \phi\in\cU_{13} \quad\forall \mu\in\cM_{\min}(L+\phi)\quad
  \forall i\le r\qquad
   \mu(A_i\setminus B_i)<\e.
  \end{equation}

  Let $\bO:=\{B_0\cup B_1,\ldots, B_0\cup B_r\}$. Observe that $\bO$ 
is an open cover because 
\linebreak
$B_0\cup B_i=(\cup_{j\ne i} B_i)^c$.
\begin{equation}\label{defde}
\text{
Let $\de>0$ be a Lebesgue number for $\bO$.
}
\end{equation}
Let $\be>0$ be such that 
\begin{equation}\label{defbe1}
0<\be<\min\Big\{\,\frac \de 2,\;
\inf_{\phi\in\cU_1}\Lip(\vr^{L+\phi}_1|_{\E_\phi})^{-1}\Big\}
\end{equation}
and such that 
\begin{equation}\label{defbe2}
\sup_{\phi\in\cU_1}\sup_{|\tau|\le\be}
\sup_{\tt\in\E_\phi}d\big(\vr^{L+\phi}_\tau(\tt),\tt\big)
<\frac \de 2.
\end{equation}

Given $n\in\na$ and $\a>0$ let $G(n,\a)$ be a cover of 
$\E_\phi$ of minimal cardinality  by balls of radius $\a^n$. Let
\begin{equation}\label{coverG}
C(\cU_1,\a):=\limsup_n \tfrac 1n\log\# G(n,\a).
\end{equation}
Then (cf. Falconer~\cite[Prop.~3.2]{Falconer0})
$$
1\le \lim_{\a\to 0} C(\cU_1,\a) = \dim \E_\phi<\infty.
$$
Shrink $\be$ if necessary so that $\be$ satisfies \eqref{defbe1}, \eqref{defbe2} and
\begin{equation*}\label{defbe3}
\tfrac 12\le C(\cU_1,\be) < \infty.
\end{equation*}

Let $Q\in\na^+$ be such that 
\begin{equation}\label{q2l2}
\frac{3+2\log 2}Q < \tfrac 14\,\ga\, C(\cU_1,\be).
\end{equation}
Let 
\begin{equation}\label{defnga}
N_\ga^1> 3Q>0
\end{equation}
 be such that 
\begin{align}
\forall n>N_\ga^1\qquad
\tfrac 1n \log\#G(n,\be) &\le 2 \, C(\cU_1,\be)
\label{ngagn}
\\
\intertext{and, using \eqref{ecmn},}
\forall n> N_\ga^1\qquad
\tfrac1{m_n}\log T_n -\tfrac 1{m_n}\log\be 
&< 
\tfrac 14\,\ga\,C(\cU_1,\be).
\label{ngatn}
\end{align}
Suppose that  $\phi\in\cU_{13}$, 
$\mu\in\cM_{\min}(L+\phi)$, $\ga>0$ and $n>N_\ga^1$ satisfy
\begin{equation}\label{mulega1}
\mu(\{\,\tt\in \E_\phi \;|\; d(\tt,\Ga_n) \ge \be^{m_n}\,\})\le\ga,
\end{equation}
and that $\Ga_n$ is also a periodic orbit for $\vr^{L+\phi}$,
we shall prove that then 
\begin{equation}\label{hmulphi3gac}
h_\mu(\vr^{L+\phi})\le3\,\ga\, C(\cU_1,\be).
\end{equation}
This  implies Claim~\ref{c1}.

 Observe that 
 $$
 A_i\cap B_j=
 \begin{cases}
 \emptyset &\text{if }\quad  i\ne j\ne 0,\\
 B_j &\text{if } \quad i=j\ne 0,\\
 A_i\setminus B_i &\text{if }\quad j=0.
 \end{cases}
 $$
 Let $\rho(x):=-x\log x$, $x\in[0,1]$, with $\rho(0):=0$. Then
 $$
 \rho\left(\frac{\mu(A_i\cap B_j)}{\mu(B_j)}\right)
 =\begin{cases}
 0 &\text{ if }\quad j\ne 0, \\
 \rho\left(\frac{\mu(A_i\setminus B_i)}{\mu(B_0)}\right)
 &\text{ if }\quad j=0.
 \end{cases}
 $$
 Observe that $B_0=\cup_{i=1}^r(A_i\setminus B_i)$, then
 from~\eqref{cU13}, 
 \begin{equation}\label{mub0e}
 \mu(B_0)< r\,\e.
 \end{equation}

 We have that the relative entropy satisfies
 \begin{align*}
 H_\mu(\A|\B):&=-\sum_{i=1}^r\sum_{j=0}^r \mu(A_i\cap B_j)
  \;\log\frac{\mu(A_i\cap B_j)}{\mu(B_j)}
  \\
  &= \sum_{i=1}^r\sum_{j=0}^r \mu(B_j) \,
  \rho\left(\frac{\mu(A_i\cap B_j)}{\mu(B_j)}\right)
  \\
  &=\sum_{i=1}^r \mu(B_0)\;\rho\left(\frac{\mu(A_i\cap B_0)}{\mu(B_0)}\right)
  \\
  &\le \mu(B_0) \log r \qquad \text{by Lemma~\ref{A.1} with A=1},
  \\
  &\le \e\, r\log r  < 1     \qquad\text{ using \eqref{mub0e} and \eqref{defer}.}
 \end{align*}
  From Walters \cite[Theorem~4.12(iv)]{Walters} for all 
  $f:\E_\phi\to\E_\phi$ continuous and $Q\in\na$ we have that
  \begin{align}\label{h+1}
  h_\mu(f^Q,\A)\le h_\mu(f^Q,\B)+H_\mu(\A|\B)
  \le  h_\mu(f^Q,\B) +1.
  \end{align}

 Define
 $$
 B(\Ga_n,\be^{m_n}):=\{\,\tt\in\E_\phi\;|\;d(\tt,\Ga_n)<\be^{m_n}\,\}.
 $$
 Let $f$ be the time 1 map  $f:=\vr^{L+\phi}_1$.
 Fix $\tt_0\in\Ga_n$ and let
 $$
 R_n:=\big\{\,\vr_{i\be}(\tt_0)\;\big|\; i=0, 1,\ldots,\lfloor\tfrac{T_n}\be\rfloor\,\big\}.
 $$
 We claim that $R_n$ is an $(m_n,\de,f)$-generating set for $B(\Ga_n,\be^{m_n})$,
 i.e.
 \begin{equation}\label{rngenerating}
 B(\Ga_n,\be^{m_n})\subset \bigcup_{p\in R_n}V(p,m_n,\de,f),
 \end{equation}
 where
 $$
 V(p,m_n,\de,f):=\{\,\tt\in\E_\phi\;|\; d(f^i(\tt),f^i(p))<\de, 
 \quad\forall i=0,\ldots, m_n-1\,\}.
 $$
 Indeed if $q\in B(\Ga_n,\be^{m_n})$ there is $\tt\in\Ga_n$ such that 
 $d(q,\tt)<\be^{m_n}$. 
 Recall that by hypothesis $\Ga_n(t)$ is a periodic orbit
 of $\vr^{L+\phi}$. There is $p\in R_n$ such that $p=\vr^{L+\phi}_\tau(\tt)$
 with $|\tau|\le \be$. 
  In particular
 $$
 f^j(p)=\vr^{L+\phi}_j(\vr_\tau^{L+\phi}(\tt))=\vr_\tau^{L+\phi}(f^j(\tt)).
 $$
 If $0\le j\le m_n-1$, by~\eqref{defbe1} and~\eqref{defbe2}
  we have that 
\begin{align*}
d\big(f^j(q),f^j(\tt)\big) 
&\le \Lip(f)^j\, d(q,\tt)\le \Lip(f)^j\,\be^{m_n}\le \be <\de/2,
\\
d\big(f^j(\tt),f^j(p)\big)
&= d\big(f^j(\tt),\vr_\tau^{L+\phi}(f^j(\tt))\big)
\le d_{C^0}(\vr^{L+\phi}_\tau,id)< \de/2,
\\
d\big(f^j(q),f^j(p)\big)
&\le d\big(f^j(q),f^j(\tt)\big) +d\big(f^j(\tt),f^j(p)\big)
< \de/2+\de/2=\de.
\end{align*}
Therefore $q\in V(p,m_n,\de,f)$ and $p\in R_n$.

  Recall that $Q\in\na^+$ is from~\eqref{q2l2}. Write
 \begin{align*}
 \B_{f^Q}^{(k)}:&=\textstyle\bigvee\limits_{i=0}^{k-1} f^{-iQ}(\B),
 \\
 \bO_{f^Q}^{(k)}:
 &=\big\{\textstyle\bigcap_{i=0}^{k-1}f^{-iQ}(U_i)\;
 \big|\; U_i\in \bO \quad \forall i=0,\ldots,k-1\,\big\}.
 \end{align*}
 Let 
 \begin{equation}\label{defW}
 W(n,k,Q):=\big\{\, \cB\in \B^{(k)}_{f^Q}
 \;\big |\; \cB\cap B(\Ga_n,\be^{m_n})\ne\emptyset\,\big\}.
 \end{equation}
 Let $k_n=k(n,Q)\in\na$ be such that
 \begin{equation}\label{kn1}
 (k_n-1) \, Q < m_n \le k_n \,Q.
 \end{equation}
 Since in~\eqref{ecmn} $m_n>n$, 
 by~\eqref{defnga} we have that $k_n\ge 2$ whenever $n>N_\ga^1$.
 We want to estimate $\#W(n,k_n,Q)$.
 Given $\cB\in W(n,k_n,Q)$, choose $q(\cB)\in \cB\cap B(\Ga_n,\be^{m_n})$.
 By the inclusion~\eqref{rngenerating} there is $p(\cB)\in R_n$ such that 
 $$
 q(\cB)\in V(p(\cB),m_n,\de,f)\subset V(p(\cB),k_n,\de,f^Q).
 $$
 In particular $d\big(f^{Qi}(q(\cB)),f^{Qi}(p(\cB))\big)<\de$ for $0\le i<k_n$.
 The choice of $\de$ in \eqref{defde}
  implies that there is $j_i=j_i(p)\in\{1,\ldots,r\}$, depending only on $(i,p)$,
   such that 
 $$
 f^{Qi}(q(\cB))\in B\big(f^{Qi}(p(\cB)),\de\big)
 \subset B_0\cup B_{j_i},
 \qquad i=0,\ldots,k_n-1.
 $$
 Therefore 
 \begin{align*}
 q(\cB)&\in \bigcap_{i=0}^{k_n-1}f^{-Qi}B\big(f^{Qi}(p(\cB)),\de\big)\subset
 \\
 &\qquad\subset \bigcap_{i=0}^{k_n-1}f^{-Qi}(B_0\cup B_{j_i})
 =\bigcap_{i=0}^{k_n-1}\Big(f^{-Qi}(B_0)\cup f^{-Qi}(B_{j_i})\Big).
 \end{align*}
 Since $\B$ is a partition, $\cB\in \B^{(k_n)}_{f^Q}$ 
 and $q(\cB)\in\cB$
 we have that 
 \begin{equation}\label{cbfq}
 \cB=\bigcap_{i=0}^{k_n-1} f^{-Qi}(B_{\ell_i}),
 \qquad \text{ where } \quad \ell_i\in\{0,j_i(p)\}.
 \end{equation}
 Observe that $j_i(p)$ depends only on $i$ and $p$.
Thus for $p\in R_n$  we have that 
 $$
 \#\big\{\,\cB\in W(n,k_n,Q)\subset \B^{(k_n)}_{f^Q}
 \;\big|\; p(\cB) =p\big\}\le 2^{k_n}.
 $$
 Therefore
 \begin{equation}\label{Wnknq}
  \# W(n,k_n,Q)\le 2^{k_n} \cdot \# R_n\le 2^{k_n}\frac{T_n}\be.
 \end{equation}

 By~\eqref{defbe1} we have that 
$B(\tt,\be^n)\subset V(\tt,n,\de,f)$. Therefore
\begin{equation}\label{BVVt}
\forall\tt\in\E_\phi\qquad 
B(\tt,\be^{m_n})\subset V(\tt,m_n,\de,f)\subset V(\tt,k_n,\de,f^Q).
\end{equation}
Identify the covering $G(m_n,\be)$ from~\eqref{coverG}
by balls of radius $\be^{m_n}$
with the set of their centers.
Then  by~\eqref{BVVt}, the set $G(m_n,\be)$ is a $(k_n,\de,f^Q)$-generating set, i.e.
$$
\E_\phi\subset \bigcup_{\tt\in G(m_n,\be)}V(\tt,k_n,\de,f^Q).
$$

We show that the same argument as in~\eqref{Wnknq} gives
for $n>N^1_\ga$ that
 \begin{equation}\label{B2kG}
  \#\B^{(k_n)}_{f^Q}\le 2^{k_n} \,\#G(m_n,\be).
 \end{equation}
 Namely, we construct a map $p: \#\B^{(k_n)}_{f^Q}\to G(m_n,\be)$
 which is at most $2^{k_n}$ to 1 as follows.
 Given $\cB\in  \#\B^{(k_n)}_{f^Q}$ choose $q(\cB)\in \cB$.
 Now choose $p(\cB)\in G(m_n,\be)$ such that 
 $$
 q(\cB)\in V(p(\cB),k_n,\de,f^Q).
 $$
 From~\eqref{defde} for all $i=0,\ldots, k_n-1$
  there is $j_i(p)$ such that $B\big(f^{iQ}(p(\cB)),\de\big)\subset B_0\cup B_{j_i(p)}$
  and therefore
  $$
  q(\cB)\in
   \bigcap\limits_{i=0}^{k_n-1}
   f^{-iQ}\big(B(f^{iQ} (p(\cB)),\de)\big)
   \subset\bigcap\limits_{i=0}^{k_n-1}
   f^{-iQ}(B_0\cup B_{j_i(p)}).
  $$
  This implies that 
  $$
  \cB=\bigcap\limits_{i=0}^{k_n-1}
   f^{-iQ}(B_{\ell_i})
   \quad\text{ with }\quad
   \ell_i\in\{0,\,j_i(p(\cB))\},
  $$
  which in turn implies~\eqref{B2kG}.
 
 By the hypothesis~\eqref{mulega1} and~\eqref{defW}
 we have that
 $$
 \tga_n:=\sum_{\cB\in\B^{(k_n)}_{fQ}\setminus W(n,k_n,Q)}\mu(\cB)\le \ga.
 $$
 Using Lemma~\ref{A.1} and inequalities
 \eqref{Wnknq} and \eqref{B2kG} we have that
 \begin{align*}
 h_\mu(f^Q,\B)
 &\le\frac 1{k_n}H_\mu(\B^{k_n}_{f^Q})
 \qquad\quad \text{ by~\eqref{HfNdec}  (cf.  Walters~\cite[Theorem~4.10]{Walters})},
 \\
 &\le \frac 1{k_n}\sum_{\cB\in W(n,k_n,Q)}\hskip -.5cm-\mu(\cB)\,\log\mu(\cB)
 +  \frac 1{k_n}\sum_{\cB\in \B^{(k_n)}_{f^Q}\setminus W(n,k_n,Q)}
 \hskip-.9cm -\mu(\cB)\,\log\mu(\cB)
 \\
 &\le \frac 1{k_n}\big(1+(1-\tga_n) \log\# W(n,k_n,Q)\big)
 +\frac 1{k_n} \big( 1 +\ga \, \log\#\B^{(k_n)}_{f^Q}\big)
 \\
 &\le(1+\ga) \log 2 + \tfrac 1{k_n}(2+\log T_n-\log\be)
+\tfrac 1{k_n} \,\ga\, \log\# G(m_n,\be).
 \end{align*}
 Thus, using~\eqref{h+1}, and then~\eqref{ngagn} and that $m_n>n$
 we have that for $n>N_\ga^1$,
 \begin{align}
 h_\mu(f^Q,\A) &\le 1+h_\mu(f^Q,\B) 
 \notag \\
 &\le 1+2\log 2+\tfrac 1{k_n}(2+\log T_n-\log\be)
 +\tfrac {m_n}{k_n}\, 2\,\ga\, C(\cU_1,\be).
 \label{hfqa}
 \end{align}
 The hyperbolicity of $\mN(L+\phi)$ implies that
  $f^Q$ has also h-expansivity constant $h$ on $\mN(L+\phi)$ 
  because for $\tt\in\mN(L+\phi)$ there is $\tau=\tau(\tt)>0$ such that
  $$
  \Ga_h(\tt,f)=\Ga_h(\tt,f^Q)=\vr_{[-\tau,\tau]}(\tt),
  $$
  where
 $$
 \Ga_h(\tt,f^Q):=\{\vrt\in \E_\phi\;|\;\forall n\in\Z\quad d(f^{nQ}(\tt),f^{nQ}(\vrt))<h\}.
 $$
 In particular $h_{top}(\Ga_h(\tt,f^Q))=h_{top}(\Ga_h(\tt,f))=0$.
 By Corollary~\ref{cecmm},
  $\supp(\mu)\subset\mN(L+\phi)$. Therefore $h$ is an h-expansivity 
  constant on $\supp(\mu)$. Since by~\eqref{diamah}
  $\diam \A<h$, by
 Theorem~\ref{TBhE} (cf. Bowen~{\cite[Theorem~3.5]{Bowen9}})
 we have that
 \begin{align*}
 h_\mu(f^Q)= h_\mu(f^Q,\A).
 \end{align*}
 By~\eqref{kn1}, $k_n\ge\tfrac{m_n}Q$,
  \begin{align*}
 h_\mu(f)&=\tfrac 1Qh_\mu(f^Q)=\tfrac 1Q h_\mu(f^Q,\A)
 \\
 &\le \tfrac1Q(3+2\log 2) +\tfrac 1{m_n}{\log T_n}-\tfrac 1{m_n}\log\be
 +2\,\ga\,C(\cU_1,\be).
 \end{align*}
 Using~\eqref{q2l2} and \eqref{ngatn} we obtain
 $$
 h_\mu(f)\le 3\,\ga\, C(\cU_1,\be).
 $$
 This proves inequality~\eqref{hmulphi3gac}, and also
 Claim~\ref{c1}.

 \hfill $\triangle$

\pagebreak

\bigskip

\begin{Claim}\label{c2}\quad

There are $C>0$, $N^2_\ga>N^1_\ga>0$ 
such that if $n>N^2_\ga$,
$\phi\in\cU_2$ are such that
$$
\phi \ge 0 \qquad\text{ and } \qquad \phi|_{\pi\Ga_n}\equiv 0,
$$
and $\nu\in\cM_{\min}(L+\phi)$,
$\tt\in \Ga_n$, $\vrt\in\supp(\nu)$,
$d(\tt,\vrt) \ge\be^{m_n}$, then
$$
d\big(\pi(\tt),\pi(\vrt)\big) \ge \tfrac 1C\, \be^{m_n}.
$$
\end{Claim}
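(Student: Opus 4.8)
The plan is to show that the periodic orbit $\Ga_n$ lies $o(\be^{k\, m_n})$-close to $\cA(L+\phi)$ for every $k$ (uniformly in the admissible $\phi$), and then to finish with Mather's graph property for $\cA(L+\phi)$.

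\textbf{Step 1 (action bookkeeping).} I would first record the consequences of the hypotheses on $\phi$. Since $\phi\ge0$ is $C^k$, $k\ge2$, and vanishes on $\pi(\Ga_n)$, its differential vanishes there, so $\Ga_n$ solves the Euler--Lagrange equation of $L+\phi$ as well; thus $\Ga_n$ is a periodic orbit of $\vr^{L+\phi}$, and $\mu_n$ is a closed measure for both $\vr^{L}$ and $\vr^{L+\phi}$. As $\phi\ge0$, every closed measure $\mu$ has $\int(L+\phi)\,d\mu\ge\int L\,d\mu$, so $c(L+\phi)\le c(L)$ by \eqref{minmeas}. On the other hand Lemma~\ref{LA2} (applied to $L$) and Corollary~\ref{cmn} give
$$
0\ \le\ r_n:=\int L\,d\mu_n+c(L)\ \le\ B\int d\bigl(\theta,\cA(L)\bigr)\,d\mu_n\ =\ o(\be^{k\, m_n})\qquad\text{for all }k,
$$
and since $\int\phi\,d\mu_n=0$, minimality of $-c(L+\phi)$ over closed measures forces $c(L)-r_n\le c(L+\phi)\le c(L)$. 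Hence $\mu_n$ is almost minimizing for $L+\phi$: its excess action $\int(L+\phi)\,d\mu_n+c(L+\phi)=r_n-\bigl(c(L)-c(L+\phi)\bigr)$ lies in $[0,r_n]$, so it is $o(\be^{k\, m_n})$ for every $k$.

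\textbf{Step 2 (localizing $\Ga_n$).} Next I would turn this into a pointwise statement. Write $\L_\phi(x,v):=L(x,v)+\phi(x)+c(L+\phi)-d_xu_\phi(v)\ge0$, where $u_\phi$ is a $C^{1,1}$ critical subsolution of $L+\phi$ that is strict off the Aubry set (Fathi--Siconolfi \cite{FaSi}, Bernard \cite{Be3}), so that $\L_\phi$ vanishes exactly on $\cA(L+\phi)$ and $\int\L_\phi\,d\mu_n$ equals the excess action of Step~1. The crucial ingredient is a quantitative converse of Lemma~\ref{LA2}: because $\cA(L+\phi)\subset\mN(L+\phi)$ is hyperbolic and is a Lipschitz graph on which $\L_\phi$ vanishes with fibrewise Hessian $\partial^2_{vv}L>0$, one has (for $\cU$ small, uniformly in $\phi\in\cU$) $\L_\phi(\theta)\ge c_0\,d\bigl(\theta,\cA(L+\phi)\bigr)^2$ near $\cA(L+\phi)$ and $\L_\phi(\theta)\ge c_1>0$ away from it. Since $\mu_n$ is the normalized measure on the periodic orbit $\Ga_n$ of period $T_n$, an averaging argument — if a point of $\Ga_n$ were at distance $\rho$ from $\cA(L+\phi)$, an arc of $\Ga_n$ of time-length of order $\rho$ about it stays at distance $\ge\rho/2$, so $\int\L_\phi\,d\mu_n\ge c_2\,\rho^3/T_n$ — together with $\log T_n/m_n\to0$ gives
$$
\alpha_n:=\sup_{\tt\in\Ga_n} d\bigl(\tt,\cA(L+\phi)\bigr)=o(\be^{k\, m_n})\qquad\text{for every }k.
$$

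\textbf{Step 3 (conclusion via the graph property).} Finally, by Mather's graph theorem (with Lipschitz constant bounded uniformly for $\phi\in\cU$, $\cU$ small), there is $\kappa\ge1$ with $d(\theta_1,\theta_2)\le\kappa\,d\bigl(\pi(\theta_1),\pi(\theta_2)\bigr)$ for all $\theta_1,\theta_2\in\cA(L+\phi)$. I would then take $N^2_\ga>N^1_\ga$ large enough that $\alpha_n\le\tfrac1{4\kappa}\be^{m_n}$ for $n>N^2_\ga$. Given $\tt\in\Ga_n$, $\nu\in\cM_{\min}(L+\phi)$, $\vrt\in\supp(\nu)\subset\cA(L+\phi)$ with $d(\tt,\vrt)\ge\be^{m_n}$, pick $\tt'\in\cA(L+\phi)$ with $d(\tt,\tt')\le\alpha_n$; then $\tt',\vrt\in\cA(L+\phi)$ give $d\bigl(\pi(\tt'),\pi(\vrt)\bigr)\ge\tfrac1\kappa d(\tt',\vrt)\ge\tfrac1\kappa(\be^{m_n}-\alpha_n)$, whence
$$
d\bigl(\pi(\tt),\pi(\vrt)\bigr)\ \ge\ d\bigl(\pi(\tt'),\pi(\vrt)\bigr)-\alpha_n\ \ge\ \tfrac1\kappa(\be^{m_n}-\alpha_n)-\alpha_n\ \ge\ \tfrac1{2\kappa}\,\be^{m_n},
$$
which proves the Claim with $D:=2\kappa$. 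The hard part is Step~2: the uniform, quantitative converse of Lemma~\ref{LA2} (``small excess action forces proximity to the Aubry set''), which is precisely where the hyperbolicity of the Ma\~n\'e set is used; Steps~1 and~3 are then routine.
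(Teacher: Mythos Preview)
Your approach is genuinely different from the paper's, and Step~2 contains a real gap.

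The paper never tries to locate $\Ga_n$ near $\cA(L+\phi)$. It argues directly by contradiction via Mather's Crossing Lemma (stated as Lemma~\ref{CL}): assuming $d(\tt,\vrt)\ge\be^{m_n}$ but $d(\pi\tt,\pi\vrt)<\tfrac1D\be^{m_n}$, set $\a(t)=\pi\vr_t(\vrt)$ (semistatic for $L+\phi$, since $\vrt\in\supp\nu\subset\cA(L+\phi)$) and $\ga(t)=\pi\vr_t(\tt)$, build the crossing curves $a,c$, and concatenate $x=a*\ga|_{[\e,T_n-\e]}*c$ from $\a(-\e)$ to $\a(\e)$. The Crossing Lemma gives $A(a)+A(c)\le A(\a)+A(\ga)-\eta\,\be^{2m_n}$, and going once around $\Ga_n$ costs only its excess action, which (this is your Step~1, and here you agree with the paper) is $o(\be^{2m_n})$. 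Hence $A_{L+\phi+c(L+\phi)}(x)<A_{L+\phi+c(L+\phi)}(\a)$, contradicting semistaticity. The constant $D$ is the one furnished by the Crossing Lemma; no graph theorem and no information on where $\cA(L+\phi)$ sits are used.

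The gap in your Step~2 is the asserted inequality $\L_\phi(\theta)\ge c_0\,d(\theta,\cA(L+\phi))^2$. Fibrewise convexity of $L$ yields this only in the vertical direction: at $x=\pi(\theta)$ one gets $\L_\phi(x,v)\ge c_0|v-v^*(x)|^2$ with $v^*(x)$ the Legendre dual of $d_xu_\phi$. But if the nearest Aubry point $(x_0,v_0)$ to $\theta$ differs mostly horizontally ($|\pi\theta-x_0|\sim\rho$ while $|v(\theta)-v_0|\ll\rho$), then since $v^*$ is only Lipschitz one has $|v(\theta)-v^*(\pi\theta)|=O(\rho)$ with possibly small constant, and you are reduced to needing $c(L+\phi)-H(x,d_xu_\phi)\ge c_0'\,d(x,\pi\cA(L+\phi))^2$. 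The Fathi--Siconolfi/Bernard subsolution is strict off $\pi\cA(L+\phi)$ but carries no such quantitative rate, and you give no argument for one. Compounding this, you need $c_0,c_1$ uniform over all admissible $\phi\in\cU$, while $u_\phi$ and $\cA(L+\phi)$ vary with $\phi$ in a way that is only upper-semicontinuous. Steps~1 and~3 are fine, but without an honest proof of the quadratic lower bound---which, as far as I can see, would itself need a Crossing-Lemma-type comparison---Step~2 does not go through.
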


Denote the action of a $C^1$ curve $\a:[S,T]\to\re$ by
$$
A_L(\a):=\int_S^T L(\a(t),\dot\a(t))\; dt.
$$
The following Crossing Lemma is extracted for Mather~\cite{Mat5} with 
the observation that the estimates can be taken uniformly on $\cU_2$.

\begin{Lemma}[Mather~{\cite[p. 186]{Mat5}}]\label{CL}\quad

If $K>0$, then there exist $\e$, $\de$, $\eta>0$ and 
\begin{equation}\label{CL>1}
C> 1,
\end{equation}
such that 
if $\phi\in\cU_2$, and 
$\a, \ga:[t_0-\e,t_0+\e]\to M$
are solutions of the Euler-Lagrange equation
with $\lV d\a(t_0)\rV$, $\lV d\ga(t_0)\rV\le  K$,
$d\big(\a(t_0),\ga(t_0)\big)\le\de$, and
$$d\big(d\a(t_0),d\ga(t_0)\big)\ge C\; d\big(\a(t_0),\ga(t_0)\big),$$
then there exist $C^1$ curves $a,c:[t_0-\e,t_0+\e]\to M$
such that $a(t_0-\e)=\a(t_0-\e)$, $a(t_0+\e)=\ga(t_0+\e)$,
$c(t_0-\e)=\ga(t_0-\e)$, $c(t_0+\e)=\a(t_0+\e)$, and
\begin{equation}\label{ecl}
A_{L+\phi}(\a)+A_{L+\phi}(\ga)
-A_{L+\phi}(a)-A_{L+\phi}(c)
\ge \eta\; d\big(d\a(t_0),d\ga(t_0)\big)^2.
\end{equation}

\end{Lemma}

\bigskip

\noindent{\it Proof of Claim~\ref{c2}:}
Let $K>0$ be such that
$$
\forall \phi\in\cU_2\qquad
E_{L+\phi}^{-1}\{c(L+\phi)\} \subset \{\,\xi\in TM\;|\; \lV \xi\rV<K\,\}.
$$
Let $\e$, $\de$, $\eta$, $C>0$ be from Lemma~\ref{CL}.
Choose $M_\ga>N^1_\ga$ such that if $n>M_\ga$ then
\begin{equation}\label{1Cd}
\tfrac 1C\be^{m_n} <\de.
\end{equation}
Suppose by contradiction that there are $\phi\in\cU_2$, 
$\nu\in\cM_{\min}(L+\phi)$,
$\tt\in\Ga_n$, $\vrt\in\supp(\nu)$
such that 
$\phi\ge 0$, 
$\phi|_{\Ga_n}\equiv 0$,
 $$
 d(\tt,\vrt)\ge \be^{m_n}
 \qquad\text{ and }\qquad
 d\big(\pi(\tt),\pi(\vrt)\big) < \tfrac 1C\, \be^{m_n}.
$$
Since $\phi$ is $C^2$, $\phi\ge 0$ and $\phi|_{\pi\Ga_n}\equiv 0$
we have that $d\phi|_{\pi\Ga_n}\equiv 0$ and then $\Ga_n(t)$
is also a periodic orbit for $L+\phi$, with the same parametrization.
Write  $\a(t)=\pi\vr^{L+\phi}_t(\vrt)$, $\ga(t)=\pi\vr^{L+\phi}_t(\tt)$, $t\in[-\e,\e]$.
By~\eqref{1Cd} we can apply 
Lemma~\ref{CL} 
and obtain $a,\,c:[-\e,\e]\to M$ satisfying \eqref{ecl}.
Since $\vrt\in\supp(\nu)\subset\cA(L+\phi)$ the segment $\a$ is semi-static
for $L+\phi$:
\begin{align}
&\hskip -.7cmA_{L+\phi+c(L+\phi)}(\a) = \Phi^{L+\phi}_{c(L+\phi)}(\a(-\e),\a(\e))
\notag \\
&=\inf\{\,A_{L+\phi+c(L+\phi)}(x)\;|\;x\in C^1([0,T], M),\;
 T>0, \;
x(0)=\a(-\e), \; x(T)=\a(\e)\;\}.
\label{1semi} 
\end{align}
Consider the curve 
$x = a *\pi \vr^{L+\phi}_{[\e,T_n-\e]}(\tt) *c$ joining $\a(-\e)$ to $\a(\e)$.
Writing
$$
L_\phi:=L+\phi,
$$
we have that
\begin{align}
A_{L_\phi+c(L_\phi)}(x) 
&= A_{L_\phi+c(L_\phi)}(a) + A_{L_\phi+c(L_\phi)}(\Ga_n)-A_{L_\phi+c(L_\phi)}(\ga)
+A_{L_\phi+c(L_\phi)}(c)
\notag\\
& \le A_{L_\phi+c(L_\phi)}(\a) -\eta\, d(\tt,\vrt)^2 + A_{L_\phi+c(L_\phi)}(\Ga_n)
\notag\\
&\le A_{L_\phi+c(L_\phi)}(\a) -\eta\, \be^{2 \, m_n} + A_{L_\phi+c(L_\phi)}(\Ga_n).
\label{Ax}
\end{align}

Now we estimate $A_{L_\phi+c(L_\phi)}(\Ga_n)$. 
Observe that since $L_\phi=L+\phi \ge L$ we have that
$$
c(L+\phi)\le c(L).
$$
Since $\phi\vert_{\Ga_n}\equiv 0$, we have that
$\phi |_{\Ga_n}+ c(L+\phi) \vert_{\Ga_n} \le c(L)$.
Observe that $\mu_n$ is an invariant measure for the
flows of both $L$ and $L+\phi$ with $\supp\mu_n=\Ga_n$,
thus 
\begin{align}
A_{L_\phi+c(L_\phi)}(\Ga_n)
&= T_n\int \big[ L+\phi+c(L+\phi)\big]\; d\mu_n
\notag\\
&\le T_n\int \big[L+c(L)\big]\, d\mu_n
=A_{L+c(L)}(\Ga_n).
\label{LphiL}
\end{align}
By the choice of $m_n$, $T_n$, $\mu_n$ from Corollary~\ref{cmn} 
for $L$ 
and using Lemma~\ref{LA2}  applied to $L$, 
we have that for any $\rho>0$, setting $e:=c(L)+1$,
\begin{align}
A_{L+c(L)}(\Ga_n) &= T_n \int \big[L+c(L)\big]\, d\mu_n
\le B(L,e)\; T_n \int d(\xi,\cA(L)) \,d\mu_n(\xi)
\notag\\
&\le  B\,T_n\cdot o(\be^{k\, m_n}) 
=o\big(\be^{(k-\rho)\,m_n}\big).
\label{AGa}
\end{align}
In Corollary~\ref{cmn} the estimate holds with the same sequence $\mu_n$ for any $k\in\na^+$.
Thus, choosing $k$, $\rho$ such that  $k-\rho\ge2$, if $n$ is large enough, 
from \eqref{Ax}, \eqref{LphiL} and \eqref{AGa}, we get
\begin{align*}
A_{L_\phi+c(L_\phi)}(x) &\le
A_{L_\phi+c(L_\phi)}(\a) -\eta\, \be^{2 m_n} + o(\be^{2 m_n})
\\
&< A_{L_\phi+c(L_\phi)}(\a).
\end{align*}
This contradicts \eqref{1semi} and  proves Claim \ref{c2}. 

\hfill $\triangle$

\pagebreak

\bigskip

We use $L$ from~\eqref{LL0p0} and 
$m_n$, $T_n$, $\mu_n$, $\Ga_n$, $\be$ and $\cU_2$ from Claim~\ref{c1} 
and $C$, $N^2_\ga$ from Claim~\ref{c2}.
By Whitney Extension Theorem~\cite[p. 176 ch. VI \S 2.3]{stein} there is $A>0$ 
and $C^k$ functions  $f_n\in C^k(M,[0,1])$ such that
\begin{equation*}
0\le f_n(x) = \begin{cases}
0 & \text{in a small neighbourhood of }\pi(\Ga_n),
\\
\be^{(k+1) m_n} &\text{if } d(x,\pi(\Ga_n)) \ge\tfrac 1C\, \be^{m_n},
\end{cases}
\end{equation*} 
and $\lV f_n\rV_{C^k}\le A$.
Take $\e>0$ such that $\forall n>N^2_\ga$, $\phi_n:=\e f_n\in \cU_2$.

Write $L_n:=L+\phi_n$. 
Observe that $\Ga_n$ is also a periodic orbit for $L_n$.
In particular Claim~\ref{c1} and Claim~\ref{c2} hold for measures
in $\cM_{\min}(L_n)$.
Suppose by contradiction
 that there is a sequence $n=n_i\to+\infty$ 
 such that 
 $\phi_n\notin\cE_{\ov\ga}$ with
$\ov{\ga}= 4\ga\, C(\cU_1,\be)$.
Then there   is a minimizing measure $\nu_n\in \cM_{\min}(L_n)$ with
$h(\nu_n)\ge 4 \ga \,C(\cU_1,\be)>3\ga\, C(\cU_1,\be)$.
By Claim~\ref{c1} and Claim~\ref{c2} we have that
$$
\nu_n\big(\big\{\,\vrt\in TM \;\big|\; d(\pi(\vrt),
 \pi(\Ga_n))\ge\tfrac 1C\,\be^{m_n}\;\big\}\big)>\ga.
$$
Since $\nu_n\in \cM(L_n)\subset \cC(TM)$ is a  closed measure, 
by \eqref{minmeas}, 
$
\int \big[L +c(L)\big]\, d\nu_n \ge 0.
$
Then
\begin{align}
\int \big( L+\phi_n\big) \,d\nu_n &\ge -c(L) +\int\phi_n\,d\nu_n
\notag
\\
&\ge -c(L) + \e \ga\, \be^{(k+1) m_n}.
\label{AM}
\end{align}

Observe that $\mu_n$ is also an invariant probability for $L_n=L+\phi_n$.
From Lemma~\ref{LA2} and Corollary~\ref{cmn}
applied to $L$ and $e=c(L)+1$,  we have that
\begin{align}
\int \big(L+\phi_n\big) \,d\mu_n &
= \int L\;d\mu_n
\le -c(L) + B(L,e) \int d(\theta,\cA(L)) \; d\mu_n(\theta)
\notag
\\
&\le -c(L)+ o\big(\be^{(k+1)m_n} \big).
\label{AN}
\end{align}
Inequalities \eqref{AN} and \eqref{AM} imply
that for $n=n_i$ large enough $\nu_{n_i}$ is not minimizing, 
contradicting the choice of $\nu_{n_i}$.
Therefore $\phi_n\in \cE_{\ov\ga}\cap\cU_1$ for $n$ large enough.

\qed

\color{black}

\section{Hyperbolic Aubry sets can be closed.}\label{Sclosing}

In this section we prove Theorem~\ref{HYP}. Throughout the section we assume that
$L$ is a Tonelli lagrangian with Aubry set $\cA(L)$ hyperbolic.

\subsection{The action of a periodic specification.}\quad

A {\it dominated function} for $L$ is a function $u:M\to\re$ such that 
for any $\ga:[0,T]\to M$ absolutely continuous and $0\le s<t\le T$ we have
\begin{equation}\label{domi}
 u(\ga(t))-u(\ga(s))  \le \int_s^t \big[c(L)+L(\ga,\dga)\big]. 
\end{equation}
We say that the curve $\ga$ {\it calibrates} $u$ if the equality holds in \eqref{domi}
for every $0\le s<t\le T$. 
Dominated functions always exist, for example, by the triangle inequality for Ma\~n\'e's potential $\Phi_c$, the functions $u_p(x):=\Phi_c(p,x)$
are dominated for every $p\in M$. 
The definition of the Hamiltonian $H$ associated to $L$ implies 
that any $C^1$ function  
$u:M\to\re$ which satisfies
$$
\forall x\in M, \quad H(x,d_xu)\le c(L)
$$
is dominated.

\begin{Lemma}\label{statcal}
If $u$ is a dominated function and $\ga$ is a static curve
then $\ga$ calibrates $u$.
\end{Lemma}

\begin{proof}
Recall that $\ga$ is static iff for all $s<t$ we have
\begin{equation}\label{staticd2}
\int_s^t\big[c(L)+L(\ga,\dga)\big] = -\phi_{c(L)}(\ga(t),\ga(s))=\phi_{c(L)}(\ga(s),\ga(t)).
\end{equation}
If $u$ is dominated, $\ga$ is static and $s<t$ we have that
\begin{align*}
u(\ga(t))\le u(\ga(s))+\phi_{c(L)}(\ga(s),\ga(t))
= u(\ga(s))-\phi_{c(L)}(\ga(t),\ga(s)).
\end{align*}
Using again the domination of $u$ and then the previous inequality we get
$$
u(\ga(s))\le u(\ga(t))+\phi_{c(L)}(\ga(t),\ga(s))\le u(\ga(s)).
$$
Therefore, using~\eqref{staticd2}, 
$$
u(\ga(t))=u(\ga(s))-\phi_{c(L)}(\ga(t),\ga(s))=
u(\ga(s))+\int_s^t\big[c(L)+L(\ga,\dga)\big].
$$
\end{proof}

\begin{Lemma}\label{domifathi}\quad

There are $K>0$ and $\de_0>0$ such that if $(z,\dz)\in\cA(L)$
 is a static vector, $u$ is a dominated function and $d(z,y)<\de_0$,
 then
 \begin{equation}\label{upvl}
 \big| u(y)-u(z)-\partial_vL(z,\dz)(y-z)\big|
 \le K\,|y-z|^2,
 \end{equation}
 where $y-z:=(\exp_z)^{-1}(y)$.

\end{Lemma}

\begin{proof}
Let $\E\subset TM$ be a compact subset such that 
$E^{-1}_L\{c(L)\}\subset\intt \E$. Cover $M$ by a finite 
set $\cO$ of charts. Fix $0<\e<1$  such that   if $\ga:[-\e,\e]\to M$
has velocity $(\ga,\dga)\in\E$ then $\ga([-\e,\e])$ lies inside the
domain of a chart in $\cO$.
There are $\de_1>0$ smaller than the Lebesgue
number of the covering $\cO$ and $A>0$ such that if $(x,v)\in\E$
and $\max\{|h|,\,|k|\}\le \de_1$ then in the charts
\begin{equation}\label{ldla}
\big| L(x+h,v+k)-L(x,v)-DL(x,v)(h,k)\big|\le A(|h|^2+|k|^2).
\end{equation}
Let $u:M\to\re$ be dominated and  $(z,\dz)\in\cA(L)$. Recall that 
$\cA(L)\subset E_L^{-1}\{c(L)\}\subset \E$. Write $\ga(t):=\pi\vr^L_t(z,\dz)$.
By Lemma~\ref{ALinv} the complete curve $\ga:\re\to M$ is static.
By Lemma~\ref{statcal}, $\ga$ calibrates $u$. Let $\de_0:=\e\,\de_1$.
Let $y\in M$ with $|y-z|<\de_0$ in a local chart. Define $\be:]-\e,0]\to M$ by
$$
\be(t):=\ga(t)+\left(\tfrac{t+\e}\e\right) (y-z).
$$
Then $\be(-\e)=\ga(-\e)$, $\be(0)=y$, $\dbe=\dga+\tfrac 1\e(y-z)$.
In particular $|\dbe-\dga|\le \tfrac 1\e|y-z|\le \de_1$ and we can apply~\eqref{ldla}.
\begin{align*}
\int_{-\e}^0 L(\be,\dbe)\le
\int_{-\e}^0L(\ga,\dga) 
+\int_{-\e}^0 
\Big\{L_x(\ga,\dga)(\be-\ga)
+L_v(\ga,\dga)(\dbe-\dga)\Big\}
+ A \e\big(1+\tfrac 1{\e^2}\big)\,|y-z|^2.
\end{align*}
Using that $\ga$ is a solution of the Euler-Lagrange equation
$\tfrac {d\,}{dt}L_v=L_x$ and integrating by parts, we get that
\begin{align}
\int_{-\e}^0L(\be,\dbe)&\le \int_{-\e}^0 L(\ga,\dga)\, dt+
L_v(\ga,\dga)(\be-\ga)\Big\vert_{-\e}^0 +  \tfrac {2A}{\e}\,|y-z|^2,
\notag\\
&\le \int_{-\e}^0 L(\ga,\dga)\, dt+ L_v(z,\dz)(y-z) + \tfrac{2A}{\e}\,|y-z|^2.
\label{lbdb}
\end{align}
Since $u$ is dominated and calibrated by $\ga|_{[-\e,0]}$ we obtain 
one of the inequalities in~\eqref{upvl}:
\begin{align*}
u(y) &\le u(\ga(-\e))+\int_{-\e}^0 c(L)+ L(\be,\dbe) 
\\
&\le u(\ga(-\e))+\int_{-\e}^0\Big\{L(\ga,\dga)+c(L)\Big\} dt+ L_v(z,\dz)(y-z) + \tfrac{2A}{\e}\,|y-z|^2
\\
&\le u(z) + L_v(z,\dz)(y-z) + \tfrac{2A}{\e}\, |y-z|^2.
\end{align*}

Now define $\a:[0,\e]\to M$ by
$$
\a(t):=\ga(t)+\left(\tfrac {\e-t}\e\right) (y-z).
$$
A similar argument to~\eqref{lbdb} gives
$$
\int_0^\e L(\a,\da)\, dt
\le \int_0^\e L(\ga,\dga) \,dt
-L_v(z,\dz)(y-z) + \tfrac{2A}{\e}\,|y-z|^2.
$$
Since $u$ is dominated we have that
\begin{align*}
u(\ga(\e))
&\le u(y)+\int_0^\e \Big\{L(\a,\da)+c(L)\Big\}
\\
&\le u(y) +\int_0^\e \Big\{L(\ga,\dga)+c(L)\Big\}\, dt
-L_v(z,\dz)(y-z) + \tfrac{2A}{\e}\,|y-z|^2.
\end{align*}
Since $u$ is calibrated by $\ga|_{[0,\e]}$ we have that
$$
u(\ga(\e))-\int_0^\e \Big\{L(\ga,\dga)+c(L)\Big\} = u(z).
$$
Thus we get the remaining inequality
$$
u(z)\le u(y)-L_v(z,\dz)(y-z)+ \tfrac{2A}{\e}\,|y-z|^2.
$$

\end{proof}

Recall that the Aubry set $\cA(L)$ is inside the energy level $c(L)$, 
 $\cA(L)\subset E^{-1}_L\{c(L)\}$. Recall also that we are assuming that 
 $\cA(L)$ is a hyperbolic set for the lagrangian flow restricted to 
 $E^{-1}_L\{c(L)\}$.  In Corollary~\ref{CSH} in Appendix~\ref{asha}
 we state the version of the Shadowing Lemma used in the following 
 proposition. From Definition~\ref{B8}, a $\de$-possible $\ell$-specification
 is a collection of orbit segments $\{\,\vr_{[t_i,t_{i+1}[}(\tt_i)\,\}_{i\in\Z}$ such that
 for all $i\in\Z$, $t_{i+1}-t_i\ge \ell$ and $d(\vr_{t_{i+1}}(\tt_i),\tt_{i+1})<\de$.
 We say that a specification $\{\,\vr_{[t_i,t_{i+1}]}(\tt_i)\,\}_{i\in\Z}$ in $\cA(L)$
 is $\e$-shadowed by $\xi\in E^{-1}_L\{c(L)\}$ if there is a increasing homeomorphism
 $s:\re\to\re$,  such that $s(t_0)=t_0$ and
 $$
 \forall i\in\Z,\quad \forall t\in]t_i,t_{i+1}[\qquad 
 |s(t)-t|\le \e \quad\text{and}\quad d(\vr_{s(t)}(\xi),\vr_t(\tt_i)) < \e.
 $$
 Corollary~\ref{CSH} states that there are $\de_0(\ell)>0$ and $Q(\ell)>0$
 such that every $\de$-possible $\ell$-specification in $\cA(L)$ with $0<\de<\de(\ell)$
 is $\e$-shadowed by a point in the energy level $E^{-1}\{c(L)\}$ with 
 $\e=Q(\ell)\,\de$.
 
 We say that a specification $\{\, \vr_{[t_i,t_{i+1}[}(\tt_i)\,\}_{i\in\Z}$ is
 {\it periodic with $J\in\na^+$ jumps} if  
 $$
 \forall i\in\Z \quad (t_{i+J},\tt_{i+J})=(t_i,\tt_i).
 $$
 
\bigskip

\begin{Lemma}\label{ALc}\quad
 
 Let $Q=Q(1)>0$, $\de_0=\de_0(1)>0$ be from Corollary~\ref{CSH} applied 
 to the hyperbolic set $\cA(L)$ in $E^{-1}\{c(L)\}$.

There is $E>0$ such that if $0<\de<\de_0$,
 $\{(x_k,\dx_k)\vert_{[T_k,T_{k+1}]}\}$ is a periodic 
 $\de$-possible $1$-specification  in $\cA(L)$
with $J\in\na^+$ jumps
and  $(y,\dy)$ is the periodic orbit with energy $c(L)$ which
 $\e$-shadows $\{(x_k,\dx_k)\}$
with $\e= Q\,\de$ then
$$
A_{L+c(L)}(y)\le J E\,\de^2.
$$
\end{Lemma}

\begin{proof}
The set $\cA(L)$ is hyperbolic for the Euler-Lagrange flow restricted 
to the energy level $[E=c(L)]$.
By the Shadowing Corollary~\ref{CSH}, there is an Euler-Lagrange solution
$(y,\dy)$ with energy $c(L)$ and a  continuous reparametrization 
$\si(t)$, with 
$| \si(t)-t|\le \e$ such that
$$
\forall t \qquad
d\big([x_k(t),\dx_k(t)],[y(\si(t)),\dy(\si(t))]\big) <\e.
$$

Then $Y(s):=(y(s),\dy(s))$ is a periodic orbit with a period near $\si(T_J-T_0)$.
We want a sequence of times $S_k$ nearby $\si(T_k)$ such that $S_J-S_0$ is
a period for $Y(s)$. Write $X(t):=(x(t),\dx(t))$. Using canonical coordinates~\ref{caco}
 define $w_k\in \re$ small by
\begin{align*}
\langle Y(\si(T_k)),X(T_k)\rangle 
&=W^{s}_\ga(Y(\si(T_k)))\cap W^{uu}_\ga(X(T_k))
\\
&=W^{ss}_\ga\big(\vr_{w_k}(Y(\si(T_k)))\big)\cap W^{uu}(X(T_k))
\ne\emptyset.
\end{align*}
Now let $S_k:=w_k+\si(T_k)$.
Observe that the time shift $w_k$ is determined by the sequence $X(T_k)$ which 
is periodic. Then the sequence $S_k$ is periodic modulo the period $S_J-S_0$ of $Y$.

By Proposition~\ref{B71} there are $D>0$ and $0<\la<1$ such that  
if $\e$ is small enough  
 there is $|v_k|<D \e$,  such that
$$
\forall s\in[S_k,S_{k+1}]
\qquad
d\big([x_k(s+v_k),\dx_k(s+v_k)],[y(s),\dy(s)]\big)
\le D\,\e\, \la^{\min\{s-S_k,\,S_{k+1}-s\}}.
$$
Let $z_k(s):=x_k(s+v_k)$.
Since $\cA(L)$ is invariant we also have that $(z_k,\dz_k)\in\cA(L)$.

By adding a constant we can assume that $c(L)=0$.
On local charts we have that
\begin{align*}
L(y,\dy) &\le L(z_k,\dz_k) 
+\partial_xL(z_k,\dz_k) (y-z_k) 
+\partial_vL(z_k,\dz_k) (\dy-\dz_k)
+ K_1\,\e^2\, \la^{2\min\{s-S_k,\,S_{k+1}-s\}}.
\end{align*}
Using that $z_k$ is an Euler-Lagrange solution we obtain
\begin{align*}
\int_{S_k}^{S_{k+1}}L(y,\dy) 
&\le 
\left[\int_{S_k}^{S_{k+1}}L(z_k,\dz_k) \right]
+ \partial_vL(z_k,\dz_k)(y-z_k)\Big\vert_{S_k}^{S_{k+1}}
+ K_2 \,\e^2.
\end{align*}
\begin{align}\label{ALz}
A_L(y) \le &\sum_{k=1}^J A_L(z_k)\;+
\notag\\
+&\sum_{k=1}^J\Big\{ \partial_v L(z_k,\dz_k)(y-z_k)\Big\vert_{S_{k+1}}
-\partial_v L(z_{k+1},\dz_{k+1})(y-z_{k+1})\Big\vert_{S_{k+1}}
\Big\}
+J K_2\,\e^2.
\end{align}

Let $u$ be a dominated function. By Lemma~\ref{domifathi}
if $(z,\dz)\in\cA(L)$ is a static vector, then 
$$
\big| u(y)-u(z) - \partial_vL(z,\dz)(y-z) \big|\le K_3\, |y-z|^2.
$$
By Lemma~\ref{statcal}, $u$ is necessarily calibrated on static curves.
Therefore
\begin{align}
\sum_{k=1}^JA_L(z_k) &=\sum_k u(z_k(S_{k+1}))-u(z_k(S_k))
\notag \\
&=
\sum_k u(z_k(S_{k+1}))-u(z_{k+1}(S_{k+1}))
\notag \\
&= 
\sum_k
\big\{
u(z_k) -u(y) +u(y) -u(z_{k+1})\big\}\Big\vert_{S_{k+1}}
\notag \\
&\le \sum_k
\Big\{
\partial_v L(z_k,\dz_k)(z_k-y)
+
\partial_v L(z_{k+1},\dz_{k+1})(y-z_{k+1})
+ 2K_3 D^2  \e^2\Big\}
\Big\vert_{S_{k+1}}.
\label{ALxk}
\end{align}
Replacing  estimate  \eqref{ALxk} for $\sum_k A_L(z_k)$ 
in inequality \eqref{ALz}
we obtain
$$
A_L(y) \le 2J K_3 D^2\,\e^2+ J K_2\,\e^2 =: J K_4\,\e^2.
$$
Since $\e=Q\,\de$, we obtain Lemma~\ref{ALc} with $E=K_4 Q^2$.
\newline
\end{proof}

\medskip

\begin{Lemma}\label{LKTr}
Given a Tonelli lagrangian  $L_0$ a compact subset $\De\subset TM$  and $\e>0$ 
there are $K>0$ and $\de_1>0$ 
such that for any Tonelli lagrangian $L$ with $\lV (L-L_0)|_E\rV_{C^2}<\e$,
and any $T>0$:
\begin{enumerate}[(a)]
\item\label{KTra}
If $x\in C^1([0,T],M)$  is a solution of the Euler-Lagrange equation for $L$
with $(x,\dx)\in \De$ and 
$z\in C^1([0,T],M)$ satisfies
$$
d\big([z(t),\dz(t)],[x(t),\dx(t)]\big) \le 2 \rho\le\de_1 \qquad \forall t\in[0,T],
$$
then 
\begin{equation}\label{KTr}
\lv \int_0^T L(z,\dz)\,dt -\int_0^T L(x,\dx)\, dt -\partial_v L(x,\dx)\cdot(z-x)\Big|_0^T\rv
\le K \,(1+T)\,  \rho^2,
\end{equation}
where $z-x:=(\exp_x)^{-1}(z)$.

\item\label{KTrb} 
If $x\in C^1([0,T],M)$ is a solution of the Euler-Lagrange equation for $L$ with $(x,\dx)\in \De$
 and
the curves 
$w_1,\,w_2,\,z\in C^1([0,T],M)$ satisfy
$w_1(0)=x(0)$, $w_1(T)=z(T)$, $w_2(0)=z(0)$, $w_2(T)=x(T)$, and 
for $\xi=z,\,w_1,\,w_2$ we have
$$
d\big([\xi(t),\dxi(t)],[x(t),\dx(t)]\big) \le 2 \rho\le \de_1 \qquad \forall t\in[0,T], 
$$
then
$$
\lv A_L(x)+A_L(z)-A_L(w_1)-A_L(w_2)\rv \le 3 K  \rho^2  (1+T).
$$
\end{enumerate}
\end{Lemma}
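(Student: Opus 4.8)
The plan is to establish part~(a) by a first-variation computation along the solution $x$, and then to obtain part~(b) by applying part~(a) to three pairs of curves and watching the boundary terms cancel.

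For part~(a), I would first fix the constants from compactness: since $\lV (L-L_0)|_E\rV_{C^2}<\e$, all derivatives of $L$ up to order two are bounded on $E$ by a constant depending only on $L_0$, $E$, and $\e$, so it is enough to take $\de_0$ below the injectivity radius on $E$ (and small enough that $z(t)$ always stays within that radius of $x(t)$), and to run the estimate for orbits lying in $E$. Put $\xi(t):=(\exp_{x(t)})^{-1}(z(t))$, so $z-x=\xi$ in the notation of the statement. Because the metric on $TM$ controls both the base point discrepancy and the velocity discrepancy, the hypothesis $d\bigl([x,\dx],[z,\dz]\bigr)\le 2\rho$ forces $|\xi(t)|\le C\rho$ and $|\dot\xi(t)|\le C\rho$ for all $t$. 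Working in local coordinates along $x$, Taylor's theorem with Lagrange remainder (the Hessian of $L$ being bounded) gives
$$
L(z,\dz)-L(x,\dx)=\partial_xL(x,\dx)\cdot\xi+\partial_vL(x,\dx)\cdot\dot\xi+R(t),\qquad |R(t)|\le C\rho^2 .
$$
Since $x$ solves the Euler--Lagrange equation, $\tfrac d{dt}\,\partial_vL(x,\dx)=\partial_xL(x,\dx)$, so the linear part is the exact derivative $\tfrac d{dt}\bigl(\partial_vL(x,\dx)\cdot\xi\bigr)$. Integrating over $[0,T]$ turns it into the boundary term $\partial_vL(x,\dx)\cdot(z-x)\big|_0^T$ of \eqref{KTr}, while $\bigl|\int_0^T R\bigr|\le C\,T\rho^2\le K(1+T)\rho^2$; this is \eqref{KTr} with $K=K(L_0,E,\e)$. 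When $M$ is not covered by one chart I would cut $[0,T]$ into $O(T)$ subintervals of bounded length, run the argument on each, and telescope the boundary terms, paying a further $O(\rho^2)$ error at each of the $O(T)$ junctions (because $\partial_vL(x,\dx)\cdot\xi$ is chart-independent only to first order); the total extra error is again $O(T\rho^2)$.

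For part~(b), I would apply part~(a) to $(x,z)$, $(x,w_1)$, and $(x,w_2)$ --- all admissible, since each of $z,w_1,w_2$ is $2\rho$-close to the solution $x$ --- to get
$$
A_L(\eta)=A_L(x)+\partial_vL(x,\dx)\cdot(\eta-x)\big|_0^T+r_\eta,\qquad |r_\eta|\le K(1+T)\rho^2,\quad \eta\in\{z,w_1,w_2\},
$$
with $\eta-x:=(\exp_x)^{-1}(\eta)$. The boundary conditions $w_1(0)=x(0)$, $w_1(T)=z(T)$, $w_2(0)=z(0)$, $w_2(T)=x(T)$ kill the $t=0$ term for $w_1$ and the $t=T$ term for $w_2$, and make the surviving ones coincide with those for $z$, so that $\partial_vL(x,\dx)\cdot(w_1-x)\big|_0^T+\partial_vL(x,\dx)\cdot(w_2-x)\big|_0^T=\partial_vL(x,\dx)\cdot(z-x)\big|_0^T$. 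Subtracting the identities for $w_1$ and $w_2$ from the one for $z$, the copies of $A_L(x)$ and all the boundary terms cancel, leaving $A_L(x)+A_L(z)-A_L(w_1)-A_L(w_2)=r_z-r_{w_1}-r_{w_2}$, whose absolute value is at most $3K(1+T)\rho^2$.

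The main obstacle is the uniformity in $T$ in part~(a): the estimate must be carried out genuinely pointwise in $t$ --- integrating the Taylor remainder --- so that the errors accumulate only linearly in $T$, whereas a Gronwall-type comparison of $x$ and $z$ would introduce an exponential factor and destroy the bound. This is also why the $C^2$ bounds on $L$, hence $K$, must be kept uniform, which is exactly the point of phrasing the statement relative to a fixed compact set $E$ (in the applications $x$ lies in a hyperbolic set or in the Aubry set inside a fixed energy level, so this costs nothing). The chart-patching and the covariant-derivative bookkeeping needed to make sense of $z-x=(\exp_x)^{-1}(z)$ and of $\dot\xi$ are routine and only contribute additional $O(T\rho^2)$ errors that are absorbed into $K$.
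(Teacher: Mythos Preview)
Your proposal is correct and follows essentially the same line as the paper's proof: a second-order Taylor expansion of $L$ along the Euler--Lagrange solution $x$, so that the first-order terms collapse to the exact derivative $\tfrac{d}{dt}\big(\partial_vL(x,\dx)\cdot(z-x)\big)$ and integrate to the boundary term, leaving an $O(\rho^2)$ remainder integrated over $[0,T]$; part~(b) is then obtained by applying part~(a) to $z$, $w_1$, $w_2$ and observing that the boundary contributions cancel thanks to the endpoint conditions. The only cosmetic difference is that the paper works in a single tubular-neighbourhood chart (an immersion if $x$ self-intersects) rather than patching $O(T)$ charts as you suggest, but either device yields the same $K(1+T)\rho^2$ bound.
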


\begin{proof}\quad
\begin{enumerate}[(a)]
\item
We use a coordinate system on a tubular neighbourhood of $x([0,T])$ with 
a bound in the  $C^2$ norm independent of $T$ and of $\dx(0)$. 
In case $x$ has self-intersections
or short returns the coordinate system is an immersion. 

We have that
\begin{align*}
L(z,\dz)-L(x,\dx) = \partial_x L(x,\dx)(z-x) +\partial_v L(x,\dx)(\dz-\dx) +O(\rho^2),
\end{align*}
here $O(\rho^2)\le K\, \rho^2$
where $K$ depends on the second derivatives of $L$ on a small neighbourhood of 
 the compact $\De$ and hence
it can be taken uniform on a $C^2$ neighbourhood of $L$. Since $x$ satisfies the 
Euler-Lagrange equation for $L$,
\begin{align*}
L(z,\dz)-L(x,\dx) = \tfrac d{dt}[\partial_v L(x,\dx)(z-x)] +O(\rho^2).
\end{align*}
This implies \eqref{KTr}.

\item By item~\eqref{KTra}
\begin{align*}
A_L(w_1) -A_L(x) &\le \partial_vL(x,\dx) (w_1-x)\Big|_0^T + K \rho^2 (1+T)
\\
&\le   \partial_vL(x(T),\dx(T)) (z(T)-x(T)) + K \rho^2 (1+T) .
\\
A_L(w_2)-A_L(x) 
&\le -\partial_vL(x(0),\dx(0)) (z(0)-x(0)) + K \rho^2 (1+T) .
\\
A_L(x)-A_L(z) 
&\le  -\partial_vL(x,\dx) (z-x)\Big|_0^T + K \rho^2 (1+T)
\\
&\le -\partial_vL(x(T),\dx(T)) (z(T)-x(T)) 
\\
&\hskip 10pt 
+\partial_vL(x(0),\dx(0)) (z(0)-x(0))+ K \rho^2 (1+T) .
\end{align*}
Adding these inequalities we get
$$
A_L(w_1)+A_L(w_2)-A_L(x)-A_L(z)
\le 3 K \rho^2 (1+T).
$$
The remaining inequality is obtained similarly.
\end{enumerate}
\end{proof}

\subsection{Closing solitary returns.}\quad
\label{sscsr}

The following proposition has its origin in Yuan and Hunt \cite{YH}, 
the present proof uses some arguments by Quas and Siefken \cite{QS}.

\begin{Proposition}\label{Ppert}\quad

Let $J\in\na^+$. Suppose that for any $\de>0$ there is a periodic 
$\de$-possible 1-specification  $\{\vr_{[T_k,T_{k+1}]}(\tt_k)\}_{k=1}^\ell$ in $\cA(L)$
with at most $J$ jumps ($\ell\le J$) such that the smallest approach
$$
\ga_\de:=\min \big\{\,d(\vr_{s_i}(\tt_i),\vr_{t_j}(\tt_j))
\;\big|\;  s_i\in[T_i,T_{i+1}], t_j\in[T_j,T_{j+1}];\;  |s_i-t_j|_{\text{\rm  mod }(T_{\ell+1}-T_1)}\ge 1\;\big\}
$$
satisfies
$$
\liminf\limits_{\de\to 0}\frac\de{\ga_\de}=0.
$$
Then for any $\e>0$ there is $\phi\in C^2(M,\re)$ with $\lV \phi\rV_{C^2}<\e$
such that $\cA(L+\phi)$ contains a periodic orbit.
\end{Proposition}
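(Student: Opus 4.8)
The plan is to take the given family of periodic $\de$-pseudo orbits, shadow each one by a genuine periodic orbit $y_\de$ of energy $c(L)$ via the Shadowing Lemma (as in the proof of Proposition~\ref{Pper}), and then add a potential supported near $\pi(\Ga)$ for a suitably chosen periodic orbit $\Ga$ so that $\Ga$ becomes contained in the Aubry set of the perturbed Lagrangian. The perturbing potential will be locally of the form $\phi(x)=\e'\,d(x,\pi(\Ga))^2$, matching the pattern announced in the introduction, so the $C^2$ norm of $\phi$ is controlled by $\e'$ times a geometric constant, and we shrink $\e'$ to beat $\e$. The delicate point is that to put $\Ga$ in the Aubry set we must make the shadowing orbit $y_\de$ \emph{calibrated} for $L+\phi$, i.e.\ realize the Ma\~n\'e potential between its points; this is where the crossing/Crossing Lemma machinery (Lemma~\ref{CL}, Lemma~\ref{LKTr}, Lemma~\ref{ALc}) enters.

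The key steps, in order. First, fix $\de>0$ small, apply the Shadowing Lemma to the given periodic $\de$-pseudo orbit with $\le M$ jumps to get a periodic orbit $(y,\dy)$ of energy $c(L)$ that $\e$-shadows it with $\e=Q\de$; by Lemma~\ref{ALc}, $A_{L+c(L)}(y)\le MB\de^2$, so $y$ has very small $(L+c(L))$-action. Second, using the separation hypothesis $\liminf \de/\ga_\de=0$, choose $\de$ along a subsequence so that $\de\ll\ga_\de$; the point of $\ga_\de$ is that along the true pseudo-orbit, and hence along $y$, distinct ``time-1-separated'' pieces are $\ga_\de$-apart in $TM$, so $y$ has no short self-returns at scale comparable to $\ga_\de$. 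Third, let $\Ga$ be (the orbit of) $y$ and introduce $\phi(x):=\rho\,d(x,\pi(\Ga))^2$ cut off to be $C^2$ with $\lV\phi\rV_{C^2}\le C\rho$; choose $\rho$ with $C\rho<\e$, and also $\rho$ large relative to $\de$ in the sense dictated below (this is possible because we may first fix $\rho$, then pass to $\de\to0$). Fourth — the crux — show that $\Ga\subset\cA(L+\phi)$: since $\phi\ge0$ and $\phi|_\Ga\equiv0$ we have $c(L+\phi)\le c(L)$ and $A_{(L+\phi)+c(L+\phi)}(\Ga)\le A_{L+c(L)}(\Ga)\le MB\de^2$, so $\Ga$ is an ``almost-minimizing'' closed curve; to upgrade ``almost'' to ``exactly calibrated'' one argues by contradiction: if some sub-arc of $\Ga$ failed to calibrate the Ma\~n\'e potential of $L+\phi$, a competing curve $x$ would beat it, and then using Lemma~\ref{LKTr}(b) together with the Crossing Lemma~\ref{CL} one splices $x$ with a piece of $\Ga$ to produce a \emph{shorter-action} closed curve through $\pi(\Ga)$, forcing the quadratic penalty $\rho\,\be^{\cdots}$-type gain from $\phi$ to be cancelled by the crossing gain $\eta\,d(\cdot,\cdot)^2$ — and the no-short-return property (step 2) plus the quadratic lower bound on $\phi$ away from $\pi(\Ga)$ make this impossible once $\de$ is small enough relative to $\rho$. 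Hence $\Ga$ calibrates, so $\Ga\subset\cA(L+\phi)$, and in particular $\mN(L+\phi)\supset\Ga$ contains a periodic orbit.

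The main obstacle is Step~4: organizing the contradiction argument cleanly. One must control three competing scales — the pseudo-orbit error $\de$ (hence action defect $\le MB\de^2$ and shadowing displacement $\le Q\de$), the separation $\ga_\de$ (governing when two orbit pieces are ``close'' in the sense required by the Crossing Lemma), and the potential strength $\rho$ (giving action gain $\gtrsim\rho\,\ga_\de^2$ when an orbit strays a definite fraction of $\ga_\de$ away from $\pi(\Ga)$) — and show that for $\rho$ fixed and $\de\to0$ along the subsequence with $\de/\ga_\de\to0$, the potential gain dominates both the crossing loss and the $O(\de^2)$ action defect. The bookkeeping is essentially that of Yuan--Hunt \cite{YH} adapted to the Lagrangian setting via Mather's Crossing Lemma, with the separation hypothesis playing the role that the ``locking'' condition plays in ergodic optimization; once the scales are lined up, the estimates are routine applications of Lemmas~\ref{ALc}, \ref{LKTr}, and~\ref{CL}.
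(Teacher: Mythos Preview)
Your high-level plan matches the paper: shadow the pseudo-orbit to get a periodic orbit $\Ga$, add a non-negative potential $\phi$ vanishing on $\pi(\Ga)$ with $\lV\phi\rV_{C^2}<\e$, and then show $\Ga\subset\cA(L+\phi)$; the identification of the three competing scales $\de$, $\ga_\de$, and the potential strength is also right. The gap is in your Step~4. You propose to show $\Ga$ is calibrated by contradiction: a competing curve $x$ beats a sub-arc of $\Ga$, you splice via Lemmas~\ref{LKTr}(b) and~\ref{CL}, and produce a closed curve of smaller action. But this is not a contradiction: $\Ga$ has $(L+\phi+c(L+\phi))$-action in $[0,MB\de^2]$, so a shorter closed curve can still have non-negative action and nothing is violated. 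More seriously, the competing curve $x$ need not stay near $\Ga$, so neither splicing lemma applies to it directly; you are forced to cut $x$ into near-$\Ga$ and far-from-$\Ga$ pieces and estimate each, which is essentially the paper's argument carried out on the wrong object. (Note also that the Crossing Lemma~\ref{CL} plays no role in the paper's proof of this proposition; only Lemma~\ref{LKTr}(b) is used.)

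The paper's Step~4 does not try to show $\Ga$ is calibrated. Instead it shows that \emph{every} backward semistatic curve $x:(-\infty,0]\to M$ for $L+\phi$ has $\alpha$-limit equal to $\Ga$; since $\alpha$-limits of semistatic orbits are static (Ma\~n\'e), this gives $\Ga\subset\cA(L+\phi)$ at once. The mechanism is an excursion count. Writing $\L:=L+\phi+c(L+\phi)-du$ with $u$ a $C^1$ critical subsolution, one records the successive times at which $x$ leaves the $\rho$-tube around $\Ga$ and reaches distance $\tfrac13\ga_\de$. On each such excursion the penalty $\phi$ contributes at least $\big(\tfrac1{64}\e\ga_\de^2-\tfrac{MB}{T}\de^2\big)\a$ to $A_\L$. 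On each intervening return interval one splices $x$ with a time-shifted copy of itself (not with $\Ga$) via Lemma~\ref{LKTr}(b), closing off a loop and bounding the $\L$-action from below by $-6K\rho^2-2MB\de^2$. Choosing $\de<\rho<\tfrac14\ga_\de$ so that \eqref{rho1}--\eqref{rho2} hold makes the net $\L$-action per excursion strictly positive; since $A_\L(x|_{(-\infty,0]})$ is bounded above by the diameter of the Ma\~n\'e potential, there can be only finitely many excursions, and expansivity then forces the $\alpha$-limit to be $\Ga$.
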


\begin{Remark}
The function $\phi$ used in proposition~\ref{Ppert} does not require
a special technique to perturb the flow, we describe it here explicitly.
 For $\de$ and $\frac\de{\ga_\de}$ 
sufficiently small let $\Ga$ be the periodic orbit with energy $c(L)$ 
which shadows the pseudo-orbit.
The function $\phi$ is a canal about $\pi(\Ga)$ 
defined in \eqref{defphi}. In particular $\Ga$ is a common 
periodic orbit for the flows of $L$ and $L+\phi$. Proposition~\ref{Ppert} proves that 
$\Ga$ is a periodic orbit in the Aubry set $\cA(L+\phi)$.
\end{Remark}

\begin{Lemma}\label{Lgade}
If $\cA(L)$ has no periodic orbits and $\{\vr^\a_{[T^\a_k,T^\a_{k+1}]}(\tt^\a_k)\}_{k=1}^\ell$, $\a\in\na$
is a sequence of $\de_\a$-possible 1-specifications with $\ell\le J$ jumps with $\lim_\a\de_\a=0$,
then $\lim_\a \ga_{\de_\a}=0$.
\end{Lemma}

\begin{proof}
We can assume that $T^\a_1=0$ for all $\a$. 
We first prove that $\lim_\a (T^\a_{\ell+1}-T^\a_1)=+\infty$. If not, we can extract a subsequence $\a_n$
such that  all
$\{\tt^{\a_n}_k\}_{k=1}^\ell$, $\{T^{\a_n}_k\}_{k=1}^{\ell+1}$ converge in $n$. Define
$\tt_k:=\lim_n \tt^{\a_n}_k\in\cA(L)$ and $T_k:=\lim_n T^{\a_n}_k$. Since $\lim_n\de_{\a_n}=0$, we have that $\forall k$
$\vr_{T_{k+1}}(\tt_k)=\vr_{T_{k+1}}(\tt_{k+1})$ and hence $\tt_1$ is a periodic point in $\cA(L)$ with period $T_{\ell+1}-T_1$.
This contradicts the hypothesis.

Consider the points $\xi^\a_{4m}:=\vr_{4m}(\tt^\a_i)$, where $m\in\na$ and  $i$ is such that $T_i\le 4m < T_{i+1}$
and $1\le m <M_\a:= \big[\frac 14T_{\ell+1}\big]$.
Recall that $T^\a_1\equiv 0$.
Since $T^\a_{\ell+1}\to\infty$, when $\a\to\infty$ the quantity $M_\a$ 
of these points $m$ tends to infinity.
Therefore 
$$
\ga_{\de_\a}\le \min_{m_1\ne m_2} d(\xi^\a_{4m_1},\xi^\a_{4m_2}) \overset{\a}\longrightarrow 0
$$

\end{proof}

{\bf Idea of the Proof:}

We first close the specification using the Shadowing Corollary~\ref{CSH} and obtain
a periodic orbit $\Ga$.
Then perturb the Lagrangian by a potential $\phi$ which is a 
non-negative channel centred at $\pi(\Ga)$
defined in~\eqref{defphi}. 
The curve $\Ga$ is a periodic orbit for the flows of $L$ and of $L+\phi$.
We show that $\Ga$ is contained in the Aubry set $\cA(L+\phi)$ by proving that
any semi-static curve $x:]-\infty,0]\to M$ for $L+\phi$ has 
$$
\a\text{-limit of }(x,\dx)=\Ga;
$$
because by Ma\~n\'e \cite[Theorem V.(c)]{Ma7}, $\a$-limits of semi-static orbits are static.
This is done by calculating the action of each segment of the semi-static which is spent 
outside of a small neighbourhood of $\pi(\Ga)$, and proving that it has a uniform positive
lower bound. Since the total action of a semi-static is finite, the quantity of those segments 
is finite. Thus the semi-static eventually stays forever in a small neighbourhood of $\Ga$.
The expansivity of $\cA(L+\phi)$ implies that the $\a$-limit of the semi-static is $\Ga$.

\bigskip

{\bf Proof of Proposition~\ref{Ppert}:}

By adding a constant we can assume that 
\begin{equation}\label{clo}
c(L)=0.
\end{equation}
Let $u$ be a $C^1$ critical subsolution of the Hamilton-Jacobi equation 
for $L$, $H(x, d_xu)\le c(L)$. Thus
\begin{equation}\label{Ldu}
L - du \ge 0.
\end{equation}

By Gronwall's inequality and the continuity of Ma\~n\'e's critical value $c(L)$
(see 
\cite[Lemma~5.1]{CP})  there is $\a>0$ and $\ga_0$  such that if 
$\lV\phi\rV_{C^2}\le 1$,  $0<\ga<\ga_0$ and
$\Ga$ is a periodic orbit for $L+\phi$ 
with energy smaller than $c(L+\phi)+1$ 
then
\begin{equation}\label{tima}
d(\vr^{L+\phi}_s(\vrt),\Ga)\le\frac\ga 4 
\quad \text{ and }\quad
d(\vr^{L+\phi}_t(\vrt),\Ga)\ge\frac\ga3
\quad\then \quad 
|t-s|> \a.
\end{equation}

The graph property states that the projection $\pi:\cA(L)\to M$ has a Lipschitz
inverse (see Ma\~n\'e \cite{Ma7}). The Lipschitz constant is the same as $C$ in
Mather's Crossing Lemma~\ref{CL}.
The Aubry set has energy $c(L)$ and $c(L+\phi)$ is continuous on $\phi$
(see \cite{CP} Lemma~5.1, p.~660).
Then one can choose $\e_1$ and $K$, $C>1$ in Lemma~\ref{CL}  such that if
$\lV \phi\rV_{C^2}<\e_1$ then $\cA(L+\phi)$ is a graph with Lipschitz constant $C$.

By the upper semicontinuity of the Ma\~n\'e set 
(see \cite{CP} Lemma~5.2, p.~661),  we can 
choose a neighbourhood $U$ of $\mN(L)$
and $0<\e_2<\e_1$  such that if $\lV \phi\rV_{C^2}<\e_2$
then the set 
$$
\La(\phi) := \bigcap_{t\in\re}\vr_{-t}^{L+\phi}(\ov{U})
$$
is hyperbolic and contains $\mN(L+\phi)$.
We can assume that in the statement of 
Proposition~\ref{Ppert}, $\e<\e_3$
from~\eqref{gadbe0}.
Let $\epsilon_0>0$ be a flow expansivity constant for $\mN(L+\phi)$
as in Definition~\ref{dfe} and Remark~\ref{rue}.

Fix $K_1>0$ such that
\begin{equation}\label{K1}
[E_L\le c(L)+1]\subset [|v|\le K_1].
\end{equation}

We can assume that $\cA(L)$ has no periodic points.
By Lemma~\ref{Lgade}, $\ga_\de$ is small when $\de$ is small.
Choose $\de$ and a $\de$-possible 1-specification 
with $\ga_\de$
and $\frac\de{\ga_\de}$  so small that
\begin{align}
&\;\de < \de_0(1) \quad\text{ from Corollary~\ref{CSH}},
\notag
\\
&\gad < \epsilon_0 \qquad \text{ a flow expansivity constant for $\mN(L+\phi)$, }
\label{gadeo}
\\
&\ga_\de <\min\{\be_0,\be_1\} \quad 
\begin{matrix}
\text{where  $\be_0$  and $\be_1$  are from 
Propositions~\ref{B71} and~\ref{B16} }
\\
\text{ for $\mN(L+\phi)$, for all  $\lV\phi\rV_{C^2}<\e_3<\e_2$.}
\label{gadbe0}
\end{matrix}
\\
2 &\ga_\de < \de_1 \qquad \de_1:=\de[K_1]
\text{ from Lemma~\ref{CL}, where  $K_1$ is from~\eqref{K1},
}
\label{dfde1}
\\
&\ga_\de < \eta 
\qquad \text{ where $\eta$ is from the canonical coordinates \ref{caco} for $\mN(L)$ as in \eqref{cacoN},}
\label{gadeta}
\end{align}
and such that writing 
\begin{equation}\label{gabarde}
\ogd:=\frac{\ga_\de}{C  (B+1)}
<\tfrac 12\ga_\de
\end{equation}
we have that
\begin{align}
&\ogd<\ga_0\hskip 2cm\text{ where $\ga_0$ is from~\eqref{tima},}
\label{gatima}
\\
&\ogd - 2 Q\, \de > \tfrac 3 4 \ogd
\qquad Q:=Q(1) \text{ from Corollary~\ref{CSH} with} 
\label{gaqd}
\\
&  Q>1,
\label{Q>1}
\end{align}
 and there is 
\begin{equation}\label{rho14ga}
\de<\rho<\tfrac 14\ogd
\end{equation} 
such that
\begin{gather}
\tfrac 14\e\, \rho^2 > JE\, \de^2,
\label{rho1}
\\
 C\rho > \de\; \sqrt{\tfrac{JE}\eta},
 \label{rho15}\\
 \left(  \tfrac 1{32}\, \e\, (\ogd)^2 -JE \de^2\right) \a
  -96 KD^2 C^2 (B+1)^2\rho^2 - 3JE\,\de^2
  > 0.
  \label{rho2}
\end{gather}
where $\a$ is from~\eqref{tima}, 
$E$ is from Lemma~\ref{ALc}, $B$ is from Lemma~\ref{B4},
 $C$ and $\eta$ are from Lemma~\ref{CL} and \eqref{CL>1}, with
 \begin{equation}\label{C>1}
 C>1,
 \end{equation}
 $D$ is from Proposition~\ref{B71} 
and $K$ is from Lemma~\ref{LKTr}
applied to the compact $\Delta=$ \linebreak
$[E_L\le c(L)+5]$.

 Let $Q=Q(1)>1$ be from the Shadowing Corollary~\ref{CSH} applied 
 to the hyperbolic set $\cA(L)$ in $E^{-1}\{c(L)\}$.
 Let $\Ga$ be the periodic orbit with energy $c(L)$ which $Q\de$-shadows the 
 1-specification $\{\vr_{[T_k,T_{k+1}]}(\tt_k)\}$
and let $\mu_\Ga$ be the $L$-invariant probability measure supported on $\Ga$.
Let $T$ be the period of $\Ga$.

Let $\phi:M\to[0,1]$ be a $C^\infty$ function such that  $\lV \phi\rV_{C^2}<10\, \e$ and
\begin{equation}\label{defphi}
0\le \phi(x)=
\begin{cases}
0 &\text{if } x\in\pi(\Ga),
\\
\ge \tfrac 14 \e \, \rho^2 
&\text{if } d(x,\pi\Ga)\ge \rho,
\\
\tfrac 1{32}\e\, (\ogd)^2
&\text{if }d(x,\pi\Ga)\ge \tfrac 14 \ogd.
\end{cases}
\end{equation}

Write
\begin{equation*}\label{defLL}
\L:=L+\phi+c(L+\phi)-du.
\end{equation*}

\begin{Claim}
If $\frac\de{\ga_\de}$ is small enough then
\begin{enumerate}
\item \label{cl1}
We have that
$$
\inf\limits_{d(s,t)_{\text{mod }T}\ge 2} d\big(\pi\Ga(s),\pi\Ga(t)\big)
 >\tfrac 34 \,\ogd.
 $$
 In particular the neighbourhood $B(\pi\Ga,\tfrac 38\ogd)$ of $\pi\Ga$ of radius 
 $\frac 38 \ogd$ has no self intersections, i.e. it is homeomorphic to
 $S^1\times ]0,1[^{\dim M-1}$.
 
\item\label{cl2}
 If  $x:]-\infty,0]\to M$ is a semi-static orbit for $\L$ then for all $t\le -1$
\begin{align}\label{ecl2}
&\text{either }\quad  
d\big([x(t),\dx(t)],\Ga\big) \le \de \, \sqrt{\tfrac{ J E}{\eta_1}}
\quad\text{ or } \quad d\big([x(t),\dx(t)],\Ga\big) \le C\, d\big(x(t),\pi\Ga\big),
\\
&\text{or }\quad d(x(t),\pi\Ga)\ge \de_1,
\label{ecl3}
\end{align}
where  $E$ is from Lemma~\ref{ALc}; $\eta_1=\eta_1(K_1)$, $C=C(K_1)$
and $\de_1=\de_1(K_1)$ are from Lemma~\ref{CL} for $K=K_1$ from~\eqref{K1}.
\end{enumerate}
\end{Claim}
{\it Proof:}

\eqref{cl1}.
 If  $d(s,t)_{\text{mod T}}\ge 2$, 
 by Corollary~\ref{CSH} for $L$,
  there are points $\tt_i$, $\tt_j$
 and $s_i,\, t_j\in\re$  in the specification,
  with $\max\{|s-s_i|,\,|t-t_j|\}\le Q\de$ such that
\begin{gather*}
d\big(\pi\Ga(s),\pi\vr_{s_i}\tt_i\big)
\le d\big(\Ga(s),\vr_{s_i}(\tt_i)\big)
\le Q\de,
\\
d\big(\pi\Ga(t),\pi\vr_{t_j}\tt_j\big)
\le d\big(\Ga(t),\vr_{t_j}(\tt_j)\big)
\le Q\de.
\end{gather*}
If $\de$ is small we have that $|s_i-t_j|_{\text{mod }T}\ge 1$ and then
$d(\vr_{s_i}(\tt_i),\vr_{t_j}(\tt_j))\ge \ga_\de$.
Since $\tt_i,\,\tt_j\in\cA(L)$, by the graph property for $\cA(L)$ and \eqref{gabarde}
we have that
$$
d(\pi\vr_{s_i}\tt_i,\pi\vr_{t_j}\tt_j)
\ge \tfrac 1C\,
d(\vr_{s_i}\tt_i,\vr_{t_j}\tt_j)
\ge \tfrac 1C\ga_\de\ge \ogd.
$$
Then
\begin{align*}
d\big(\pi\Ga(s),\pi\Ga(t)\big)
&\ge d(\pi\vr_{s_i}\tt_i,\pi\vr_{t_j}\tt_j)
- d(\pi\Ga(s),\pi\vr_{s_i}\tt_i)
- d(\pi\Ga(t),\pi\vr_{t_j}\tt_j)
\\
&\ge \ogd-2 Q\de.
\end{align*}
Therefore by~\eqref{gaqd}
$$
\inf_{d(s,t)_{\text{mod $T$}}\ge 2} d\big(\pi\Ga(s),\pi\Ga(t)\big)
\ge \ogd - 2 Q\de >\tfrac 34 \,\ogd.
$$

\eqref{cl2}. Suppose by contradiction that there exists $t\le -1$ such that 
\begin{equation}\label{posso}
d(x(t),\pi\Ga)<\de_1 \qquad \text{ and }
\end{equation}
$$
 d\big([x(t),\dx(t)],\Ga\big)^2 >  \de^2 \, \tfrac{ J E}{\eta_1}
\quad\text{ and } \quad d\big([x(t),\dx(t)],\Ga\big) > C\, d(x(t),\pi\Ga).
$$

First we check that we can apply Lemma~\ref{CL} to $\L$.
For $\ga:[0,T]\to M$ we have that 
$$
\oint_\ga c(L+\phi)-du = T \, c(L+\phi) -u(\ga(T))+u(\ga(0))
$$
depends only on the time interval $T$ and the endpoints of $\ga$.
Thus instead of  $\L$ it is enough to apply Lemma~\ref{CL} to $L+\phi$, for whom 
it holds if $\phi\in\cU_2$ is small enough.

Now we check the speed hypothesis in Lemma~\ref{CL}.
Observe that 
$$
E_\L=v\,\L_v-\L=E_{L+\phi}-c(L+\phi)=E_L-\phi-c(L+\phi)
$$
 and that by~\eqref{minmeas}
 $$
 c(\L)=c\big(L+\phi+c(L+\phi)\big)=0.
 $$
 Therefore
 $$
 \cN(\L)\subset [E_\L= c(\L)]
 \subset [E_L=\phi+c(L+\phi)].
 $$
 If $\phi$ is small enough 
$$
\phi+c(L+\phi)<c(L)+1,
$$
and then $\dx(t)\in
\cN(\L)\subset [E_L\le c(L)+1]$. The Shadowing Corollary~\ref{CSH} is applied 
to the hyperbolic set $\cA(L)$ of the lagrangian flow $\vr^L$ restricted to the energy 
level $[E_L=c(L)]$. So the shadowing periodic orbit $\Ga$ is in the same manifold
$\Ga(t)\in [E_L=c(L)]$. Therefore
$$
\forall t \qquad
\dx(t),\; \Ga(t) \in [E_L\le c(L)+1].
$$
 
Finally we check the distance hypothesis in Lemma~\ref{CL}. 
Let $t_0$ be such that $d(x(t),\pi\Ga)=d(x(t),\pi(\Ga(t_0)))$.
By~\eqref{posso} and the definition of $\de_1$ in \eqref{dfde1} we can apply Lemma~\ref{CL} for $\L$ 
and  $K=K_1$ from~\eqref{K1},
to $x$ and $\pi\Ga$ at $x(t)$ and $\pi(\Ga(t_0))$.
Using $0<\e\le 1$ from Lemma~\ref{CL} we obtain $C^1$ curves
$w_1,\,w_2:[-\e,\e]\to M$ with $w_1(-\e)=x(t-\e)$, $w_1(\e)=\pi\Ga(t_0+\e)$,
$w_2(-\e)=\pi\Ga(t_0-\e)$, $w_2(\e)=x(t+\e)$ such that
$$
A_\L(w_1)+A_\L(w_2) 
\le A_\L(\pi\Ga|_{[t_0-\e,t_0+\e]}) + A_\L(x|_{[t-\e,t+\e]})
- \eta_1\, d([x(t),\dx(t)],\Ga(t_0))^2.
$$
Since $\phi\ge 0$ we have that 
\begin{equation}\label{clf}
c(L+\phi)\le c(L)=0.
\end{equation}
Using Lemma~\ref{ALc}, $\phi|_{\pi\Ga}\equiv 0$ and that $\Ga$ is a closed curve
we have that 
$$
A_\L(\pi\Ga) = A_{L+c(L+\phi)}(\pi\Ga) \le A_{L+c(L)}(\pi\Ga) \le 
J E \de^2.
$$
We compute the action of the curve $w_1*\pi\Ga|_{[t_0+\e,t_0+T-\e]}*w_2$
which joins $x(t-\e)$ to $x(t+\e)$.
\begin{align*}
A_\L(w_1)&+A_\L(\pi\Ga|_{[t_0+\e,t_0+T-\e]})+A_\L(w_2) <
\\
&< A_\L(x|_{[t-\e,t+\e]})
+ A_\L(\pi\Ga|_{[t_0-\e,t_0+\e]}) + A_\L(\pi\Ga|_{[t_0+\e,t_0+T-\e]})
- \eta_1\, d([x(t),\dx(t)],\Ga(t_0))^2
\\
&\le  A_\L(x|_{[t-\e,t+\e]})
+ J E \de^2 - \eta_1\, d([x(t),\dx(t)],\Ga)^2
\\
&< A_\L(x|_{[t-\e,t+\e]}), \qquad \text{using \eqref{posso}}.
\end{align*}
This contradicts the assumption that $x$ is semi-static for $\L$.

\hfill$\triangle$

\bigskip

Observe that $\Ga$ is also a periodic orbit for $L+\phi$. 
By Lemma~\ref{ALc} and \eqref{clo} we have that 
\begin{equation}\label{cLp}
c(L+\phi)\ge -\int (L+\phi) \;d\mu_\Ga
= -\int L \; d\mu_\Ga
\ge - \frac{JE}T \, \de^2,
\end{equation}
where $T$ is the period of $\Ga$.
Since we can assume that $\cA(L)$
has no periodic orbits, if $\de$ is small enough 
\begin{equation}\label{T>1}
T\ge 1.
\end{equation}

We will prove that  any semi-static curve $x:]-\infty,0]\to M$ for $L+\phi$
has 
$\a$-limit$\{(x,\dx)\}$ $= \Ga$.
Since $\a$-limits of semi-static orbits are static (Ma\~n\'e \cite[Theorem V.(c)]{Ma7}),
this implies that $\Ga\subset\cA(L+\phi)$.
Thus finishing the proof of Proposition~\ref{Ppert}.

Since by~\eqref{gadeo} the number
 $\ogd$ is smaller than the flow expansivity constant of $\mN(L+\phi)$,
it is enough to prove that  the tangent $(x,\dx)$ of 
any semi-static curve $x:]-\infty,0]\to M$ 
spends only  a bounded time outside the $\tfrac 38\ogd$-neighbourhood of $\Ga$.

Let $x:]-\infty,0]\to M$ be a semi-static curve for $L+\phi$.
Let $\tt:=(x(0),\dx(0))$.

By~\eqref{dfde1} and~\eqref{gabarde} we have that
\begin{equation}\label{dxpgd1}
d(x(t),\pi\Ga)\ge \de_1
\quad\then\quad
d(x(t),\pi\Ga)>\tfrac 14 \ogd.
\end{equation}
By \eqref{ecl2}-\eqref{ecl3} and \eqref{rho15} we have that
\begin{equation}\label{Drho}
d(\vr_t\tt,\Ga)> C\rho
\quad \&\quad 
d(x(t),\pi\Ga)<\de_1
 \quad \then \quad 
d(x(t),\pi\Ga) \ge \tfrac 1C \, d(\vr_t\tt,\Ga).
\end{equation}
By~\eqref{rho14ga}, \eqref{gabarde} we have that $\tfrac 14\ga_\de>C\rho$.
And then  from~\eqref{dxpgd1} and~\eqref{Drho} we get
\begin{equation}\label{Crho}
d(\vr_t\tt,\Ga)\ge\tfrac 14 \gad\left(> \tfrac 14 C\,\ogd\right)
 \quad \then \quad 
d(x(t),\pi\Ga) 
> \tfrac 14\,\ogd.
\end{equation}
Then by \eqref{CL>1},  \eqref{defphi},  \eqref{cLp},  \eqref{T>1} and \eqref{rho1}, 
 we have that 
\begin{equation}\label{phirho}
d(\vr_t\tt,\Ga)>C\rho \quad \then \quad
\phi(x(t))+c(L+\phi) \ge  \tfrac14 \e\rho^2-\tfrac{JE}T \de^2 =:a_0>0.
\end{equation}

For $\xi\in\mN(L+\phi)$ consider the local invariant manifolds
\begin{align*}
W^s_\eta(\xi)&:=\{\,\zeta\in E_L^{-1}\{c(L)\}\;:\; \forall t\ge0 \quad
d(\vr_t(\zeta),\vr_t(\xi))\le \eta\,\},
\\
W^{ss}_\eta(\xi) &:=\{\, \zeta\in W^s_\eta(\xi)\;:\; \lim_{t\to+\infty}
d(\vr_t(\zeta),\vr_t(\xi))=0\,\},
\\
W^u_\eta(\xi)&:=\{\,\zeta\in E_L^{-1}\{c(L)\}\;:\; \forall t\le0 \quad
d(\vr_t(\zeta),\vr_t(\xi))\le \eta\,\},
\\
W^{uu}_\eta(\xi) &:=\{\, \zeta\in W^s_\eta(\xi)\;:\; \lim_{t\to-\infty}
d(\vr_t(\zeta),\vr_t(\xi))=0\,\}.
\end{align*}

Also consider the canonical coordinates (cf.~\ref{caco}) on $\mN(L)$, i.e.
{\sl there are $\a_1,\,\eta>0$ such that if
 $\xi,\,\zeta\in\mN(L)$
and $d(\xi,\zeta)<\a_1$
then there is $v=v(\xi,\zeta)\in\re$, $|v|\le\eta$ such that
\begin{align}
\langle \xi, \zeta \rangle:=W^{ss}_\eta(\vr_v(\xi))\cap W^{uu}_\eta(\zeta)
\ne \emptyset.
\label{cacoN}
\end{align}
}
We use the canonical coordinates to parametrize the approaches of
$\vr_t(\tt)$ to $\Ga$ in the following way. We can assume that $\gad<\a_1$.
The local weak stable manifold of $\Ga$
$$
W^{ss}_\eta(\Ga):=\textstyle\bigcup_{\xi\in\Ga}W^{s}_\eta(\xi)
=\bigcup_{\xi\in\Ga}W^{ss}_\eta(\xi)
$$
forms a cylinder homeomorphic to $\Ga(\re)\times ]0,1[^{\dim M-1}$.
When $d(\vr_t(\tt),\Ga(\re))<\gad$ the strong local unstable manifold
$W^{uu}_\eta(\vr_t(\tt))$ intersects this cylinder transversely  and 
defines a unique time parameter $v(t)$ (mod $T$) such that 
\begin{equation}\label{vttt}
W^{ss}_\eta(\Ga(v(t)))\cap W^{uu}_\eta(\vr_t(\tt))\ne 0.
\end{equation}
Since the family of strong invariant manifolds is invariant 
under each iterate $\vr_t$
we have that if $d(\vr_t(\tt),\Ga(\re))<\gad$ for all $t\in[a,b]$
then
$$
\forall s\in[0,b-a]\qquad v(a+s)= v(a)+s.
$$

 Let  $B$ be from Lemma~\ref{B4}.
 Write
$\tt=(x(0),\dx(0))$ and
define $S_k(\tt)$, $T_k(\tt)$ recursively by
\begin{align*}
S_0(\tt)&:=0,\\
T_k(\tt)&:=\sup\,\big\{\,t< S_{k-1}(\tt)\;\big|\; d\big(\vr_t(\tt),\Ga(v(t))\big)
\le C(B+1)\rho\,\big\},
\\
C_k(\tt)&:=\sup \,\big\{\,t<T_k(\tt)\;\big|\; d(\vr_t(\tt),\Ga(\re))=\tfrac\gad3\,\big\},
\\
S_k(\tt)&:=\inf\big\{\,t>C_k(\tt)\;\big|\;d\big(\vr_t(\tt),\Ga(v(t))\big)\le C(B+1)\rho\,\big\}.
\end{align*}

\begin{figure}[h]
\resizebox*{13cm}{3.5cm}{\includegraphics{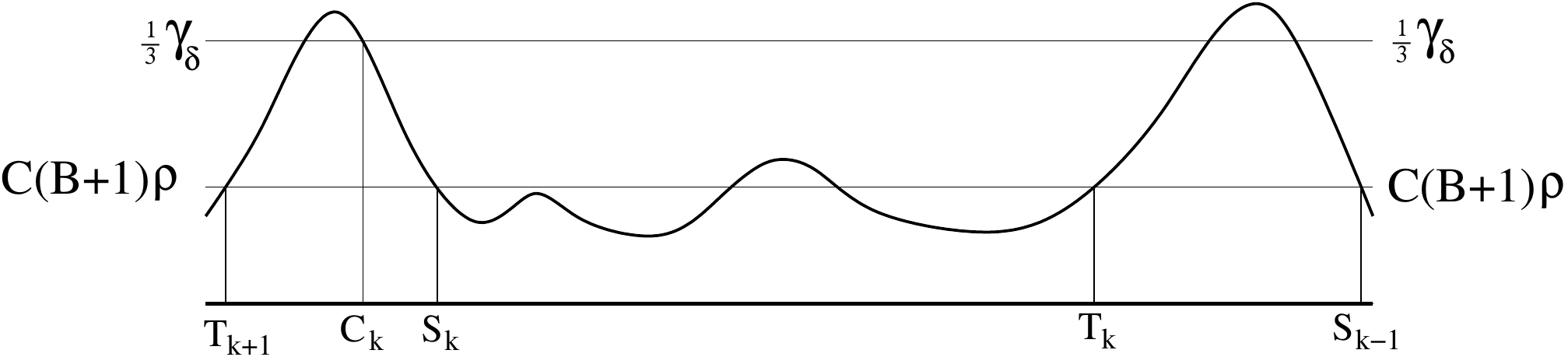}} 
\caption{This figure illustrates the distance of the orbit of $\tt$ to the
periodic orbit $\Ga$ and the choice of $C_k$, $S_k$ and $T_k$.
}\label{escapes}
\end{figure}

\begin{Claim}\label{stcs}\quad

\begin{enumerate}
\item\label{stcs1} If $S_{k-1}(\tt)>-\infty$ then $T_k(\tt)>-\infty$.
\item\label{stcs25} If $T_k(\tt)>-\infty$ then $T_{k+1}(\tt) \le C_k(\tt)$.
\item\label{stcs15} If $C_{k-1}(\tt)>-\infty$ then
$d\big[\vr_{T_k(\tt)}(\tt),\Ga(v(T_k(\tt)))\big]=C(B+1)\rho$.
\item\label{stcs2} If $C_k(\tt)>-\infty$ then $C_k(\tt)<S_k(\tt)\le T_k(\tt)$.
\item\label{stcs3} If the sequence $\{T_k\}$ is finite, then 
$\a\text{-limit}(x,\dx)=\Ga$.
\item\label{stcs6} If $t\in[S_k(\tt),T_k(\tt)]$ then $d(\vr_t (\tt),\Ga(\re))\le\tfrac 13\gad$.
\end{enumerate}

\end{Claim}

\noindent{\it Proof:}

\noindent\eqref{stcs1}. 
 Suppose by contradiction that $S_{k-1}(\tt)>-\infty$ 
 but $T_k(\tt)=-\infty$.
Let $\Phi^L_k$ be the action potential for $L$. 
Since $\Phi^L_{c(L)}$ is Lipschitz, it is bounded 
on $M\times M$.
\begin{align}
\int_{-t}^{S_{k-1}(\tt)}L(x,\dx)
=
\int_{-t}^{S_{k-1}(\tt)}\big\{c(L)+L(x,\dx)\big\} 
&\ge \Phi^L_{c(L)}\big(x(-t),x(S_{k-1}(\tt))\big)
\notag
\\
&\ge\inf_{y,z\in M}\Phi^L_{c(L)}(y,z)=:b_0 >-\infty.
\label{intb0}
\end{align}
Recall that $\eta$ is from the canonical coordinates~\ref{caco} for $\mN(L)$ as in~\eqref{cacoN}
and satisfies~\eqref{gadeta}.
Since $T_k(\tt)=-\infty$ we have that for all $t<S_{k-1}(\tt)$ either
\begin{equation}\label{caseta1}
d(\vr_t(\tt),\Ga(\re))>\eta>\gad
\qquad \text{ or }
\end{equation}  
\begin{equation}\label{caseta}
d(\vr_t(\tt),\Ga(\re))\le \eta\quad \text{ but }\quad
d(\vr_t(\tt),\Ga(v(t)))> C(B+1)\rho.
\end{equation}
In the case~\eqref{caseta}
let $s(t)$ be such that $d(\vr_t(\tt),\Ga(s(t)))=d(\vr_t(\tt),\Ga(\re))\le\eta$.
We have that
\begin{align}
\langle \Ga(s(t)),\vr_t(\tt)\rangle
&=W^{s}_{\text{loc}}(\Ga(s(t)))\cap W^{uu}_\eta(\vr_t(\tt))
\notag
\\
&=W^s_{\text{loc}}(\Ga(v(t)))\cap W^{uu}_\eta(\vr_t(\tt))
=\langle \Ga(v(t)),\vr_t(\tt)\rangle
\notag
\\
&=W^{ss}_\eta(\Ga(v(t)))\cap W^{uu}_\eta(\vr_t(\tt)).
\label{Gavt}
\end{align}
 We apply Lemma~\ref{B4} with $x:=\Ga(s(t))$ and $y:=\vr_t(\tt)$.
 Using~\eqref{Bdxy} we have that
 \begin{equation}\label{dyvrv}
 d(y,\vr_v(x))\le d(y,x)+ d(x,\vr_v(x))
\le (1+B)\,d(y,x).
\end{equation}
Observe that \eqref{Gavt} implies that $\vr_v(x)=\Ga(v(t))$.
Replacing $x$ and $y$ in~\eqref{dyvrv} 
and using~\eqref{caseta} we have that
\begin{align}
d(\vr_t(\tt),\Ga(\re))
&= d(\vr_t(\tt),\Ga(s(t)))
\ge \tfrac 1{1+B} \;d(\vr_t(\tt),\Ga(v(t)))
\notag
\\
&> C \rho.
\label{caseta3}
\end{align}
Observe that by~\eqref{rho14ga} and~\eqref{gabarde}, 
in case~\eqref{caseta1} inequality~\eqref{caseta3} 
also holds. Therefore
\begin{equation}\label{tsk}
\forall t<S_{k-1}(\tt) \qquad
d(\vr_t(\tt),\Ga(\re)) \ge C\,\rho.
\end{equation}

Since $x$ is semi-static for $L+\phi$ we have that 
\begin{align}
\infty>
\sup_{y,z}\Phi^{L+\phi}_{c(L+\phi)}(y,z)
&\ge
\Phi_{c(L+\phi)}^{L+\phi}\big(x(-t),x(S_{k-1}(\tt))\big)
\notag\\
&=\int_{-t}^{S_{k-1}(\tt)} \big[ L(x,\dx)+\phi(x)+c(L+\phi)\big]
\notag\\
&=\int_{-t}^{S_{k-1}(\tt)} L(x,\dx) 
+
\int_{-t}^{S_{k-1}(\tt)} 
\big[ \phi(x)+c(L+\phi) \big]
\notag\\
&\ge b_0 + a_0 \big(t+S_{k-1}(\tt)\big)
\qquad\text{ by \eqref{intb0} and \eqref{tsk}, \eqref{phirho}.}
\label{b+at}
\end{align}
By~\eqref{phirho} we have that $a_0>0$.
Letting $t\to+\infty$, inequality~\eqref{b+at} gives a contradiction.

\medskip
\noindent
\eqref{stcs25}.  
Let 
\begin{equation}\label{fgtv}
f(t):=d(\vr_t(\tt),\Ga(\re))
\qquad\text{  and }\qquad
g(t):=d(\vr_t(\tt),\Ga(v(t))),
\end{equation}
when $g$ is defined (in particular by~\eqref{gadeta} when $f(t)<\gad$).
Then $f(t)\le g(t)$.

Suppose first that $C_k(\tt)=-\infty$. 
Then $f(t)\ne \tfrac 13\gad$ for all $t<T_k(\tt)$.
By hypothesis  $T_k(\tt)>-\infty$, then 
$f(T_k(\tt))\le g(T_k(\tt))\le C(B+1)\rho$.
Therefore, by~\eqref{rho14ga}, \eqref{gabarde},
\linebreak
 $f(t)<\tfrac 13\gad$ for all $t<T_k(\tt)$.
By~\eqref{gadbe0} and  Proposition~\ref{B5}, $\vr_t(\tt)\in W^{uu}_\eta(\Ga(v(t)))$
and by Proposition~\ref{pHPS} 
$\lim_{t\to-\infty}g(t)=0$.
Then $S_k(\tt)=-\infty$ and also $T_{k+1}(\tt)=-\infty$.

Now suppose that $C_k(\tt)>-\infty$.
By the definition of $S_k(\tt)$ 
for all $t\in]C_k(\tt),S_k(\tt)[$
we have that 
$g(t)> C(B+1)\rho$.
 This implies that $T_{k+1}(\tt)\le C_k(\tt)$.

\medskip
\noindent\eqref{stcs15}.
Let $f,\,g$ be as in~\eqref{fgtv}.
By the hypothesis $C_{k-1}(\tt)>-\infty$ and by the  definition of $C_{k-1}(\tt)$,
 $C_{k-1}(\tt)\le T_{k-1}(\tt)$. Then $f(C_{k-1}(\tt))=\tfrac 13\gad$.
By~\eqref{rho14ga}, $C(B+1)\rho< \tfrac 13\gad$, and then
\begin{equation}\label{gcgfg}
C(B+1)\rho<\tfrac 13\gad=f(C_{k-1}(\tt))\le g(C_{k-1}(\tt)).
\end{equation}
By the definition of $S_{k-1}(\tt)$ we have that
$C_{k-1}(\tt)\le S_{k-1}(\tt)$.
But by~\eqref{gcgfg}, $g(C_{k-1}(\tt))\ge\tfrac 13\gad$
and by the definition of  $S_{k-1}(\tt)$, if $S_{k-1}(\tt)<+\infty$
then $g(S_{k-1}(\tt))\le C(B+1)\rho<\tfrac 13\gad$.
Therefore $C_{k-1}(\tt)\ne S_{k-1}(\tt)$ and then 
\begin{equation}\label{cksk}
C_{k-1}(\tt)<S_{k-1}(\tt)\le+\infty.
\end{equation}
By~\eqref{gcgfg}
and the definition of $S_{k-1}(\tt)$ we have that 
$$
\forall t\in  ]C_{k-1}(\tt),S_{k-1}(\tt)[
\qquad
g(t)> C(B+1)\rho.
$$
This implies that $T_k(\tt)< C_{k-1}(\tt)$,
with strict inequality by~\eqref{gcgfg}.
By~\eqref{cksk} and  item~\eqref{stcs1} we have that $C_{k-1}(\tt)>-\infty$ 
implies that $T_k(\tt)>-\infty$.
Therefore
\begin{equation}\label{tkck1}
-\infty<T_k(\tt)<C_{k-1}(\tt)<S_{k-1}(\tt).
\end{equation}
The definition of $T_k(\tt)$ and the continuity of $g(t)$
on its domain
 imply that 
 \begin{equation}\label{gcb1}
 g(T_k(\tt))\le C(B+1)\rho.
 \end{equation}
 The domain of definition and continuity of $g$ contains
 $f^{-1}(]0,\gad[)\supset g^{-1}(]0,\gad[)$.
By the intermediate value theorem for $g$ on connected components 
of $[g\le\gad]$
and~\eqref{tkck1}, \eqref{gcb1}, \eqref{gcgfg}, the image 
$g([T_k(\tt),C_{k-1}(\tt)])$, and hence also 
$g(] -\infty,S_{k-1}(\tt)[)$,
contain the closed interval 
$\big[C(B+1)\rho,\tfrac 13\gad\big]$.
Therefore, by the definition of $T_k(\tt)$, we have that 
$g(T_k(\tt))=C(B+1)\rho$.

\medskip
\noindent\eqref{stcs2}.
Let $f$, $g$ be from~\eqref{fgtv}.
 If $C_k(\tt)>-\infty$ then 
 by the definition of $C_k(\tt)$,
 \begin{equation}\label{ckletk}
 C_k(\tt)\le T_k(\tt).
 \end{equation}
 Therefore $T_k(\tt)>-\infty$.
 Then the definition of $T_k(\tt)$ implies that 
 \begin{equation}\label{gtkcbr}
 g(T_k(\tt))\le C(B+1)\rho.
 \end{equation}
 Since $f(t)$ is continuous, 
 \begin{equation}\label{fck3g}
 f(C_k(\tt))=\tfrac 13\gad.
 \end{equation}
By~\eqref{gtkcbr}, ~\eqref{rho14ga} and~\eqref{fck3g} we have that 
\begin{equation}\label{c43gf}
g(T_k(\tt))\le C(B+1)\rho<\tfrac 14\ga_\de<\tfrac13\gad=f(C_k(\tt))\le g(C_k(\tt)).
\end{equation}
This implies that $C_k(\tt)\ne T_k(\tt)$.
This together with \eqref{ckletk} imply that
\begin{equation}\label{ck<tk}
C_k(\tt)<T_k(\tt).
\end{equation}
By~\eqref{gtkcbr} and~\eqref{ck<tk} 
the value $S_k(\tt)$ is an infimum of a set which contains $T_{k}(\tt)$,
therefore 
\begin{equation}\label{skztk}
S_k(\tt)\le T_k(\tt).
\end{equation}
This proves the second inequality in item~\eqref{stcs2}.

The first of the following inequalities follows from the definition of $S_k(\tt)$.
The second inequality is~\eqref{skztk}. The third inequality follows from
the definition of $T_k(\tt)$.
\begin{equation}\label{csts}
C_k(\tt)\le S_k(\tt)\le T_k(\tt)\le S_{k-1}(\tt).
\end{equation}
We get that
$$
-\infty <C_k(\tt)\le S_k(\tt)\le S_{k-1}(\tt)\le\cdots \le S_0(\tt):=0<+\infty.
$$
From the definition of $S_k(\tt)$ and $S_k(\tt)<+\infty$, and then~\eqref{fck3g},
 we have that
$$
g(S_k(\tt))\le C(B+1)\rho<\tfrac 13\gad=f(C_k(\tt))\le g(C_k(\tt)).
$$
In particular $C_k(\tt)\ne S_k(\tt)$.
Thus from~\eqref{csts}, $C_k(\tt)<S_k(\tt)$.

\medskip
\noindent
\eqref{stcs3}. If the sequence $\{T_k\}$ is finite, there is $\ell\in\na$ such that 
$T_\ell>-\infty$ and $T_{\ell+1}=-\infty$. Let $f,\,g$ be as in~\eqref{fgtv}.
By item~\eqref{stcs25} we have that 
$-\infty< T_\ell(\tt)\le C_{\ell-1}(\tt)$.
Then we can apply item~\eqref{stcs15} and use~\eqref{rho14ga}
to obtain
\begin{equation}\label{ftltt}
f(T_\ell(\tt))\le g(T_\ell(\tt))=C(B+1)\rho<\tfrac 13\gad.
\end{equation}
Since $T_{\ell+1}(\tt)=-\infty$, by item~\eqref{stcs1}, $S_{\ell}(\tt)=-\infty$
and by item~\eqref{stcs2}, $C_{\ell}(\tt)=-\infty$. 
Since $C_{\ell}(\tt)=-\infty$  
we have that $f(t)\ne \tfrac 13\gad$ for all $t<T_\ell(\tt)$.
But by~\eqref{ftltt}, $f(T_\ell(\tt))<\tfrac 13\gad$.
Since $f(t)$ is continuous, using~\eqref{gadbe0}  we get that
$$
f(t)<\tfrac 13\gad<\be_0 \qquad\text{ for all } t<T_\ell(\tt). 
$$
This implies that there is a continuous function $s:]-\infty,T_\ell(\tt)]\to\re$ such that
$$
\forall t\le T_k(\tt) \qquad d\big(\vr_t(\tt),\Ga(s(t))\big)\le \be_0.
$$
By Proposition~\ref{B71} and Proposition~\ref{pHPS} there is $v\in\re$ and $\la>0$ such that 
$$
\forall t\le T_\ell(\tt)\qquad 
d(\vr_t(\tt),\Ga(t+v))\le D\,\be_0 \,\ee^{-\la (T_\ell(\tt)-t)}.
$$
This implies that $\lim\limits_{t\to +\infty}d(\vr_{-t}(\tt),\Ga)=0$ and that 
$\a\text{-limit}(\tt)=\Ga(\re)$.

\noindent\eqref{stcs6}.
We can assume that $T_k(\tt)>-\infty$, otherwise the statement is empty.
The definition of $T_k(\tt)$ implies that there is a sequence $t_n\uparrow T_k(\tt)$
such that $f(t_k)\le g(t_k)\le C(B+1)\rho$.
The continuity of $f(t)$ implies that $f(T_k(\tt))\le C(B+1)\rho<\tfrac13\gad$.
By the definition of $C_k(\tt)$ we have that $\forall t\in]C_k(\tt),T_k(\tt)]$ $f(t)\ne \tfrac 13\gad$.
Then by the continuity of $f(t)$, $\forall t\in]C_k(\tt),T_k(\tt)]$ $f(t)< \tfrac 13\gad$.
Now it is enough to see that by item~\eqref{stcs2}, $[S_k(\tt),T_k(\tt)]\subset ]C_k(\tt),T_k(\tt)]$.

 \hfill$\triangle$

Let
$$
B_k(\tt):=\sup\big\{\;t<C_k(\tt)\;\big|\; d(\vr_t(\theta),\Ga) \le \tfrac\gad 4\;\big\}.
$$ 
\begin{Claim}
$$
[B_k(\tt),C_k(\tt)]\subset [T_{k+1}(\tt),S_k(\tt)].
$$
\end{Claim}
\noindent{\it Proof:} \quad

Let $f,\,g$ be as in~\eqref{fgtv}.
By the definition of $S_k(\tt)$ we have that $S_k(\tt)\ge C_k(\tt)$.
By the definition of $B_k(\tt)$ and~\eqref{rho14ga}, we have that
\begin{equation}\label{gbkck}
g|_{]B_k,C_k[}\ge  f|_{]B_k,C_k[} > \tfrac 14 \gad > C(B+1)\rho.
\end{equation}
By the definition of $S_k(\tt)$ we have that 
\begin{equation}\label{gcksk}
g|_{]C_k,S_k[}> C(B+1)\rho.
\end{equation}
By the definition of $C_k(\tt)$ and the continuity of $f(t)$
we have that 
\begin{equation}\label{gck}
g(C_k(\tt))\ge f(C_k(\tt))=\tfrac 13\gad > C(B+1)\rho.
\end{equation}
Joining~\eqref{gbkck}, \eqref{gcksk} and \eqref{gck} we get that
$$
g|_{]B_k,S_k[}>C(B+1)\rho.
$$
By the definition of $T_{k+1}(\tt)$ this implies that $T_{k+1}(\tt)\le B_k(\tt)$.

\hfill$\triangle$

If $t\in[B_k(\tt),C_k(\tt)]$, by the definition of $B_k(\tt)$ we have that
$$
d(\vr_t(\tt),\Ga)\ge \tfrac 14 \gad .
$$
Then by \eqref{Crho},
\begin{equation}\label{tbkck}
t\in[B_k(\tt),C_k(\tt)] \quad\then\quad d(x(t),\pi\Ga) > \tfrac 14 \ogd.
\end{equation}
By the definition of $T_{k+1}(\tt)$ we have that 
\begin{equation}\label{dvgcb}
\forall t\in]T_{k+1}(\tt),S_k(\tt)[\qquad d\big(\vr_t(\tt),\Ga(v(t))\big)> C(B+1)\rho.
\end{equation}
The arguments in~\eqref{dyvrv}-\eqref{caseta3}
apply in the case~\eqref{dvgcb} to obtain
\begin{equation}\label{tk1sk}
t\in]T_{k+1}(\tt),S_k(\tt)[\quad\then\quad
d(\vr_t(\tt),\Ga)> C\rho.
\end{equation}
From \{\eqref{tbkck}, \eqref{defphi}\}, \eqref{cLp},
 \{\eqref{gatima}, \eqref{tima}\} and \{\eqref{tk1sk}, \eqref{phirho}\}, we have that
\begin{align}
\int_{T_{k+1}(\tt)}^{S_k(\tt)}\Big(\phi +c(L+\phi)\Big)
&\ge 
\int_{B_k(\tt)}^{C_k(\tt)}\Big( \tfrac 1{32}\, \e\, (\ogd)^2 -\tfrac{ JE}T \,\de^2\Big)
+ \int_{[T_{k+1},S_k]\setminus[B_k,C_k]} \Big( \phi +c(L+\phi) \Big)
\notag\\
&\ge\big( \tfrac 1{32}\, \e\, (\ogd)^2 -\tfrac{ JE}T \,\de^2\big)\, \a + 0.
\label{itk1sk}
\end{align}

Recall that
\begin{equation*}
\L:=L+\phi +c(L+\phi) -du,
\end{equation*}
where $u$ is from~\eqref{Ldu}.
Observe that the lagrangian flow for $\L$ is the same 
as the lagrangian flow $\vr_t$ for $L+\phi$.
Also $\mN(\L)=\mN(L+\phi)$ and $\cA(\L)=\cA(L+\phi)$.
Using \eqref{Ldu} and~\eqref{itk1sk},
\begin{align}
\int_{T_{k+1}(\tt)}^{S_k(\tt)} \L(\vr_t(\tt))\,dt
&= \int_{T_{k+1}(\tt)}^{S_k(\tt)}( L-du )
+\int_{T_{k+1}(\tt)}^{S_k(\tt)} \Big(\phi +c(L+\phi)\Big)
\notag
\\
&\ge 0+ \left(  \tfrac 1{32}\, \e\, (\ogd)^2 -\tfrac{JE}T \de^2\right) \a.
\label{farp}
\end{align}

\noindent
{\it Case 1:}
{\it Suppose that $T_k(\tt)-S_k(\tt)>T+2$.}

Let $m_k\in\na$ be such that
$$
S_k(\tt)+m_k T\le T_k(\tt)-1< S_k(\tt)+(m_k+1) T.
$$
Then $m_k\ge 1$.
Let $R_k(\tt):= S_k(\tt)+m_k T$.
Then $1\le T_k(\tt)-R_k(\tt) < T+1$.
By Claim~\ref{stcs}.\eqref{stcs6}
 $\Ga$ is $\tfrac\gad 3$-shadowed by $\vr_{[S_k,T_k]}(\tt)$. 
 Therefore by  inequality~\eqref{dextxy}  in Proposition~\ref{B71} there is $v\in\re$
  such that $ \forall t\in[S_k,T_k]$
 \begin{equation}\label{sktkd}
 d(\vr_t(\tt),\Ga(t+v))\le D\, \ee^{-\la\min\{t-S_k, T_k-t\}}
 [d(\vr_{S_k}(\tt),\Ga(S_k+v))
 +d(\vr_{T_k}(\tt),\Ga(T_k+v))].
 \end{equation}
 Also the choice of $v$ in Proposition~\ref{B71} is the same as in~\eqref{vttt}
 so that 
 \begin{equation}\label{tvvt}
 t+v = v(t) \qquad \forall t\in[S_k(\tt),T_k(\tt)].
 \end{equation}

\begin{figure}[h]
\resizebox*{8cm}{6cm}{\includegraphics{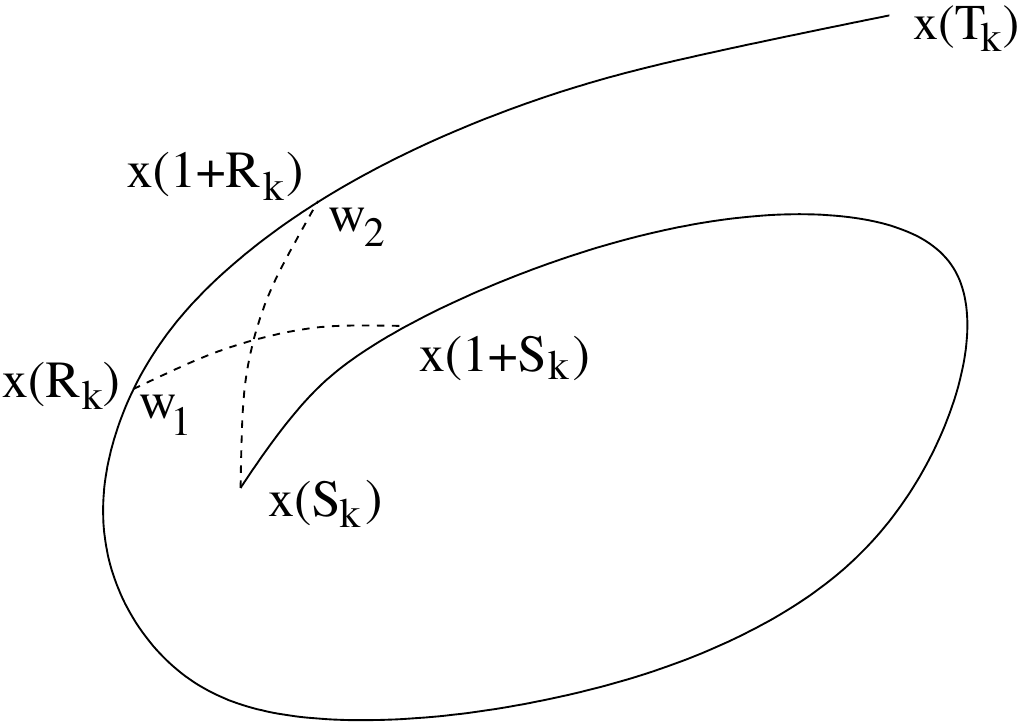}} 
\caption{The auxiliary segments $w_1$ an $w_2$.}
\label{near}
\end{figure}

By the definition of $S_k$ and $T_k$ and the continuity of $g(t)$ on its domain we have that
\begin{equation}\label{sktkm}
g(S_k)\le C(B+1)\rho, \qquad
g(T_k)\le C(B+1)\rho.
\end{equation}

By~\eqref{sktkd}, \eqref{tvvt} and \eqref{sktkm} we have 
for $s\in [0,1]$ that
\begin{align*}
d\big(\vr_{s+R_k}\tt &,\Ga(v(s+R_k))\big)\le
\\
&\le 
 D \ee^{-\la\min\{s+R_k-S_k,T_k-s-R_k\}}
\big[d(\vr_{S_k}\tt,\Ga(v(S_k))) + d(\vr_{T_k}\tt,\Ga(v(T_k)))\big] 
\notag \\
&\le D \,\ee^0\;[g(S_k)+g(T_k)]
\le 2 DC (B+1) \rho.
\notag\\
d(\Ga(v(s+&S_k)),\vr_{s+S_k}\tt) \le 2 D C(B+1)\rho. 
\end{align*}
From~\eqref{tvvt} we have that 
$$
v(s+R_k)=s+R_k+v=s+S_k+v+ m_k T =v(s+S_k) + m_k T.
$$
So that $\Ga(v(s+R_k))=\Ga(v(s+S_k))$.
Adding the inequalities above we get
\begin{equation}
\forall s\in[0,1]\qquad d(\vr_{s+R_k}\tt,\vr_{s+S_k}\tt) \le 4   DC(B+1)\rho.
\label{2cr}
\end{equation}

In local coordinates about $\pi(\Ga)$ define
\begin{alignat*}{6}
w_1(s+R_k) &= (1-&&s) \,&&x(s+R_k)  \;&+& &&s \, &&x(s+S_k), \qquad  s\in[0,1];
\\
w_2(s+S_k) &= &&s \, &&x(s+R_k) &+&\; (1-&&s)\, &&x(s+S_k),\qquad s\in [0,1].
\end{alignat*}
By Lemma~\ref{LKTr}\eqref{KTrb} and \eqref{2cr}
we have that
$$
A_{L+\phi}(x\vert_{[S_k,1+S_k]})+A_{L+\phi}(x|_{[R_k,1+R_k]})
\ge 
A_{L+\phi}(w_1)+A_{L+\phi}(w_2) -96 K D^2C^2 (B+1)^2\rho^2.
$$
Since the pairs of segments $\{\,x\vert_{[S_k,1+S_k]}, \, x|_{[R_k,1+R_k]}\,\}$
and $\{\, w_1,\,w_2\,\}$ have the same collections of endpoints
$$
\int_{S_k}^{1+S_k} du(\dx) + \int_{R_k}^{1+R_k} du(\dx)
= \oint_{w_1}du +\oint_{w_2}du.
$$
Therefore, since $c(L+\phi)$ is constant, 
\begin{equation}\label{crossingp}
A_\L(x\vert_{[S_k,1+S_k]})+A_\L(x|_{[R_k,1+R_k]})
\ge 
A_\L(w_1)+A_\L(w_2) -96 K  D^2 C^2 (B+1)^2 \rho^2.
\end{equation}

The integral of $d_xu$ on closed curves is zero. Therefore
\begin{equation}\label{cllo}
c(\L)=c(L+\phi+c(L+\phi))=0.
\end{equation}
Since $w_1*x|_{[1+S_k,R_k]}$ is a closed curve and $c(\L)=0$, 
\begin{equation}\label{closedp}
A_\L(w_1)+A_\L(x|_{[1+S_k,R_k]}) \ge 0.
\end{equation}
Using \eqref{Ldu} and \eqref{cLp},
\begin{equation}\label{LL>}
\L = (L-du) + \phi + c(L+\phi)\ge 0+0 -\frac{JE}T\,\de^2.
\end{equation}
Since $T_k(\tt)-R_k(\tt)\le T+2$, on the curve $w_2*x|_{[1+R_k,T_k]}$, 
using\eqref{T>1} we have that
\begin{equation}\label{restp}
A_\L(w_2)+A_\L(x|_{[1+R_k,T_k]}) \ge
 - \frac{JE}T\,\de^2\,(T+2) \ge - 3JE\,\de^2.
\end{equation}
From \eqref{crossingp}, \eqref{closedp} and \eqref{restp} we get that
\begin{alignat*}{3}
A_\L(x|_{[S_k,T_k]})
&\ge A_\L(w_1) 
&&+A_\L(w_2)-96KD^2 C^2(B+1)^2\rho^2
\\
&  &&+A_\L(x|_{[1+S_k,R_k]})+A_\L(x|_{[1+R_k,T_k]})
\\
&\ge -96 KD&&^2C^2(B+1)^2\rho^2 - 3JE\,\de^2.
\end{alignat*}
{\it Case 2:  If $T_k-S_k\le T+2$,} from~\eqref{LL>} we also have
\begin{align*}
\hskip -2cm
A_\L(x|_{[S_k,T_k]})&\ge - \frac{JE}T\,\de^2 (T+2) 
\ge - 3JE\,\de^2
\\
&\ge -96 KD^2C^2(B+1)^2\rho^2 - 3JE\,\de^2.
\end{align*}
Adding inequality \eqref{farp} and using \eqref{rho2} we obtain
a positive lower bound for the action independent of $k$:
$$
A_\L(x|_{[T_{k+1},T_k]}) 
\ge 
 \left(  \tfrac 1{32}\, \e\, (\ogd)^2 -\tfrac{JE}T \de^2\right) \a
  -96 K D^2C^2(B+1)^2\rho^2 - 3JE\,\de^2
  > 0.
$$

Since $x$ is also semi-static for $\L$ and by~\eqref{cllo}
$c(\L)=0$, the total action is finite:
 $$
 A_\L(x|_{]-\infty,0]})\le \max_{p,q\in M}\Phi^\L_{c(\L)}(p,q)<+\infty
 $$ 
 is finite. 
Therefore there must be at most finitely many $T_k$'s.

By item~\eqref{stcs3} in Claim~\ref{stcs},  we have that $\a$-limit$(x,\dx)=\Ga$.
Since $\a$-limits of semi-static orbits are static (Ma\~n\'e \cite[Theorem V.(c)]{Ma7}),
we obtain that $\Ga\subset\cA(L+\phi)$. This finishes the proof of Proposition~\ref{Ppert}.

\hfill\qed

\noindent{\bf Proof of Theorem~\ref{HYP}:}

By Theorem~C(a) in \cite{CP}
the set
$$
\cG^k_2:=\{\, \phi\in C^\infty(M,\re) \;|\;
\cM_{\min}(L_0+\phi)=\{\mu\} \text{ and } 
\cA(L_0+\phi) =\supp(\mu)\,\}
$$
is residual\footnote{The proof is the same for all $2\le k\le \infty$.} in $C^k(M,\re)$, $2\le k\le\infty$.
Since any invariant probability in the Aubry set is minimizing 
(c.f. Theorem~IV in Ma\~n\'e~\cite{Ma7},  and  in \cite{CDI}),
for $\phi\in\cG^k_2$ the set $\cA(L_0+\phi)=\supp(\mu)$ is uniquely ergodic.

Let $\cE_0$ be from Theorem~\ref{Tzeroentropy}.
We will prove that any $\phi\in \cE_0\cap\cG^2_2$ can be $C^2$ 
approximated by a potential
$\phi_1$ for which $\cA(L_0+\phi_1)$ contains a periodic orbit.

Fix $\phi\in\cE_0\cap\cG^2_2$. Suppose that $\phi$ can not be $C^2$ 
approximated by a potential
$\phi_1$ for which $\cA(L_0+\phi_1)$ contains a periodic orbit.
Write $L=L_0+\phi$ and let $\vr_t=\vr^L_t$ be the lagrangian flow of $L$.

Let 
$$
A:=1+\sup\nolimits_{|t|\le 1}\Lip\big(\vr_t|_{[E_L\le c(L)+1]}\big). 
$$
By Proposition~\ref{Ppert} with $J=2$ we have that

\medskip
\begin{Statement}\label{st1} \quad

\noindent
There are $\de_0>0$ and $Q>0$ such that for any periodic
$\de$-possible 1-specification in $\cA(L)$ with at most 2 jumps
$\{\, \vr_{[T_i,T_{i+1}]}(\tt_i)\,\}_{i=1,2}$ with $\de<\de_0$
there is an approach
$$
d(\vr_s(\tt_i),\vr_t(\tt_j)) < \tfrac 1{2A} Q\,\de,
$$
with $\{i,j\}\subset\{1,2\}$, $|s-t| (\text{mod }(T_3-T_1))\ge 1$ and 
$s\in[T_i,T_{i+1}]$, $t\in[T_j,T_{j+1}]$,
(if there is only one jump, $J=1$, we set $T_3=T_2$).
\end{Statement}

Let $\mu$ be the minimizing measure of $L$.
Fix a point $\vrt\in\cA(L)$ which is generic for $\mu$, 
i.e. for every continuous function
$F:TM\to\re$ 
$$
\lim_{T\to+\infty}\frac 1T \int_0^T F(\vr_t(\vrt))\,dt = \int F\,d\mu.
$$

Let $N_0\in\na$ be such that 
\begin{equation}\label{qno}
Q^{-N_0}<\de_0.
\end{equation}

Given $\tt\in\cA(L)$, let $\Si(\tt)$ 
be a small codimension 1 submanifold of $TM$
 transversal to $\vr$ containing $\tt$.
 Since  $\cA(L)$ has no periodic orbits we can choose
 $\Si(\tt)$ small enough such that its return time is larger than one, i.e.
 \begin{equation}\label{rtl1}
 \Si(\tt)\cap\vr_{]0,1]}(\Si(\tt))=\emptyset.
 \end{equation}
Given $N>N_0$ let $t_1^N(\tt)<t_2^N(\tt)<\cdots$ 
be all the $\tfrac 12 Q^{-N}$ returns near $\tt$ in $\Si$
of the orbit of $\vrt$, i.e.
\begin{equation}\label{12qn}
\{\,t_1^N(\tt),\,t_2^N(\tt),\ldots\,\}=
\big\{\,t>0\;\big|\; \vr_t(\vrt)\in\Si(\tt),\; d(\vr_t(\vrt),\tt)< \tfrac 12 Q^{-N}\,\big\}.
\end{equation}

We need the following result 
which will be proved in subsection~\ref{countingapprox}.

\begin{Proposition}\label{PertEnt}
For any $\tt\in\cA(L)$ and  $\ell\ge 1$, \quad
$t^N_{\ell+1}(\tt)-t^N_\ell(\tt) \ge \sqrt[3]{2}^{N-N_0-3}$.
\end{Proposition}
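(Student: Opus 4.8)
The plan is to first record the following consequence of the standing assumption (that $\cA(L)$ cannot be closed by a small perturbation) together with Proposition~\ref{Ppert}. If for some sequence $\de_n\downarrow0$ there were periodic $\de_n$-pseudo orbits in $\cA(L)$ with at most two jumps and $\de_n/\ga_{\de_n}\to0$, then Proposition~\ref{Ppert} (with $M=2$) would produce $\phi_1$ arbitrarily $C^2$-close to $\phi$ with $\cA(L_0+\phi_1)$ containing a periodic orbit, against the assumption. Hence, for the constants $\de_0,Q$ already fixed, \emph{every} periodic pseudo orbit in $\cA(L)$ with at most two jumps and jump size $<\de<\de_0$ admits an \emph{approach}: two points $\vr_s(\tt_i),\vr_t(\tt_j)$ on its orbit segments with $|s-t|\ge 1$ (measured cyclically, so that the jumps and the wrap-around pairs are not counted) and $d\bigl(\vr_s(\tt_i),\vr_t(\tt_j)\bigr)<Q\de$. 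In particular, a periodic pseudo orbit of period $<2$ with at most two jumps and jump size $<\de_0$ cannot occur in $\cA(L)$, since it has no pair of parameters at cyclic distance $\ge1$.

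Next I would prove the contrapositive of Proposition~\ref{PertEnt}: assuming $p:=t^N_{\ell+1}(\tt)-t^N_\ell(\tt)<\sqrt2^{\,N-N_0-1}$ for some admissible $\tt,\ell,N$, derive a contradiction. Since $\vr_{t^N_\ell}(\vrt),\vr_{t^N_{\ell+1}}(\vrt)\in B(\tt,Q^{-N})$ they are at distance $<2Q^{-N}$, so closing the orbit arc $\vr_{[t^N_\ell,\,t^N_{\ell+1}]}(\vrt)$ by this single jump yields a periodic pseudo orbit $\Om_0$ in $\cA(L)$ with one jump, jump size $d_0:=2Q^{-N}<\de_0$, and period $p_0:=p$. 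Then iterate: given $\Om_k$ in $\cA(L)$ with period $p_k$, at most two jumps and jump size $\le d_k:=Q^{k}d_0=2Q^{-(N-k)}$, first note $p_k\ge2$ (otherwise $\Om_k$ is the forbidden object above). Applying the approach statement to $\Om_k$ (legitimate while $d_k<\de_0$, i.e.\ for $k$ at most of order $N-N_0$, with $N_0$ chosen suitably) gives parameters $\si_1,\si_2$ at cyclic distance $a\in[1,p_k-1]$ and an approach of size $<Qd_k=d_{k+1}$. Cutting $\Om_k$ at $\si_1,\si_2$ produces two sub-arcs of lengths $a$ and $p_k-a$, both in $[1,p_k-1]$; closing either of them by the new jump (size $<d_{k+1}$) is again a periodic pseudo orbit in $\cA(L)$. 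Let $\Om_{k+1}$ be one that still has at most two jumps and, subject to that, is as short as possible; its jump size is $\le d_{k+1}$.

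The key point is that ``at most two jumps'' can be maintained while the period shrinks geometrically. If $\Om_k$ has one jump, exactly one of the two arcs is jump-free, so whichever arc is shorter one closes it to a pseudo orbit with at most two jumps and period $\le p_k/2$. If $\Om_k$ has two jumps then, when $\si_1,\si_2$ lie on different orbit segments, the two jumps are split one per arc, so again the shorter arc halves the period at the cost of at most two jumps; only when $\si_1,\si_2$ lie on a common segment is one arc forced (the jump-free one, which re-establishes a one-jump pseudo orbit), and then the period may fail to halve --- but the following step does halve it. Thus in any $K$ consecutive steps at least $\lfloor K/2\rfloor$ halve, so $p_K\le p_0\,2^{-\lfloor K/2\rfloor}$. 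Taking $K$ of order $N-N_0$ --- which is exactly the range for which all scales $d_k=2Q^{-(N-k)}$ stay below $\de_0$ --- and using $p_0<\sqrt2^{\,N-N_0-1}=2^{(N-N_0-1)/2}$, one reaches $p_K<2$ before running out of scale. A periodic pseudo orbit of period $<2$ with at most two jumps and jump size $<\de_0$ is impossible in $\cA(L)$, and this contradiction gives $p\ge\sqrt2^{\,N-N_0-1}$. (For $N=N_0+1$ the assertion is $p\ge1$, which is the case $K=0$: $p_0<1$ contradicts the approach statement directly.)

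The main obstacle is exactly this bookkeeping: keeping the jump count $\le2$ throughout the cutting while guaranteeing geometric decay of the period, and controlling the geometric growth of the jump sizes $2Q^{-(N-k)}$ against the threshold $\de_0$ over the $\sim(N-N_0)$ steps actually needed. One must also fix the ``cyclic distance $\ge1$'' convention entering $\ga_\de$, and check that the approach it furnishes is realized by points lying on the orbit segments of the current pseudo orbit, so that the cut pieces are again genuine pseudo orbits in $\cA(L)$; this is routine hyperbolic geometry. All of this is carried out exactly as in the proof of Proposition~3.1 of \cite{ground}.
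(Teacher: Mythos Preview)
Your proposal is correct and follows the same approach the paper invokes: the paper does not give an independent proof but refers to Proposition~3.1 of \cite{ground}, and your sketch reproduces precisely that Yuan--Hunt/Quas--Siefken cutting argument --- start from the one-jump closed return of the generic orbit of $\vrt$, iteratively apply the contrapositive of Proposition~\ref{Ppert} to find $Q\de$-approaches, cut to shorter pseudo-orbits while keeping at most two jumps, and reach a period below $2$ within $\sim N-N_0$ scales. The only points to tidy in a full write-up are exactly the ones you flag: absorbing the factor $2$ in the initial jump size $2Q^{-N}$ into the choice of $N_0$ (or into $Q\ge 2$), and fixing the cyclic reading of $|s-t|\ge 1$ so that a period $<2$ genuinely forces $\ga_\de=+\infty$.
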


We continue the proof using Proposition~\ref{PertEnt}. Write
$$
B(\tt,r):=\{\,\om\in TM\;|\; d(\om,\tt)<r\,\}.
$$
In the definition of the return times $t^N_i(\tt)$ in~\eqref{12qn}
we shall use the following family of transversal sections.

 \begin{Claim}\quad
 
 If $\mN(L)$ has no periodic points then
 there is a family of local transversal sections $\{\Si(\tt)\}_{\tt\in\mN(L)}$
  and there are $N_3>N_2>N_0$ such that if $N>N_3$ then
 $$
 \forall\tt\in\cA(L),\qquad
 B(\tt,Q^{-N-N_2})\subset \vr_{[-.3,.3]}\big(\Si(\tt)\cap B(\tt,\tfrac 1{2A} Q^{-N})\big)
 $$
 and
 \begin{equation}\label{return>1}
 \forall \tt\in\mN(L), \qquad 
 \Si(\tt)\cap \textstyle \bigcup\limits_{0<|t|\le 1} \vr_t(\Si(\tt)) =\emptyset.
 \end{equation}

 \end{Claim}
 \noindent{\it Proof:}

 Since $\mN(L)$ has no periodic points, there is $r_0>0$ such that condition~\eqref{return>1} is 
 satisfied if $\diam \Si(\tt) < r_0$ for all $\tt\in \mN(L)$.
 Let $X(\tt)$ be the lagrangian vector field.
 There is $0<r_1<r_0$ such that for all
 $\tt\in\cA(L)$, setting
 $$
 \Si(\tt):=\exp_\tt\big(\{v\in T_\tt (E_L^{-1}\{c(L)\})\;:\;
 \langle v ,X(\tt)\rangle =0,\;|v|<r_1\}\big),
 $$
  we have that $\diam \Si(\tt)<r_0$, that  $\Si(\tt)$ is a transversal section to $\vr_t$ 
  in the energy level $E_L^{-1}\{c(L)\}$ and that
  $$
  W(\tt):=\vr_{[-.3,.3]}\big(B(\tt,r_1)\cap\Si(\tt)\big)
  $$
  is a tubular neighbourhood of  $\vr_{[-.3,.3]}(\tt)$. 
  There is $r_2>0$ such that for all $\tt\in\cA(L)$, $B(\tt,r_2)\subset W(\tt)$.
  Choose $N_4>N_0$ such that $Q^{-N_4}<r_2$ and hence
  $$
  \forall\tt\in\cA(L)\qquad B(\tt,Q^{-N_4})\subset W(\tt).
  $$
  
 The projection map 
 $P_\tt:W(\tt)\to \Si(\tt)\cap B(\tt,r_1)$, $P_\tt(\vr_\tau(\xi))=\xi$
 is smooth. We show that it has a uniform Lipschitz constant.
 Consider the map 
 $$F_\tt:[-.3,.3]\times(\Si(\tt)\cap B(\tt,r_1))\to W(\tt), \qquad
 F(t,\xi):=\vr_t(\xi).
 $$ 
 Then $\partial_t F_\tt(t,\xi)=X(\vr_t(\xi))$ and 
 $\partial_\xi F_\tt(t,\xi) =D\vr_t(\xi)$ is near the identity.
 The angle $\measuredangle(\Si(\tt),X(\xi))$, $\xi\in\Si(\tt)$, $\tt\in\cA(L)$
  is uniformly bounded away from zero. This implies that there 
  is a uniform bound  $a>0$ such that $|DF_\tt(t,\xi)\cdot\zeta|\ge a|\zeta|$
  for all $\xi\in\Si(\tt)$, $t\in[-.3,.3]$,  all $\tt\in\cA(L)$ and 
  $\zeta\in T_{(t,\xi)}\big(]-.3,.3[\times\Si(\tt)\big)$. By the inverse function theorem
  $\lV DF_\tt^{-1}(\vr_t(\xi))\rV\le a^{-1}$. Since $F_\tt^{-1}(\vr_t(\xi))
  =\big(*,P_\tt(\vr_t(\xi))\big)$ we have that $a^{-1}$ is a uniform 
  Lipschitz constant for all $P_\tt$, $\tt\in\cA(L)$.
 
 Choose $N_2>N_4$ such that $\tfrac 1{2A} Q^{N_2}> a^{-1}$ 
 so that $\tfrac 1{2A}Q^{N_2}$ is a Lipschitz 
 constant for $P_\tt$ on $W(\tt)$,  for all $\tt\in\cA(L)$. 
 Choose $N_3>N_2$. Then
 for $N>N_3$ we have that $N+N_2>N_4$ and hence 
 $B(\tt,Q^{-N-N_2})\subset W(\tt)$. Since $\Lip(P_\tt)\le \tfrac 1{2A} Q^{N_2}$ 
 we have that $P_\tt(B(\tt,Q^{-N-N_2}))\subset \Si(\tt)\cap B(\tt,\tfrac 1{2A} Q^{-N})$.
 Therefore
 $$
 B(\tt,Q^{-N-N_2})\subset (P_\tt|_{W(\tt)})^{-1}\big( \Si(\tt)\cap B(\tt,\tfrac 1{2A}Q^{-N})\big)
 =\vr_{[-.3,.3]}\big( \Si(\tt)\cap B(\tt,\tfrac 1{2A} Q^{-N})\big).
 $$
 \hfill$\triangle$
 
\medskip
 
 Thus if  $\xi\in B(\tt,Q^{-N-N_2})\cap \vr_{[.3,T-.3]}(\vrt)$ then there are 
 $\tau\in[-.3,.3]$
 and $s\in[0,T]$ such that $\vr_s(\vrt)\in \Si(\tt)\cap B(\tt,\tfrac 1{2A} Q^{-N})$
 and $\xi=\vr_\tau(\vr_s(\vrt))$.
 Observe that then $s=t_i^N(\tt)$ for some $t_i^N(\tt)\le T$.
 Therefore, denoting by $m$ the Lebesgue measure on $\re$, 
 \begin{align*}
 m\big\{\,t\in[{\scriptstyle .3,T-.3}]\,:\,\vr_t(\vrt)\in B(\tt,Q^{-N-N_2})\big\}
 &\le 0.6\,\#\{\,\ell\in\na \;:\;t_\ell^N(\tt)\le T\,\}.
 \end{align*}
 Similarly, using~\eqref{return>1},
   \begin{align*}
  m\Big\{t\in[{\scriptstyle.41,T-.41}]\,:\, \vr_t(\vrt)\in \vr_{[-.11,.11]}\big(B(\tt,Q^{-N-N_2})\big)\Big\}
 &\le 0.82\,\#\{\,\ell\in\na \;:\;t_\ell^N(\tt)\le T\,\}
 \\
 &\le \#\{\,\ell\in\na \;:\;t_\ell^N(\tt)\le T\,\}.
 \end{align*}
 
 Given $N\gg N_3$ and $\tt\in\cA(L)$, let $f_\tt:TM\to[0,1]$
  be a continuous function 
 such that
$f_\tt|_{\vr_{[-.1,.1]}\left(B(\tt,Q^{-N-N_2-1})\right)}\equiv 1$ and
 $\supp f_\tt\subseteq \vr_{[-.11,.11]}\left(B(\tt,Q^{-N-N_2})\right)$.
 Then
\begin{align*}
\int_{0.41}^{T-0.41} f_\tt(\vr_t(\vrt))\,dt
&\le m\Big\{t\in [{\scriptstyle.41,T-.41}]
\,:\, \vr_t(\vrt)\in\vr_{[-.11,.11]}\big(B(\tt,Q^{-N-N_2})\big)\Big\}
\\
 &\le \#\{\,\ell\in\na \;:\;t_\ell^N(\tt)\le T\,\}.
 \end{align*}

Using that $\vrt$ is a generic point for $\mu$ 
and Proposition~\ref{PertEnt}
we have that
\begin{align}
\mu\big(\vr_{[-.1,.1]}\big(B(\tt,Q^{-N-N_2-1})\big)\big) 
&\le \int f_\tt\, d\mu
\notag
=\lim_{T\to+\infty} \frac 1T \int_0^T f_\tt(\vr_t(\vrt))\, dt
\\
&=\lim_{T\to+\infty}\frac 1T\int_{0.41}^{T-0.41}f_\tt(\vr_t(\vrt))\,dt
\notag\\
&\le 
\limsup_{T\to+\infty}\frac 1T\,\#\big\{\,\ell\in\na \;|\; t^N_\ell(\tt) \le T \,\big\}
\notag\\
&\le 
\sqrt[3]{2}^{-N+N_0+3} \qquad
\forall \tt\in\cA(L).
\label{muB}
\end{align}

Fix a point $\om\in\supp(\mu)=\cA(L)$ 
for which Brin-Katok Theorem (see Theorem~\ref{BK} 
in  Appendix~\ref{AE} or \cite{BK})  
holds:
\begin{equation}\label{homp}
0=
h_\mu(\vr^L)=\lim_{\e\to 0}
\limsup_{T\to+\infty} -\frac 1T \, \log\big(\mu(V(\om,T,\e))\big),
\end{equation}
where $V(\om,T,\e)$ is the {\it dynamic ball}:
$$
V(\om,T,\e):= \{\, \xi\in TM\;|\; d(\vr_t(\xi),\vr_t(\om))\le \e, \quad \forall t\in[0,T]\;\}.
$$
Since $h_\mu(\vr^L)=0$, equation~\eqref{homp} is equivalent 
to 
$$
0=h_\mu(\vr^L)\ge\lim_{\e\to 0} \limsup\limits_{T\to+\infty} 
-\frac 1T \, \log\big(\mu(V(\om,T,\e))\big).
$$
Since the inner limit increases as $\e\downarrow 0$, the inequality holds
without taking $\lim_{\e\to 0}$:
\begin{equation}\label{hmet}
0=
h_\mu(\vr^L)=
\limsup_{T\to+\infty} -\frac 1T \, \log\big(\mu(V(\om,T,\e))\big).
\end{equation}
Fix $\e_0$ for which~\eqref{hmet} holds for every $\e<\e_0$.

By Proposition~\ref{B71} there are constants $D>0$ and $\la\in]0,1[$ such that if $\e$ is small
enough, $\tt\in\cA(L)$ and $d(\vr_s(\xi),\vr_s(\tt))\le \e$ for all $s\in[-T,T]$ then there is 
$|v|< D\e$ such that
\begin{equation}\label{delat}
\forall |s|\le T,\qquad
d(\psi_{s}(\xi),\vr_{s+v}(\tt))\le D \e\, \la^{(T-|s|)}.
\end{equation}
We can choose $\e<\e_0$ such that \eqref{delat} holds and $D\e<0.1$.

Since $\om\in\cA(L)$, from~\eqref{delat} we get that
\begin{equation}\label{vrtvw}
\vr_T\big(V(\om,2T,\e)\big)\subset \vr_{[-D\e,D\e]}\big(B(\vr_T(\om),D\e \la^T)\big).
\end{equation}
Take  $N=N(T)$ such that
\begin{equation}\label{eN}
Q^{-N-N_2-2}< D\e\,\la^T \le Q^{-N-N_2-1}.
\end{equation}
Then $B(\vr_T(w), D\e \la^T)\subseteq B(\vr_T(w),Q^{-N-N_2-1})$
and from~\eqref{vrtvw},
$$
\vr_T(V(w,2T,\e))\subset \vr_{[-.1,.1]}\big(B(\vr_T(w),Q^{-N-N_2-1})\big).
$$
Using~\eqref{muB} we have that
\begin{align*}
\frac{-1}{2T}\log\mu(V(w,2T,\e))
&=\frac{-1}{2T}\log\mu\big(\vr_T(V(w,2T,\e))\big)
\\
&\ge \frac{-1}{2T}\log\mu\big(\vr_{[-.1,.1]}(B(\vr_T(w),Q^{-N-N_2-1}))\big)
\\
&\ge \frac{-1}{2T}\log \sqrt[3]{2}^{-N+N_0+3}.
\end{align*}
Taking the limit when $T\to+\infty$ and using~\eqref{hmet} and  \eqref{eN},
we get
$$
h_\mu(\vr^L) \ge \frac{-\log\la \cdot\log\sqrt[3]{2}}{2\, \log Q}>0.
$$
This contradicts the choice of $\phi\in\cE_0$.

\qed

\bigskip

\color{black}

\subsection{Counting approximations.}
\label{countingapprox}\quad

Now we prove
\addtocounter{Thm}{-1}
\begin{Proposition}
For any $\tt\in\cA(L)$ and  $\ell\ge 1$, \quad
$t^N_{\ell+1}(\tt)-t^N_\ell(\tt) \ge \sqrt[3]{2}^{N-N_0-3}$.
\end{Proposition}

\begin{proof}\quad

  For $N\in \na$, let 
  \begin{align}\label{AAN}
  \A_N &:= \{(x,y)\in \cA(L)\times \cA(L)\;|\; d(x,y) \le Q^{-N}\}.
 \end{align}
Recall that
\begin{equation}\label{lipa}
A:=1+\sup\nolimits_{|t|\le 1}\Lip\big(\vr_t|_{[E_L\le c(L)+1]}\big). 
\end{equation}
From~\eqref{qno} and Statement~\ref{st1}, setting $\de=Q^{-N}$,  we get

\begin{Statement}\label{st2}\quad

If $N>N_0$ and  $\{\, \vr_{[T_i,T_{i+1}]}(\tt_i)\,\}_{i=1,2}$ is a periodic 
$Q^{-N}$-possible 1-specification in $\cA(L)$ with at most two jumps
then there is a $Q^{-N+1}$ approach
$$
d(\vr_s(\tt_i),\vr_t(\tt_j))< \tfrac 1{2A} Q^{-N+1}
$$
with $\{i,j\}\subset\{1,2\}$, $|s-t|(\text{mod}(T_3-T_1))\ge 1$ 
and $s\in[T_i,T_{i+1}]$, $t\in[T_j,T_{j+1}]$.
And with $T_3=T_2$, $i=j=1$  if there is only one jump.
\end{Statement}

\begin{Lemma}\label{applength}
If $\cA(L)$ has no periodic orbits and $N_0$ is large enough
then

\begin{equation}\label{dxiz}
\left\{
\begin{aligned}
&\xi,\zeta\in\cA(L),\;d(\xi,\zeta)<Q^{-N_0},
\\ &\si,\tau\in[0,1],\;\;2\ge \si+\tau\ge 1
\end{aligned}
\right\}
\quad\then\quad
d(\vr_{-\si}(\xi),\vr_\tau(\zeta))\ge Q^{-N_0},
\end{equation}
and
\begin{gather}
\text{if }\{\vr_{[a,b]}(\tt_1),\;\vr_{[b,c]}(\tt_2)\} 
\text{ is a periodic $Q^{-N_0}$-specification }
\notag\\
\text{ with $\tt_i\in\cA(L)$, $i=1,2$, \quad
and $|c-a|\ge 1$,}
\notag\\
\text{then }\max\{|b-a|,|c-b|\}>500.
\label{lpss}
\end{gather}

\end{Lemma}

\begin{proof}
If~\eqref{dxiz} does not hold there is a sequence $n\to+\infty$, points 
$\xi_n,\zeta_n\in\cA(L)$ and $\si_n,\tau_n\in[0,1]$ such that 
$d(\xi_n,\zeta_n)<Q^{-n}$, 
$2\ge \si_n+\tau_n\ge 1$ but $d(\vr_{-\si_n}(\xi_n),\vr_{\tau_n}(\zeta_n))< Q^{-n}$.
Since $\cA(L)$ is compact, extracting a subsequence we can assume that the limits
$\xi:=\lim_n \xi_n$, $\zeta:=\lim_n\zeta_n$, $\si:=\lim_n\si_n$, $\tau:=\lim_n\tau_n$ exist.
Then we have that $\xi=\zeta\in\cA(L)$,  $1\le \si+\tau\le 2$,
$\vr_{\si+\tau}(\zeta)=\xi=\zeta$. Therefore $\zeta$ is a periodic point in $\cA(L)$, 
which contradicts the hypothesis.

Now we prove~\eqref{lpss}.
If ~\eqref{lpss} does not hold then there is a sequence  $n\to+\infty$ and
periodic $Q^{-n}$-possible specifications $\{\vr_{[a_n,b_n]}(\tt^n_1),\,\vr_{[b_n,c_n]}(\tt^n_2)\}\subset\cA(L)$
such that $|c_n-a_n|\ge 1$, $|b_n-a_n|\le 500$ and $|c_n-b_n|\le 500$.
Translating the time intervals we can assume that $a_n=0$ for all $n$,
then $\forall n\;\;\{a_n,b_n,c_n\}\subset [0,500]$.
Since $\cA(L)$ is compact, extracting a  subsequence we can assume that 
the limits $\tt_1:=\lim_n \tt^n_1$, $\tt_2:=\lim_n \tt^n_2$,
$a:=\lim_n a_n$, $b:=\lim_n b_n$, $c:=\lim_n c_n$ exist
with $|c-a|\ge 1$ and $\tt_1\in\cA(L)$.       
Since $d(\vr_{b_n}(\tt^n_1),\vr_{b_n}(\tt^n_2))<Q^{-n}$
and $d(\vr_{c_n}(\tt^n_2),\vr_{a_n}(\tt^n_1))<Q^{-n}$,
we have that $\vr_b(\tt_1)=\vr_b(\tt_2)$, hence $\tt_1=\tt_2$, 
also $\vr_c(\tt_1)=\vr_c(\tt_2)=\vr_a(\tt_1)$ and $|c-a|\ge 1$.
Therefore $\tt_1$ is a periodic point in $\cA(L)$ which contradicts the hypothesis.

\end{proof}

Since $\cA(L)$ has no periodic orbits we can assume that $N_0$ is so large that
\eqref{dxiz} and \eqref{lpss} hold.

By~\eqref{12qn} and \eqref{rtl1} 
we have that $\vr_{[t^N_\ell(\tt),t^N_{\ell+1}(\tt)]}(\vrt)$ is
a periodic $Q^{-N}$-possible 
1-spec\-i\-fi\-ca\-tion 
in $\cA(L)$. 
Therefore there is a $Q^{-N+1}$ approach 
$d(\vr_s(\vrt),\vr_t(\vrt))< \tfrac 1{2A} Q^{-N+1}\le Q^{-N+1}$
with $1< (t-s)\mod(t^N_{\ell+1}-t^N_\ell)$, i.e.
$$
t^N_\ell(\tt)\le s<s+1<t<
\min\big\{t^N_{\ell+1}(\tt),\, (t^N_{\ell+1}(\tt)-1)+(s-t^N_\ell(\tt))\big\}.
$$ 
This implies that the length $t^N_{\ell+1}(\tt)-t^N_\ell(\tt)\ge 2$.
It also gives rise to two periodic $\frac 1{2A}Q^{-N+1}$-possible
specifications in $\cA(L)$ with at most 2 jumps. Namely 
$\vr_{[s,t]}(\vrt)$ and 
$\big\{\vr_{[t,t^N_{\ell+1}(\tt)]}(\vrt)$, $\vr_{[t^N_\ell(\tt),s]}(\vrt)\big\}$. 
At most one of $\{s, t\}$ is possibly at distance  $\le 1$  to the endpoints
$\{t^N_\ell(\tt)$, $t^N_{\ell+1}(\tt)\}$.
Say  $|t^N_{\ell+1}(\tt)-t|\ge |s-t^N_{\ell}(\tt)|$.
Suppose for example that also 
\linebreak
$|s-t^N_\ell(\tt)|>1$.
In this case both periodic $Q^{-N+1}$-specifications, 
$\vr_{[s,t]}(\vrt)$ and 
$\big\{\vr_{[t,t^N_{\ell+1}(\tt)]}(\vrt)$, $\vr_{[t^N_\ell(\tt),s]}(\vrt)\big\}$,
are 1-specifications.
 By  Statement~\ref{st2}, each of these specifications
  imply the existence of a new  $\frac 1{2A}Q^{-N+2}$ approach and 
we expect that each
approach adds a length of one to a lower bound for 
$t^N_{\ell+1}(\tt)-t^N_\ell(\tt)$ ...
This process will continue as long as the exponent 
$-N+k\le -N_0$. In this way we expect that the number of distinct
$Q^{-N+k}$ approaches in the segment $\vr_{[t^N_\ell(\tt),t^N_{\ell+1}(\tt)]}(\vrt)$  
grows exponentially with $k$ and then implying that the length of the segment
$t^N_{\ell+1}(\tt)-t^N_\ell(\tt)$ grows exponentially with $k$, giving the result
in Proposition~\ref{PertEnt}.

  \begin{figure}[h]
  \resizebox*{14cm}{4cm}{\includegraphics{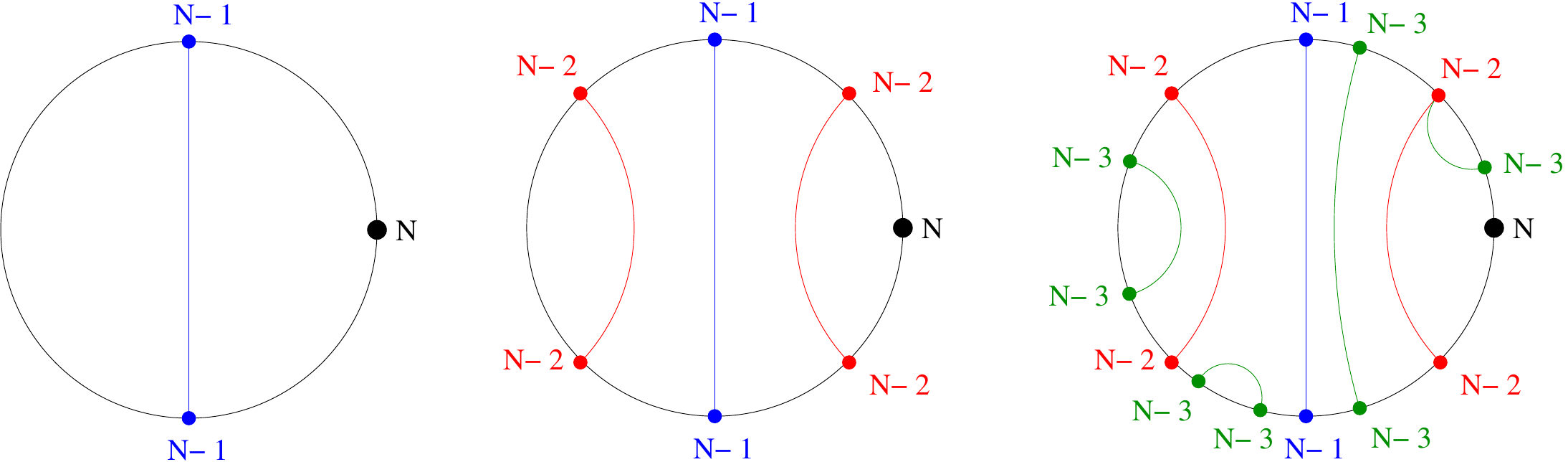}} 
   \caption{\small Example of a cascade of returns implied 
                  by the inductive process. }\label{ex0}
   \end{figure}

There are two possible problems we have to consider. The first one is when 
 some of the new approaches in this process
is near to the endpoints of the mother specification.
In this case  the process  will imply only one
or no new segment or length one in $[t^N_\ell(\tt),t^N_{\ell+1}(\tt)]$. 
When there is no new segment we will obtain one return 
giving rise to a specification with {\it only one jump}.
The other problem is when one of the
child specifications obtained in the process has more than two jumps.  
For such specification with three jumps we stop the process.
When this happens, we 
will see that the sister 1-specification has {\it only one jump}.
In both cases we get a 1-specification with only one jump,
this gives rise in the next generation 
to two specifications with at most two jumps, re-activating the duplication process.
In this way we can assure that in three generations this process gives a duplication 
of implied  segments of length one in the interval $[t^N_\ell(\tt),t^N_{\ell+1}(\tt)]$, obtaining
an exponential growth with rate $\sqrt[3]{2}$ as Proposition~\ref{PertEnt} claims.

      \begin{figure}[h]
     \resizebox*{6cm}{4cm}{\includegraphics{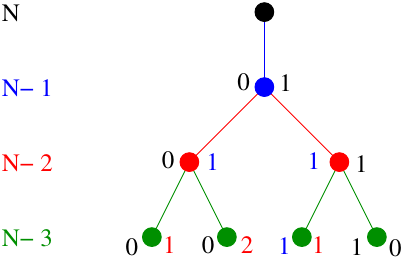}} 
     \hskip 1cm
 \resizebox*{6cm}{5cm}{\includegraphics{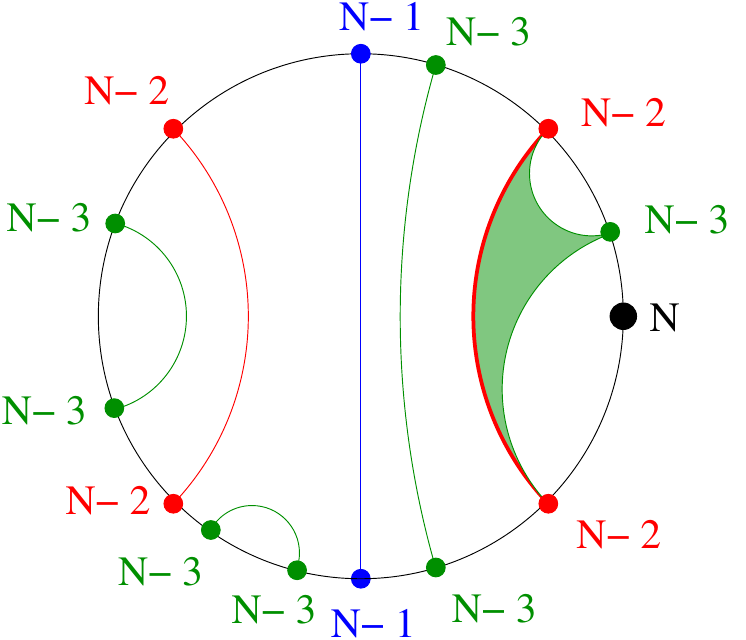}}
  \caption{\small An example of a distribution of approaches implied by Statement~\ref{st2}
   and the tree representing it. 
   The shadow is explained in subsection~\ref{0B1}.
   It corresponds to an approach with one of its points near 
   one endpoint of a previous specification.}\label{ex1}
\end{figure}

It is simpler to show the inductive process in a picture.
On a plane draw a circle $\SS$ representing the specification 
$\vr_{[t^N_\ell(\tt),t^N_{\ell+1}(\tt)]}(\vrt)$. 
On $\SS$ both points $\vr_{t^N_\ell}(\tt)$ and $t^N_{\ell+1}(\tt)$
are represented by a single black dot with level $N$.
Inside the  disk $\D$ with boundary $\SS$
draw a line from $\vr_s(\vrt)$ to $\vr_t(\vrt)$. 
It may be that the distance $d(s,\{t^N_\ell(\tt),t^N_{\ell+1}(\tt)\})\le 1$  but in that case 
$d(t,\{t^N_\ell(\tt),t^N_{\ell+1}(\tt)\})>1$ 
because by Proposition~\ref{Ppert},
$|s-t|{\mod(t^N_{\ell+1}-t^N_\ell)}\ge 1$
and then by~\eqref{lpss}
\begin{equation}\label{max5}
\max\big\{\quad d(s,\{t^N_\ell(\tt),t^N_{\ell+1}(\tt)\}),\quad d(t,\{t^N_\ell(\tt),t^N_{\ell+1}(\tt)\})\quad\big\}>5.
\end{equation}
The line
$\color{blue}\ov{\vr_s(\vrt)\vr_t(\vrt)}$ 
separates the disk in two 
components. 
If for example\footnote{The other case is treated in {\it Case $0\bullet 0$} in subsection~\ref{0B0}, but here we continue with
this example to illustrate the construction of the figures in the circle and the construction of the tree.}
 in \eqref{max5} also the minimum is larger than 1,
then each component in the disk $\D$ is a $Q^{-N+1}$-possible 
1-specification in $\cA(L)$ with at most 2 jumps
(one jump of size $\le Q^{-N+1}$ and possibly another
with size $\le Q^{-N}<Q^{-N+1}$).
By Statement~\ref{st2} each component has at least one $\frac 1{2A}Q^{-N+2}$ approach. The construction 
is iterated in this way.
The interior of the lines in this construction do not intersect.

   \begin{figure}[h]
     \resizebox*{6cm}{3.5cm}{\includegraphics{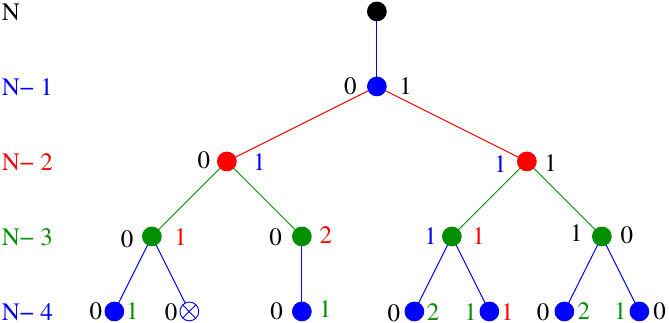}}
      \resizebox*{6cm}{4.9cm}{\includegraphics{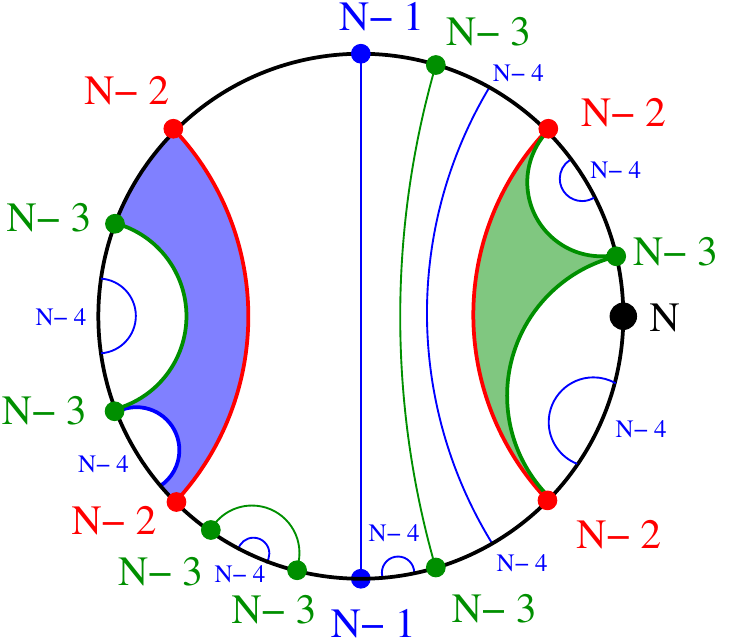}} 
      \caption{\small This is a possible next step from the example in Figure~\ref{ex1}.
      At level ${\color{dgreen}N-3}$ we had a node $0\color{dgreen}\bullet\color{red} 2$
       which only issues one branch  with label $0$. 
       At level ${\color{blue}N-4}$ the node {\color{blue}$0\otimes$ }corresponds to the shadowed region on 
      the left of the disc. This node comes from a branch with label $\color{red} 1$, i.e. a periodic specification with $2$ jumps. 
      In this case the implied return in $\color{blue}\A_{N-4}$ has both of its points at time distance $\le 1$ from the jumps of
      the specification. We put a white (or empty) node $\color{blue}\otimes$ in the tree, signifying that this node (approach) does not count as 
      another interval of length one in $[t^N_\ell(\tt),t^N_{\ell+1}(\tt)]$. We write a label $\color{blue}0$ in the node $\color{blue}0\otimes$
      meaning that the implied specification has only one jump. The node $\color{blue}0\otimes$ will issue only one branch 
      (with label $0$). We shadow the pentagonal region at the left  of the disc so that it is not considered  in the process
      any more.  After drawing the shadow it remains a white region
      with periodic specification with only one jump that will restore the duplication process.}
      \label{ex1a}
     \end{figure}

  We will also draw a tree with the approaches, in order to see 
   that their number grows exponentially.
   We put a black node $\bullet$ when we find a new approach which
   has at least one of its points  at distance at least 1 from
   all the points in the approaches considered earlier.
   In this way we shall obtain
   $$
    t^N_{\ell+1}-t^N_\ell \ge \#\{\text{black nodes}\}.
   $$
   It may be that the newly obtained approach
   has its two points at distance $\le 1$ from the previously
   obtained approaches. In this case we put a white (or empty) node
   $\otimes$ in the tree. The branches of the tree represent the new specifications
   which are implied by the approach in their upper node.

   \begin{Remark}[by Andrea Davini]\quad
   
   {\sl 
   Black and white nodes are used because in a black node we count a new end-interval of length one 
   in a new specification obtained in the process. White nodes appear when the points in a new implied
   approach are at small distance $(\le 1)$ to the endpoints of a previous approach.
   
   But we can  observe   that the specifications
   corresponding to the bottom level $N_0$ of the tree have disjoint interiors. 
   And then use Lemma~\ref{applength}, which says that any new specification in the inductive process
   must have length at least 500, to obtain
   $$
   t^N_{\ell+1}-t^N_\ell \ge 498\cdot\#\{\text{bottom level nodes}\};
   $$
   without caring about the end branches of the bottom specifications and neither
   if the nodes in the tree were black or white.
   
   }
    \end{Remark}

   An example of the inductive process appears in Figure~\ref{ex1}.
   The nodes of the tree are the approaches 
   implied by Statement~\ref{st2}.
   The height
    of the node bounds the size of the approach, namely $Q^{-N+k}$. 
    The numbers near a node
   are the quantity of approaches in upper levels of 
   the tree which are adjacent to the approach of the node, 
   either at its left or at its right. These numbers 
   are also equal to -1 + the quantity of jumps
   of the two new periodic specifications determined by the node. 
   Thus the numbers are associated to the branches issued from the node.
   The branches in the tree are the new periodic specifications of the next
   level $Q^{-N+k+1}$ obtained by cutting through the approach of the issuing node.
   Most of the nodes issue two branches but some of them issue only one branch. 
   These are the nodes $0\otimes$ which issue only one branch corresponding to the
   number $0$, i.e. a periodic specification with only one branch; and the node $0\bullet 2$
   which issues one branch corresponding to the number $0$ and no branch for the number $2$,
   because it is a periodic specification with $3$ jumps and we are stopping the process 
   at at most two jumps.

We now describe the possible nodes that end a branch with number $0$ in subsection~\ref{br0} and a branch 
with number $1$ in subsection~\ref{br1}

      \begin{figure}[h]
     \resizebox*{12cm}{6cm}{\includegraphics{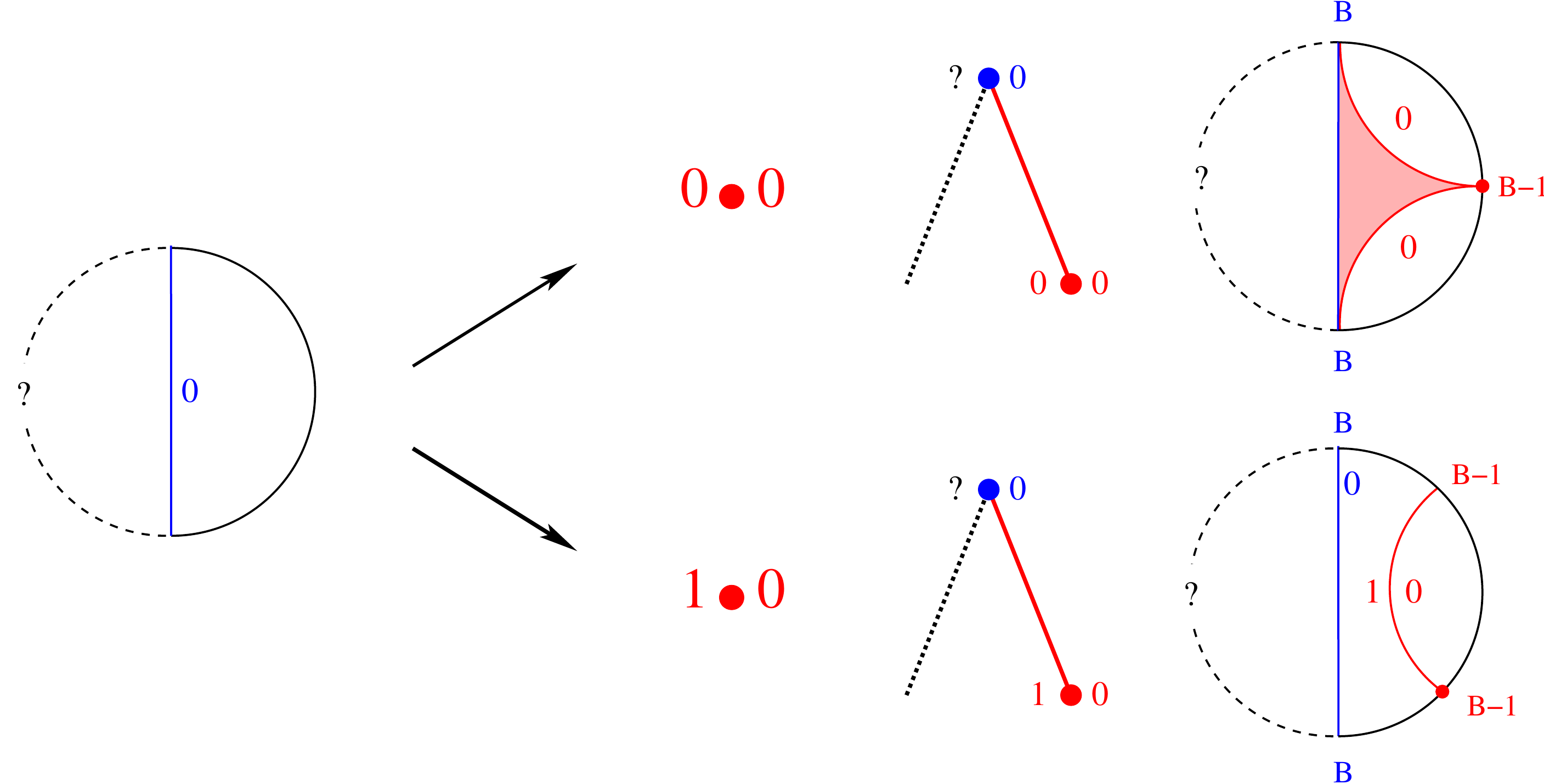}} 
  \caption{\small Possible nodes ending a branch with a label 0, i.e. child specifications
  of a periodic  1-specification with only one jump.}\label{case0}
\end{figure}

\subsection{Childs of a 1 jump periodic 1-specification.}\label{br0}\quad

This case appears in Figure~\ref{case0}.
Let $\vr_{[T_1,T_2]}(\vrt)$ be a $Q^{-B}$-possible 1-specification with only one jump.
Statement~\ref{st2} implies the existence of an approach $(\vr_a(\vrt),\vr_b(\vrt))\in\A_{B-1}$
where $T_1\le a<a+1<b\le T_2$ and
\begin{equation}\label{1ab}
d(\vr_a(\vrt),\vr_b(\vrt))< \tfrac1{2A}\, Q^{-B+1}.
\end{equation}

\bigskip

\subsubsection{Case $0\bullet 1$.} {\it When both points in the approach are at time interval $>1$ from the
endpoints of the specification, i.e. when $a-T_1>1$, $T_2-b>1$ (and $b-a>1$).}

In this case we obtain two child periodic $Q^{-B+1}$-possible 1-specifications: 
$\vr_{[a,b]}(\vrt)$ with 1-jump and $\{\vr_{[T_1,a]}(\vrt), \vr_{[b,T_2]}(\vrt)\}$
with 2-jumps. We write a node $0\bullet 1$ in the tree, representing a specification
with 1 jump and another with 2 jumps. The node is black $\bullet$ because at least
one (in this case two) of the points of the approach is at distance $\ge 1$ from the 
endpoints of the mother specification.

\subsubsection{Case $0\bullet 0$.} \label{0B0}
{\it When one of the points in the $\A_B$ approach\footnote{The set $\A_B$ is defined in \eqref{AAN}.}
 is at time interval
$\le 1$ from one of the endpoints of the mother specification.} 

Say $|a-T_1|\le 1$ and 
$$|b-a| >1.$$
We have that $\{\vr_{[T_1,a]}(\vrt),\vr_{[b,T_2]}(\vrt)\}$ is a periodic $Q^{-B+1}$-specification with
$$
|T_2-b|+ |a-T_1|\ge 1,
$$
then by \eqref{lpss}, 
\begin{equation*}\label{t2b10}
|T_2-b| >5.
\end{equation*}
 We will change the approach to $\ov{a}=T_1$, $\ov{b}=b-(a-T_1)=b+(T_1-a)$.
 Then
 \begin{gather}
 |\ov b -T_1|=|\ov{b}-\ov{a}|=|b-a|>1.
\label{b-a1} \\
|T_2-\ov b|\ge |T_2-b|-1 >1.
\label{t2b1}
\end{gather}
Using~\eqref{lipa} and~\eqref{1ab} we have that
\begin{align}
d\big(\vr_{\ov b}(\vrt),\vr_{T_1}(\vrt)\big) 
&\le \Lip(\vr_{T_1-a})\,d\big(\vr_b(\vrt),\vr_a(\vrt)\big) 
< A\cdot \tfrac 1{2A} Q^{-B+1} \le \tfrac 12 Q^{-B+1}.
\label{dobT1}\\
d\big(\vr_{\ov{b}}(\vrt),\vr_{T_2}(\vrt)\big)
&\le d\big(\vr_{\ov b}(\vrt),\vr_{T_1}(\vrt)\big) + d\big(\vr_{T_1}(\vrt),\vr_{T_2}(\vrt)\big)
\notag\\
&\le\tfrac 1{2} Q^{-B+1} + Q^{-B} < Q^{-B+1}.
\label{abB1}
\end{align}
By   \eqref{b-a1} and \eqref{dobT1} we have that $\vr_{[T_1,\ov{b}]}(\vrt)$
is a periodic $Q^{-B+1}$-possible 1-specification with only 1 jump. 
From~\eqref{t2b1} and~\eqref{abB1} we get that 
$\vr_{[\ov{b},T_2]}(\vrt)$ is another $Q^{-B+1}$-possible 1-specification with  only 1 jump. 
In the disc $\D$ we draw lines $\ov{T_1\ov{b}}$, $\ov{\ov{b} T_2}$ representing  approaches in $\A_{B-1}$.
Before we had drawn the line $\ov{T_1T_2}$ corresponding to a $Q^{-B}$ approach. 
These three lines bound a triangular region that we shadow as in Figure~\ref{case0}.
In the tree we add a node with numbers $0\bullet 0$ which represents two specifications with only 1 jump.
This node will issue two branches in the next step corresponding to these two child periodic specifications with
only one jump. The node is black $\bullet$ because there is a new point $\ov b$ in the approach which 
is at time interval at least 1 from the endpoints $T_1$ and $T_2$ of the mother specification.

     \begin{figure}[h]
     \resizebox*{12cm}{8.5cm}{\includegraphics{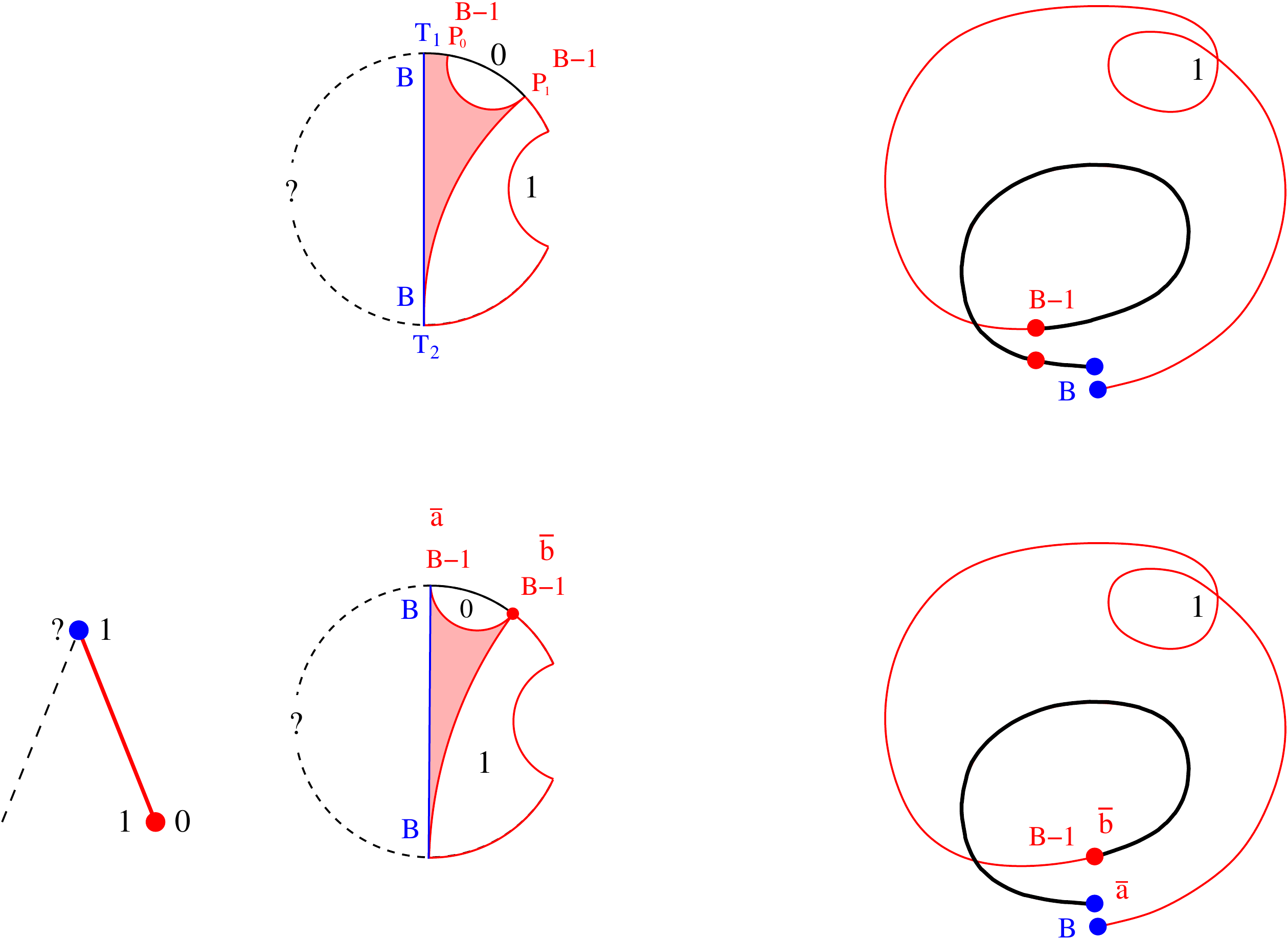}} 
     \caption{{\it Case $0\bullet 1$}. \small Here we study the case in which  the approach in $\A_{B-1}$ 
     implied by Statement~\ref{st2} contains a point {\color{red}$P_0$}
     at a time interval $0\le a\le1$ of one of the endpoints ${\color{blue}R_1}$ 
     of the mother periodic specification and the other point ${\color{red}P_1}$
     at a time interval $> 1+a$ from all the jumping points of the mother specification. 
     Let {\color{blue}$R_2$} be the other point in the approach to {\color{blue}$R_1$} in the mother specification, i.e.
     $d(R_1,R_2)<Q^{-B}$. If $P_0=\vr_a(R_1)$, we flow the two endpoints 
     to $\ov{P}_0:=\vr_{-a}(P_0)=R_1$ and $\ov P_1:=\vr_{-a}(P_1)$ and calculate that
     both $(\ov P_0,\ov P_1)$ and $(\ov P_1, R_2)$ are approaches in $\A_{B-1}$.
     They imply two child periodic $Q^{-B+1}$-possible 1-specifications. The approach $(\ov P_0,\ov P_1)$
     implies a specification with only one jump and hence we write the number $0$ in the next node.
     The approach  $(\ov P_1,R_2)$ implies a specification with two jumps, and hence we write 
     the number $1$ in the next node. Since the point $P_1$ is at distance $>1$ from all the previous endpoints
     (nodes) in the tree it counts as a new interval of length one in $[t^N_\ell(\tt),t^N_{\ell+1}(\tt)]$. So
     the next node in the tree is coded $1\bullet 0$. The number 0 issues a new branch corresponding 
     to a periodic specification with only $0+1$ jump and the number $1$ issues another new branch 
     corresponding to a periodic specification with $1+1$ jumps.
     }\label{shadow}
   \end{figure}

\subsection{Childs of a 2 jump periodic 1-specification.}\label{br1}\quad

Let $\{\vr_{[0,T_1]}(\vrt_1), \, \vr_{[T_1,T_2]}(\vrt_2)\}$ be a $Q^{-B}$-possible 1-specification with 2 jumps.
In particular
\begin{equation}\label{exqn}
d(\vrt_1,\vr_{T_2}(\vrt_2))<Q^{-B} \qquad\text{and}\qquad
d(\vr_{T_1}(\vrt_1),\vr_{T_1}(\vrt_2))<Q^{-B}.
\end{equation}
Statement~\ref{st2} implies the existence of an approach $(\vr_a(\vrt_i),\vr_b(\vrt_j))\in\A_{B-1}$ such that 
${|b-a|_{\text{\rm mod }T_2}> 1}$ and
\begin{equation}\label{1ab1}
d(\vr_a(\vrt_i),\vr_b(\vrt_j))< \tfrac1{2A}\, Q^{-B+1}.
\end{equation}
We can assume that
$\vrt_i=\vrt_1$ and $a\in [0,T_1]$.
The following cases appear in Figure~\ref{case1}.

The first two cases are when the distance $d(\{a,b\},\{0,T_1,T_2\})\ge 1$.
Interchanging $\vr_a(\vrt_1)$ and $\vr_b(\vrt_j)$ if necessary we can assume that
$0<a<a+1\le b< T_2$.
\subsubsection{Case $1\bullet 1$.}
{\it When $0< a < T_1< b < T_2$ and $d(\{a,b\},\{0,T_1,T_2\})\ge 1$.}

Since $|b-a|_{\text{\rm mod }T_2}\ge 1 $,
in this case we have two child $Q^{-B+1}$-possible 1-specifications with 2 jumps, namely
$\{\vr_{[0,a]}(\vrt_1), \vr_{[b,T_2]}(\vrt_2)\}$ and $\{\vr_{[a,T_1]}(\vrt_1),\vr_{[T_1,b]}(\vrt_2)\}$.
In the disc we draw a line $\ov{ab}$ 
and on the tree we add a node with
numbers $1\bullet 1$ and a black node $\bullet$.

      \begin{figure}[h]
     \resizebox*{14.8cm}{7.4cm}{\includegraphics{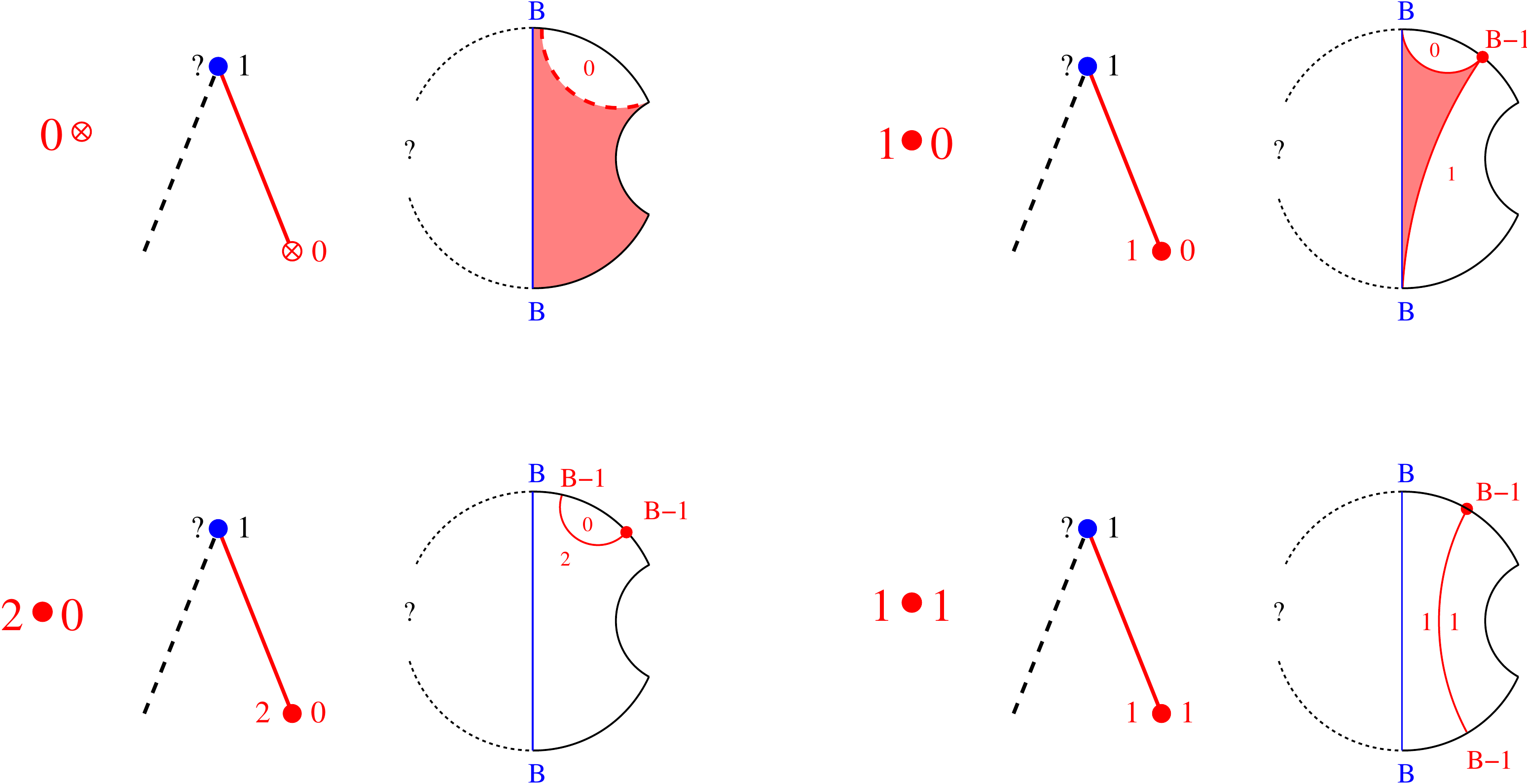}} 
  \caption{\small Possible nodes ending a branch with a label 1, i.e. child specifications
  of a periodic  1-specification with  two jumps.}\label{case1}
\end{figure}

\subsubsection{Case $0\bullet 2$.}
{\it When $0< a<a+1\le b < T_1< T_2$ and $d(\{a,b\},\{0,T_1,T_2\})\ge 1$.}

In this case $\vr_{[a,b]}(\vrt_1)$ is a $Q^{-B+1}$-possible periodic 1-specification
with only $1$ jump. Also 
\linebreak 
$\{\vr_{[b,T_1]}(\vrt_1), \vr_{[T_1,T_2]}(\vrt_2),\vr_{[0,a]}(\vrt_1)\}$ is a
$Q^{-B+1}$-possible 1-specification with 3 jumps. In the tree write a node with 
numbers $0\bullet 2$. We will not follow the the specification with 3 jumps.
From this node $0\bullet 2$ the tree will only have one branch corresponding to the number 0,
the specification with only 1 jump.

\bigskip

Now we study the cases with  $d(\{a,b\},\{0,T_1,T_2\})<1$.
Without loss of generality we can assume that $a\in[0,T_1]$,
$d(a,\{0,T_1\})<d(b,\{0,T_1,T_2\})$, $d(a,\{0,T_1\})<1$.
Now $a\in[0,T_1]$, $|a-b|_{\text{mod }T_2}\ge 1$ but $b\in[0,T_2]$.
In this case the approach point $\vr_a(\vrt_1)$ will not
count as a new interval of length one inside $[t^N_\ell(\tt),t^N_{\ell+1}(\tt)]$.
Define
\begin{align*}
\ov a &:= \begin{cases}
0 &\text{if }\quad  a < 1\le T_1-a,\\
T_1 &\text{if }\quad a\ge 1 > T_1-a.
\end{cases}
\\
\ov b&:=\begin{cases}
b-a &\text{if }\quad  a< 1\le T_1-a,\\
(b+T_1-a)_{\text{mod }T_2} &\text{if } \quad a\ge 1 > T_1-a.
\end{cases}
\end{align*}
We have that 
\begin{equation}\label{bage1}
|\ov a-\ov b|_{\text{mod }T_2}=|a-b|_{\text{mod }T_2}\ge 1.
\end{equation}
Since $d(a,\{0,T_1\})<d(b,\{0,T_1,T_2\})$ we have that  $\{b,\ov b\}\subset [0,T_1]$ or $\{b, \ov b\} \subset [T_1,T_2]$
and, using that $|\ov a-a|<1$, 
\begin{align}
d(\vr_{\ov a}(\vrt_1),\vr_{\ov b}(\vrt_j))&=
d(\vr_{\ov a}(\vrt_1),\vr_{b+\ov a-a}(\vrt_j))\notag \\
&\le  \Lip(\vr_{\ov a-a})\; d(\vr_a(\vrt_1),\vr_b(\vrt_j))
< A\cdot \tfrac1{2A} Q^{-B+1} \le \tfrac 12 Q^{-B+1}.
\label{doaob}
\end{align}

Define a new approach  $\vr_{c}(\ov\vrt_1)$ (see Fig.~\ref{fcase0b1}) 
by $\ov\vrt_1:=\vrt_j$ and $c\in\{0,T_1,T_2\}$ is such that
$$
\vr_{c}(\ov\vrt_1):=
\begin{cases}
 \vrt_1\quad &=\vr_{\ov a}(\vrt_j)\qquad
 \text{if } \ov a = 0 \hskip 5pt\text{ and }\vrt_j=\vrt_1,\\
 \vr_{T_1}(\vrt_1)  &=\vr_{\ov a}(\vrt_j)\qquad 
 \text{if } \ov a = T_1 \text{ and }\vrt_j=\vrt_1,\\
  \vr_{T_1}(\vrt_2) &=\vr_{\ov a}(\vrt_j)\qquad
  \text{if } \ov a = T_1 \text{ and }\vrt_j=\vrt_2,\\
\vr_{T_2}(\vrt_2)   &\phantom{=\vr_{\ov a}(\vrt_j)\qquad}
 \text{if } \ov a = 0 \hskip 7pt\text{ and }\vrt_j=\vrt_2,\\
\end{cases}
$$

Using~\eqref{doaob} have that
\begin{align}
d(\vr_{c}(\ov\vrt_1),\vr_{\ov b}(\vrt_j))
&\le d(\vr_{c}(\ov\vrt_1),\vr_{\ov a}(\vrt_1))+
d(\vr_{\ov a}(\vrt_1),\vr_{\ov b}(\vrt_j))
\notag\\
&\le \max\big\{d(\vr_{T_2}(\vrt_2),\vrt_1),d(\vr_{T_1}(\vrt_1),\vr_{T_1}(\vrt_2))\big\}
+
d(\vr_{\ov a}(\vrt_1),\vr_{\ov b}(\vrt_j))
\notag\\
&\le Q^{-B} +\tfrac 12 Q^{-B+1} < Q^{-B+1}.
\label{appab}
\end{align}
So that $\big(\vr_{c}(\ov\vrt_1),\vr_{\ov b}(\vrt_j)\big)$
is an approach in $\A_{B-1}$.
Moreover, we have chosen the approach $\vr_{c}(\ov\vrt_1)$ 
so that it is in the same orbit segment as $\vr_{\ov b}(\vrt_j)$ because $\ov\vrt_1=\vrt_j$.
Also
\begin{equation*}\label{obcge1}
|\ov b-c|=
\begin{cases}
|\ov b - \;\ov a\,|=|\ov b-\ov a|_{\text{mod }T_2}\ge 1 &\text{when $c=\ov a$},
\\
|\ov b-T_2|=|\ov b -\ov a|_{\text{mod }T_2}\ge 1
&\text{when }\ov a = 0\text{ and } c=T_2.
\end{cases}
\end{equation*}
Then we have proved
\medskip
\begin{Statement}\label{0appr}\quad
\newline
The segment $\vr_{[c,\ov b]}(\vrt_j)$ if $c<\ov b$,
(or $\vr_{[\ov b, c]}(\vrt_j)$ if $\ov  b<c$),
is a periodic $Q^{-B+1}$-possible 1-specification with only one jump.
\end{Statement}

\bigskip

Define
$$
\vrt^*_1:=\begin{cases}
\vr_{T_2}(\vrt_2) &\text{if } \vr_c(\ov\vrt_1)=\vrt_1,
\\
\vr_{T_1}(\vrt_2) &\text{if } \vr_c(\ov\vrt_1)=\vr_{T_1}(\vrt_1),
\\
\vr_{T_1}(\vrt_1) &\text{if } \vr_c(\ov\vrt_1)=\vr_{T_1}(\vrt_2),
\\
\vrt_1 &\text{if } \vr_c(\ov\vrt_1)=\vr_{T_2}(\vrt_2).
\end{cases}
$$	

  \begin{figure}[h]
   \psfrag{A12}{$vr_{\ov b}(\vrt_j)$}
  \resizebox*{14cm}{9.8cm}{
  \includegraphics{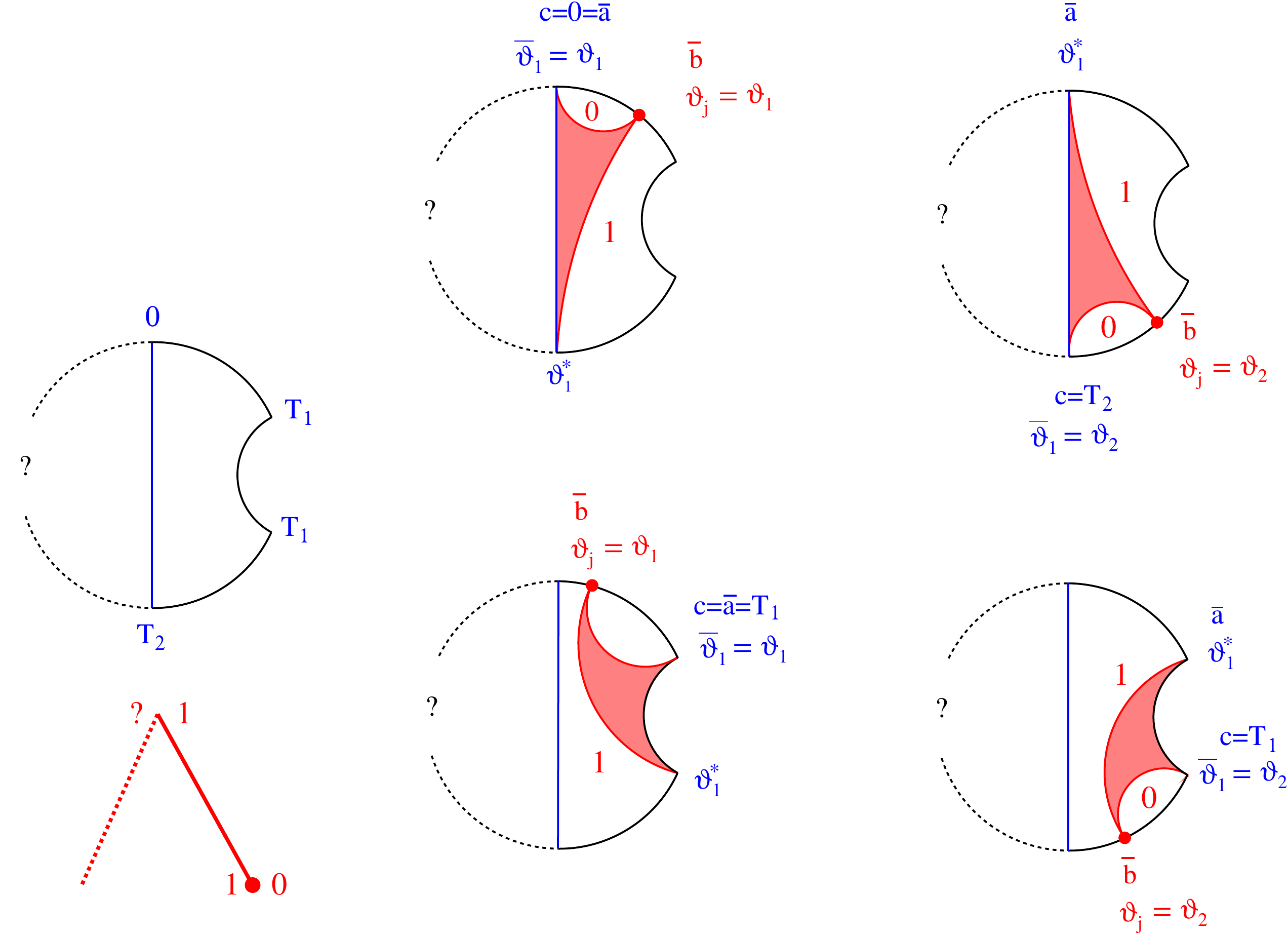}
  } 
   \caption{Case $0\bullet 1$. \small 
   In this case the red dot $\ov b$ is at time distance
   $d(\ov b,\{0,T_1,T_2\})\ge 1$ from the endpoints of its orbit segment. By \eqref{appab} 
   and~\eqref{v*b} every pair of vertices in the shaded triangles are $\A_{B-1}$ approaches.
   By Statement~\ref{0appr}, one side of the triangles define a $Q^{-B+1}$-possible 1-specification
   with only one jump. By the condition  $d(\ov b,\{0,T_1,T_2\})>1$, the other two pieces of the mother specification
   define a periodic $Q^{-B+1}$-possible 1-specification with two jumps. In the tree we write a black node with
   label $0\bullet 1$. The node will have two branches corresponding to the labels $0$ and $1$.
   }
   \label{fcase0b1} 
   \end{figure}

If $\vrt_1^*=\vr_{\ov a}(\vrt_1)$, then using~\eqref{doaob},
\begin{equation}
d(\vrt_1^*,\vr_{\ov b}(\vrt_j))=d(\vr_{\ov a}(\vrt_1),\vr_{\ov b}(\vrt_j))
\le \tfrac 12 Q^{-B+1}.
\label{v1*1}
\end{equation}
If $\vrt_1^*\ne\vr_{\ov a}(\vrt_1)$
then
\begin{align}
d(\vrt_1^*,\vr_{\ov b}(\vrt_j))
&\le d(\vrt_1^*,\vr_{\ov a}(\vrt_1))
+d(\vr_{\ov a}(\vrt_1),\vr_{\ov b}(\vrt_j))
\notag\\
&\le \max\{d(\vr_{T_2}(\vrt_2),\vrt_1),d(\vr_{T_1}(\vrt_1),\vr_{T_1}(\vrt_2))\}
+d(\vr_{\ov a}(\vrt_1),\vr_{\ov b}(\vrt_j))
\notag\\
&\le Q^{-B} +\tfrac 12 Q^{-B+1} < Q^{-B+1}.
\label{v1*2}
\end{align}
From~\eqref{v1*1} and~\eqref{v1*2}
we get that 
\begin{equation}
(\vrt_1^*,\vr_{\ov b}(\vrt_j))\in\A_{B-1}.
\label{v*b}
\end{equation}

\bigskip

\subsubsection{Case $0\bullet 1$.} \label{0B1}
{\it When $d({\ov b},\{0,T_1,T_2\})\ge 1$.}

Say that $ \vr_{\ov a}(\vrt_1)=\vrt_1$, the other case ($\vr_{\ov a}(\vrt_1)=\vr_{T_1}(\vrt_1)$) is similar.
This case $ \vr_{\ov a}(\vrt_1)=\vrt_1$ is described in the two upper pictures in Figure~\ref{fcase0b1}.
In this case we count $\vr_{\ov b}(\vrt_j)$ as a new interval of length 1
in $[t^N_\ell(\tt),t^N_{\ell+1}(\tt)]$ and we write a black dot $\bullet$ in the tree.
 If $0<\ov b <T_1$ (and $ \vrt_j=\vrt_1$, $\ov a=0$)
then by~\eqref{doaob}, $\vr_{[0,\ov b]}(\vrt_1)$ is a periodic $Q^{-B+1}$-possible 1-specification
with only 1 jump, and we write  $0\bullet$ in the  node in the tree. In this case $0<\ov b <T_1$
we also have that $\{\vr_{[\ov b,T_1]}(\vrt_1),\; \vr_{[T_1,T_2]}(\vrt_2)\}$ is
a  $Q^{-B+1}$-possible 1-specification with 2 jumps. We complete the information
writing $0\bullet 1$ in the node in the tree. In the disc we draw a line  
$\ov{\vrt_1\,\vr_{\ov b}(\vrt_1)}$ and another line $\ov{\vr_{\ov b}(\vrt_1)\,\vr_{T_2}(\vrt_2)}$, which 
is also $\ov{\vr_{\ov b}(\vrt_1)\vrt_1^*}$.
By \eqref{doaob} and \eqref{v*b} respectively, both lines are approaches in $\A_{B-1}$.
These lines together with the previously drawn line $\ov{\vr_{T_2}(\vrt_2)\,\vrt_1}$ bound 
a triangle that we shadow. The shadow means that we will not consider new approaches 
whose lines enter in the shadow and also that the three pairs corresponding to the edges of the
triangles are approaches in $\A_{B-1}$. The shadow gives a node in the tree with two sides,
left and right, chosen at the readers will, with numbers which are the amount of jumps seen
in the region of the disc separated by the shadow, in this case $0\bullet 1$.
Each number which is less than 2 in turn issues a new branch in the tree.

\begin{figure}[h]
   \psfrag{A12}{$vr_{\ov b}(\vrt_j)$}
  \resizebox*{14cm}{9.8cm}{
  \includegraphics{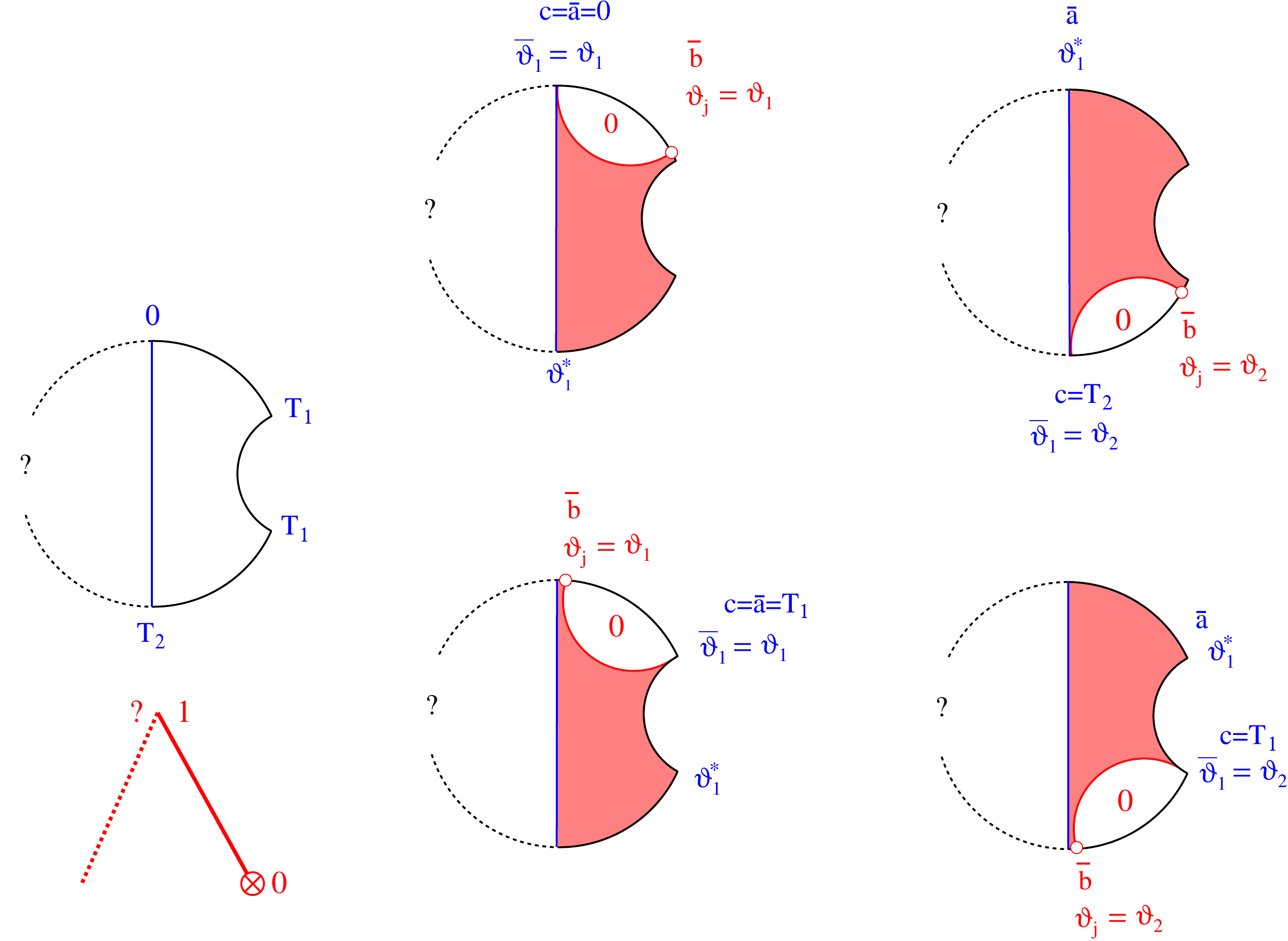}
  } 
   \caption{Case $0\otimes$. \small 
   In this case the white dot $\ov b$ is at time distance
   $d(\ov b,\{0,T_1,T_2\})\le 1$ from the endpoints of its orbit segment.
   It could even be that $\{a,b\}\subset\{0,T_1,T_2\}$.
   By Statement~\ref{0appr} the approach $(\vr_{c}(\vrt_j),\vr_{\ov b}(\vrt_j))\in \A_{B-1}$
   always determines a new periodic $Q^{-B+1}$-possible 1-specification with 
   {\it only one jump } which was
   not considered before.
   For simplicity we forget about the rest of the mother specification and shadow the 
   region delimited by it. In the tree we label the node with $0\otimes$. This node will issue
   a branch with label $0$. From subsection~\ref{br0} and  Figure~\ref{case0} such branch with label $0$
   (i.e. a 1-specification with only one jump) will restart the duplication process because it always 
   have two childs
   because its node is either $0\bullet 0$ or $0\bullet 1$.
     }
   \label{fcase0w} 
   \end{figure}

If $T_1<\ov b<T_2$ we have that $\{\vr_{[0,T_1]}(\vrt_1),\; \vr_{[T_1,\ov b]}(\vrt_2)\}$ 
is  a  $Q^{-B+1}$-possible 1-specification with 2 jumps and 
$\vr_{[\ov b, T_2]}(\vrt_2)$ is a periodic $Q^{-B+1}$-possible 1-specification
with only 1 jump. In this case we also write a node with $1\bullet 0$ in the tree.
The black node $\bullet$ will have two branches in the tree corresponding to 
numbers 0 and 1. We similarly also shadow the triangle bounded by the lines
 $\ov{\vrt_1\,\vr_{\ov b}(\vrt_2)}$,   $\ov{\vr_{\ov b}(\vrt_2)\,\vr_{T_2}(\vrt_2)}$, 
  $\ov{\vr_{T_2}(\vrt_2)\,\vrt_1}$. And we notice that by \eqref{doaob}, \eqref{appab}
  and \eqref{exqn} respectively,
  the three lines are approaches
  in $\A_{B-1}$

\subsubsection{Case $0\otimes$}
{\it When $d({\ov b},\{0,T_1,T_2\})\le 1$.}

Say that $ \vr_{\ov a}(\vrt_1)=\vrt_1$, the other case  is similar.
Since by~\eqref{bage1} 
$|\ov b-\ov a|_{\text{mod }T_2}\ge 1$, in this case $|\ov b-T_1|<1$  and then neither of the two
points  in the approach is implying a new interval of length one.
We write a white (or empty) node $\otimes$ in the tree.
By Statement~\ref{0appr} this situation implies at least one child specification with only 
one jump corresponding to the approach $(\vr_c(\vrt_j),\vr_{\ov b}(\vrt_j))\in\A_{B-1}$.
For simplicity we will forget about the other
parts of the mother specification. This case is shown in Figure~\ref{fcase0w}.
We draw a line $\ov{\vr_c(\vrt_j) \,\vr_{\ov b}(\vrt_j)}$ and shadow the complement
in the mother specification
of the region bounded by the segment $\vr_{[c,\ov b]}(\vrt_j)$ 
[or $\vr_{[\ov b,c]}(\vrt_j)$] and the line  $\ov{\vr_c(\vrt_j) \,\vr_{\ov b}(\vrt_j)}$,
as shown in Figure~\ref{fcase0w}.
On the tree we write a label $0\otimes$ on the node.
This node will issue only one branch corresponding to the label $0$: 
i.e. a child periodic $Q^{-N+1}$-possible 1-specification with only one jump.
Indeed, Statement~\ref{0appr} says it is a $Q^{-N+1}$-possible 1-specification.
\bigskip

        \begin{figure}[h]
     \resizebox*{14cm}{8cm}{\includegraphics{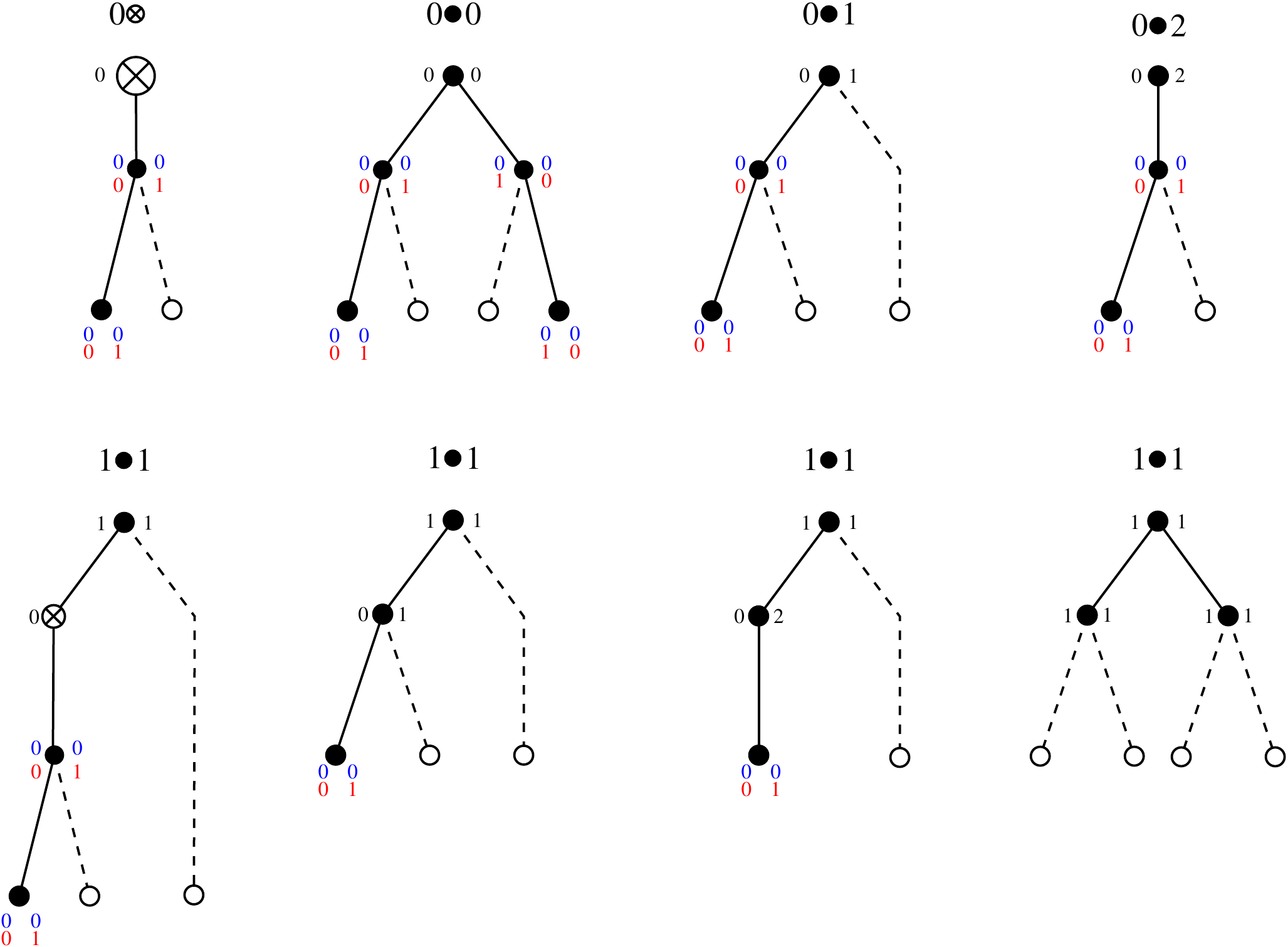}}
      \caption{\small From the childhood possibilities given in Figures~\ref{case0} and~\ref{case1} we construct the possible
      sub-trees of two and three levels to show that they satisfy Claim~\ref{c111}. Claim~\ref{c111} asks for at least
       two black nodes in the sub-tree with three levels and at least two ending nodes in the subtree with three levels.
       The dotted lines 
       denote that from this node we have at least one branch  that continues to levels 2 and 3. White dots
      $\circ$ denote that we don't know if the ending node is black $\bullet$ or empty $\otimes$.
      Since by Figures~\ref{case0} and ~\ref{case1} each node issues at least one branch, 
      we have that  if the sub-tree at
      level 2 satisfies Claim~\ref{c111}, then the sub-tree at level 3 also satisfies Claim~\ref{c111}. 
      In Figure~\ref{case1100} we develop the 2 level sub-tree for the case $1\bullet1\to 0\otimes$ and show 
      that in fact  we don't need the three level sub-tree because all the sub-trees with two levels already satisfy Claim~\ref{c111}.}
      \label{case0102}
     \end{figure}

Our tree has black nodes and white nodes. Each black node is at time interval at least one from the
    other nodes. So we have that 
     \begin{equation}\label{Bnodes}
      t^N_{\ell+1}-t^N_\ell \ge \#\{\text{black nodes}\}.
     \end{equation}
     In order to have the estimate in Proposition~\ref{PertEnt} it is enough to prove that
     \begin{claim} \label{c111}Given a
     node $P$ at level $B\ge 3$ the next three levels in the tree, $B-1$, $B-2$ and $B-3$, contain at least two
     black nodes below the node $P$ (not counting the node $P$); and also  the level $B-3$ below
     the node $P$ has at least two nodes, black or white. 
     \end{claim}
     This is because each node, black or white,
     has at least one successor in the next level. So at any two levels the number of black dots is
     duplicated and the number of successor branches is duplicated. This gives an exponential 
     rate of growth of $\sqrt[3]{2}$ new black dots per level.

          \begin{figure}[h]
     \resizebox*{14cm}{5cm}{\includegraphics{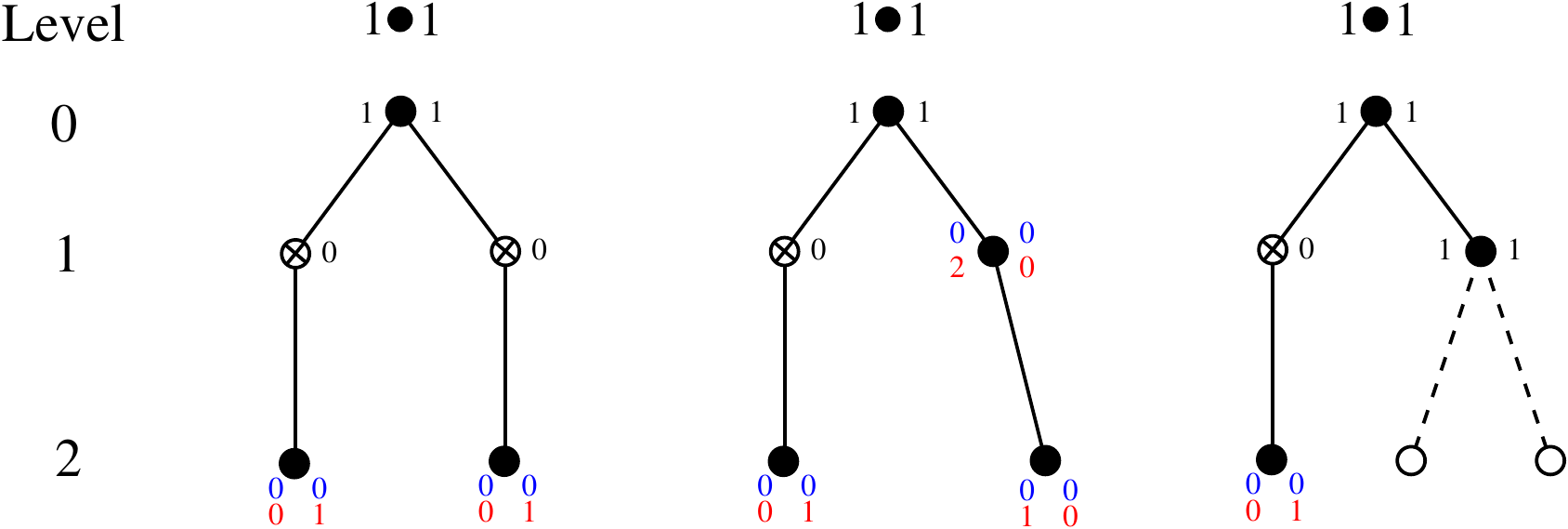}} 
      \caption{\small In this figure we develop  the possible sub-trees with original node $1\bullet 1$
      and one child node $0\otimes$. In the second sub-tree the right node could be followed by one
      or two branches depending on wether the right node is $2\bullet 0$ or $1\bullet 0$ respectivelly.
       We see that all these trees satisfy Claim~\ref{c111}: at least two black nodes in levels
      1 and 2 together,  and at least two  nodes at level 2. }
      \label{case1100}
     \end{figure}

   In subsection \S~\ref{br0} and   Figure~\ref{case0}  we showed all the possible 
   nodes ending a branch with label 0, namely $0\bullet 0$ and $0\bullet 1$.
    Also subsection \S~\ref{br1} and Figure~\ref{case1} 
   show the possible nodes ending a branch with label 1. We get that all the possible
   labels of a node are $0\otimes$, $0\bullet 0$, $0\bullet 1$, $0\bullet 2$, $1\bullet 1$ 
   and their symmetric labels $1\bullet 0$, $2\bullet 0$. All these nodes issue at least 
   one branch. In Figure~\ref{case0102} we show the  possibilities of two or three labels 
   issued by any node. The dotted branches and white dots $\circ$  denote that there is
   at least one branch issued and we don't know if it ends in a black dot $\bullet$ or empty 
   node $\otimes$. Figure~\ref{case0102} shows that Claim~\ref{c111} is satisfied.
   In most cases in Figure~\ref{case0102} we develop the tree only two levels because with 
   two levels we already have at least two ending nodes and two black dots in the sub-tree.
   Since each ending node will issue at least one branch the sub-tree with three labels will
   have at least the same amount of ending nodes as the sub-tree with two labels. Therefore 
   if a sub-tree with two labels already satisfies Claim~\ref{c111} its continuation to three labels
   also satisfies Claim~\ref{c111}. The only sub-tree with three labels on Figure~\ref{case0102}
   is a node $1\bullet 1$ followed with a node $0\otimes$. In this case the node $0\otimes$ issues
   at least two descendant black nodes in two labels. This implies that the three level sub-tree satisfies 
   Claim~\ref{c111}.  In fact this case already satisfies Claim~\ref{c111} in two labels as seen in
   Figure~\ref{case1100} where the right branch has been developed. So in fact we have a better
   exponential rate $\sqrt[2]{2}$ in Proposition~\ref{PertEnt}.
   
   The diagrams in Figure~\ref{case0102} exploit the fact that  as in Figure~\ref{case0}, a branch with
   label $0$ ends in a node with label $0\bullet 0$ or $0\bullet 1$. In both cases this node has two branches
   and at least one new branch with label $0$. The diagrams are ordered first by the label of the upper node,
   then by the existence in the next level of a given label.  Then in the last diagram we have an upper node
   $1\bullet 1$ issuing a node with label $1\bullet 1$, for the other node at level 1 the other possibilities
   $0\otimes$, $0\bullet 1$, $0\bullet 2$, given from Figure~\ref{case1} have already been studied in the 
   previous diagrams, it only remains to study a second node with label $1\bullet 1$.

 This finishes the proof of Proposition~\ref{PertEnt}.
 
 \end{proof}

\bigskip

\appendix

\section{Entropy.}\label{AE}

 Let $\phi_t$ be a continuous flow without fixed points 
 on a compact metric space $X$.
 
 For $T>0$ define the distance $d_T$ on $X$ by
 $$
 d_T(x,y) := \max_{s\in[0,T]} d(\phi_s(x),\phi_s(y)). 
 $$
 For $\de, \, T>0$, $x\in X$ the {\it dynamic ball } $V(x,T,\de)$ is defined 
 as the closed ball of radius $\de$ for the distance $d_T$ centered at $x$, equivalently
 $$
 V(x,T,\de):=\{\,y\in X\;|\; \forall s\in[0,T]\;\; d(\phi_s(x),\phi_s(y))\le \de\;\}.
 $$

 Given $E,\,F\subset X$ we say that $E$ is a $(T,\de)${\it -spanning set} for $F$
 (or that it $(T,\de)${\it -spans} $F$)
 iff
$$
F \subset \bigcup_{e\in E} V(e,T,\de).
$$
 Let 
 $$
 r(F,T,\de):=\min \{\#E \;|\; E\text{ $(T,\de)$-spans } F \,\}.
 $$
 If $F$ is compact, the continuity of $\phi_t$ implies that $r(F,T,\de)<\infty$.
 Let
 $$
 \ov r_\phi(F,\de) := \limsup_{T\to\infty} \frac 1T \log r(F,T,\de).
 $$
 
 We say that $E\subset X$ is $(T,\de)${\it -separated } if
 $$
 x,\,y\in E, \quad x\ne y \quad \then\quad 
 V(x,T,\de)\cap V(y,T,\de) =\emptyset.
 $$
 Given $F\subset X$ let 
 \begin{align*}
 s(F,T,\de):= \max\{\,\#E\,|\, E\subset F,\; E\text{ is $(T,\de)$-separated }\}.
 \end{align*}
 If $F$ is compact,
 Theorem 6.4 in \cite{Walters2}
 shows that $s(F,T,\de)<\infty$.
 Let
 $$
 \ov s_\phi(F,\de):= \limsup_{T\to+\infty}
 \frac 1T \log s(F,T,\de).
 $$
 Define the {\it topological entropy} by
 $$
 h_{top}(F,\phi):=\lim_{\de\to 0}\ov r_\phi(F,\de)
 =\lim_{\de\to 0}\ov s_\phi(F,\de).
 $$
 By Lemma~1 in \cite{Bowen10} these limits exist and are equal.
 
 In fact the topological entropy of a flow $\phi$ equals the topological
 entropy of the homeomorphism $\phi_1$, and more generally
 $h(\phi_t)= |t|\, h(\phi)$, see Proposition~21 in \cite{Bowen10}.
 
  Denote by $\mathfrak B(X)$ the Borel $\si$-algebra of $X$.
  Let $f:X\to X$ be a measurable map.
  Denote by $\cM(f)$ the set of $f$-invariant Borel probabilities on $X$.
 Given a finite Borel partition  $\A\subset\mathfrak B(X)$ 
 of $X$ and an $f$-invariant Borel
  probability $\mu\in\cM(f)$ define its {\it entropy}
  by
  \begin{align*}
  H_\mu(\A):&=-\sum_{A\in\A}\mu(A)\,\log\mu(A).
  \end{align*}
  Given two finite Borel partitions $\A,\,\B$ of $X$ define $\A\vee \B$ as
  \footnote{More generally $\A\vee \B=\si(\A\cup \B)$ is the $\si$-algebra generated by $\A\cup \B$. 
  This is the definition used for an infinite refinement $\bigvee_{i\in\na} \A_i$. } 
  $$
  \A\vee \B:=\{\, A \cap B\;|\; A\in\A,\, B\in\B\,\}.
  $$
  From Walters~\cite[Theorem~4.10]{Walters}, the map
  \begin{equation}\label{HfNdec}
  \na^+\ni N \mapsto \tfrac 1N \, 
  H_\mu\Big(\textstyle\bigvee_{n=0}^{N-1}f^{-n}(\A)\Big)
  \qquad
  \text{is decreasing.}
  \end{equation}
  Let 
  $$
  h_\mu(f,\A):= \lim_{N} \tfrac 1N\, 
  H_\mu\Big(\textstyle\bigvee_{n=0}^{N-1} f^{-n}(\A)\Big).
  $$
  The (metric) {\it entropy} of $\mu$ under $f$ is defined as
  $$
  h_\mu(f):=\sup\{\, h_\mu(f,\A)\;|\; \A \text{ is a Borel partition of $X$} \,\}.
  $$

  \begin{Theorem}[Variational Principle] (cf. Walters~\cite[Theorem~8.6]{Walters})
  \label{VarPrin}

  Let $f:X\to X$ be a continuous map of a compact metric space $X$. Then
  $$
  h_{top}(X,f)=\sup\{\, h_\mu(f)\;|\; \mu\in\cM(f)\,\}.
  $$
  \end{Theorem}

    \begin{Definition}\label{hexpan}\quad
    
  Let $f:X\to X$ be a homeomorphism. For $\e>0$ and $x\in X$ define
  $$
  \Ga_\e(x,f):=\{\,y\in X\;|\;\forall n\in \Z\quad  d(f^n(y),f^n(x))\le \e \,\}.
  $$
  We say that $f$ is {\it entropy expansive} or {\it h-expansive} if there
  is $\e>0$ such that
  $$
  \forall x\in X\qquad h_{\text{top}}(\Ga_\e(x,f),f)=0.
  $$
  Such an $\e$ is called an h-expansive constant for $f$.
    \end{Definition}
 
  The following theorems use the definition from Hurewicz and Wallman \cite{HuWall} of a finite dimensional 
  metric space. They mainly use the property from the Corollary to Theorem~V.1, page~55 in~\cite{HuWall},
  that if the metric space $X$ has dimension $\le m$, then for any $\ga>0$,  $X$ has a covering 
  $\mathfrak B(\ga)$ of diameter $<\ga$, such that  no point in $X$ is in more than $m+1$  
  elements of $\mathfrak B(\ga)$.

  \medskip

  \begin{Theorem}[Bowen~{\cite[Theorem~3.5]{Bowen9}}]\label{TBhE}\quad
  
  Let $X$ be a finite dimensional metric space and $f:X\to X$ a 
  uniformly continuous homeomorphism. Suppose that $\e>0$ is
  an h-expansive constant for $f$. 
  If $\A$ is a finite Borel partition of $X$ with $\diam\A<\e$
  then $h_\mu(f)=h_\mu(f,\A)$.
  \end{Theorem}

\medskip
  
  \begin{Definition}\label{dexpans}\quad
  
  A continuous flow $\phi:\re\times X\to X$ is said {\it flow expansive}
  (cf.~\cite{BoWa})
  if for every $\eta>0$ there exists $\de>0$ such that if
  $x,\,y\in X$ and $\a:\re\to\re$ is continuous with $\a(0)=0$
  such that $\forall t\in\re$ \;\; $d(\phi_t(x),\phi_{\a(t)}(y))\le \de$,
  then $y=\phi_v(x)$ for some $|v|\le\eta$.
  Observe that if a flow $\phi$ is flow expansive then its time~$1$ map $\phi_1$
  is entropy expansive.
  \end{Definition}
 
    For a continuous flow $\phi:\re\times X\to X$, the {\it metric entropy} 
  $h_\mu(\phi)$ of a $\phi$-invariant Borel probability measure $\mu$
  is defined as the metric entropy of its time one map
  $h_\mu(\phi):=h_\mu(\phi_1)$.

   \begin{Definition}\label{duhe}\quad
   
   Let $\cU$ be a topological subspace of $C^0(X,X)\supset\cU$ and 
   $Y\subseteq X$ compact.
   We say that $\cU$ is {\it uniformly h-expansive} on $Y$
   if there is $\e>0$ such that 
   $$
   \forall f\in\cU\quad \forall y\in Y
   \qquad
   h_{\text{top}}(\Ga_\e(y,f),f)=0.
   $$
   In our applications $\cU$ will be a $C^1$ neighbourhood of a diffeomorphism
   endowed with the $C^0$ topology. An h-expansive homeomorphism corresponds
   to $\cU=\{ f\}$.
   \end{Definition}

  Let $\cP(X)$ be the set of Borel probability measures on $X$ endowed 
  with the weak* topology. Let $\cM(f)\subset \cP(X)$ the subspace of $f$-invariant
  probabilities.

 \medskip
 
 \begin{Theorem}\quad\label{ESS}
 
Let $X$ be a finite dimensional compact metric space, $Y\subseteq X$ compact  and 
let 
\linebreak 
$\cU\subset C^0(X,X)$ be a uniformly h-expansive set on $Y$.
 Then the entropy map
$(\mu,f)\mapsto h_\mu(f)$ is upper semicontinuous on $\cU$,
i.e.

if $f\in\cU$, $\mu\in\cM(f)$ and $\supp\mu\subset Y$ then
given $\e>0$ there are open sets $\cV$, $U$,
\linebreak
$f\in\cV\subset C^0(X,X)$ and $\mu\in U\subset \cP(Y)$ such that 
$$
\forall g\in\cU\cap\cV,\quad
\forall\nu\in\cM(g)\cap U\cap\cP(Y)
\qquad 
h_\nu(g)\le h_\mu(f)+\e.
$$

In particular, this applies to time-one maps of uniformly expansive flows
as in Remark~\ref{rue}, giving
$$
\limsup_{(\psi^n_1,\nu_n)\to(\phi_1,\mu)}h_{\nu_n}(\psi^n)\le h_\mu(\phi).
$$
\end{Theorem}

\begin{proof}
\quad

Let $\e>0$ be a uniform entropy h-expansivity constant on $Y$ for all $f\in\cU$.
Fix $f\in\cU$ and $\mu\in\cM(f)$ and let $\de>0$. 

Let $\C=\{\, C_1,\ldots, C_n\,\}$ be a finite partition of $Y$
by Borel sets with $\diam C_i<\e$. By Theorem~\ref{TBhE},
$h_\nu(g)=h_\nu(g,\C)$ for all $g\in\cU$ and $\nu\in\cM(g)$.
Let $N$ be such that 
\begin{equation}\label{fixN}
\frac 1N\, H_\mu\Big(\textstyle\bigvee\limits_{k=0}^{N-1} f^{-k}\C\Big)
< h_\mu(f)+\tfrac 12\de.
\end{equation}
Since $\mu$ is regular, there are compact sets
$$
K_{i_0\ldots i_{N-1}}\subset\textstyle\bigcap\limits_{k=0}^{N-1} f^{-k}C_{i_k}
$$
such that
\begin{equation}\label{muck}
\mu\Big(\bigcap_{k=0}^{N-1} f^{-k}C_{i_k}\setminus K_{i_0\cdots i_{N-1}}\Big)<\e_1.
\end{equation}
Then
$$
L_j:=\textstyle \bigcup\limits_{k=0}^{N-1}\bigcup\limits_{i_k=j}
f^k K_{i_0\ldots i_{N-1}}
\subset C_j.
$$
The sets $L_1,\ldots, L_n$ are compact and disjoint so there is 
a partition $\D:=\{ D_1,\ldots , D_n \}$ with $\diam(D_j)<\e$ and 
$L_j\subset \intt(D_j)$.
We have that 
$$
K_{i_0\ldots i_{N-1}}\subset \intt\Big(\textstyle\bigcap\limits_{k=0}^{N-1}
f^{-k} D_{i_k}\Big).
$$
Choose open subsets $W_{i_0\ldots i_{N-1}}$ such that
$$
K_{i_0\ldots i_{N-1}}\subset 
W_{i_0\ldots i_{N-1}}\subset
\ov{W_{i_0\ldots i_{N-1}}} \subset
\intt\Big(\textstyle\bigcap\limits_{k=0}^{N-1}
f^{-k} D_{i_k}\Big).
$$
We have that 
$$
f^k\big(\ov{W_{i_0\ldots i_{N-1}}} \big)
\subset \intt D_{i_k}.
$$
Choose a relatively open subset $f\in\cU_1\subset \cU$ such that 
$$
\forall g\in\cU_1\quad \forall (i_0,\ldots, i_{N-1})
\quad
\forall k\in\{0,\ldots,N-1\}
\qquad
g^k\big(\ov{W_{i_0\ldots i_{N-1}}} \big)
\subset \intt D_{i_k}.
$$
So that 
$$
\forall g\in\cU_1\qquad
K_{i_0\ldots i_{N-1}}\subset 
W_{i_0\ldots i_{N-1}}\subset
\intt\Big(\textstyle\bigcap\limits_{k=0}^{N-1}
g^{-k} D_{i_k}\Big).
$$
By Urysohn's Lemma there exist $\psi_{i_0\ldots i_N}\in C^0(X,\re)$ such that 
\begin{itemize}
\item $0\le \psi_{i_0\ldots i_N}\le 1$.
\item equals $1$ on $K_{i_0\ldots i_{N-1}}$.
\item vanishes on $X\setminus W_{i_0\ldots i_{N-1}}$.
\end{itemize}
Define
$$
U_{i_0\ldots i_{N-1}}:=\Big\{ m\in\cP(Y)\;:\;
\Big|\int \psi_{i_0\ldots i_{N-1}}\,dm-\int \psi_{i_0\ldots i_{N-1}}\,d\mu\Big|
<\e_1 \Big\}.
$$
The set $U_{i_0\ldots i_{N-1}}$ is open in $\cP(Y)$
and if $m\in U_{i_0\ldots i_{N-1}}$ and $g\in\cU_1$ then
\begin{align}\label{mDk}
m\Big({\textstyle\bigcap\limits_{k=0}^{N-1}} g^{-k} D_{i_k}\Big)
\ge \int \psi_{i_0\ldots i_{N-1}}\,dm
>\int \psi_{i_0\ldots i_{N-1}}\,d\mu-\e_1
\ge \mu\big(K_{i_0\ldots i_{N-1}}\big) -\e_1.
\end{align}
From~\eqref{muck} and~\eqref{mDk} we get that
$$
\mu\Big(\textstyle\bigcap\limits_{k=0}^{N-1}f^{-k}C_{i_k}\Big)
-m\Big(\textstyle\bigcap\limits_{k=0}^{N-1}g^{-k}D_{i_k}\Big)
< 2 \e_1.
$$
If $U:=\bigcap_{i_0\ldots i_{N-1}}U_{i_0\ldots i_{N-1}}$ and $m\in U$  and
$g\in\cU_1$
 then:
$$
\Big|\mu\Big(\textstyle\bigcap\limits_{k=0}^{N-1} f^{-k}C_{i_k}\Big)
-m\Big(\bigcap\limits_{k=0}^{N-1}g^{-k}D_{i_k}\Big)\Big|
< 2\e_1 n^N,
$$
because if $\sum_{i=1}^q a_i=1=\sum_{i=1}^qb_i$ and there exists
$c>0$ such that $a_i-b_i<c$ for all $i$, then $\forall i$ $|a_i-b_i|<c\,q$,
because $b_i-a_i=\sum_{j\ne i}(a_j-b_j)<c\,q$.

So if $m\in U$, $g\in\cU_1$ and $\e_1$ is small enough, for $N$ fixed in \eqref{fixN}, the continuity of 
$x\,\log x$ gives:
$$
\frac 1N\,H_m\Big({\textstyle\bigvee\limits_{k=0}^{N-1}} g^{-k}\D\Big)
<
\frac 1N\,H_\mu\Big({\textstyle\bigvee\limits_{k=0}^{N-1}} f^{-k}\C\Big)
+\frac\de 2.
$$
Hence, for $g\in\cU_1$, $m\in U\cap\cM(g)$  and $\e_1$ small enough,
using  Theorem~\ref{TBhE} and \eqref{HfNdec}  we have that
\begin{align*}
h_m(g)&=h_m(g,\D)
\le \frac 1N\,H_m\Big(\textstyle\bigvee\limits_{k=0}^{N-1} g^{-k}\D\Big)
\\
&\le \frac 1N\,H_\mu\Big({\textstyle\bigvee\limits_{k=0}^{N-1}} f^{-k}\C\Big)
+\frac\de 2
< h_\mu(f)+\de.
\end{align*}
\end{proof}

\bigskip

 Let $W(x,N,\e)$ be the dynamic ball for the time 1 map $\phi_1$\,:
 $$
 W(x,N,\e) := \{\, y\in X\,|\, d(\phi_n(x),\phi_n(y))\le \e\quad
 \forall n=0,\ldots,N-1\,\}.
 $$
  Given $\e>0$ there is $\de>0$ such that 
 \begin{equation}\label{ed}
 x,\,y\in X,\quad d(x,y)<\de \quad\then\quad
 \forall t\in[0,1]\quad d(\phi_t(x),\phi_t(y))<\e.
 \end{equation}
 If $\e$, $\de$ are as in~\eqref{ed}, $N\in\na$ and $N\le T\le  N+1$, we have that 
 \begin{equation}\label{VB}
 W(x,N+1,\de)\subset V(x,T,\de)\subset V(x,N,\de)\subset W(x,N,\e).
 \end{equation}

 Using~\eqref{VB} we can use the Brin-Katok theorem for maps 
 (cf. Brin-Katok~\cite{BK})
 to obtain
 
 \medskip
 
 \begin{Theorem}[Brin-Katok \cite{BK}]\label{BK}\quad
 
 If $\mu$ is an ergodic $\phi$-invariant Borel probability, then
 for $\mu$-almost every $x\in X$ we have 
 \begin{align*}
  h_\mu(\phi)
  &=\lim_{\e\to 0}\limsup_{T\to+\infty}-\frac 1T \log \mu(V(x,T,\e)) 
  \\&=
  \lim_{\e\to 0}\liminf_{N\to+\infty}-\frac 1N \log \mu(W(x,N,\e)). 
 \end{align*}
 \end{Theorem}

\bigskip
 
  \section{Shadowing.}\label{asha}
  
  Let $\phi$ be the flow of a $C^1$ vector field on a compact manifold $M$.
  A compact $\phi$-invariant subset  $\La\subset M$
  is {\it  hyperbolic } for $\phi$ if the tangent bundle 
  restricted to $\La$ is decomposed as the Whitney sum
  $T_\La M = E^s \oplus E\oplus E^u$, where $E$ is the 1-dimensional 
  vector bundle tangent to the flow and there are constants $C,\la>0$ such that
  \begin{enumerate}[(a)]
  \item $D\phi_t(E^s) = E^s$, $D\phi_t(E^u)=E^u$ for all $t\in\re$
  \item\label{hipb} $| D\phi_t(v)| \le C\,\ee^{-\la t} |v|$ for all $v\in E^s$, $t\ge 0$.
  \item\label{hipc} $|D\phi_{-t}(u)|\le C\, \ee^{-\la t} |u|$ for all $u\in E^u$, $t\ge 0$.
  \end{enumerate} 
  It follows from the definition that the hyperbolic splittig 
  $E^s \oplus E\oplus E^u$ over $\La$ is continuous.

  From now on we shall assume that $\La$ does not contain fixed points
  for $\phi$.
  For $x\in \La$ define the following   stable and unstable sets:
  \begin{align}
  W^{ss}(x):&=\{\,y\in M\;|\; d(\phi_t(x),\phi_t(y))\to 0 \text{ as }t\to +\infty\,\},
  \notag\\
  W^{ss}_\e(x):&=\{\,y\in W^{ss}(x)\;|\;d(\phi_t(x),\phi_t(y))\le\e\;\;\forall t\ge 0\,\},
  \notag\\
  W^{uu}(x):&=\{\,y\in M\;|\; d(\phi_{-t}(x),\phi_{-t}(y))\to 0\text{ as } t\to +\infty\,\},
  \notag\\
  W^{uu}_\e(x):&=\{\,y\in W^{uu}(x)\;|\;d(\phi_{-t}(x),\phi_{-t}(y))\le \e\;\;\forall t\ge 0\,\},
  \label{wuue}\\
  \intertext\quad
  W^s_\e(x):&=\{\,y\in M\;|\;d(\phi_t(x),\phi_t(y))\le\e\;\;\forall t\ge 0\,\},
  \notag\\
   W^{u}_\e(x):&=\{\,y\in M\;|\;d(\phi_{-t}(x),\phi_{-t}(y))\le \e\;\;\forall t\ge 0\,\}.
   \notag
  \end{align}
  
  \medskip 
  
  Conditions~\eqref{hipb} and ~\eqref{hipc} are equivalent to
  \begin{enumerate}[(a)]
  \addtocounter{enumi}{3}
  \item\label{hipd} There exists $T>0$ such that
   $\lV D\phi_T|_{E^s}\rV <\tfrac 12$ \quad and \quad
   $\lV D\phi_{-T}|_{E^u}\rV<\tfrac 12$.
  \end{enumerate}

  Let $\fX^k(M)$ be the Banach manifold of the $C^k$ vector fields on $M$, $k\ge 1$.
  Let $X=\partial_t\phi_t$ the vector field of $\phi_t$.

  \begin{Proposition}\label{unifhip}\quad
  
  There are open sets $X\in\cU\subset\fX^1(M)$ and $\La\subset U\subset M$ 
  such that for every $Y\in\cU$ the set $\La_Y:=\bigcap_{t\in\re}\psi^Y_t(\ov U)$
  is hyperbolic for the flow $\psi^Y_t$ of $Y$, with uniform constants $C$, $\la$, $T$
  on~\eqref{hipb}, \eqref{hipc} and \eqref{hipd}.
  \end{Proposition}
  
   Proposition~\ref{unifhip} can be proven by a characterization of hyperbolicity using
   cones (cf. Hasselblatt-Katok~\cite[Proposition 17.4.4]{HK}) and obtaining uniform 
   contraction (expansion) for a fixed iterate in $\La_Y$.

  \medskip

  \begin{Proposition}[Hirsch, Pugh, Shub {\cite[Corollary 5.6, p.~63]{HPS}}, 
  Bowen~{\cite[Prop. 1.3]{Bowen6}}]
  \label{pHPS}\quad

  There are constants $C,\,\la>0$ such that, for small $\e$,
  \begin{enumerate}[(a)]
  \item $d\big(\phi_t(x),\phi_t(y)\big)\le C\, \ee^{-\la t}\, d(x,y)$
  when $x\in \La$, $y\in W_\e^{ss}(x)$, $t\ge 0$.
  \item $d\big(\phi_{-t}(x),\phi_{-t}(y)\big)\le C\, \ee^{-\la t}\, d(x,y)$
  when $x\in \La$, $y\in W_\e^{uu}(x)$, $t\ge 0$.

  \end{enumerate} 
  \end{Proposition}
  
  \medskip
  \begin{mysec}{\bf Canonical Coordinates \cite[(3.1)]{PS}, \cite[(4.1)]{HPS}, \cite[(7.4)]{Smale},
  \cite[(1.4)]{Bowen6}, \cite[(1.2)]{Bowen3}:}
  \label{caco}
  \end{mysec}\vskip -5pt
  {\it There are $\de, \,\ga>0$ for which the following is true:
  If $x, y\in \La$ and $d(x,y)\le \de$ then there is a unique 
  $v=v(x,y)\in \re$ with $|v|\le \ga$ such that 
  \begin{equation}\label{ecaco}
  \langle x,y\rangle:=W_\ga^{ss}(\phi_v(x))\cap W^{uu}_\ga(y) \ne \emptyset.
  \end{equation}
  This set consists of a single point, which we denote 
  $\langle x,y\rangle\in M$. The maps $v$ and 
  $\langle\;,\;\rangle$ are continuous on the set
  $\{\,(x,y)\;|\; d(x,y)\le \de\,\}\subset \La\times\La$.
  }
  
  \medskip
  
     We will take a small neighborhoods $\La\subset U\subset M$ and  $\cU\subset\fX^1(M)$
     and  take uniform constants from \ref{pHPS} and \ref{caco} which hold for every $Y\in\cU$ and
     all points in the maximal invariant set $\La^Y_U:=\bigcap_{t\in\re}\ov U$.
    The following proposition is a modification of 
    Bowen {\cite[Prop.~1.6, p.~4]{Bowen6}} which we prove below.
  
    \begin{Proposition}
    \label{B16}
  \quad
  
     There are open sets $X\in\cU\subset\fX^1(M)$
     and $\La\subset U\subset M$ and $\eta_0>0$, $B>1$ such that
     $$
     \forall \eta>0 \qquad \exists
      \be=\be(\eta)=\tfrac 1B\,\min\{\eta,\eta_0\} \qquad \forall Y\in\cU
     $$
   if $\psi_t=\psi^Y_t$  is the flow of $Y$, 
  $x,\, y \in \Om^Y_U:=\bigcap_{t\in\re}\psi_t(\ov U)$ and $s:\re\to\re$  continuous with $s(0)=0$ satisfy
  \begin{equation}\label{csh}
  d(\psi_{t+s(t)}(y),\psi_t(x))\le\be\quad\text{ for }|t|\le L,
  \end{equation}
  then 
  \begin{equation}\label{setav}
  |s(t)|\le3\eta \quad \text{ for all }|t|\le L,
  \qquad
   |v(x,y)|\le\eta \quad \text{ and }
  \end{equation}
    \begin{align}
  \forall |s|\le L, \qquad
  &d(\psi_s(y),\psi_{s+v}(x))\le C\,\ee^{-\la(L-|s|)}\,
  \big[d(\psi_L(w),\psi_L(y))+d(\psi_{-L}(w),\psi_{-L+v}(x))\big],
  \notag\\
  &\text{where }\qquad w:=\langle x,y\rangle 
  =W^{ss}_\ga(\psi_v(x))\cap W^{uu}_\ga(y).
  \label{d<egdd1}
  \end{align}
  also
  \begin{equation}\label{dsh}
  \forall |s|\le L, \qquad
   d(\psi_s(y),\psi_s\psi_v(x))\le C\, \ga\,e^{-\la (L-|s|)}.
  \end{equation}
   In particular 
  $$
  d(y,\psi_v(x))\le C\, \ga\, e^{-\la L}.
  $$
  \end{Proposition}
  
  \bigskip
  
  For the proof of Proposition~\ref{B16} we need the following
  
    \begin{Lemma}\label{B4}

  There is $\eta_0>0$ and $B>1$ such that  
  \newline
  if $d(x,y)\le\eta_0$,  
  $Y\in\cU$, $x,\,y\in\La^Y_U$ and $\eta = B\, d(x,y)$  then
  \begin{gather}
  \langle x,y\rangle\in W^{ss}_\eta(\psi^Y_v(x))\cap W^{uu}_\eta(y)
  \qquad \text{with }\quad
  |v(x,y)| \le \eta \qquad 
  \label{vdxy}\\
  \text{and } \qquad d(x,\psi^Y_v(x))\le \eta.
  \label{Bdxy}
  \end{gather}
  \end{Lemma}

 \begin{proof}\quad

We have that $\langle x,x\rangle =x$ and $v(x,x)=0$.
By uniform continuity, given $\de>0$, for $d(x,y)$ small enough
\begin{equation}\label{dewx}
d(\langle x,y\rangle,x)\le \de,
\qquad
d(\langle x, y\rangle,y)\le \de,
\end{equation}
and $v=v(x,y)$ is so small that
\begin{equation}\label{dpsvd}
d(\psi_v(x),x)\le \de.
\end{equation}

 The continuity of the hyperbolic splitting implies that the angles
  $\measuredangle(E^s,E^u)$, $\measuredangle(Y,E^s)$ and 
  $\measuredangle(E^s\oplus\re Y,E^u)$
   are bounded away from zero, 
  uniformly on $\La^Y_V$, for some $V\supset U$
   and all $Y$ in an open set $\cU_0\subset\fX^1(M)$
  with $X\in\cU_0$.   
    There is $\be_1>0$ such that if $x,\,y\in\La^Y_U$ and $d(x,y)<\be_1$
  then 
  $$
  \langle x, y\rangle =W^s_\ga(x)\cap W^{uu}_\ga(y)\in V.
  $$

   The strong local invariant manifolds $W^{ss}_\ga$, $W^{uu}_\ga$ 
  are tangent to $E^s$, $E^u$ at $\La^Y_V$ and for a fixed $\ga$ as $C^1$ submanifolds
  they vary continuously on the base point $x\in M$ and on the vector field in $C^1$
  topology (cf.  \cite[Thm. 4.3]{CroPo}\cite[Thm. 4.1]{HPS}). 
  There is a family of small cones $E^u_X(x)\subset C^u(x)\subset T_xM$, $E^s_X(x)\subset C^s(x)\subset T_xM$ 
  defined on a neighbourhood $W$ of $\La$ such that $\exp_x(C^u(x)\cap B_\de(0))$, 
  $\exp_x (C^s(x)\cap B_\de(0))$ are invariant under $\psi^Y_1$ and $\psi^Y_1$ respectively, for
  $Y$ in a $C^1$ neighborhood $\cW$ of $X$. These cones contain $W^{uu}_{\ga}(x)$ and $W^{ss}_{\ga}(x)$ for 
  $x\in\La^Y_W$ and $Y\in\cW$. The angles between these cones are uniformly bounded 
  away form zero, so for example if $z^u\in W^{uu}(x)$, $z^s\in W^{ss}(x)$ and $d(z^u,x)$, $d(z^s,x)$ are small,
  then $d(z^u,x)+d(z^s,x)< A_0\,d(z^u, z^s)$ for some $A_0>0$.
  We can construct similiar cones separating $E^u$ from $E^s\oplus\re X$.
  
  Shrinking $U$ and $\cU$ if necessary there are $0<\be_2<\be_1$  and $A_1,\,A_2,\,A_3>0$
  such that if $Y\in\cU$, $x,\,y\in \La^Y_U$ and $d(x,y)<\be_2$,
  taking 
  $w:=\langle x,y\rangle\in W^s_\ga(x)\cap W^{uu}_\ga(y)$
  and $v$  such  that $w\in W^{ss}_\ga(\psi^Y_v(x))$,  i.e. 
$\psi_v(x)\in\psi^Y_{[-1,1]}(x)\cap W^{ss}_\ga(w)$, then
  \begin{align}
d(x,w)+d(w,y)&\le A_1\, d(x,y),
 \label{a1xwy}
\\ 
d(x,\psi^Y_v(x))+d(\psi^Y_v(x),w)
&\le A_2\, d(x,w) \le A_2 A_1 \,d(x,y),
\label{a2xpw}
\\
|v|\le A_3\, d(x,\psi^Y_v(x))&\le A_3 A_2 A_1 \, d(x,y).
\notag
  \end{align}

  We can assume that $\cU_0$ and $U$ are so small that the constants
  $C$, $\la$, $\e$ in Proposition~\ref{pHPS} can be taken uniform for all 
  $Y\in\cU_0$ and in $\La^Y_U$.
    By Proposition~\ref{pHPS},
since $\langle x,y\rangle \in W^{ss}_\ga(\psi_v(x))$, 
 we have that
\begin{align*}
\forall t\ge 0 \qquad
d\big(\psi^Y_t(\langle x,y\rangle),\psi^Y_{t}(\psi^Y_v(x))\big)
&\le C\,\ee^{-\la t}\,d(w,\psi_v^Y(x)) 
\\
&\le A_2 A_1 C\, \ee^{-\la t} \,d(x,y)
\qquad\text{ using~\eqref{a2xpw}.} 
\end{align*} 
Take $B_1:=(1+A_2)A_1 C$. Then if $d(x,y)<\be_2$ and 
$\eta= B_1\,d(x,y)$
 we obtain that
$\langle x,y\rangle\in W^{ss}_\eta(\psi^Y_v(x))$.

Since $\langle x,y\rangle\in W^{uu}_\ga(y)$ we have that 
\begin{align*}
\forall t\ge 0 \qquad
d(\psi^Y_{-t}(\langle x,y\rangle),\psi^Y_{-t}(y))
&\le C\, \ee^{-\la t} \,d(w,y) 
\\
&\le A_1C\, \ee^{-\la t} d(x,y)
\qquad\text{using }\eqref{a1xwy}.
\end{align*}
Thus if $\eta = B_1\, d(x,y)$ then $\langle x, y\rangle \in W^{uu}_\eta(y)$.

By~\eqref{dewx} and~\eqref{dpsvd}
there is $0<\be_0<\be_2$ such that if $d(x,y)<\be_0$ then
$d(w,x)$, $d(w,y)$ and $d(\psi_v(x),x)$ are small enough
to satisfy the above inequalities.
Now let 
$$
B:=\max\{ 1,\, B_1,\, A_3 A_2 A_1,\, A_2 A_1\}.
$$

 \end{proof}

  \noindent{\bf Proof of Proposition~\ref{B16}:}
  
  Let $\ga$ be 
  from~\ref{caco}.
  We may assume that $\eta$ is so small that
  \begin{gather}
  \eta < \tfrac{\ga}8,
  \label{etaga8}
  \\
  \sup\{\,d(\psi_u(x),x)\;:\; x\in M,\,|u|\le 4\eta\,\}\le \tfrac \ga 8.
  \label{u4eta}
  \end{gather}
  Let 
  \begin{equation}\label{beeta}
  \be=\be(\eta) := \tfrac 1B\, \min\{\eta,\eta_0\},
  \end{equation}
  where $B>1$ and $\eta_0$  are
  from Lemma~\ref{B4}. Consider $x$, $y$ and $s(t)$ as
  in the hypothesis. Since $s(0)=0$ we have that  $d(x,y)\le \be$.
  Using Lemma~\ref{B4} we can define
   \begin{equation}\label{wxyeta}
   w:=\langle x,y\rangle =W^{ss}_\eta(\psi_v(x))\cap W^{uu}_\eta(y)\ne\emptyset,
   \end{equation}
   we also have 
   \begin{equation}\label{vxyeta}
   |v|=|v(x,y)|\le \eta.
    \end{equation}
       Define the sets
   \begin{alignat*}{2}
   A&:=\{\,t\in[0,L]\;:\;|s(t)|\ge 3\eta\;&&\text{ or }
   \;d(\psi_t(y),\psi_t(w))\ge \tfrac 12{\ga}\,\},
   \\
   B&:=\{\,t\in[0,L]\;:\; |s(-t)|\ge 3\eta\;&&\text{ or }
    \;d(\psi_{-t+v}(x),\psi_{-t}(w))\ge \tfrac12\ga\,\}.
   \end{alignat*}
   
   Suppose that $A\ne \emptyset$. Let $t_1:=\inf A$. 
   Then 
   $d(\psi_t(y),\psi_t(w))\le \tfrac 12 \,\ga$, $\forall t\in[0,t_1]$.
   Since $w\in W^{uu}_\eta(y)$ and by~\eqref{etaga8}, $\eta<\tfrac 1{C}\ga$;
   from~\eqref{wuue} 
   we have that $d(\psi_t(y),\psi_t(w))\le \tfrac 18 \ga$, $\forall t\le 0$.
   Therefore
   \begin{equation}\label{wuut1}
   d(\psi_{t_1-r}(y),\psi_{t_1-r}(w))\le \tfrac 12 \,\ga, \qquad \forall r\ge 0.
   \end{equation}

   Since $s$ is continuous, $s(0)=0$ and $t_1\in\partial A$, we have that 
   $|s(t_1)|\le 3\eta$.
   Using~\eqref{u4eta} twice with $u=|s(t_1)|$ and the triangle inequality
   we obtain
   $$
   d(\psi_{t_1+s(t_1)-r}(y),\psi_{t_1+s(t_1)-r}(w))\le  \tfrac 34 \ga,
   \qquad \forall r\ge 0.
   $$
   Hence $\psi_{t_1+s(t_1)}(w)\in W^{uu}_\ga(\psi_{t_1+s(t_1)}(y))$. 
   From~\eqref{wxyeta},
   $w\in W^{ss}_\eta(\psi_v(x))$, and then
   \begin{equation}\label{uvga8}
   d(\psi_r(w),\psi_{r+v}(x))\le \eta<\tfrac \ga 8,
   \qquad \forall r\ge0.
   \end{equation}
   Since $|s(t_1)|\le 3\eta$, using~\eqref{u4eta} twice with $u=s(t_1)$, 
   and~\eqref{uvga8} with $r=t_1+p\ge 0$, and the triangle inequality,
   we get
   $$
   d(\psi_{t_1+s(t_1)+p}(w),\psi_{t_1+s(t_1)+v+p}(x))\
   \le\tfrac {3\ga}8, \qquad \forall p\ge 0.
   $$
   Hence $\psi_{t_1+s(t_1)}(w)\in W^{ss}_\ga(\psi_{s(t_1)+v}(\psi_{t_1}(x)))$.
   We have shown that
   \begin{equation}\label{pstst}
   \psi_{t_1+s(t_1)}(w)\in W^{ss}_\ga(\psi_{s(t_1)+v}(\psi_{t_1}(x)))
   \cap W^{uu}_\ga(\psi_{t_1+s(t_1)}(y)).
   \end{equation}
   Since $|s(t_1)+v|\le|s(t_1)|+|v|\le 4\eta<\ga$
   and by~\eqref{csh}, 
   \begin{equation}\label{lbe}
   d(\psi_{t_1+s(t_1)}(y),\psi_{t_1}(x))\le\be,
   \end{equation}
   equation~\eqref{pstst} implies that 
   \begin{gather*}
   v(\psi_{t_1}(x),\psi_{t_1+s(t_1)}(y))=s(t_1)+v(x,y),
   \\
   \psi_{t_1+s(t_1)}(w)=\langle \psi_{t_1}(x),\psi_{t_1+s(t_1)}(y)\rangle.
   \end{gather*}
   By Lemma~\ref{B4}, \eqref{lbe} and \eqref{beeta}, 
   \begin{gather}
   |s(t_1)+v|\le\eta \qquad \text{ and }
   \label{st1v}
   \\
   \psi_{t_1+s(t_1)}(w)\in W^{uu}_\eta(\psi_{t_1+s(t_1)}(y)),
   \;\text{in particular}
   \notag
   \\
   d(\psi_{t_1+s(t_1)}(w),\psi_{t_1+s(t_1)}(y))\le\eta.
   \label{dpstst}
   \end{gather}
   Since $|s(t_1)|\le 3\eta$, from~\eqref{u4eta},~\eqref{dpstst}
   and~\eqref{etaga8}, we get that
   $$
   d(\psi_{t_1}(w),\psi_{t_1}(y))\le \eta+2\left(\tfrac\ga 8\right)\le 
   \tfrac {3\ga}8.
   $$
   From~\eqref{st1v} and~\eqref{vxyeta} we have that
   $$
   |s(t_1)|\le|s(t_1)+v|+|v|\le 2\eta.
   $$
   These statements contradict $t_1\in A$.
   Hence $A=\emptyset$.
   
   Similarly one shows that $B=\emptyset$.
   Since $A=\emptyset$, inequality~\eqref{ywtga} holds for all $t\in[0,L]$.
   From~\eqref{wxyeta}, $w\in W^{uu}_\eta(y)$ and by~\eqref{etaga8}, 
   $\eta<\tfrac\ga 8$; thus inequality~\eqref{ywtga} also holds for $t\le 0$.
   \begin{equation}\label{ywtga}
   \forall t\le L\qquad d(\psi_t(y),\psi_t(w))< \tfrac 12{\ga}.
   \end{equation}
   Therefore
   \begin{equation}\label{psiLWuu}
    \psi_L(w)\in W^{uu}_{\frac 12\ga}(\psi_L(y)).
   \end{equation} 
   From Proposition~\ref{pHPS} we get
  \begin{equation*}
  \forall |s|\le L \qquad 
  d(\psi_s(w),\psi_s(y))\le C\,\ee^{-\la(L-|s|)}
  \,d(\psi_L(w),\psi_L(y)).
  \end{equation*}
  Similarly, $B=\emptyset$ imples that 
  \begin{equation}\label{psi-Lwss}
  \psi_{-L}(w)\in W^{ss}_{\frac 12\ga}(\psi_{-L+v}(x))
  \qquad \text{ and }
  \end{equation}
  $$
  \forall |s|\le L
  \qquad
  d(\psi_s(w),\psi_{s+v}(x))\le C\,\ee^{-\la(L-|s|)}\,
  d(\psi_{-L}(w),\psi_{-L+v}(x)).
  $$
   Adding these inequalities we obtain
  \begin{align}
  \forall |s|\le L \qquad
  &d(\psi_s(y),\psi_{s+v}(x))\le C\,\ee^{-\la(L-|s|)}\,
  \big[d(\psi_L(w),\psi_L(y))+d(\psi_{-L}(w),\psi_{-L+v}(x))\big],
  \notag\\
  &\text{where }\qquad w:=\langle x,y\rangle 
  =W^{ss}_\ga(\psi_v(x))\cap W^{uu}_\ga(y).
  \label{d<egdd}
  \end{align}
  This proves inequality~\eqref{d<egdd1}.
  
    From~\eqref{vxyeta}, $|v(x,y)|\le\eta$.
  The fact $A\cup B=\emptyset$ also gives $|s(t)|\le 3\eta$
  for $t\in[-L,L]$. This proves~\eqref{setav}.
  From~\eqref{psiLWuu}, \eqref{psi-Lwss} and~\eqref{d<egdd}
  we get inequality~\eqref{dsh}.
  
  \qed

   \begin{Proposition}\label{B5}\quad
   
   Let $\be(\eta)$ be from Proposition~\ref{B16}.
   \begin{enumerate}[(a)]
   \item\label{B5a}
   If $x,\,y\in\La$ and $s:[0,+\infty[\to\re$ continuous
   with $s(0)=0$
   satisfy 
   $$
   d(\phi_{t+s(t)}(y),\phi_t(x))\le \be \qquad \forall t\ge0,
   $$
   then $|s(t)|\le 3\eta$ for all $t\ge 0$ and there is $|v(x,y)|\le\eta$
   such that $y\in W^{ss}_\ga(\phi_v(x))$.
   
   \item\label{B5b}
   Similarly, if $x,\,y\in\La$, $s:]-\!\infty,0]\to\re$ is continuous with $s(0)=0$
   and
   $$
   d(\phi_{t+s(t)}(y),\phi_t(x))\le\be \qquad \forall t\le 0,
   $$
    then $|s(t)|\le 3 \eta$ for all $t\le 0$ and there is $|v(x,y)|\le\eta$
   such that $y\in W^{uu}_\ga(\phi_v(x))$.
   \end{enumerate}
   \end{Proposition}

  \begin{proof}\quad
  
  We only prove item~\eqref{B5a}.
  The same proof as in Proposition~\ref{B16} shows that 
  taking
  $$
  w:=\langle x,y\rangle=W^{ss}_\eta(\phi_v(x))\cap W^{uu}_\eta(y)\ne \emptyset,
  $$
  we have that
  $|v|=|v(x,y)|\le\eta$ and 
  $$
   \emptyset=A:=\{\,t\in[0,+\infty[\;:\;|s(t)|\ge 3\eta\;\text{ or }
   \;d(\phi_t(y),\phi_t(w))\ge \tfrac 12 {\ga}\,\}.
  $$
  Therefore $|s(t)|\le 3\eta$ for all $t\ge 0$ and 
  $w\in W^{ss}_{\frac 12 \ga }(y)\cap W^{ss}_\eta(\phi_v(x))$.
  Since $\tfrac 12 \ga+\eta<\ga$ we get that
  $y\in W^{ss}_\ga(\phi_v(x))$.

  \end{proof}

  \medskip
  
   \begin{Proposition}\label{B71}\quad

         There are $D>0$, $\be_0>0$ and  open sets $X\in\cU\subset\fX^1(M)$,
     $\La\subset U\subset M$,  such that
     $$
     \forall \be\in]0,\be_0] \qquad \forall Y\in\cU,
     $$
   if $Y\in\cU$, $\psi_t=\psi^Y_t$  is the flow of $Y$, 
  $x,\, y \in \La^Y_U:=\bigcap_{t\in\re}\psi_t(\ov U)$ 
  and $s:\re\to\re$  continuous with $s(0)=0$ satisfy
  \begin{equation}\label{csh2}
  d(\psi_{t+s(t)}(y),\psi_t(x))\le\be\quad\text{ for }|t|\le L,
  \end{equation}
  then $|s(t)|\le D\be$ for all $|t|\le L$ and there is $|v|=|v(x,y)|\le D\be$ 
  such that
     \begin{align*}
  \forall |s|\le L, \qquad
  d(\psi_s(y),\psi_{s+v}(x))\le D\,\be\,\ee^{-\la(L-|s|)}.
  \end{align*}
  
  Moreover for all $|s|\le L$,
   \begin{equation}\label{dextxy}
  d(\psi_s(y),\psi_{s+v}(x))\le
   D\,\ee^{-\la(L-|s|)}\,
  \big[ d(\psi_L(y),\psi_{L+v}(x)) + d(\psi_{-L}(y),\psi_{-L+v}(x)) \big],
  \end{equation}
  and $v$ is determined by
  $$
  \langle x,y\rangle =W^{ss}_\ga(\psi_v(x))\cap W^{uu}_\ga(y)\ne \emptyset.
  $$

  \end{Proposition}

    \begin{proof}\quad
  
  Let $C$, $\cU$, $U$ $\eta_0>0$ and $B$ be from Proposition~\ref{B16}.
  The continuity of the hyperbolic splitting implies that the angle
  $\measuredangle(E^s,E^u)$ is bounded away from zero.
  As in the argument after \eqref{dpsvd}, there are invariant families of cones separating
  $E^s$ from $E^u$ whose image under the exponential map contain the local invariant 
  manifolds $W^{ss}_\ga$, $W^{uu}_\ga$.
  And hence as in~\eqref{a1xwy}
  there are $A,\,\be_1>0$ such that if $x,\, y\in \La^Y_U$,
  $d(x,y)<\be_1$ and 
  $$
  w=\langle x,y\rangle =W^{ss}_\ga(\psi_v(x))\cap W^{uu}_\ga(y),
  $$
  then 
  \begin{equation}\label{wpsiv}
  d(w,\psi_v(x))+d(w,y) \le A\, d(\psi_v(x),y).
  \end{equation}
  Suppose that $0<\be<\min\{\tfrac 1B\eta_0,\,\be_1\}$
  and 
  $x$, $y$, $s(t)$, $\psi^Y_t$, $L$ satisfy~\eqref{csh2}.
  Apply Proposition~\ref{B16} with $\eta:= B\be$.
  
  Then $|s(L)|\le 3\eta$, 
  and 
  \begin{align*}
  d(\psi_L(y),\psi_L(x))
  &\le d(\psi_{L+s(L)}(y),\psi_L(x))+|s(L)| \cdot \Vert Y\Vert_{\sup}
  \\
  &\le \be + 3\eta \lV Y\rV_{\sup} < \de,
  \end{align*}
  if $\be$ is small enough.
  So that $\langle \psi_L(x),\psi_L(y)\rangle$ is well defined.
  Similarly $|s(-L)|\le 3\eta$ and
  $d(\psi_{-L}(y),\psi_{-L}(x))<\de$.
  Since the time $t$ map $\psi_t$ preserves the family of
  strong invariant manifolds,
  in equation~\eqref{d<egdd1} we have that 
  \begin{align*}
  \psi_L(w) &=\langle \psi_{L}(x),\psi_L(y)\rangle =
  W^{ss}_\ga(\psi_{L+v}(x))\cap W^{uu}_\ga(\psi_L(y)),
  \\
    \psi_{-L}(w) &=\langle \psi_{-L}(x),\psi_{-L}(y)\rangle =
  W^{ss}_\ga(\psi_{-L+v}(x))\cap W^{uu}_\ga(\psi_{-L}(y)).
  \end{align*}
  Therefore, using~\eqref{wpsiv},
  \begin{align}
  d(\psi_L(w),\psi_L(y))+d&(\psi_{-L}(w),\psi_{-L+v}(x))
  \notag\\
  &\le  A \big[ d(\psi_{L+v}(x),\psi_L(y)) + d(\psi_{-L+v}(x),\psi_{-L}(y)) \big],
   \label{wxyd}
   \\
  d(\psi_{L+v}(x),\psi_{L}(y)) &\le
  d(\psi_{L+v}(x),\psi_{L}(x))+d(\psi_{L}(x),\psi_{L+s(L)}(y))
  +d(\psi_{L+s(L)}(y),\psi_{L}(y))
  \notag \\
  &\le |v| \lV Y\rV_{\sup}+\be+ |s(L)|\,\lV Y\rV_{\sup}
  \notag\\
  &\le B_1 \be,
  \notag
  \end{align}
  for some $B_1=B_1(\cU)>0$, because by Proposition~\ref{B16},
  $|v|\le \eta$, $|s(t)|\le 3\eta$ and $\eta = B \be$, so that
  $$
  |v| \le B\be,\qquad |s(t)|\le 3 B \be.
  $$
  A similar estimate holds for $d(\psi_{-L+v}(x),\psi_{-L}(y))$
  and hence from~\eqref{wxyd},
  $$
  d(\psi_L(w),\psi_L(y))+d(\psi_{-L}(w),\psi_{-L+v}(x))
  \le  2AB_1 \,\be.
  $$
  Replacing this  in~\eqref{d<egdd1} we have that
  $$
  \forall |s|\le L,\qquad
  d(\psi_s(y),\psi_{s+v}(x))\le D_1\,\be \,\ee^{-\la(L-|s|)},
  $$
  where $D_1=2 A B_1 C$.
  
  By~\eqref{wxyd} and~\eqref{d<egdd1} we also have that 
  $$
  d(\psi_s(y),\psi_{s+v}(x))\le
  AC\,\ee^{-\la(L-|s|)}\,
  \big[ d(\psi_L(y),\psi_{L+v}(x)) + d(\psi_{-L}(y),\psi_{-L+v}(x)) \big].
  $$
  Now take $D:=\max\{ D_1,\,B,\,3B,\,AC\,\}$.
  
  \end{proof}

  \begin{Definition}\label{dfe}\quad
  
  We say that $\psi|_\La$ is {\it flow expansive} if for every 
  $\eta>0$ there is $\ov\a=\ov\a(\eta)>0$ such that 
  if $x\in\La$, $y\in M$ and  there is $s:\re\to\re$ continuous
  with $s(0)=0$ and $d(\psi_{s(t)}(y),\psi_t(x))\le\ov \a$ for all $t\in\re$,
  then
  $y=\psi_v(x)$ for some 
  $|v|\le \eta$.
  \end{Definition}

  \begin{Remark}\label{rue}\quad
  
  Observe that Proposition~\ref{B16} implies 
  uniform expansivity in a neighbourhood of $(X,\La)$, namely
  there are neighbourhoods $X\in\cU\subset\fX^1(M)$ and $\La\subset U\subset M$
  such that for every $\eta>0$ there is $\a=\a(\eta,\cU,U)>0$ such that if
  $x\in\La^Y_U:=\cap_{t\in\re}\psi^Y_t(\ov U)$, $y\in M$, $s:(\re,0)\to (\re,0)$ 
  continuous and $\forall t\in \re$, $d(\psi^Y_{s(t)}\big(y),\psi^Y_t(x)\big)<\a$;
  then $y=\psi^Y_v(x)$ for some $|v|<\eta$.
  
  This also implies uniform h-expansivity as in Definition~\ref{duhe}.
  \end{Remark}
  
  \medskip
  
  \begin{Corollary}\label{Rfe}\quad
  
  There are open sets $X\in\cU\subset\fX^1(M)$, $\La\subset U\subset M$
  such that 
  
  \noindent 
  for all $Y\in\cU$ the set $\bigcap_{t\in\re}\psi^Y_t(\ov U)$ is hyperbolic for 
  $\psi^Y_t$ with uniform hyperbolic constants $C,\,\la$ on $\cU$ and
  $$
  \forall \eta>0 \qquad \exists \a=\a(\eta)>0 \qquad \forall Y\in\cU
  $$
  if $\psi^Y_t$ is the flow of $Y$, \;$x,y\in\bigcap_{t\in\re}\psi^Y_t(\ov U)$, 
  $s:\re\to\re$ is continuous, $s(0)=0$ and
  $$
  \forall t\in\re \qquad 
  d\big(\psi^Y_{s(t)}(y),\psi^Y_t(x)\big)\le \a,
  $$
  then $y=\psi_v(x)$ for some $|v|\le \eta$.
  \end{Corollary}
  
  \bigskip
  
  \begin{Definition}\label{B8}\quad
  
  Let $L>0$, we say that $(T,\Ga)$ is an $L$-specification if
  \begin{enumerate}[(a)]
  \item $\Ga=\{x_i\}_{i\in\Z}\subset \La$.
  \item $T=\{t_i\}_{i\in\Z}\subset\re$\quad and\quad  $t_{i+1}-t_i\ge L$\; $\forall i\in\Z$.
  \end{enumerate}
  We say that the specification $(T,\Ga)$ is $\de$-possible if
  $$
  \forall i\in\Z\qquad d(\psi_{t_i}(x_i),x_{i+1})\le \de.
  $$ 
  \end{Definition}
  
  If $s:\re\to\re$ we denote
  \begin{align*}
  U_\e(s,T,\Ga):&=\big\{\,y\in M\,\big|\, d(\psi_{t+s(t)}(y),\psi_t(x_i))\le\e\quad\text{for }
  t\in]t_i,t_{i+1}[\,\big\};
  \\
  STEP_\e(T):&= \big\{\, s \;\big|\; s|_{]t_i,t_{i+1}[} \text{ is constant,}\;
  s(t_i)\in\{ s(t_i-),\,s(t_i+)\}, 
  \\
  &\hskip 2cm |s(t_0)|\le \e \text{ and } |s(t_i+)-s(t_i-)|\le \e\;\big\};
  \\
  U^*_\e(T,\Ga):&=\bigcup \big\{\,U_\e(s,T,\Ga)\;|\;s\in STEP_\e(T)\,\big\}.
  \end{align*}
  
  If $y\in U^*_\e(T,\Ga)$ we say that  the point $y$ $\e$-{\it shadows} the specification
  $(T,\Ga)$. 
  
  \bigskip

  \begin{Remark}\label{RSHL}\quad
  \begin{enumerate}[(a)]
  \item
  Observe that a function $s\in STEP_\e(T)$ is possibly discontinuous. But
  from the conditions in $STEP_\e(T)$ and $U_\e(s,T,\Ga)$ it is easy to
  replace $s$ by a continuous function satisfying~\eqref{csh} with $\be= K \e$.
  Indeed, replace $s$ by
  \begin{equation}\label{sie1}
  \si(t) = s(t_i-\e)+ \frac{s(t_i+\e)-s(t_i-\e)}{2\e}\big( t-(t_i-\e)\big)
  \quad\text{if}\quad t\in[t_i-\e,t_i+\e],
  \end{equation}
  and $\si(t)=s(t)$ otherwise. Then
  \begin{align}
  d(\psi_{t+\si(t)}(y),\psi_{t+s(t)}(y))&\le \lV\partial_t\psi\rV_{\sup} |\si(t)-s(t)|
  \le  \lV\partial_t\psi\rV_{\sup} \e,
  \notag\\
   d(\psi_{t+\si(t)}(y),\psi_t(x_i))&\le \e +  \lV\partial_t\psi\rV_{\sup} \e
   = K\,\e \qquad \text{ when }\quad |t-t_i|\le\e.
   \label{Ke1}
   \end{align}
   \item  In~\eqref{sie1} the continuous function $t+\si(t)$ is strictly increasing.
   Indeed, $s(t)$ is constant on each interval $]t_i,t_{i+1}[$ and
   $|s(t_{i}+)-s(t_i-)|\le \e$, therefore
   $$
   |\si'(t)| =\lv\tfrac{s(t_i+)-s(t_i-)}{2\e}\rv\le\tfrac 12 
   \quad\text{ on } t\in[t_i-\e,t_i+\e], 
   \quad \si'(t)=0 \text{ otherwise}.
   $$
   And thus
   $$
   \tfrac{d\,}{dt}(t+\si(t))\ge 1-\tfrac 12 >0, \qquad t\ne t_i.
   $$
   \item\label{B7c}
    Similarly we can modify the function s by a function $\si$ which 
   is continuous, strictly increasing, satisfying \eqref{Ke1} for some $K$ 
   independent of $\e$ and also $\si(t_0)=0$.
   Indeed, define
  $$
  \si_2(t) =
  \begin{cases}
   \frac1{3\e}(t-t_0) \,s(t_0+3\e) &\text{if} \quad t-t_0\in[0,3\e],
   \\
   \frac 1{3\e}(t_0-t) \, s(t_0-3\e) &\text{if} \quad t-t_0\in[-3\e,0],
   \\
   \si(t) &\text{if} \quad t-t_0\notin[-3\e,3\e].
  \end{cases}
  $$
  Then $\si_2(t_0)=0$.
  Since $|s(t_0)|\le \e$ and $|s(t_0+)-s(t_0-)|\le\e$, if $\e<L/3$ we have that
 $|s(t_0+3\e)|=|s(t_0+)|\le 2\e$ and  $|s(t_0-3\e)|=|s(t_0-)|\le 2\e$. Therefore
 $|\si_2'(t)|\le \frac23$ and then
 $$
 \tfrac{d\,}{dt}(t+\si_2(t))\ge 1 -\tfrac 23 >0
 \qquad \text{if }\quad 0<|t-t_0|\le 3\e.
 $$
 Also $|\si_2(t)-s(t)|\le \max\{|s(t_0+)|,|s(t_0-)|\}\le 2\e$
  and the argument in \eqref{Ke1} gives
  $$
     d(\psi_{t+\si(t)}(y),\psi_t(x_i))\le \e +  \lV\partial_t\psi\rV_{\sup}  2\e
   = :K_2\,\e \qquad \text{ when }\quad |t-t_0|\le3\e.
  $$

   \end{enumerate}
   \end{Remark}
   \bigskip
  
  \begin{Theorem}[Bowen~\cite{Bowen6} Thm. (2.2) p. 6]\label{SHL}
  \quad
  
  Given $L>0$ there are $\de_0, \,Q>0$ such that if $0<\de<\de_0$ and
  $(T,\Ga)$ is a $\de$-possible $L$-specification on $\La$ then
  $U^*_\e(T,\Ga)\ne \emptyset$ with $\e= Q\de$.   \end{Theorem}
  
  \medskip 
  
  This Theorem is proven in Bowen~\cite{Bowen6} with a similarly
  presented statement without the estimate on $\e$. 
  In the context of Bowen~\cite{Bowen6} the
  set $\La$ is locally maximal but this is not 
  needed for Theorem~\ref{SHL}. 
  A proof of this theorem for flows without the local maximality hypothesis
  and with the explicit estimate on $\e$ appears in Palmer \cite{Palmer}
  Theorem~9.3, p. 188. In \cite{Palmer}, \cite{Palmer2009} the theorem 
  requires an upper bound on the lengths of the intervals in $T$. 
  This is because there the theorem is proven also for perturbations 
  of the flow. Indeed by Proposition~\ref{B16} longer intervals in 
  $T$  {\sl improve} the estimate on $\e$.
  
  \bigskip
  
  \begin{Remark}\quad
  \begin{enumerate}[(a)]
  \item Theorem~\ref{SHL} does not require the local maximality of $\La$. 
  \item Without the local maximality the shadowing orbit may not be in $\La$.
  \item In Palmer~\cite{Palmer} Theorem~9.3, p.~188 there is a proof for this
           Theorem where a specification for $\phi$ in $\La$ is shadowed by
           a perturbation $\psi$ of the flow. It requires an upper bound in the 
           lengths of the intervals in $T$ and the estimate is $\e=M(\de+\si)$,
           where $\si$ is the $C^1$ distance of their vector fields.
  \item  It is possible to shadow specifications which are in a neighbourhood
            of $\La$. Namely, given $\e>0$, $L>0$ there is $\de>0$ and a 
            neighbourhood $U(\La)$ of $\La$ such that  if $(T,\Ga)$ is a 
            $\de$-possible $L$-specification on $U(\La)$  then 
            $U^*_\e(T,\Ga)\ne\emptyset$.
  \item If $y\in U_\e(s,T,\Ga)$, by Remark~\ref{RSHL},  $s(t)$ can be replaced 
           by a continuous function
           satisfying~\eqref{csh2}  with $\be=\e K_2$ and such that $t\mapsto t+s(t)$ 
           is strictly increasing and $s(t_0)=0$. By Corollary~\ref{B71}, $|s(t)|\le \e K_3$ for some 
           $K_3>0$. 
  \item  If the specification is periodic  with period $T$ and $y\in U_\e(s,T,\Ga)$, 
           with $\si(t):=t+s(t)$ a homeomorphism we have that 
           $d(\psi_{\si(t)}(y),\psi_{\si(t+T)}(y))\le 2\e$, $\forall t\in\re$.
           By the flow expansivity of $\psi$ in $\La$ (Remark~\ref{rue}),
           if $\e$ is small enough then there is $\tau\in\re$ with $\psi_\tau(y)=y$.
           Then $y$ is a periodic point.
  
          \end{enumerate}
  \end{Remark}
  
   Therefore we get           

  \begin{Corollary}\label{CSH}\quad
  
    Given $\ell>0$ there are $\de_0=\de_0(\ell)>0$ and $Q=Q(\ell)>0$ 
    such that if $0<\de<\de_0$ and
  $(T,\Ga)=(\{t_i\},\{x_i\})_{i\in\Z}$ is a $\de$-possible $\ell$-specification on $\La$ then
  there exist $y\in M$ and $\si:\re\to\re$ continuous, piecewise linear,  strictly increasing with 
  $\si(t_0)=t_0$ and $|\si(t)-t| < Q\,\de$  such that
  $$
  \forall i\in \Z \quad
   \forall t\in]t_i,t_{i+1}[
   \qquad
  d\big(\psi_{\si(t)}(y),\psi_t(x_i)\big)< Q\,\de.
  $$
  Moreover, if the specification is periodic then $y$ is a periodic point for $\phi$.
   \end{Corollary}
   
   \bigskip

  \section{Symbolic Dynamics.}\label{ASD}
  
  Let $\cA:=\{T_1,\ldots,T_M\}$ be a finite set, 
  called the set of symbols, and let
  $$
  \cA^\Z =\textstyle\prod\limits_{n\in\Z}\cA
  =\big\{\,(x_i)_{i\in\Z}\;\big|\;\forall i\in\Z\;\;\; x_i\in \cA\,\big\}.
  $$
  We denote $\ox=(x_i)_{i\in\Z}$ and $\ox_i =x_i$.
  Endow $\cA$ with the discrete topology and $\cA^\Z$ with the 
  product topology. By Tychonoff Theorem $\cA^\Z$ is compact. 
  Given $a>1$ the metric 
  \begin{equation}\label{da}
  d_a(\ox,\oy)= a^{-n}, 
  \qquad 
  n=\max\{\, k\in\na \; |\; \forall|i|\le k,\quad x_i=y_i\,\}  
  \end{equation}
  induces the same topology.
  
  The {\it shift} homeomorphism $\si:\cA^\Z\to\cA^\Z$ is defined by
  $\si(\ox)_i=x_{i+1}$. A subset $\Om\subset\cA^\Z$ is called a {\it subshift}
  if $\Om$ is closed and $\si(\Om)=\Om$. 
  We call $\Om$ a {\it subshift of finite type} iff  
  there is a function
  $A:\cA\times\cA\to\{0,1\}$ (or equivalently a matrix $A\in\{0,1\}^{M\times M}$)
  such that
  $$
  \Om=\Si(A):=\{\,\ox\in \cA^\Z\;|\; \forall i\in\Z \quad A(x_i,x_{i+1})=1\;\}.
  $$
  
  Suppose that $\Om$ is a subshift and $\tau:\Om\to\re^+$ is a positive continuous function. 
  The {\it suspension} $S(\Om,\si,\tau)$ is defined as the topological quotient
  space $S(\Om,\si,\tau)=\Om\times\re/_\equiv$, where
  \begin{equation}\label{som}
  \forall (x,s)\in\Om\times\re\qquad \big(x, \,s+\tau(x)\big) \equiv \big(\si(x), s\big).
  \end{equation}
  Equivalently,
  $$
  \forall (x,s)\in\Om\times\re \quad \forall n\in\Z \qquad
  \Big(x,\; s+\tsum_{i=0}^{n-1}\tau(\si^i(x))\Big)
  \equiv\big(\si^n(x),s\big).
  $$
  Then $S(\Om,\si,\tau)$ is a compact metrizable space. A metric appears in \cite{BoWa}.
  
  We obtain the  {\it suspension} flow $S_t=sus_t(\Om,\si,\tau)$ by ``flowing vertically'' and remembering identifications, i.e.
  \begin{equation}\label{sust}
  S_t(P(x,s)) = P(x,s+t),
   \end{equation}
  where $P:\Om\times\re \to S(\Om,\si,\tau)$ is the canonical projection. 
  
  \begin{Definition}\label{DHSF}\quad
  
  A {\it hyperbolic symbolic flow} is a suspension flow $sus_t(\Si_A,\si,\tau)$
   on $S(\Si_A,\si,\tau)$,
  where $\Si_A$ is a subshift of finite type and $\tau:\Si_A\to\re$ is positive and
  Lipschitz with respect to the metric $d_a$ for some $a>1$.
  \end{Definition}
   
  Given a $\si$-invariant Borel probability $\nu$ on $\Si_A$, construct the
  $S_t$-invariant Borel probability $\mu$ on $S(\Si_A,\si,\tau)$ as  
  \begin{equation}\label{amunu}
  \int_{S(\Si_A,\si,\tau)} f \; d\mu:=
  \left(\int_{\Si_A} \tau\,d\nu\right)^{-1}
  \int_{\Si_A}\int_0^{\tau(x)} f(x,t)\;dt\; d\nu(x).
  \end{equation}
  
  \medskip
   
   \begin{Theorem}[{\bf Abramov Formula  \cite{abramov}}]\quad
   
   If $\nu$ is a $\si$-invariant probability on $\Si_A$ and $\mu$ is from
   \eqref{amunu} then their entropies satisfy
   $$
   h(S_1,\mu) = \frac{h(\si,\nu)}{\textstyle\int \tau\,d\nu}.
   $$
   \end{Theorem}

  \section{Markov Partitions.}\label{AMP}
  Markov partitions were constructed by Ratner~\cite{Rat0} for Anosov flows
  and then by Bowen~\cite{Bowen3} for locally maximal hyperbolic sets.
  In this paper we need to consider hyperbolic sets which may be non locally
  maximal. This appendix covers the definition and application of Markov partitions
  and Appendix~\ref{ALM} covers their construction.
  
  There are flows without a global cross section~\cite{Fried}.
  For example geodesic flows never have a global cross section
  because a closed geodesic in opposite directions give a counterexample
  to Fried's criterion~\cite[Theorem D]{Fried}.  In negative curvature 
  geodesic flows are hyperbolic (Anosov). Similarly, any energy level 
  of an autonomous Tonelli Hamiltonian has no global cross section 
  because the Liouville measure has zero asymptotic cycle (see Section~3
  in \cite{Maslov}).  Thus Markov partitions must be constructed from 
  {\sl local} transversal sections.
  
   In this section we follow Bowen~\cite{Bowen3} so that we can quote Theorem~\ref{TB22}
   refering its proof to~\cite{Bowen3}.
   Let $\phi_t:M\hookleftarrow$ be the flow of a $C^1$ vector field on a compact manifold $M$ and
   let $\La$ be a hyperbolic compact $\phi$-invariant set.
   This appendix deals mainly with the definition of a Markov
   partition and its symbolic dynamics. It will be applied to a larger
   hyperbolic set $X\supset \La$ which we construct on Appendix~\ref{ALM} 
   that will have a Markov partition.
   
   Suppose that $D\subset M$ is a a differentiable closed disk containing
   $x\in X$ of $\dim D=\dim M-1$ and transverse to the flow $\phi_t$. 
   Then $D$ is a local transverse section to $\phi_t$, i.e. there is $\xi>0$
   such that $(x,t)\to \phi_t(x)$ is a diffeomorphism of $D\times[-\xi,\xi]$ 
   onto a neighbourhood $U_\xi(D)$ of $x$. The projection map 
   $\pr_D:U_\xi(D)\to D$ defined by $\pr_D(\phi_t(y))=y$ for $|t|\le \xi$ 
   is differentiable.
  
   We use the canonical coordinates $\langle\cdot,\cdot\rangle$ 
   from~\ref{caco}. They are defined in    $X\times X$
   with values on $M$.
   For a closed set $T\subset X\cap D$, disjoint  from $\partial D$ of small diameter
   (depending on $d(T,\partial D)$) we have that 
    $$
   \langle x,y\rangle_D:  T\times T\to D,\qquad
   \langle x,y\rangle_D:=\pr_D\langle x,y\rangle
   =D\cap W^s_\ga(x)\cap W^u_\ga(y).
   $$
   is well defined and continuous.

   \begin{Definition}\quad
   
   We say that $T$ is a {\it rectangle} if $T$ is closed and $\langle x,y\rangle_D\in T$ for all
   $x,\,y\in T$; 
   in this case we write $\langle x,y\rangle_T$ for 
   $\langle x, y\rangle_D$ and notice that it does not actually depend on $D$.

   We say that a rectangle $T\subset D$ is {\it proper} if $T=\ov{T^*}$, 
   where $T^*$ is the
   interior of $T$ as subset of $D\cap X$.
   \end{Definition}
   
   \bigskip
   
      Observe that even if $X\cap D$ were a union of rectangles, it does not
   mean that $\langle\cdot,\cdot\rangle_D$ was closed on $X\cap D$.
   It may happen that $x$ and $y$ are close but in different rectangles 
   in $X\cap D$ and $\langle x,y\rangle_D\notin X$.

   For $x\in T$ with $T$ a small rectangle set
   \begin{align*}
   W^s(x,T)&:= \{\,\langle x,y\rangle_T\;|\; y\in T\,\}
   = T\cap \pr_D\big(U_\xi(D)\cap W^s_\xi(x)\big),
   \\
   W^u(x,T)&:=\{\,\langle z,x\rangle_T\;|\;z\in T\,\}
   = T\cap \pr_D\big(U_\xi(D)\cap W^u_\xi(x)\big),
   \end{align*}
   where $\xi$ is large compared to $\diam T$.
   The map $(u,v)\mapsto \langle u,v\rangle_T$
   defines a homeomorphism 
   $G_x:W^u(x,T)\times W^s(x,T)\to T$.
   
   \begin{Lemma}\label{LD2}\quad 
   
   The boundary $\partial T$ (as subset of $D\cap X$) consists of
   two parts $\partial T =\partial^s T\cup \partial^u T$, where
   \begin{align*}
   \partial^s T&=G_x\big(\partial W^u(x,T)\times W^s(x,T)\big),
   \\
   \partial^u T&=G_x\big(W^u(x,T)\times \partial W^s(x,T)\big).
   \end{align*}
   Here $\partial W^u(x,T)$ and $\partial W^s(x,T)$ denote 
   the boundaries of these sets as subsets of 
   \linebreak
   $W^u_\xi(x)\cap(D\cap X)$ and $W^s_\xi(x)\cap(D\cap X)$ respectively.

   \end{Lemma}
   
   \begin{proof}
   We have to prove that
   $$
   x\in\intt T \quad\Longleftrightarrow\qquad
   x\in\intt W^s(x,T)\; \cap \;\intt W^u(x,T).
   $$
   Suppose that $x\in\intt T$, then $W^u(x,T)=T\cap (W^u_\ga(x)\cap D\cap X)$
   is a neighbourhood of $x$ in $W^u_\ga(x)\cap D\cap X$ because 
   $T$ is a neighbourhood of $x$ in $D\cap X$. Therefore $x\in \intt W^u(x,T)$.
   Similarly $x\in\intt W^s(x,T)$.
   
   Now suppose that $x\in\intt W^s(x,T)\cap\intt W^u(x,T)$. Let $B_1$ be a 
   small neighbourhood of $x$ in $X$. Since $x\in \intt W^s(x,T)$, 
   if $B_1$ is small enough,
   $B_1\cap  W^{s}_\ga(x)\cap (D\cap  X)\subset W^s(x,T)$. 
   By the continuity of $\langle\cdot,\cdot\rangle_D$, there is an open set
   $x\in B_2\subset B_1$ such that if 
   $y\in B_2$ then $\langle x, y\rangle_D\in B_1$ and therefore 
   $\langle x, y\rangle_D\in W^s(x,T)$.   
   Similarly there is an open set $x\in B_3\subset B_2$ such that 
   if $y\in B_3$ then
   $\langle y, x\rangle_D\in W^u(x,T)$. By the rectangle property
   we have that if $y\in B_3\cap D$ then
   $$
   z= \big\langle \langle y,x\rangle_D,\langle x,y\rangle_D\big\rangle_D \in T.
   $$
   Since $z\in W^s_\ga(y)\cap D$ and $z\in W^u_\ga(y)\cap D$,
   by the expansivity property \ref{dfe}, $y=z\in T$.   
   We have proved that $T\supset B_3\cap D\ni x$.
   Therefore $x\in \intt T$.
   
   \end{proof}

      For $x\in T$ one can check that for small $\e>0$ one has
   \parskip -1pt
   \begin{enumerate}[(a)]
   \item $x\in\partial^s T$ iff  $x=\lim_n x_n$ for some sequence of 
   $x_n\in W^u_\e (x)\cap X$ with $x_n\notin\phi_{[-\e,\e]}T$.
   \item $x\in\partial^u T$ iff $x=\lim_n x_n$ for some sequence of 
   $x_n\in W^s_\e (x)\cap X$ with $x_n\notin\phi_{[-\e,\e]}T$.
   \end{enumerate}

   \begin{Definition}\label{dft}\quad
   
   A {\it proper family of size $\a$}\/ is a set $\fT=\{T_1,\ldots,T_n\}$ such that 
   \begin{enumerate}[(i)]
   \item each $T_1$ is a closed subset of $X$,
   \item \label{fT2} $X = \phi_{[-\a,0[}(\cup\fT)$, where $\cup\fT=T_1\cup\cdots\cup T_n$,
   \end{enumerate}
   and there are differentiable closed disks $D_1,\ldots D_n$ in $M$ 
   transverse to the flow $\phi_t$ so that 
   \begin{enumerate}[(i)]
   \setcounter{enumi}{2}
   \item $\dim D_i=\dim M-1$,
   \item $\diam D_i<\a$,
   \item $T_i\subset\interior D_i$ and $T_i=\ov{T_i^*}$, where $T_i^*$ is the 
            interior of  $T_i$ as a subset of the metric space $D_i\cap X$,
    \item\label{fT6}
             for $i\ne j$, at least one of the sets $D_i\cap\phi_{[0,\a]}D_j$ 
             and $D_j\cap\phi_{[0,\a]}D_i$ is empty; in particular
             $D_i\cap D_j=\emptyset$ and also $T_i\cap T_j=\emptyset$.     
   \end{enumerate}
   \end{Definition}
   
   Suppose that $\fT$ is as above and $\a$ is small. 
   From \eqref{fT2} it follows that for any $x\in\cup\fT$ there is a first 
   positive time $0<\tau(x)\le\a$ such that $\phi_{\tau(x)}(x)\in\cup\fT$. Since the
   $D_i$ are compact and pairwise disjoint and each $D_i$ is a local cross-section to 
   the flow, there is $\be>0$ such that $\tau(x)\ge\be$ for all $x\in\cup\fT$. 
   The {\it first return map} $F=F_\fT:\cup\fT\to\cup\fT$ defined by 
   $F_\fT(x)=\phi_{\tau(x)}(x)$ is a bijection: it is onto because by \eqref{fT2},
   $\cup\fT\subset X\subset\phi_{[-\a,0[}(\cup\fT)$.
  
   The functions $\tau(x)$ and $F_\fT$ are not continuous on $\cup\fT$.
   However they are continuous on 
   $$
   \cup'\fT:=\left\{\,x\in\cup\fT\;\Big|\;
   \forall k\in\Z\quad
   (F_\fT)^k(x)\in \textstyle \bigcup_{i=1}^n T_i^*\,\right\}.
   $$
   Since the rectangles are proper, the complement of $\cup' \fT$ is a countable union of local
   invariant manifolds $T_i\cap F^{-k}(\partial T_j)$
   which are nowhere dense in $\cup\fT$.
   By the Baire category theorem $\cup'\fT$ is dense in $\cup \fT$ 
   and
   $$
   \phi_\re(\cup'\fT) =\Big\{\, x\in X\;\Big|\;
   \phi_\re(x)\cap \cup\fT\subset \textstyle\bigcup_{i=1}^n T_i^*\,\Big\}
   $$
   is dense in $X$.

    For $x\in \cup'\fT$ let $q(x)$ be the unique $T_i$ containing $x$.
    Since $q:\cup'\fT\to\fT$ is continuous, the {\it itinerary} map 
    $Q:\cup'\fT\to\fT^\Z=\prod_\Z\fT$ given by
    $$
    Q(x) =\big(q(F_\fT ^i(x)\big)_{i\in\Z}
    $$
    is continuous. Since $\phi_t:X\to X$ is  flow expansive 
    (cf. Corollary~\ref{Rfe}) it follows that $Q$ is injective 
    (remember that $\a$ is small and use Theorem~3(iv) in \cite{BoWa}). 
    Thus the map
    $$
    Q^{-1}:Q(\cup'\fT)\to\cup'\fT
    $$
    is well-defined. The following lemma says that 
    $Q^{-1}$ extends to a continuous function 
    $\pi:\Om\to\cup\fT$, where
    $$
    \Om=\Om(\fT) =\ov{\{\,Q(x)\,|\,x\in\cup'\fT\,\}}
    \subset\textstyle \prod_\Z \fT.
    $$
    Observe that for the shift map $\si$, we have that $\si(\Om)=\Om$
    because $\si (Q(x)) = Q(F_\fT(x))$.
    
    \medskip 
    
    \begin{Lemma}[Bowen~{\cite[Lemma~2.2]{Bowen3}}]\label{LD4}\quad
    
    There is a continuous surjective map $\pi:\Om\to\cup\fT$ such that
    \begin{enumerate}[(i)]
    \item\label{LD4i} $\pi$ is Lipschitz with respect to the metric $d_a$ on $X$
             for some $a>1$.
    \item $\pi(\ov S)\in S_0$ for $\ov S\in \Om$.
    \item $\pi^{-1}\{x\}=\{ Q(x)\}$ for $x\in\cup'\fT$.         
    \end{enumerate}
    \end{Lemma}
    
    For $\ov S\in Q(\cup'\fT)$ consider  $\tau(\pi(\ov S))$. On the set
    \begin{equation}\label{cilT}
    \{\, \ov S=Q(x)\;|\; x\in T_i,\, F_\fT(x)\in T_j\,\}
    \end{equation}
    the function $\tau(\pi(\ov S))$ is just the time it takes for the point 
    $\pi(\ov S)$ to go from $D_i$ to $D_j$. As $D_i$, $D_j$ are differentiable
    local cross-sections, this time depends differentiably upon $x$, 
    hence  Lipschitz. On the metric $d_a$ the (cylindrical) sets \eqref{cilT}
    are open, {\sl closed} and {\sl disjoint}. Thus $\tau(\pi(\ov S))$ is Lipschitz 
    on $\ov S$ and extends to a function $f:\Om\to\re$ Lipschitz with respect 
    to the metric $d_a$. For $\ov S=Q(x)$ we have that 
    $$
    \phi_{f(\ov S)}\pi(\ov S) = \pi(\si(\ov S)),
    $$
    (both sides are $F_\fT(x)$); so by continuity this formula holds for all 
    $\ov S\in\Om$.
    
    Construct the suspension $S_t=sus_t(\Om,\si,f)$ as in appendix~\ref{ASD}.
    Define the map 
    \linebreak
    $\rho:S(\Om,\si,f)\to X$ by
    $$
    \rho(S_t(\ov S))= \phi_t(\pi(\ov S)).
    $$
    This map is well defined, and continuous, because 
    $\phi_{f(\ov S)}\pi(\ov S)=\pi(\si(\ov S))$.     
    We want $\Om$ to be a subshift of finite type, as in definition~\ref{DHSF}.
    For this we need the following Markov property:
    
    \begin{Definition}\quad\label{dfm}
    
    Let $\fT$ be a proper family for $\phi_t:X\to X$ of small size $\a$.
    We say that $\fT$ is {\it Markov} if
    \begin{enumerate}
    \item each $T_i$ is a rectangle,
    \item $x\in U(T_i,T_j):=\ov{\{\,w\in \cup'\fT\;|\; w\in T_i,\, F_\fT(w)\in T_j\,\}}$
             implies $W^s(x,T_i)\subset U(T_i,T_j)$.
    \item $y\in  V(T_k,T_i):=\ov{\{\,w\in \cup'\fT\;|\; F_\fT^{-1}(w)\in T_k,\, w\in T_i\,\}}$
             implies $W^u(y,T_i)\subset V(T_k,T_i)$.       
    \end{enumerate}
    \end{Definition}
    
    Define $A_\fT:\fT\times\fT\to\{0,1\}$ by $A(T_i,T_j)=1$ iff 
    $\exists x\in\cup'\fT$ such that $x\in T_i$ and $F_\fT(x)\in T_j$.
    Then $\Om\subset \Si(A_\fT)$ with the notation of appendix~\ref{ASD}.
    
    \bigskip
    
    \begin{Theorem}[{Bowen~\cite{Bowen3} Theorem 2.4, p. 437}]
    \label{TB22}\quad
    $\Om=\Si(A_\fT)$ \, iff \; $\fT$ is Markov.
    \end{Theorem}

    \bigskip
    
    \section{Local Maximality.}\label{ALM}
    
   A (compact) hyperbolic set $\La$ for the flow $\phi_t$ on $M$ 
   is {\it locally maximal} (or {\it isolated})
   if there is an open set $\La\subset U\subset M$ such that 
   \begin{equation}\label{dlm}
   \La = \bigcap_{t\in\re}\phi_t(U).
   \end{equation}
   
   We say that a hyperbolic set has {\it local product structure}
   if there are $\de,\,\ga>0$ as in the canonical coordinates~\ref{caco}
   such that
   \begin{equation*}\label{dlps}
   x,\,y\in \La,\quad d(x,y)<\de
    \quad\then \quad
    \langle x,y\rangle \in \La,
   \end{equation*}  
   where $\langle\cdot,\cdot\rangle$ is from~\eqref{ecaco}.
   
   We show that this definition is invariant under time reversal of the flow.
   Indeed,
   it is symmetric on $x$ and $y$, but the definition of 
   \begin{equation}\label{bra}
   \langle x,y\rangle = W^s_\ga(x)\cap W^{uu}_\ga(y)
   \end{equation}
   is not invariant under time reversal. Nevertheless
   the symmetric definitions 
   $$
   W^s_\ga(x)\cap W^{uu}_\ga(y)\quad\text{ and }\quad
   W^{ss}_\ga(x)\cap W^{u}_\ga(y)
   $$
   are in the same orbit because
   $$
   W^s_\ga(x) = \phi_{[-\ga,\ga]}(W^{ss}_\ga(x))
   \quad\text{ and }\quad
   W^u_\ga(x) = \phi_{[-\ga,\ga]}(W^{uu}_\ga(x)).
   $$
   Since $\La$ is $\phi_t$-invariant we get that the property of having 
   local product structure is invariant under time reversal.
   
    \parskip+5pt
    
  \begin{Remark}\quad\label{LaU}
  
    Given a hyperbolic set $\La$ there is a neighbourhood $V$ of $\La$ 
    such that for any open set $\La\subset U\subset V$ the set
    $$
    \hat\La :=\bigcap_{t\in\re}\phi_t(\ov U)
    $$
    is also a hyperbolic set. Indeed, it is enough to consider the 
    time 1 map $\phi_1$ as a partially hyperbolic diffeomorphism
    and obtain the extension to a neighbourhood of the dominated 
    splitting for $\phi_1$ as in~\cite{BoDiVi} appendix~B, p.~289 
    and pp.~292-293.  
     \end{Remark}
  
   \medskip
   \begin{Proposition}\label{LMlps}\quad
   
   A hyperbolic set is locally maximal if and only if it has local product structure.
   
   \end{Proposition}
   
   Here we adapt the proof  for diffeomorphisms 
   given in Theorem 18.4.1 in Hasselblatt-Katok \cite{HK}.
   
   \begin{proof}\quad
   
   Suppose that $\La$ is locally maximal with $U$ as in~\eqref{dlm}.
    Take $\de,\,\ga$ as in the canonical coordinates~\ref{caco} with
    \begin{equation*}
    B_\ga(\La):=\{\,x\in M\;|\; d(x,\La)\le \ga\,\}\subset U.
    \end{equation*}
    Then if $x,\,y\in\La$ and $d(x,y)<\de$ we have that
    \begin{align*}
    \langle x,y\rangle &=W^{ss}_\ga(\phi_\nu(x))\cap W^{uu}_\ga(y)
    \\
    &=W^s_\ga(x)\cap W^{uu}_\ga(y)
    \subset \Big[\, \textstyle\bigcap\limits_{t\le 0}\phi_t(U) \Big]
    \cap 
    \Big[\,\bigcap\limits_{t\ge 0}\phi_t(U)\Big]
    =\La.
    \end{align*}
    Therefore $\La$ has local product structure.
   
    For the converse we need the following 
    \begin{Lemma}\label{Le2}
    Let $\La$ be a hyperbolic set with local product structure.
    \newline
    There exist $\de_1,\,\de_2>0$ such that if $x\in\La$, 
    $y\in W^{uu}_{\de_1}(x)$ and 
    $d(\phi_t(y),\La)<\de_2$ for all $t\ge 0$, then $y\in\La$.
    \end{Lemma}

    \begin{proof}\quad
    
    Using Proposition~\ref{pHPS} choose $T>0$ such that for some $\mu<1$
    \begin{equation}\label{emu}
    d(\phi_{-T}(x),\phi_{-T}(y))\le \mu\, d(x,y)\qquad
    \text{ when } x\in \La,\; y\in W^{uu}_\e(x).
    \end{equation}
    Let $K:=\sup\lV D\phi_T\rV$ taken over a neighbourhood of $\La$.
    Let $\de,\ga>0$ be from the definition of local product structure for $\La$.
    Observe that if $\de_3:=\max\{\de_1,\de_2\}$ is small enough, we have that
    both $x$ and $y$ are in the larger hyperbolic set
    \begin{equation}\label{hLa}
    \hat \La =\bigcap_{t\in\re} \phi_t\big(B_{\de_3}(\La)\big).
    \end{equation}
    Let $d^u$ be the distance along the strong unstable leaves $W^{uu}$.
    The continuity of the hyperbolic splitting implies that the angles among 
    its subspaces are bounded below. Then there exists $M>0$ such that 
    \begin{equation}\label{pqp}
    d^u (p, \langle q,p\rangle)
    =d^u\big(p,W^s_\ga(q)\cap W^{uu}_\ga(p)\big)
    \le M\, d(p,q),
    \end{equation}
    whenever $d(p,q)$ is small enough.
    For any $\de_1<\min\{\frac 1{MK},\frac 12,\frac\de 2,\frac\ga 2\}$
    take $\de_2\le \min\{\de_1 /MK,\,\de_1\}$ such that 
    \begin{equation}\label{usede2}
    p,q\in\hat\La,\quad d(p,q)<\de_2\quad\then\quad W^s_{\de_1}(p)\cap W^{uu}_{\de_1}(q)\ne\emptyset.
    \end{equation}
    \begin{Claim}\label{c4}
    If $x\in\La$, $y\in W^{uu}_{\de_1}(x)$ and 
    $d(\phi_t(y),\La)<\de_2$ for all $t\ge 0$, then
    $$
    a_n:=\min_{z\in\La\cap W^{uu}(\phi_{nT}(y))}
    d^u(\phi_{nT}(y),z) = 0
    \qquad \text{for some } 
    n\in\na.
    $$
    \end{Claim}
    Then by~\eqref{emu} for $z$ attaining the minimum  we would have
    $$
    d(y,\La)\le  d(y,\phi_{-nT}(z))\le  \mu^n\,a_n=0.
    $$
    
    Since $\La$ is closed and $\phi_t$ invariant, for Lemma~\ref{Le2} it is enough 
    to prove Claim~\ref{c4}.
    
    \end{proof}
    
    To prove  Claim~\ref{c4} we first show inductively that 
    \begin{equation}\label{ande}
    a_n<\de_1\qquad\text{  for all }
    n\in\na.
    \end{equation}
    Since $y\in W^{uu}_{\de_1}(x)$, we have that 
    $$
    a_0\le d^u(y,x)<\de_1.
    $$
    Suppose that $a_n<\de_1$ then there is $w_n\in\La\cap W^{uu}(\phi_{nT}(y))$
    such that $d^u(\phi_{nT}(y),w_n)<\de_1$. Take $z_n\in \La$ such that 
    $d(\phi_{nT}(y),z_n)<\de_2$.  Using \eqref{usede2} observe that 
    \begin{align*}
    p_n:=\langle z_n,\phi_{nT}(y)\rangle
    &=W^s_{\de_1}(z_n)\cap W^{uu}_{\de_1}(\phi_{nT}(y))
    \\
    &=W^s_{\de_1}(z_n)\cap W^{uu}_{2\de_1}(w_n)
    =\langle z_n,w_n\rangle\in \La
    \end{align*}
    by the local product structure.
    Therefore, using \eqref{pqp},
    $$
    a_n\le d^u(\phi_{nT}(y),p_n)\le M\; d(\phi_{nT}(y),z_n)
    < M \de_2 \le \frac{\de_1}K.
    $$
    By the choice of $K$ we  have that
    $a_{n+1}\le K \,a_n<\de_1$. 
    This proves~\eqref{ande}.

    The definition of $a_n$ implies that
    $$
    a_n \le \mu\,a_{n+1}
    $$
    when $a_{n+1}<\e$ and $\mu<1$ is from~\eqref{emu}.
    Therefore, if $\de_1$ is small enough we have that 
    $a_n<\de_1\Rightarrow a_{n+1}\ge \mu^{-1} a_n$.
    Combining this with~\eqref{ande} we get that $a_n=0$
    for all $n\in\na$.
     This proves Claim~\ref{c4}.
     
      \hfill$\triangle$
     
     Using Lemma~\ref{Le2} we finish the proof of Proposition~\ref{LMlps}.
     Let $\rho>0$ be such that 
     $$
     z,\,w\in\hat\La,\quad 
     d(z,w)<\rho\quad  \then \quad
     W^s_{{\de_2}/2}(z)\cap W^{uu}_{{\de_2}/2}(w)\ne \emptyset.
     $$
      Take $\de_4:=\min\{\rho,\de_2/2\}$.
     Suppose now that 
     $d(\phi_t(y),\La)<\de_4$ for all $t\in\re$. From~\eqref{hLa}
     we have that 
     $y\in\hat\La$.
     Let $x\in\La\subset\hat\La$ be such that $d(y,x)<\frac{\de_2}2$ and let
     $$
     p=\langle y,x\rangle 
     = W^s_{{\de_2}/2}(y)\cap W^{uu}_{{\de_2}/2}(x).
     $$
     We have that 
     $$
     \forall t\ge 0 \qquad
     d(\phi_t(p),\La) \le d(\phi_t(p),\phi_t(y))+d(\phi_t(y),\La)
     \le  \frac{\de_2}2+\frac{\de_2}2\le \de_2,
     $$
     and $p\in W^{uu}_{\de_1}(x)$, $x\in \La$.
     By Lemma~\ref{Le2}, $p\in\La$.
     Applying the same arguments to the reverse flow  $\phi_{-t}$
     (recall the discussion in~\eqref{bra})
     we get 
     $$
     q=W^u_{\de_2/2}(y)\cap W^{ss}_{\de_2/2}(x)\in \La.
     $$
     Thus $p,\,q\in\La$ and
     $$
     y\in W^s_{\de_2/2}(p)\cap W^u_{\de_2/2}(q)
     \subset \phi_{[-\ga,\ga]}
     \big(W^s_{\ga}(p)\cap W^{uu}_\ga(q)\big)
     =\phi_{[-\ga,\ga]}(\langle p,q\rangle).
     $$
     By the local product structure $\langle p,q\rangle\in\La$.
     Therefore by the invariance of $\La$, $y\in\La$.
     This finishes the proof of Proposition~\ref{LMlps}.
     
      \end{proof} 
   
   Crovisier~\cite{crovisier} has shown an example of a hyperbolic set $\La$ 
   for a diffeomorphism which is not contained in a locally maximal
   hyperbolic set. Nevertheless Fisher~\cite{Fisher} shows that for 
   diffeomorphisms every hyperbolic set has an extension with a Markov
   partition.   Here we extend Fisher theorem to flows.
   See also Bowen~\cite{Bowen0}.

  \bigskip
  
  \begin{Theorem}\label{E4}\quad
  
  Let $\phi_t$ be a flow on a compact manifold $M$.
  If $\La$ is a hyperbolic  set without fixed points for $\phi_t$
  and $U\supset\La$ is an open neighbourhood of $\La$.
  Then there is a hyperbolic set $\La\subset\tilde{\La}\subset U$ 
  which has a Markov partition.
  \end{Theorem}
  
    \medskip

    The remaining of the section is dedicated to the
    
    \noindent{\bf Proof of Theorem~\ref{E4}:}

                Let $V\subset U_0$ be  a neighbourhood of $\La$ such that the 
            Shadowing Theorem~\ref{SHL} holds for specifications in $V$
            with jumps of size at most $\de_0$.
      Using Remark~\ref{LaU} obtain an
    open set $U$ such that $\La\subset U\subset \ov U\subset V$ and 
   that
   \begin{equation}\label{defu}
   \La_U:=\bigcap\limits_{t\in\re}\phi_t(\ov U)
   \end{equation}
   is hyperbolic.

   Let $0<\a_0\ll  1$ be such that

   \begin{gather}
   \a_0\,\lV\partial_t\phi\rV_{\sup}
     + 2 \,\Big|\sup_{|t|\le 1} \Lip(\phi_t)\Big| \,\a_0
     < \tfrac 12 (\text{\smallskip
     flow expansivity constant on $\La_U$}),
     \label{alph0}\\
       \forall x\in \ov U\qquad \diam \phi_{[-\a_0,\a_0]}(B_{\a_0}(x))<\tfrac 14\ga,
    \label{alfa1}
   \end{gather}
   where $\ga$ is from the definition of canonical coordinates~\ref{caco}.
   
   Given $0<\a<\a_0$    we first construct a proper family of size $\a$ made with rectangles
   as in definitions~\ref{dft} and~\ref{dfm}.
    Choose a finite family of smooth disjoint  open discs $D_1,\cdots, D_m$
   transversal to the flow such that 
   \refstepcounter{equation}\label{edix}
   \begin{enumerate}[(\ref{edix}.a)]
   \item $\dim D_i=\dim M-1$. \label{edixa}
   \item $D_i$ is open.
   \item $\diam D_i<\a$.
   \item For $i\ne j$ at least one of the  sets
            $D_i\cap \phi_{[0,4\a]}( D_j)$  or $\phi_{[0,4\a]}(D_i)\cap D_j$
            is empty.
   \item $\La \subset \bigcup_{i=1}^m \phi_{]-\a,0[}(\vD_i)$,
   where  \label{edixe}
   \begin{equation}\label{brev}
   \vD_i=\big\{\,x\in D_i\;|\; d(x,\partial D_i)>\tfrac{\a}{10}\big\}.
   \end{equation}
   \item\label{edixf} The sets $D_i\cap\phi_{]-\a,0]}(D_j)\cap \phi_{]-\a,0]}(D_k)$ and $D_i\cap\phi_{]-\a,0]}(D_j)$ 
   when non-empty are connected.
     \end{enumerate}
     
      Let $\D:=\bigcup_{i=1}^m D_i$ and 
   \begin{equation}\label{mbeta}
   2\be :=\inf\{\,t>0\;|\; x\in\D,\;\phi_t(x)\in\D\,\}>0.
   \end{equation}
   By~(\ref{edix}.\ref{edixe}) $0<2\be<\a$.

   Let $A>1$ be such that  
      \begin{equation}\label{Alip}
       \exists i \quad y,\,z\in\phi_{[-2\a,2\a]}(D_i)
      \quad 
   \then \quad d(P_{D_i}(y),P_{D_i}(z)) \le A\, d(y,z),
   \end{equation}
   where $P_{D_i}:\phi_{[-2\a,2\a]}(D_i)\to D_i$ is the projection 
   \begin{equation}\label{projd}
   P_{D_i}(\phi_t(y))=y,  \quad\text{ when }\quad y\in D_i.
   \end{equation}
   Denote the projection time by
  $ \tau_{D_i}:\phi_{[-2\a,2\a]}(D_i)\to[2\be,2\a]$
   where
   $$
   \tau_{D_i}(x):=\min\{\,t>0\;|\; \phi_t(x)\in D_i\,\}.
   $$

   Let 
   \begin{equation}\label{me0}
   0<\e_0<\tfrac{\a}{40} \cdot \tfrac 1{A+1},
   \end{equation}
    be a Lebesgue number for the open cover $\{\phi_{]-\a,0[}(D_i)\}_{i=1}^m$
   of $\La$. 
   From~\eqref{alph0} and~\eqref{me0} we have that 
   \begin{equation}   \label{aphisup1}
        \a_0\,\lV\partial_t\phi\rV_{\sup}
     + 2 \,\Big|\sup_{|t|\le \a_0} \Lip(\phi_t)\Big| \,\e_0
     < \tfrac 12 (\text{\small flow expansivity constant on $\La_U$}).
       \end{equation}
       
   Let $0<\e_1<\e_0$ be such that 
   \refstepcounter{equation}\label{defe1}
   \begin{enumerate}[(\ref{defe1}.i)]
   \item \quad $0<4\e_1<\min_{i\ne j}d(D_i,D_j)$.
   \label{me11}
   \item \quad
   $\exists j \quad w\in \vD_j  \quad \then \quad
   B_{\e_1}(w)\subset \phi_{[-\a,\a]}(D_j)$,
    where $\vD_j$ is from \eqref{brev}.
    \label{me115}
   \item \quad
   $B_{4\e_1}(\La)\subset\textstyle  \bigcup_{i=1}^m \phi_{]-\a,0[}(D_i)$.
   \label{me12}
   \item \quad
   $\La_1:=\bigcap\nolimits_{t\in\re} \phi_t(\ov{B_{3\e_1}(\La)})
   \qquad\text{is hyperbolic}$.
   \label{me13}
   \end{enumerate}
   Using $A$ from~\eqref{Alip}, let $0<\e_2<\e_1$ be such that
   \refstepcounter{equation}\label{defe2}
   \begin{enumerate}[(\ref{defe2}.i)]
  \item\label{me2}
   $0<\e_2<\frac {\e_1}A<\e_1$
   \item\label{me21}
   $\tfrac{(1+A)\,\e_2}{\lV\partial_t\phi\rV_{\sup}}
   < \tfrac 14 \,\a$.
   \item\label{me22}
    $5 \e_2<\ov\a(\tfrac14\be)$ is a flow expansivity constant for the hyperbolic set  $\La_1$
   for $\eta=\tfrac 14\be$ in Definition~\ref{dfe}.
       \item\label{me225} 
    $ \e_2 < \tfrac 12 \,\be(\tfrac 14 \a)$,
     where $\be(\eta)$ is from Proposition~\ref{B16} for $\La_1$.
     \end{enumerate}
 
     Using $A>1$ from~\eqref{Alip} and Corollary~\ref{CSH}, let
   \begin{equation}\label{me3}
   0<\e_3< \e_2
   \end{equation}
   be such that 
    any $2\e_3$-possible $\be$-specification on $\La_1$
    is $\tfrac 12 \e_2$-shadowed
    (by an orbit which is possibly not in $\La_1$).
    Let
    \begin{equation}\label{me4}
    0<\e_4<\frac{\e_3}{A+1}.
    \end{equation}

    \begin{Lemma}\label{40Ae}
    If $z\in D_i$, $d(z,\vD_i)<4\e_1$,		 
    $w=\phi_{a}(z)\in D_j$, $0<a<\a$,
    $d(w,\vD_j)<4\e_1$,
    then $B_{4\e_1}(z)\cap D_i\subset\phi_{]-\a,0[}(D_j)$.
    \end{Lemma}
    
   \begin{proof}
   Since $\e_1<\e_0$, from \eqref{me0} we have that 
   \begin{equation}\label{dwd}
   4\e_1\, A < \tfrac{\a}{10}-4\e_1.
   \end{equation}
       By \eqref{Alip} the projection $P_{D_j}:\phi_{]-\a,0[}(D_j)\to D_j$
   has Lipschitz constant $A$.
 Since $w=P_{D_j}(z)$ and $d(w,\vD_j)<4\e_1$, we have that 
   \begin{align*}
   d(P_{D_j}(z),\partial D_j)=
   d(w,\partial D_j)&\ge d(\vD_j,\partial D_j) - d(w,\vD_j)
   \\
   &\ge \tfrac \a{10}-4\e_1.
   \end{align*}

   If $x\in B_{4\e_1}(z)\cap D_i\cap\phi_{]-\a,0[}(D_j)=:E$, then
   \begin{align*}
   d(P_{D_j}(x),\partial D_j)&\ge
   d(P_{D_j}(z),\partial D_j) -d(P_{D_j}(x),P_{D_j}(z))
   \\
   &>\tfrac \a{10}-4\e_1-  4\e_1\, A>0.
   \end{align*}
   This implies that the set $E$ contains $B_{4\e_1}(z)\cap D_i$ and hence
   $B_{4\e_1}(z)\cap D_i\subset \phi_{]-\a,0[}(D_j)$.

   \end{proof}
   
   Let
   $$
   \D :=\cup_{i=1}^m D_i.
   $$
   For $x\in\D$ let $D(x):=D_i$ where $x\in D_i$.
    
    Let $W$ be a finite $\e_4$-dense set in $\La\cap \bigcup_{i=1}^m \vD_i$.
       For $w\in W$, by (\ref{defe1}.\eqref{me12})  and the definition of $\e_0$ after
       \eqref{me0}, there is $j\in\{1,\ldots, m\}$
   such that
   \begin{equation}\label{me1234}
   B_{3\e_1}(w)\subset \phi_{]-\a,0[}(D_j). 
   \end{equation}
   For $w\in W$ let $E(w):=B_{3\e_1}(w)\cap D(w)$.
   From Lemma~\ref{40Ae} we get
   \begin{Corollary}\label{core6}\quad
  
    If $w_1,\,w_2\in W$ and
    $P_{D(w_2)}(E(w_1))\cap E(w_2)\ne \emptyset$
    then $E(w_1)\subset \phi_{]-\a,0[}(D(w_2))$.
    In particular $P_{D(w_2)}:E(w_1)\to D(w_2)$ is well defined
    and smooth.
     \end{Corollary}

        Let $\Om\subset W^\Z$ be the set of sequences $\ov w=(w_k)_{k\in\Z}$
   such that $\forall k\in \Z$, $w_k\in \phi_{]-\a,0[}(D(w_{k+1}))$ and
   \begin{equation}
   \forall k\in\Z\qquad d(P_{D(w_{k+1})}(w_k),w_{k+1})<\e_3.
   \label{defOm1}
   \end{equation}
     Then $\Om$ is closed and invariant under the shift map $\si:W^\Z\to W^\Z$,
   $\si(\ow)_i=w_{i+1}$. In fact it is a subshift of finite type.
   
   Observe that Corollary~\ref{core6} and \eqref{mbeta} imply that if $\ow=(w_k)_{k\in\Z}\in\Om$
   then
   \begin{equation}\label{mtau1}
   \be < \tau_{D(w_{k+1})}|_{E(w_{k})}\le\a.
   \end{equation}

     If $\ow\in\Om$, 
       write
       \begin{equation}\label{snt}
       S_n\tau(\ow):=
       \begin{cases}
       \textstyle
      \phantom{-} \sum\limits_{k=0}^{n-1}\tau_{D(w_{k+1})}(w_k) &\text{if }\quad n\ge 1,
       \\
       \qquad 0 &\text{if }\quad n=0,
       \\
       -\sum\limits_{k=-n}^{-1} \tau_{D(w_{k+1})}(w_k) &\text{if } \quad n \le -1.
       \end{cases}
       \end{equation}
       Let $f_{\ow}$ be the  $\e_3$-possible 
       $\be$-specification
       \begin{align}
      f_\ow\big(S_n\tau(\ow)+t\big) = \phi_t(w_n),\qquad t\in[0,\tau_{D(w_{k+1})}(w_n)[,
      \quad n\in\Z.
      \label{fwt}
       \end{align}
       By \eqref{me3} and  the Shadowing Corollary~\ref{CSH}
   there is $y\in M$ and $s:(\re,0)\to (\re,0)$ strictly increasing 
   piecewise linear  with $s(0)=0$
   such  that 
   \begin{equation}\label{shy}
   d(\phi_{s(t)}(y),f_\ow(t))<  \e_2, 
   \qquad
   |s(t)-t|<\e_2.
   \end{equation}
   Since $f_{\ov w}(0)=w_0$ and $s(0)=0$, we have that $d(y,w_0)<\e_2<\e_1$.
   Also
    by 
   (\ref{defe1}.\ref{me115}),
   $B_{\e_1}(w_0)\subset\phi_{[-\a,\a]}(D(w_0))$
   and by \eqref{Alip} and (\ref{defe2}.\ref{me2})
   \begin{equation}\label{dwpy2}
   d(w_0,P_{D(w_0)}(y))\le A \,d(w_0,y)
   \le A\, \e_2 <\e_1.
   \end{equation}
    Thus $P_{D(w_0)}(y)\in E(w_0) = B_{3\e_1}(w_0)\cap D(w_0)$.
  
   Define $ \Pii :\Om\to \D$ by
   \begin{equation}\label{defpi1}
  \Pii (\ov w) := P_{D(w_0)}(y)\in E(w_0),
   \end{equation}
   where $y$ is from~\eqref{shy}.
   Since $W\subset\La$ and in \eqref{shy}  $s:\re\to\re $ is a homeomorphism,
   we have that $d(\phi_s(y),\La)<\e_2$ for all $s\in\re$.
   From the definition in (\ref{defe1}.\ref{me13})
   we have that 
   \begin{equation}\label{PiLa12}
   \Pii (\ov w)\in \La_1.
   \end{equation}

     From~\eqref{shy} and ~\eqref{dwpy2} we have that 
   $$
   d(y,P_{D(w_0)}(y))
   \le d(y,w_0)+d(w_0,P_{D(w_0)}(y))
   < (1+A)\,\e_2.
   $$
   Using~(\ref{defe2}.\ref{me21}) we have that $\Pii (\ov w)=P_{D(w_0)}(y)=\phi_b(y)$ with
   \begin{equation}\label{bndna1}
   |b|<\tfrac{(1+A)\,\e_2}{\lV\partial_t\phi\rV}
   < \tfrac 14 \,\a.
   \end{equation}

      For the record, using~\eqref{shy}, \eqref{bndna1} and $y=\phi_{-b}(\Pii (\ow))$, we have that 
   \begin{equation}\label{shy2}
   \exists |b|\le \tfrac{\a}4  \qquad \forall t\in\re
   \qquad
   d(\phi_{(s(t)+b)}(\Pii (\ow)),f_\ow(t))<\e_2,
   \end{equation}
   where $s:(\re,0)\to(\re,0)$ is continuous, strictly increasing and $s(0)=0$.

      On $\Om$ we use the restriction of the metric $d_a$  in $W^\Z$ 
    defined in~\eqref{da}.
    \begin{Lemma}\label{le6}
    \qquad 
    
    There is  $a\in]0,1[$ such that the map $\Pii $ is Lipschitz for $d_a$.
    
    If $w\in W$ and $T_w:=\Pii (\{\ow\;|\; w_0=w\,\})$ then
    \begin{equation}\label{twe212}
    \diam T_w < \e_2.
    \end{equation}
    In particular, from $\e_2<\e_3$ and~\eqref{mtau1},
    \begin{equation}\label{mtau2}
    \be < \tau_{D(w_{k+1})}|_{T_{w_{k}}}\le\a.
    \end{equation}

    \end{Lemma}

       \begin{proof}
    Observe that  from~\eqref{shy}, 
    if $\ow,\,\oz\in\Om$ and $d(\ow,\oz)<a^N$, then
    there are strictly increasing piecewise linear  functions $s_1,\,s_2:(\re,0)\to(\re,0)$ 
    such that 
    \begin{align}
    \forall t\in[S_{-N}&\tau(\ow),S_N\tau(\ow)]
    \notag
    \\
    &d(\phi_{s_1(t)}(\Pii (\ow)),f_\ow(t))<\e_2
    \quad\text{and}\quad
    d(\phi_{s_2(t)}(\Pii (\oz)),f_\ow(t))<\e_2,
    \label{s1s212}
    \end{align}
    because $f_\ow(t)=f_\oz(t)$ for all 
    $S_{-N}\tau(\ow)\le t\le S_N\tau(\ow)$.
    Thus
    $$
    d\big(\phi_{s_1(t)}(\Pii \ow),\phi_{s_2(t)}(\Pii \oz)\big)< 2 \e_2
    \qquad \text{ for all } \quad  t\in[S_{-N}\tau(\ow),S_N\tau(\ow)].
    $$
    By \eqref{mbeta} we have that $S_{-N}\tau<-N\be$ and $S_N\tau>N\be$.
    Since $s_2$ is a homeomorphism, we have that $s_1\circ s_2^{-1}:\re\to\re$
    is continuous, $s_1\circ s_2^{-1}(0)=0$ and 
    $$
    d\big(\phi_{s_1\circ s_2^{-1}(t)}(\Pii \ow),\phi_t(\Pii \oz)\big)\le 2\e_2
    \qquad \text{ for all } \quad |t|\le N\be.
    $$
    From Proposition~\ref{B16} and (\ref{defe2}.\ref{me225})
    there is 
    $|v|\le \a$
    such that
    $d(\Pii \ow,\phi_v(\Pii \oz))< C\,\ga\,e^{-\la\,N\be}$.
    Since $w_0=z_0$, by~\eqref{defpi1} 
    we have that $\Pii (\ov z),\,\Pii (\ov w)\in D(w_0)$.
    By Property~\eqref{Alip},
    \begin{align*}
    d(\Pii \ow,\Pii \oz)=d(P_{D(w_0)}(\Pii \ow),P_{D(w_0)}(\phi_v(\Pii \oz)))
    &\le A \,d(\Pii \ow,\phi_v(\Pii \oz))
    \\
    &\le A C\ga\, e^{-\la N\be}
    \le A C \ga\, d_a(\ow,\oz)
   \end{align*}
    if  $a=e^{-\la \be}$.
    Therefore $\Pii $ is Lipschitz.
    
    We now prove~\eqref{twe212}. If $\ow,\,\oz\in\Pii ^{-1}( T_w)$, applying~\eqref{s1s212}
    to $N=0$, $t=0$, $s_2(0)=0$ we get
    $$
    d(\Pii \oz,\Pii \ow)=d(\phi_{s_2(0)}(\Pii \oz),f_\ow(0))<\e_2.
    $$
    \end{proof}

             Since $\Om$ is a subshift of finite type, on $\Om$ we have the local product structure     
             \linebreak
    $[\cdot,\cdot]:\{ (\ow,\oz)\in \Om\times\Om\;|\; w_0=z_0\,\}\mapsto \Om$
    given by
    $$
    [\ow,\oz]_n :=
    \begin{cases}
    w_n &\text{if}\quad n\ge 0,\\
    z_n &\text{if}\quad n\le 0.
    \end{cases}
    $$

    \begin{Lemma}
    If $w_0=z_0$, 
    \begin{align}
    \pi(\Om)\ni
    \Pii ([\ow,\oz])&=\langle \Pii \ow,\Pii \oz\rangle_{D(w_0)}
    =P_{D(w_0)}(W^s_\ga(\Pii \ow)\cap W^{uu}_\ga(\Pii \oz)).
      \label{presbra12}
    \end{align}
    \end{Lemma}
    
    \begin{proof}
    Let $\ou:=[\ow,\oz]$.  By~\eqref{shy} there are $s_1,s_2:(\re,0)\to(\re,0)$ such that
    $$
    \forall t\ge 0\qquad 
    d(\phi_{s_1(t)}(\pi\ow),f_{\ow}(t))<\e_2,
    \quad
     d(\phi_{s_2(t)}(\pi\ou),f_{\ou}(t))<\e_2.
    $$
      Since $u_n=w_n$ for all $n\in \na$,
      $f_\ow(t)=f_\ou(t)$ for all $t\ge 0$.
    Therefore
        $$
    d\big(\phi_{s_2\circ s_1^{-1}(t)}(\Pii \ou),\phi_t(\Pii \ow)\big)\le 2\e_2
    \qquad \text{ for all } \quad t\ge 0.
    $$
    Using~(\ref{defe2}.\ref{me225}) and Proposition~\ref{B5} we have that
    $\pi\ou\in W^{ss}_\ga(\phi_v(\pi\ow))$ with $|v|<\frac 14 \a$, then
    $\pi\ou\in W^{s}_\ga(\phi_{v}(\pi\ow))$.
    Similarly $\pi\ou\in W^{u}_\ga(\phi_{v_2}(\pi\oz))$.
    Therefore
    \begin{align*}
    \pi\ou\in P_{D(w_0)}(W^s_\ga(\pi\ow)\cap W^u_\ga(\pi z))
    &= P_{D(w_0)}(W^s_\ga(\pi\ow)\cap W^{uu}_\ga(\pi \oz))
    \\
    &=\langle \pi\ow,\pi\oz\rangle_{D(w_0)}.
    \end{align*}

      \end{proof}
    
    For each $w\in W$ define
    $$
    T_w := \Pii (\{\oz\in\Om\;|\; z_0=w\,\}).
    $$
    Observe that by Lemma~\ref{le6}, $T_w$ is the continuous image of a closed cylinder in $\Om$, 
    thus it is closed. Equation~\eqref{presbra12}  implies that 
    $T_w$ is a rectangle in $D(w)$.
    
    Define 
    \begin{align*}
    \bLa := \phi_\re\big(\Pii (\Om)\big)=\phi_{[0,\a]}\big(\Pii (\Om)\big),
    \end{align*}
     where the second equality follows from~\eqref{defpi1} and~\eqref{mtau2}.
       We have that $\bLa$ is closed because $\Pii $ is continuous.
     The set $\bLa$ is  invariant  by construction.  By \eqref{PiLa12} 
     we have that
     $\bLa\subset \La_1$
     and hence by the choice of $\e_1$ in~(\ref{defe1}.\ref{me13}),
      $\bLa$ is a hyperbolic set.

             \begin{Lemma} \label{lala}
             \quad $\La\subset\bLa$. 
    \end{Lemma}
    \begin{proof}
    Let $x\in \La$. By (\ref{edix}.\ref{edixe}) there is $i\in\{1,\ldots,m\}$ such that $x\in\phi_{[-\a,0[}(\vD_i)$.
    Then $\exists P_{D_i}(x)=:y\in \vD_i$. Since 
    $\bLa$ is invariant it is enough to prove $y\in\bLa$.

    By   (\ref{edix}.\ref{edixe})  $\La\subset\bigcup_{i=1}^m\phi_{[-\a,0[}(\vD_i)$.
    Define inductively the sequence of returns $\{y_n\}_{n\in\Z}$ of $y$ to 
    $\breve\D:=\cup_i\vD_i$ by
    $y_0:=y$, 
    $$
    \tau_{n+1}=\min\{\,t>\tau_n\;|\; \phi_t(y)\in\cup_i \vD_i \,\},
    \qquad n\in\Z;
    $$
    $y_{n}=\phi_{\tau_n}(y)\in \vD_{i_n}$, $n\in\Z$.
    By the definition of $W$ before \eqref{me1234}
    there are $w_n\in W$ such that 
    \begin{equation}\label{dynwn}
    d(y_n,w_n)<\e_4.
    \end{equation}
    Let $E(w_n):=B_{3\e_3}(w_n)\cap D(w_n)$.
    Since $y_n\in E(w_n)$ and $y_{n+1}\in E(w_{n+1})$ we have that
    $y_{n+1}\in P_{D(w_{n+1})}(E(w_n))\cap E(w_{n+1})\ne\emptyset$.
    Then by Corollary~\ref{core6},
    $E(w_n)\subset \phi_{]-\a,0[}(D(w_{n+1}))$.
    In particular $\{y_n,\, w_n\}\subset \phi_{]-\a,0[}(D(w_{n+1}))\subset \text{Domain of  } P_{D(w_{n+1})}$.
    Using~\eqref{Alip} and ~\eqref{me4} we have that 
        \begin{align*}
    d(P_{D(w_{n+1})}(w_n),w_{n+1})
    &\le d(P_{D(w_{n+1})}(w_n),P_{D(w_{n+1})}(y_n)) + d(P_{D(w_{n+1})}(y_n),w_{n+1})
    \\
    &\le A\; d(w_n,y_n) + d(y_{n+1},w_{n+1})
    \\
    &\le A\,\e_4+ \e_4 < \e_3.
    \end{align*}
     Therefore by \eqref{defOm1}, $\ow:=(w_n)_{n\in\Z}\in \Om$.
   
     Define $S_n\tau(\ov w)$ by \eqref{snt} and $f_{\ov w}(t)$ by \eqref{fwt}.
     Let $\si:(\re,0)\to(\re,0)$ be the continuous function which is
    affine on the intervals $[S_n\tau(\ow),S_{n+1}\tau(\ow)]$, $n\in\Z$
    and such that
    $$
    \si(S_n\tau(\ow)) =
    \begin{cases}
    \phantom{-}\sum\limits_{k=0}^{n-1}\tau_{D({w_{k+1}})}(y_k) &n\ge 1,
    \\
    \hskip .6cm 0 & n=0,
    \\
    -\sum\limits_{k=n}^{-1}\tau_{D(w_{k+1})}(y_k) & n\le -1.
    \end{cases}
    $$
    If $s_n := S_n\tau(\ow)$ and $0\le t\le \tau_{D(w_{n+1})}(w_n)$ we have that 
    \begin{align}
    d(\phi_{\si(s_n + t)}(y)&, f_\ow(s_n+t))
    = d\big(\phi_{(\si(s_n+t)-\si(s_n))}(y_n),\phi_t(w_n)\big),
    \notag\\
    &=d\big(\phi_{bt}(y_n),\phi_t(w_n)\big),
    \hskip 2.15cm b= \tfrac{\tau_{D(w_{n+1})}(y_n)}{\tau_{D(w_{n+1})}(w_n)},
    \notag\\
    &\le d\big(\phi_{bt}(y_n),\phi_t(y_n)\big)
    + d(\phi_t(y_n), \phi_t(w_n)),
    \notag\\
    &\le \lv \tau_{D(w_{n+1})}(w_n)-\tau_{D(w_{n+1})}(y_n)\rv \; \lV \partial_t\phi\rV_{\sup}
    + \Big| \sup_{|t|\le \a}\Lip(\phi_t)\Big| \; d(y_n,w_n)
    \notag\\
    &\le \a \,\lV\partial_t\phi\rV_{\sup}
    + \Big| \sup_{|t|\le \a}\Lip(\phi_t)\Big| \; \e_4.
    \label{dphig1}
    \end{align}
    Thus, by \eqref{aphisup1},
    $$
     \forall t\in\re \qquad
    d(\phi_{\si(t)}(y),f_\ow(t)) <
    \tfrac 12 (\text{\small flow expansivity constant for $\La_U$}).
    $$
    From~\eqref{shy2} and~(\ref{defe2}.\ref{me22})  we have that
    $$
    \forall t\in\re \qquad
    d(f_\ow(t), \phi_{b+s(t)}(\Pii \ow))
    <\tfrac 15 \, (\text{\small flow expansivity constant for $\La_U$}).
    $$
    Since $t\mapsto b+s(t)$ is a	 homeomorphism of $\re$ the last two
    inequalities and the definition~\ref{dfe} of flow expansivity imply that 
    $y$ and $\Pii \ow$ are in the same orbit. Since $y=P_{D_i}(x)$ we have 
    that $y$ and $x$ are in the same orbit of $\Pii \ow$. Therefore 
    $$
    x\in \phi_\re( \Pii (\Om))=\bLa.
    $$    
    \end{proof}

              Observe that $\bLa$ may fail to be locally maximal.
    We have covered $\bLa$ with  `flowed rectangles' $\phi_{[0,\a]}(T_w)$.
    Each rectangle $T_w$ is closed under the bracket $\langle\cdot,\cdot\rangle_{D_i}$,
    $w\in D_i$.
    But it does not imply that $\bLa$ has a local product structure.
    There could be points $x,\,y\in\bLa$ with $d(x,y)$
      small which lie in different flowed rectangles  for which
    $\langle x,y\rangle\notin\bLa$. In the symbolic dynamics $\Om$
    the pre-images of these points lie in different cylinders, and thus
    they are far in $\Om$. In $\Om$ the return time is continuous.

    \begin{Lemma}\label{le9}
  Write $T^*:=\intt T$.
  
    Let $W_1:=\{\,w\in W\;|\; T_w^*\ne \emptyset\,\}$,
    then 
    $
    \Pii (\Om)=\textstyle\bigcup_{w\in W_1}  \ov{T_w^*}.
    $
       \end{Lemma}
    \begin{proof}

    The rectangles $T_w$ are images of symbolic cylinders under a continuous map, hence 
    they are closed. 
    If $w\in W$ then $\intt(T_w\setminus T_w^*)=\emptyset$ and hence its complement
    $T_w^c \cup T^*_w$ is open and dense in $\pi(\Om)$.
    By Baire Theorem $Y:=\bigcap_{w\in W}(T_w^c\cup T_w^*)$ is dense in $\pi(\Om)$.
    Since $W$ is finite, $Y$ is also open in $\pi(\Om)$.
    If $x\in Y$ and $x\in T_u$ then $x\in T_u^*$.
    Therefore $Y\subset\cO:=\bigcup_{w\in W} \ov{T^*_w}=\bigcup_{w\in W_1}\ov{T^*_w}$.
    Since $W$ is finite we have that $\cO$ is closed and dense in $\pi(\Om)$.
    Therefore $\cO=\pi(\Om)$.
    \end{proof}

       From now on we replace $W$ by $W_1$ and the rectangles $T_w$ by $\ov{T_w^*}$, so that they are proper.
       And replace correspondingly $\Om$ by $\Om\cap (W_1)^\Z$.
    Then
    \begin{equation*}\label{dfT}
    \fT:=\{\, T_w\;|\;w\in W_1\,\}
    \end{equation*}
     satisfies almost 
   all the requirements for  a proper family
    of rectangles for $\bLa$ as in definitions~\ref{dft}. and~\ref{dfm}.
    It may only not satisfy condition~\eqref{fT6} in Definition~\ref{dft}
    for $\phi_{t=0}$, namely, the rectangles in $\fT$ may intersect
    and also any discs which contain them may intersect.
    For simplicity we will only solve this problem at the end of the construction.
        We now refine the family $\fT$ to obtain a family with
    the Markov Property.

     For $\ow\in \Om$ write
     \begin{align*}
     W^s_{\text{loc}}(\ow) 
     &:= \{\,\oz\in\Om\;|\;   \forall k\ge 0 \;\; \oz_k=\ow_k\,\},
     \\
     W^u_{\text{loc}}(\ow) 
     &:= \{\,\oz\in\Om\;|\;   \forall k\le 0 \;\; \oz_k=\ow_k\,\}.
     \end{align*}

    \begin{Lemma}\label{Lpw1}\quad
    
    For $\ow=(w_k)_{k\in\Z}\in\Om$ we have that
    \begin{align*}
    \Pii (W^s_{\text{loc}}(\ow))&\subset W^s(\Pii (\ow),T_{w_0}),
    \qquad \\
     \Pii (W^u_{\text{loc}}(\ow))&\subset W^u(\Pii (\ow),T_{w_0}).
    \end{align*}
    \end{Lemma}
    
    \begin{proof} We only prove the inclusion for the stable manifolds.
    
    Let $\oz=(z_k)_{k\in\Z}\in W^s_{\text{loc}}(\ow)$ then from~\eqref{fwt} $f_\oz(t)=f_\ow(t)$ for all $t\ge 0$.
    Also $\Pii \oz\in T_{w_0}$.
    From~\eqref{shy2} we have that there are 
    $s_1,\,s_2:(\re,0)\to(\re,0)$ continuous and 
    strictly increasing and $|a_1|,|a_2|< \frac\a 4$ such that 
    \begin{align*}
    d(\phi_{s_1(t)+a_1}(\Pii \oz),f_\ow(t)) &< \e_2, \quad \forall t\ge 0,
    \\
    d(\phi_{s_2(t)+a_2}(\Pii \ow),f_\ow(t)) &< \e_2, \quad \forall t\ge 0.
    \end{align*}
    Then $\si:=s_1\circ s_2^{-1}$ is also continuous, striclty increasing,
    $\si(0)=0$ and 
    $$
    \forall t\ge a_2, \qquad
    d(\phi_{a_1+\si(t-a_2)}(\Pii \oz),\phi_t(\Pii \ow))
    < 2\,\e_2.
    $$
    Equivalently, setting $s=t-a_2$,
    $$
    \forall s\ge 0,
    \qquad
    d(\phi_{\si(s)}(\phi_{a_1}(\Pii \oz)),\phi_s(\phi_{a_2}(\Pii \ow)))
     < 2\,\e_2.
    $$
    By Proposition~\ref{B5} and~(\ref{defe2}.\ref{me225}) we have that 
    $\phi_{a_1}(\Pii \oz)\in W^{ss}_\ga(\phi_v(\phi_{a_2}(\pi\ow)))$
    with $|v|<\tfrac 14 \a$. Since $|v+a_2-a_1|<\a$ and $z_0=w_0$,
    we get that
    $\Pii \oz\in P_{D(w_0)}(W^s_\ga(\Pii \ow))\cap T_{w_0}=W^s(\Pii \ow,T_{w_0})$.
   
      \end{proof}

       By Lemma~\ref{le9}, $\cup\fT=\pi(\Om)$.
       For $w_0\in W_1$
       define $\tau:T_{w_0}\to ]\be,\a]$ and 
       $G_{w_0}:T_{w_0}\to\pi(\Om)$ as
       \begin{align}
       \tau_{w_0}(x)&=\min \{\,\tau_{D(w_1)}(x)\,|\,\exists\ow\in\Om\quad x=\pi(\ow), \ow_0=w_0,\, \ow_1=w_1\in W_1\,\};
       \label{deftau0}\\
       G_{w_0}(x) &=\phi_{\tau_{w_0}(x)}(x).
       \notag
       \end{align}
       The function $G_w$ may not be the first return map of $T_w$ to $\pi(\Om)$.
       On Lemma~\ref{GFla} we will prove that $G_w$ is the first return 
       map to $\pi(\Om)$ on rectangles which intersect $\La$.

          \begin{Lemma}\label{Lfw12}
    For $x\in \pi(\Om)$, if $w_0,w_1\in W_1$, $x\in T_{w_0}$ and $G_{w_0}(x)\in T_{w_1}$, then
    \begin{align*}
    G_{w_0}(W^s(x,T_{w_0})) &\subset W^s(G_{w_0}(x),T_{w_1}),
    \\
    G_{w_0}(W^u(x,T_{w_0})) &\supset W^u(G_{w_0}(x),T_{w_1}),
    \end{align*}
        
    \end{Lemma}  
      
    \begin{proof}
    Let $x\in T_{w_0}$ and let $w_1\in W_1$ be such that $G_{w_0}(x)\in T_{w_1}$.
       Let $\ow\in\Om$,  be such that $\Pii (\ow)=x$, $\ow_0=w_0$
       and $\ow_1=w_1$.
        Let $y\in W^s(x,T_{w_0})$, then $y=\Pii (\ou)$ for some
    $\ou\in\Om$ with  $\ou_0=w_0$  and $G_{w_0}(y)\in T_{\ou_1}$.
     Let $\ou^*:=[\ow,\ou]\in W^s_{\text{loc}}(\ow)$, then $\ou^*_1=w_1$.
    By~\eqref{presbra12} we have that 
    $\Pii \ou^* =\langle \Pii \ow,\Pii \ou\rangle_{D(w_0)}=\langle x,y\rangle_{D(w_0)}=y$.
    Since $\ou^*_1=w_1$, we have that 
    \begin{equation}\label{yw12}
    y=\pi(\ou^*)\in\phi_{[- \a,0[}(T_{w_1})\subset \phi_{[- \a,0[}(D(w_1)).
    \end{equation}

    Now let $\ow^*:=[\ou,\ow]$. Similarly $\pi( \ow^*)=x$,  $\ow^*_1=\ou_1$
    and $x\in\phi_{[-\a,0[}(D(\ou_1))$. Since  $G_{w_0}(x)\in T_{w_1}\subset D(w_1)$,
    we have that $\tau_{D(w_1)}(x)\le \tau_{D(\ou_1)}(x)$.
    Since the discs $D_i$ are disjoint and by~(\ref{edix}.\ref{edixf}), either $\ou_1=w_1$ 
    or 
    \begin{equation}\label{or2}
     \tau_{D(w_1)}(z) < \tau_{D(\ou_1)}(z)
     \text{ for all }
     z\in D(w_0)\cap\phi_{[-\a,0[}(D(w_1))\cap \phi_{[-\a,0[}(D(\ou_1)).
     \end{equation}
    But since for $y$ we have that $G_{w_0}(y)\in T_{\ou_1}$, then 
    $\tau_{D(w_1)}(y)\ge \tau_{D(\ou_1)}(y)$.
    By \eqref{yw12} we have that 
    $y\in D(w_0)\cap \phi_{[-\a,0[}(D(w_1))\cap\phi_{[-\a,0[}(D(\ou_1))$.
    Therefore \eqref{or2} does not hold, and hence $\ou_1=w_1$.
    
    Then we have that $G_{w_0}(y)=P_{D(w_1)}(y)=\pi(\si(\ou))\in T_{w_1}\cap W^{s}(G_{w_0}(x))$.
       Since $d(y,x)<\diam T_{w_0}<\a<\a_0$,
   by~\eqref{alfa1} we have that 
   $d(G_{w_0}(x),G_{w_0}(y))=d(P_{D(w_1)}(x),P_{D(w_1)}(y))<\ga$, and thus
   $G_{w_0}(y)\in W^s_\ga(G_{w_0}(x))\cap T_{w_1}=W^s(G_{w_0}(x),T_{w_1})$.
   
   The  inclusion for the unstable manifolds is proved similarly.

   \end{proof}

            For $u,v\in W_1$  define
       \begin{align*}
       T^1_{uv}&:= T_u\cap T_v,
       \\
       T^2_{uv}&:=\{\, x\in T_u\;|\; W^s(x,T_u)\cap T_v= \emptyset,\;
                                                 W^u(x,T_u)\cap T_v\ne\emptyset\,\},
       \\
         T^3_{uv}&:=\{\, x\in T_u\;|\; W^s(x,T_u)\cap T_v\ne \emptyset,\;
                                                 W^u(x,T_u)\cap T_v=\emptyset\,\},
       \\
         T^4_{uv}&:=\{\, x\in T_u\;|\; W^s(x,T_u)\cap T_v= \emptyset,\;
                                                 W^u(x,T_u)\cap T_v= \emptyset\,\}.                                        
       \end{align*}

        By Lemma~\ref{LD2} the boundary of each rectangle $T_w$ is 
        $\partial T_w =\partial^s T_w \cup \partial^u T_w$, where
        \begin{align*}
        \partial^s T_w:&= \{\,x\in T_w\;|\; x\notin\intt W^u(x,T_w)\,\},
        \\
         \partial^u T_w:&= \{\,x\in T_w\;|\; x\notin\intt W^s(x,T_w)\,\}.
        \end{align*}
             
       Define
       \begin{equation}\label{defx}
       Y:=\{\,x\in\pi(\Om)\;|\; W^s_\ga(x)\cap \partial^sT_w=\emptyset,\;
       W^u_\ga(x)\cap \partial^uT_w=\emptyset,\;\forall w\in W_1\,\},
       \end{equation}
       $$
       R(x):=\cap\{\intt T^a_{uv} \;|\; x\in T^a_{uv},\;  T_u\cap T_v\ne \emptyset\,\}.
       $$
       Comparing with condition~\eqref{alfa1} we see that the invariant manifolds $W^s_\ga(x)$, $W^u_\ga(x)$
       cross entirely rectangles in $D(x)$.
       It follows that if $x,\,y\in Y$ then either $R(x)=R(y)$ or
       $R(x)\cap R(y)=\emptyset$.
               By construction the sets $\ov{R(x)}$ for $x\in Y$ are proper and rectangles.

                    \begin{Lemma}\label{le100}
          If $w,\,u\in W_1$ then
          $$
          T^1_{wu}=T_w\cap T_u =\{\, x\in T_w\;|\;
          W^s(x,T_w)\cap T_u\ne \emptyset,\;
          W^u(x,T_w)\cap T_u\ne \emptyset\,\}.
          $$
          \end{Lemma}
          \begin{proof}
          We only prove
          $$
          \{\, x\in T_w\;|\;
          W^s(x,T_w)\cap T_u\ne \emptyset,\;
          W^u(x,T_w)\cap T_u\ne \emptyset\,\}
          \subseteq T_u,
          $$
          the other inclusions are easy.
          Suppose that
          $$
          \exists y\in W^s(x,T_w)\cap T_u\ne \emptyset
          \quad\text{and}\quad
          \exists z\in W^u(x,T_w)\cap T_u\ne \emptyset.
          $$
          Since $T_u$ is a rectangle we have that
          $$
          x=\langle y,z\rangle_{T_u}\in T_u.
          $$
          \end{proof}
 
     \begin{Lemma}\label{le1111}\quad
     
     If $w_0,w_1\in W_1$, $x,\,y\in T_{w_0}$, $G_{w_0}(x),\,G_{w_0}(y)\in T_{w_1}$ then
     $$
     G_{w_0}(\langle x,y\rangle_{T_{w_0}})
     =\langle G_{w_0}(x),G_{w_0}(y)\rangle_{T_{w_1}}.
     $$
     \end{Lemma}
     \begin{proof}
     \quad
     
     Since $\diam T_{w_0}<\a_0$ we have that
     $$
     \langle x,y\rangle_{T_{w_0}} = T_{w_0} \cap W^s_{\a_0}(x)\cap W^u_{\a_0}(y).
     $$
     By~\eqref{alfa1} 
     $$
      G_{w_0}(\langle x,y\rangle_{T_{w_0}})\in W^s_\ga(G_{w_0}(x))\cap W^u_\ga(G_{w_0}(y)). 
     $$
     By Lemma~\ref{Lfw12} we have that
     $$
     G_{w_0}(\langle x,y\rangle_{T_{w_0}})\in G_{w_0}\big(W^s(x,T_{w_0})\big)
     \subset W^s(G_{w_0}(x),T_{w_1})\subset T_{w_1}.
     $$
     Therefore
     $$
      G_{w_0}(\langle x,y\rangle_{T_{w_0}})\in T_{w_1}\cap W^s_\ga(G_{w_0}(x))\cap W^u_\ga(G_{w_0}(y))
      = \langle G_{w_0}(x),G_{w_0}(y)\rangle_{T_{w_1}}.
     $$
         \end{proof}

                 Let  $F:\pi(\Om)\to\pi(\Om)$ be the first return map to $\pi(\Om)$.     
       For the Markov property we will prove that for $x\in\La$, 
       \begin{align}
       &F\big(W^s(x,R(x)\big)\subset W^s\big(F(x),R(F(x))\big) \qquad\text{and}
       \notag\\
       &W^u\big(F(x),R(F(x))\big) \subset F\big(W^u(x,R(x))\big).
       \label{mwu}
       \end{align}
       We only show the proof for the stable manifolds.

  \begin{Lemma}\label{GFla}
  If $x\in\La$ there is $\ow\in\Om$ such that $\pi(\ow)=x$ and $G_{\ow_0}(x)=F(x)\in T_{\ow_1}$.
  \end{Lemma}
  
  \begin{proof}
  For $n\in\Z$ let $w_n\in W_1$ be such that $d(w_n,F^n(x))<\e_4$ and $F^n(x)\in D(w_n)$.
  We have that 
  \begin{align*}
  d(P_{D(w_{n+1})}(w_n),w_{w+1})
  &\le d(P_{D(w_{n+1})}(w_n),F^{n+1}(x))+d(F^{n+1}(x),w_{n+1})
  \\
  &\le d(P_{D(w_{n+1})}(w_n),P_{D(w_{n+1})}(F^n(x)) )+\e_4
  \\
  &\le A\, d(w_n, F^n(x)) +\e_4
  \le (A+1)\,\e_4
  < \e_3 \qquad\text{ using~\eqref{me4}.}
  \end{align*}
       Then by~\eqref{defOm1}, $\ow=(w_n)_{n\in\Z}\in\Om\cap (W_1)^\Z$.
     Also $x=\pi(\ow)$ by the same argument as in the end of Lemma~\ref{lala}:
     Lemma~\ref{lala} concludes that $x$ and $\pi(\ow)$ are in the same orbit, but
     in this case $x=P_{D(w_0)}(x)=P_{D(w_0)}(\pi\ow)=\pi\ow$.
     Since $F(x)\in D(w_1)$ we have that the minimum in \eqref{deftau0} is attained in
     $D(w_1)$:  $\tau_{w_0}(x) =\tau_{D(w_1)}(x)$.
     Therefore
     $F(x)=P_{D(w_1)}(x)=G_{w_0}(x)$.
     
  \end{proof}

     \begin{Lemma}\label{le10}\quad
     
     If $x,\,y\in\La\cap Y$, $R(x)=R(y)$ and $y\in W^s(x,R(x))$, then
     $R(F(x))=R(F(y))$.
     \end{Lemma}
     
     \begin{proof}\quad
     
     Let $w_1\in W_1$ be such that $F(x)\in  T_{w_1}$.
         Since $x\in\La$ 
       and $F(x)\in T_{w_1}$, by Lemma~\ref{GFla} 
       there is $\ow\in\Om$ such that
      $\pi(\ow)=x$ and $\ow_1=w_1$.
      Let $w_0:=\ow_0$, 
      then $F(x)=G_{w_0}(x)$. 
     
     We have that $y\in W^s(x,R(x))\subset W^s(x,T_{w_0})$.
     Since $G_{w_0}(x)=F(x)\in T_{w_1}$, by Lemma~\ref{Lfw12},
     \begin{equation}\label{hzti1}
     G_{w_0}(y)\in G_{w_0}(W^s(x,T_{w_0}))\subset W^s(G_{w_0}(x),T_{w_1}).
     \end{equation}
     Therefore $W^s(G_{w_0}(x),T_{w_1})=W^s(G_{w_0}(y),T_{w_1})$.
     Thus
     \begin{equation*}\label{e10a1}
     G_{w_0}(x)\in T_{w_1}
     \quad\then\quad
     \forall z\in W_1 \quad 
     W^s(G_{w_0}(x),T_{w_1})\cap T_z = W^s(G_{w_0}(y),T_{w_1})\cap T_z.
      \end{equation*}

      Since $G_{w_0}(y)\in T_{w_1}$ we also get that $G_{w_0}(y)=F(y)$.
      Therefore
           \begin{equation}\label{e10a2}
     F(x)\in T_{w_1}
     \quad\then\quad
     \forall z\in W_1 \quad 
     W^s(F(x),T_{w_1})\cap T_z = W^s(F(y),T_{w_1})\cap T_z.
      \end{equation}

     Now suppose that $z\in W_1$ and $\exists q_1\in W^u(G_{w_0}(x),T_{w_1})\cap T_z\ne \emptyset$.
     By Lemma~\ref{Lfw12}, 
     $$
     q_1\in W^u(G_{w_0}(x), T_{w_1})\subset G_{w_0}(W^u(x,T_{w_0})).
     $$
     Therefore there exists $q_0\in W^u(x,T_{w_0})$ such that 
     \begin{equation}\label{Gq0q1}
     G_{w_0}(q_0)=q_1\in T_{w_1}\cap T_z.
     \end{equation}
     Since $G_{w_0}(q_0)=q_1\in T_z$, using Lemma~\ref{Lfw12} we have that
     \begin{align*}
     G_{w_0}(\langle q_0,y\rangle_{T_{w_0}})
     \in G_{w_0}(W^s(q_0,T_{w_0})) \subset W^s(G_{w_0}(q_0),T_{z})\subset T_{z}.
     \end{align*}
     By construction $G_{w_0}(q_0)=q_1\in T_{w_1}$.
     From~\eqref{hzti1}, also $G_{w_0}(y)\in T_{w_1}$ and
      using Lemma~\ref{le1111}
     $$
     G_{w_0}(\langle q_0,y\rangle_{T_{w_0}})=\langle G_{w_0}(q_0),G_{w_0}(y)\rangle_{T_{w_1}}
     \in W^u(G_{w_0}(y),T_{w_1}).
     $$
     We have proven that
     $$
     W^u(G_{w_0}(x),T_{w_1})\cap T_z\ne \emptyset
     \qquad\then\qquad
      W^u(G_{w_0}(y),T_{w_1})\cap T_z\ne \emptyset.
     $$
     Since the hypothesis on $x$ and $y$ are symmetric
     we obtain
     \begin{equation}\label{e10b1}
     W^u(G_{w_0}(x),T_{w_1})\cap T_z\ne \emptyset
     \qquad\Longleftrightarrow\qquad
      W^u(G_{w_0}(y),T_{w_1})\cap T_z\ne \emptyset.
      \end{equation}
      Since $G_{w_0}(x)=F(x)$ and $G_{w_0}(y)=F(y)$, we have that 
     \begin{equation}\label{e10b2}
     W^u(F(x),T_{w_1})\cap T_z\ne \emptyset
     \qquad\Longleftrightarrow\qquad
      W^u(F(y),T_{w_1})\cap T_z\ne \emptyset.
      \end{equation}
      
      The statements~\eqref{e10a2} for $W^s$ and~\eqref{e10b2} for $W^u$, and Lemma~\ref{le100}
      imply Lemma~\ref{le10}.
     
          \end{proof}

       \begin{Lemma}\label{le122}
       If $x\in \La$ then
      \begin{equation}\label{mws}
       F\big(W^s(x,R(x))\big)
       \subset W^s\big(F(x),R(F(x))\big).
       \end{equation}
       \end{Lemma}
       
       \begin{proof}
       Given $y\in W^s(x,R(x))$ by Lemma~\ref{le10} we 
       have that $F(y)\in R(F(y))=R(F(x))$.
       Since $\diam R(x)<\a_0$, by~\eqref{alfa1} we have that $d(F(y),F(x))<\ga$
       and hence 
       $$
       F(y)\in W^s_\ga(F(x))\cap R(F(x))=W^s\big(F(x),R(F(x))\big).
       $$
       \end{proof}
       
       The inclusion~\eqref{mwu} is proven similarly, it can also be proved applying Lemma~\ref{le122}
       to the reverse flow $\phi_{-t}$.
       
       Then the set of rectangles $\{ R(x)\;|\;x\in\La\,\}$ satisfies the Markov property.
       Let 
       \begin{align*}
        \cR:&=\cup\{R(x)\;|\;x\in\La\,\}, \qquad
        \Si_1:=\{ y \in \cR\;|\;\forall k\in\Z\quad F^k(y)\in\cR\,\}.
        \end{align*}
       Then $\Si_1$ is closed and $F$-invariant.
       \begin{Lemma}
       If $p\in\La\cap Y$ then $R(p)\cap \Si_1$ is a rectangle in $\Si_1$.
       \end{Lemma}
       \begin{proof}
        Let $x,y\in R(p)\cap \Si_1$ and $z:=\langle x,y\rangle_{R(p)}$.
        Since $x,y\in\Si_1$ we have that $\forall n\in\Z$ \quad
        $R(F^n(x))$, $R(F^n(y))\in\{\,R(q)\;|\;q\in\La\}$.
        Since $z\in W^s(x,R(p))$ by the Markov property \eqref{mws},
        for $n\ge 0$, $F^n(z)\in W^s(F^n(x),R(F^n(x)))\subset R(F^n(x))\subset\cR$.
        Since $z\in W^u(y,R(p))$,  by~\eqref{mwu}
        $F^{-n}(z)\in W^u(F^{-n}(y),R(F^{-n}(y))\subset \cR$.
       Therefore $z\in\Si_1$.
       
       \end{proof}

        The rectangles in $\{ R(p)\;|\;p\in\La\,\}$ will give a Markov partition in 
        the hyperbolic set
        $\bLa_1:=\phi_\re(\Si_1)$,
       with $\La\subset\bLa_1\subseteq \bLa$.
       
       Finally, in order to fulfill the requirement  $D_i\cap D_j=\emptyset$  
       in Definition~\ref{dft}.\eqref{fT6} for $t=0$ we slightly modify the disks
       $D(w)$, $w\in W_1$ in the following way.
       Take $u_1, \ldots, u_L$, $L=\# W_1$ very small and distinct.
       Enumerate $W_1=\{\,w_1,\ldots,w_L\,\}$.
       Then redefine the disks $D(w_i)$ and the rectangles $\ov{R(w_i)}$
       by
       $$
       D_1(w_i):=\phi_{u_i}(D(w_i)),
       \qquad
       \ov{R_1(w_i)}:=\phi_{u_i}(\ov{R(w_i)}).
       $$
       This finishes the proof of Theorem~\ref{E4}.

       \qed

    \bigskip

    \color{black}

    \bigskip
    
    \section{Structural Stability.}\label{asst}

    As we shall see the structural stability result presented here
    does not need the local maximality of the hyperbolic set.
    
    Let $M$ be a $C^\infty$ compact manifold     and $\phi$ a $C^k$ flow on
    $M$. Let $\La$ be a hyperbolic set for $\phi$.
    Define
    \begin{align*}
    C^\a(\La,M) &:=\big\{\, u:\La\to M\;\big|\;
    u \text{ is $\a$-H\"older continuous }\}.
    \end{align*}
    This space has the structure of a Banach manifold
    modelled by the Banach space $C^\a(\La,\re^n)$ 
    with the norm $\lV f\rV:=\lV f\rV_0+\lV f\rV_\a$,
    where for $f\in C^\a(\La,\re^n)$,
    \begin{align*}
    \lV f\rV_0:=\sup_{x\in\La}|f(x)|,
    \qquad
    \lV f\rV_\a:=\sup_{x\ne y}\frac{|f(x)-f(y)|}{d(x,y)^\a}.
    \end{align*}

   Let
    \begin{align*}
    C^0_\phi(\La,M)&:=\big\{\,u\in C^0(\La, M)\;\big|\;
    D_\phi u(x):=\tfrac{d\,}{dt}u(\phi_t(x))\big\vert_{t=0}
    \text{ exists }\big\},
    \\
    C^\a_\phi(\La,M) &:=\big\{\,u\in C^\a(\La,M)\;\big|\;
    D_\phi u(x):=\tfrac{d\,}{dt}u(\phi_t(x))\big\vert_{t=0}
    \text{ exists and is $\a$-H\"older }\}.
    \end{align*}
    
    Let $X$ be the vector field of $\phi$. 
    The structural stability of the hyperbolic set $\La$ can be
    written as a solution $(u,\ga)\in C^0_\phi(\La,M)\times C^0(\La,\re^+)$ 
    to the equation
    $$
    Y\circ u = \ga \;D_\phi u
    $$
    for a vector field $Y$ nearby $X$.
    Here $u$ is the topological equivalence and $\ga$ encodes
    the reparametrization of the flow. The following theorem
    says that such solutions can be obtained as implicit
    functions of $Y$.
    
    \medskip
    
    \begin{Theorem}\label{Fss}\quad
    
    Let $M$ be a $C^{k+1}$ compact manifold and $\phi$ a flow of a $C^k$ vector field $X$ on  $M$.
    Let $\fX^k$ be the Banach manifold  of $C^k$ vector fields on $M$.
    Suppose that $\La$ is a hyperbolic set for $\phi_t$. 
    Then 
    \begin{enumerate}[(a)]
    \item\label{Fssa}
     There exist $0<\be<1$, a neighbourhood $\cU\subset \fX^k(M)$ of $X$
    and $C^{k-1}$ maps 
    \linebreak
    $\cU\to C_\phi^\be(\La,M):$
    $Y\mapsto u_Y$ and $\cU\to C^\be(\La,\re^+):$ $Y\mapsto \ga_Y$
    such that 
    \begin{equation}\label{Fssa1}
    Y\circ u_Y = \ga_Y\,D_\phi u_Y.
    \end{equation}
    \item\label{Fssb} The maps $\cU\to C_\phi^0(\La,M):$
    $Y\mapsto u_Y$ and $\cU\to C^0(\La,\re^+):$ $Y\mapsto \ga_Y$
    are $C^k$.
     \end{enumerate}
    \end{Theorem}
     
     The version for H\"older maps in item~\eqref{Fssa} is useful for 
     proving smooth dependence of equilibrium states, entropies and
     SBR measures, see~\cite{regu}. 
     For $Y$ near $X$ the topological equivalence $u_Y$ is uniquely
     determined if we require that
     $u_Y(x)\in\exp_x(X(x)^\perp)$
     and $u_Y$ near the identity. 
     One can change $\Ga(x)=\exp_x(X(x)^\perp\cap B(0,\de))$
     by any other smooth family of local transversal sections to the flow.
     
     The first version of Theorem~\ref{Fss}
     appears in de la Llave, Marco, Moriy\'on~\cite{LlMM}.
     Item~\eqref{Fssa} is proven in~\cite[p.~591]{KKPW2} and item~\eqref{Fssb}
     is proven in~\cite[p.~23ff.]{KKW} in the case $k=1$,
      but the proof can be immediately
     generalized to and arbitrary positive integer $k$.

     \begin{Corollary}\label{Css}\quad
     
     There is a neighbourhood $\cU\subset\fX^k(M)$ of $X$
     such that for every $Y\in\cU$ the map
      $u_Y$
     is a homeomorphism $u_Y:\La\to u_Y(\La)$.
     \end{Corollary}
     \begin{proof}\quad

     Since $\La$ is compact and $u_Y$ is continuous, it is enough
     (cf. Rudin~\cite{rudin2} Theorem~4.17)
     to prove that $u_Y$ is injective for $Y$ near $X$.

     Let $\eta>0$ be such that every periodic orbit in $\La$ has period
     larger than $4\eta$.
     Let $\a=\ov\a(\eta)<\eta$ be a flow expansivity constant for $(\La,\phi_t)$ as in 
     Definition~\ref{dfe}.  There is a neighbourhood $\cU_0$ of $X$ such that
     for all $Y\in\cU_0$ every periodic orbit
     in $B(\La,\a):=$ $\{\, y\in M\,|\, d(y,\La)<\a\,\}$ 
     has 
     \begin{equation}\label{pereta}
     \text{ period larger than $2\eta$.}
     \end{equation}
     Since $u_X=id_\La$ there is a neighbourhood 
     $\cU_1\subset\cU_0$  of $X$ such that 
     \begin{equation}\label{Yga}
      \forall Y\in\cU_1\quad \forall x\in \La \qquad 
     d\big(u_Y(x),x\big)<\tfrac 12 \a 
     \quad \text{ and }\quad
     \tfrac 12 < \ga_Y(x)< 2.
     \end{equation}

      Denote by $\psi^Y_s$  the flow of $Y\in\cU_1$.
      From equation~\eqref{Fssa1} we get that for $Y\in\cU_1$, $x\in\La$ and $t\in\re$ 
      we have that
      $$
      Y(\phi_t(x)) = \ga_Y(\phi_t(x))\, \tfrac{d\,}{ds} u_Y(\phi_s(x))\big\vert_{s=t}\,.
      $$
      This implies that the equation
      \begin{equation}\label{uypsiy}
      u_Y(\phi_t(x)) = \psi^Y_{s(t)}(u_Y(x))
      \end{equation}
      has a solution $s(t)$ with $s(0)=0$ satisfying
      \begin{equation}\label{sdga}
      \tfrac{ds}{dt}(t)= \ga_Y(\phi_t(x))^{-1}.
      \end{equation}
      Since $\tfrac{ds}{dt}>0$ we have that $s(t)$ has a continuous inverse 
      $t(s):\re\hookleftarrow$
      satisfying $t(0)=0$. Also
      $$
      \forall s\in\re\qquad
      u_Y(\phi_{t(s)}(x))=\psi^Y_s(u_Y(x)),
      $$
      and by~\eqref{Yga},
      $$
      \forall s\in\re \qquad
      d\big(\psi^Y_s(u_Y(x)),\phi_{t(s)}(x)\big)<\tfrac 12\,\a. 
      $$

     Suppose that $Y\in\cU_1$ and  $x,\,y\in \La$ are such that $u_Y(x)=u_Y(y)$.
     There are increasing homeomorphisms $t_1,\,t_2:(\re,0)\hookleftarrow$
     such that
     $$
      d\big(\psi^Y_s(u_Y(x)),\phi_{t_1(s)}(x)\big)<\tfrac 12\,\a
      \quad\text{ and }\quad
       d\big(\psi^Y_s(u_Y(y)),\phi_{t_2(s)}(y)\big)<\tfrac 12\,\a.
     $$
     Since $u_Y(x)=u_Y(y)$ we get that
     $$
     \forall \tau\in\re \qquad
     d(\phi_\tau(x),\phi_{t_2\circ t_1^{-1}(\tau)}(y))<\a.
     $$
     Since $t_2\circ t_1^{-1}$ is continuous and $t_2\circ t_1^{-1}(0)=0$,
     by the flow expansivity of $\phi_\tau$ we have that 
     $y=\phi_v(x)$ with $|v|<\eta$. Suppose that $v\ne 0$.
     Since by~\eqref{Yga}, $\ga_Y^{-1}<2$,
     the orbit segment from $x$ to $\phi_v(x)$ is sent by $u_Y$ to a closed
     orbit $\psi^Y_s(y)$ with a period smaller than $2 \eta$.
     This contradicts the choice of $\cU_1$ in~\eqref{pereta} and~\eqref{Yga}.
     Therefore $v=0$ and hence $y=x$.

     \end{proof}

     \begin{Proposition}\label{PSSLM}\quad
     
     If $k\in\na^+$, $\La\subset M$ is a hyperbolic set for the flow $\phi$ on $M$ 
     with vector field $X$
     and $V$ is an open neighbourhood of $\La$, 
     then there is an open set $U$ 
     such that $\La\subset U\subset \ov{U}\subset V$, 
     an open set  $X\in\cU\subset \fX^k(M)$,
     a subshift of finite type $\si:\Om\to\Om$,
     $0<\be<1$ and $C^{k-1}$ maps
     $\tau:\cU\to C^\be(\Om,\re^+)$, $Y\mapsto \tau_Y$
     and
     $\pi:\cU\to C^\be(\Om,M)$, $Y\mapsto \pi_Y$
     such that the natural extension
     of $\pi_Y$ to $\pi_Y:S(\Om,\tau_Y)\to M$
     is a well defined time preserving semiconjugacy
     \begin{equation}\label{SOmtauy}
     \begin{CD}
     S(\Om,\tau_Y) @> S_t >> S(\Om,\tau_Y)
     \\
     @V \pi_Y VV @VV \pi_Y V
     \\
     \bLa_Y @> \psi^Y_t >> \bLa_Y
     \end{CD}
     \end{equation}
     between the suspended flow $S_t$ of $\si$ in $S(\Om,\tau_Y)$
     and a hyperbolic set $\bLa_Y$ for the flow $\psi^Y_t$ of $Y$
     which satisfies
     \begin{equation}\label{maxinu}
     \forall Y\in\cU\qquad 
     \textstyle\bigcap\limits_{t\in\re}\psi^Y_t(\ov U)\subset\bLa_Y\subset V.
     \end{equation}
     In particular $\La\subset \bLa_X$.

     \end{Proposition}

        \begin{proof}\quad
     
     Let $\e_0>0$ be such that 
     $$
      B(\La,\e_0):=\{\,y\in M\;|\; d(y,x)<\e_0\,\}\subset V.
     $$
     
     Let $\cU_0\subset \fX^k(M)$ be the neighbourhood of $X$ given by 
     Theorem~\ref{Fss} and Corollary~\ref{Css}.
     
     Using Corollary~\ref{Rfe} with $\eta=1$ there exist an open set
     $\cU_1\subset\fX^k(M)$ with
      $X\subset\cU_1\subset\cU_0$ and $\e_1,\,\a\in\re$ such that 
      \begin{equation}\label{a<e1}
      0<\a<\e_1<\e_0
      \end{equation}
      $\La_1^Y:=  \bigcap_{t\in\re}\psi^Y_t(\ov{B(\La,\e_1)})$ is hyperbolic
      and
     if $Y\in\cU_1$, $z,w\in \La_1^Y$, 
     $\be\in C^0(\re,\re)$, $\be(0)=0$ 
     satisfy
     \begin{equation}\label{cfa}
     \forall t\in\re\qquad d(\psi^Y_{\be(t)}(w),\phi^Y_t(z))<\a,
     \end{equation}
     then $w=\psi^Y_\xi(z)$ for some $|\xi|<1$.

     Let 
     \begin{equation}\label{defde0}
      0<\de_0<\a
     \end{equation}
      be such that any $\de_0$ possible $1$-specification for
     $(\La,\phi_t)$ is $\tfrac 13 \a$-shadowed as in the Shadowing Corollary~\ref{CSH}.
      For a metric space $(B,d)$ and $f,\,g:A\to B$ write
     $$
     d_0(f,g):=\sup_{a\in A}d(f(a),f(b)).
     $$
     
     Let $\e_2$ be such that 
     \begin{equation}\label{e2a}
     0<\a<\e_2<\e_1.
     \end{equation}

     Let $0<\e_3<\e_2$ and $X\in \cU_2\subset\cU_1$ be such that
     \begin{gather}
     \Big(\sup_{t\in[0,1]}\Lip(\phi_t)\Big) \,\e_3 +
     \sup_{Y\in\cU_2}\sup_{t\in[0,1]}d_{0}(\phi_t,\psi^Y_t)
     <\frac{\de_0}3< \frac \a 3
     \label{lipesup}
     \qquad\text{and}
     \\
     \tfrac1 3 \de_0 + \e_3 < \tfrac 23 \de_0.
     \label{de0e1de0}
     \end{gather}
     
     Observe that by the choice of $\e_1$ in~\eqref{a<e1} and Corollary~\ref{Rfe} the set
     $$
     \La_1:=\textstyle
     \bigcap_{t\in\re}\phi_t(\ov{B(\La,\e_1)})
     $$
     is hyperbolic.
     Let $X\in\cU_3\subset\cU_2$ be a neighbourhood of $X$
     from Theorem~\ref{Fss} applied to $\La_1$  so that $u_Y$
     and $\ga_Y$ are defined on $\La_1$ for all $Y\in\cU_3$
     and that $\cU_3$ is small enough so that
      \begin{equation}\label{d0ui}
     \forall Y\in\cU_3\qquad 
     d_0(u_Y|_{\La_1},id|_{\La_1})< \tfrac 13 \a.
     \end{equation}

     Define
     \begin{equation}\label{defla2}
     \La_2:=\textstyle\bigcap_{t\in\re}\phi_t(\ov{B(\La,\e_2)}).
     \end{equation}

     By Theorem~\ref{E4} there is a hyperbolic set $\bLa$ for $\phi_t$ such that 
     $\La\subset\La_2\subset\bLa\subset B(\La,\e_1)\subset V$ and with
     a Markov partition which is the image by a time preserving Lipschitz
     semiconjugacy $\pi$ of a suspension of a subshift of finite type 
     $\si:\Om\to\Om$
     with H\"older ceiling function $\tau:\Om\to\re^+$.
     \begin{equation*}
     \begin{CD}
     S(\Om,\tau) @> S_t >> S(\Om,\tau)
     \\
     @V \pi VV @VV \pi V
     \\
     \bLa @> \phi_t >> \bLa
     \end{CD}
     \end{equation*}
     
     Since $\bLa$ is invariant by $\phi_t$ and $\bLa\subset B(\La,\e_1)$
     we have that $\bLa\subset\La_1$. In particular
     for all $Y\in\cU_3$ the functions $u_Y$ and $\ga_Y$
     are defined on $\La_1\supset\bLa$.
     For  $Y\in\cU_3$ let $\bLa_Y:=u_Y(\bLa)$ and 
     $\pi_Y:=u_Y\circ \pi|_{\Om\times\{0\}}\in C^\be(\Om\times\{0\},\bLa_Y)$.
     Since $u_Y:\bLa\to \bLa_Y$ is a (non time preserving) topological equivalence
     among the flows $(\bLa,\phi_t)$ and $(\bLa_Y,\psi^Y_s)$ we have that the
     following diagram commutes:
     $$
     \begin{CD}
     \Om\times\{0\} @> \si>> \Om\times\{0\}
     \\
     @V \pi VV @VV \pi V
     \\
     \bLa @> F >> \bLa
     \\
     @V u_Y VV @VV u_Y V
       \\
       \bLa_Y @> F_Y >> \bLa_Y
     \end{CD}
     $$
     where $F_Y:= u_Y\circ F\circ u_Y^{-1}$ and $F$ is the first return map
     to the Markov partition for $\bLa$, $F(\pi\ow)=\phi_{\tau(\ow)}(\pi\ow)$.
     
     Since $u_Y$ satisfies the equalities~\eqref{uypsiy} and~\eqref{sdga} 
     we have that
     \begin{align*}
     F_Y(\pi_Y\ow) &= \psi^Y_{\tau_Y(\ow)}(\pi_Y(\ow)), \quad \text{where}
     \\
     \tau_Y(\ow) :&=\int_0^{\tau(\ow)}\ga_Y(\phi_t(\pi \ow))^{-1}\,dt.
     \end{align*}
     Then the diagram~\eqref{SOmtauy} commutes.

     It remains to prove the inclusions in~\eqref{maxinu}.
     Since $Y\mapsto u_Y\in C^0(\La,M)$ is continuous,
     and $u_X(\bLa)=\bLa\subset V$,
     there is a neighbourhood $X\in \cU_4\subset\cU_3$ such that 
     $$
     \forall Y\in\cU_4\qquad \bLa_Y=u_Y(\bLa)\subset V.
     $$
     
     Let $U:=B(\La,\e_3)$.
     Given
     $
     z\in\textstyle \bigcap_{s\in\re}\psi^Y_s(\ov U)
     $
     let $z_n:=\psi^Y_n(z)$, $n\in\Z$.
     Since 
     \linebreak
     $z_n\in \ov U=\ov{B(\La,\e_3)}$ there is
     $y_n\in \La$ such that $d(y_n,z_n)\le \e_3$. 
      Define a $1$-specification $f(t)$ 
      (cf. definition~\ref{B8}) for $(\La,\phi_t)$
      by 
      $$
      f(n+t):=\phi_t(y_n), \quad n\in\Z, \quad t\in[0,1[.
      $$
      Using~\eqref{lipesup}
      we have that 
      \begin{align}\label{fphit}
      d(f(n+t),\psi^Y_t(z_n))&=d(\phi_t(y_n),\psi^Y_t(z_n))
      \notag\\
      &\le d(\phi_t(y_n),\phi_t(z_n))+d(\phi_t(z_n),\psi^Y_t(z_n))
      \notag\\
      &\le  \Big(\sup_{t\in[0,1]}\Lip(\phi_t) \Big) \,d(z_n,y_n)+
      \sup_{t\in[0,1]}d_{0}(\phi_t,\psi^Y_t)
      \notag\\
      &\le \tfrac 13 \de_0.
      \end{align}
      Observe  that $f$ is $\de_0$-possible because,
      using that $\psi^Y_1(z_n)=z_{n+1}$ and~\eqref{de0e1de0}, we have
      that
      $$
      d(f(n+1^-),y_{n+1})\le d(f(n+1^-),\psi^Y_1(z_n))
      +d(z_{n+1},y_{n+1})
      \le \tfrac13 \de_0 +\e_3
      < \tfrac 23 \de_0.
      $$
      By the Shadowing Corollary~\ref{CSH} and the choice of $\de_0$ in~\eqref{defde0},
       there is  $x\in M$ and 
       an increasing homeomorphism $\be:(\re,0)\to(\re,0)$ 
        such that  
      \begin{equation}\label{dbef}
      \forall t\in\re\qquad d(\phi_{\be(t)}(x),f(t^\pm))<\tfrac 13 \a.
      \end{equation}
      Since by definition $f(t)\in\La$ and $\be(t)$ is a homeomorphism,
      and using~\eqref{e2a} and \eqref{defla2}
       we obtain  that
      \begin{equation}\label{xinsatu}
      x\in \textstyle\bigcap_{\be\in\re}\phi_\be\big(B(\La,\tfrac 13 \a)\big)
      \subset\bigcap_{t\in\re}\phi_t(\ov{B(\La,\e_2)})=\La_2\subset\bLa.
      \end{equation}
      
      Then by~\eqref{dbef}, ~\eqref{fphit} and \eqref{defde0},
      \begin{align}
      d(\phi_{\be(n+t)}(x),\psi^Y_{n+t}(z) )
      &=
      d(\phi_{\be(n+t)}(x),\psi^Y_t(z_n))
      \notag\\
      &\le d(\phi_{\be(n+t)}(x),f(n+t))+d(f(n+t),\psi^Y_t(z_n))
      \notag\\
      &\le \tfrac 13 \a+ \tfrac 13 \de_0\le \tfrac 23 \a.
      \label{le23a}
       \end{align}

      There is a homeomorphism $\si:(\re,0)\to(\re,0)$ such that
      \begin{equation}\label{uYsi}
      	\psi^Y_{\si(t)}(u_Y(x))=u_Y(\phi_{\be(t)}(x)).
      \end{equation}
      Indeed, comparing~\eqref{uYsi} and~\eqref{Fssa1} we get that
      $$
      \si(0)=0 \quad\text{and}\quad \dot\si(t)=\dot\be(t)\,\ga_Y(\phi_{\be(t)}(x))^{-1}.
      $$
      Therefore, using~\eqref{d0ui} and \eqref{le23a},
      \begin{align*}
      d(\psi^Y_{\si(t)}(u_Y(x)),\psi^Y_t(z))
      &= d(u_Y(\phi_{\be(t)}(x)),\psi^Y_t(z))
      \\
      &\le 
      d(u_Y(\phi_{\be(t)}(x)),\phi_{\be(t)}(x))
      +d(\phi_{\be(t)}(x),\psi^Y_t(z))
      \\
      &\le d_0(u_Y,id)+  \tfrac 23 \a <\a.
      \end{align*}
      
      By the choice of $\a$ in~\eqref{cfa}
       we have that there is $\xi\in\re$ such that
      $z=\psi^Y_\xi(u_Y(x))$.
      Since $\bLa_Y=u_Y(\bLa)$ is $\psi^Y_t$-invariant 
      and by~\eqref{xinsatu}, $x\in\bLa$,
      we obtain that $z\in\bLa_Y$. Therefore
      $$
      \textstyle\bigcap_{s\in\re}\psi^Y_s(\ov U)\subset\bLa_Y.
      $$

        \end{proof}

     \begin{Proposition}\label{liftmuSSLM}\quad
     
     Let $\phi\in\cU$ and $\La\subset U\subset V$ be from Proposition~\ref{PSSLM}.
     If $Y\in\cU$ and $\mu$ is a $\psi_t^Y$-invariant Borel probability with 
     $\supp\mu\subset \ov U$ then there is and $S_t$-invariant Borel probability
     on $S(\Om,\tau_Y)$ such that $(\pi_Y)_*(\nu)=\mu$.
     \end{Proposition}
     
     \begin{proof}
     By Proposition~\ref{PSSLM} $\supp(\mu)\subset \La_Y$.
     For $f\in C^0(\La_Y,\re)$ let  $G(f\circ\pi_Y)=\int f\,d\mu$.
     Then $G$ defines a positive linear functional on a subspace $W$
     of $C^0(S(\Om,\tau_Y),\re)$. By the Riesz extension theorem, 
     $G$ extends to a positive linear functional on $C^0(S(\Om,\tau_Y),\re)$.
     Since $G(1)=G(1\circ \pi)=1$, the extension $G$ corresponds to a Borel probability 
     $\be$ 
     on $S(\Om,\tau_Y)$. 
     By the construction $(\pi_Y)_*\be=\mu$.
     By compactness we can choose a sequence 
     $T_k\to\infty$ such that 
     $ \nu=\lim_k\tfrac 1{T_k}\int_0^{T_k}(S_t)_*\be\, dt$
     exists. We have that $\nu$ is $S_t$-invariant and using that
     $$
     (\pi_Y)_* (S_t)_*\be =(\psi^Y_t)_* (\pi_Y)_*\be=(\psi^Y_t)_* \mu=\mu,
     $$
     we get that $(\pi_Y)_*\nu=\mu$.
     
     \end{proof}

     \section{Stability of hyperbolic Ma\~n\'e sets.}\label{ashms}
   
      Observe that the definition~\ref{defhipL} of a hyperbolic set 
      for an autonomous lagrangian corresponds to hyperbolicity
      for the flow restricted to an energy level. Nearby energy levels 
      can be considered as perturbations of the flow.
      
      Given Tonelli lagrangian $L:TM\to\re$ let $E_L:=v\cdot L_v-L$
      be its energy function and define
      $$
      e_0(L):= \inf\{\, k\in\re\;|\; \pi(E_L^{-1}\{k\})=M\,\}.
      $$
      Observe that
      $$
      \tfrac{d\,}{dt}E_L(x,tv)=v\cdot L_{vv}(x,tv)\cdot v>0 \qquad\text{if }v\ne 0.
      $$
      Then $f(t):=E_L(x,tv)$ is increasing on $t>0$.
      This implies that for $k>e_0(L)$ the radial projection is a diffeomorphism
      between the unit tangent bundle and the energy level $E^{-1}_L\{k\}$.

      Denote by  $O_M$ the zero section of $TM$.
      
      \begin{Lemma}\label{LSH1}\quad
      
      If $E^{-1}\{c(L)\}\cap O_M\ne \emptyset$ then 
      $c(L)=e_0(L)$ and $E^{-1}_L(e_0)\cap O_M\subset \cA(L)\subset\mN(L)$.
      \end{Lemma}
      \begin{proof}\quad
      
      Let $$
      \Si^+(L):=\big\{v\in TM\;\big|\; \pi\circ \phi_t(v)|_{[0,+\infty[}\text{ is semi-static }\big\}.
      $$
      We have that $E(\Si^+(L))=\{c(L)\}$ (Ma\~n\'e~\cite[p.~146]{Ma7}).
      By the Covering Property (Ma\~n\'e~\cite[Theorem~VII]{Ma7} 
      also~\cite[Th.~VII]{CDI}), $\pi(\Si^+(L))=M$. Therefore $c(L)\ge e_0(L)$ and
      $$
      E^{-1}\{c(L)\}\cap O_M\ne \emptyset \qquad\then\qquad
      c(L)=e_0(L).
      $$
      Let 
      $$
      F(x,v):=L(x,v)+E_L(x,v)=v\cdot L_v(x,v).
      $$
      If $\ga:[0,T]\to M$ is a closed curve with energy $c(L)$ we have that
      \begin{align}\label{intFcA}
      \int_0^T F(\ga,\dga)\,dt =
      \int_0^T \big(L(\ga,\dga)+c(L)\big)\,dt
     \ge A_{L+c}(\ga|_{[0,t]})+\Phi_c(\ga(t),\ga(0))\ge 0
     \quad \forall t\in]0,T[.
      \end{align}
      Suppose that $(x_0,0)\in E^{-1}_L\{c(L)\}\cap O_M\ne \emptyset$.
      Let $\ga(t)\equiv x_0$ for $t\in[0,2]$.
      Then $E_L(\ga,\dga)=E_L(x_0,0)=c(L)$,
      $F(x_0,0)=0$ and $\int_0^2F(\ga,\dga)\, dt =0$.
      From~\eqref{intFcA} we obtain that $\ga$ is static and hence
      $(\ga,\dga)=(x_0,0)\in\cA(L)$.

     \end{proof}
     
     \begin{Corollary}\label{CSH1}\quad 
     
     If $\mN(L)$ does not contain fixed points of the lagrangian flow
     then $c(L)>e_0(L)$ and the energy level 
     $E_L^{-1}\{c(L)\}$ is diffeomorphic to the unit 
     tangent bundle $SM$ under the radial projection. 
     \end{Corollary}
   
    \medskip

     \begin{Theorem}\label{SDL}\quad
     
     Suppose that $L:TM\to\re$ is a Tonelli lagrangian and that 
     its Ma\~n\'e set $\mN(L)$ is hyperbolic without fixed points. 
     Let $V$ be an open set with $\mN(L)\subset V$.
     Then there is a subshift
     of finite type $\si:\Om\to\Om$ and there are
     open sets $\mN(L)\subset U\subset V$ and $0\in\cU\subset C^2(M,\re)$
     and continuous maps 
     $C^2(M,\re)\supset\cU\ni \phi\mapsto \tau_\phi\in C^0(\Om,\re^+)$ and
     $C^2(M,\re)\supset\cU\ni \phi\mapsto \pi_\phi\in C^0(S(\Om,\tau_\phi),TM)$, 
     where $\big(S(\Om,\tau_\phi),S_t\big)$ is the suspension flow of $\si$ 
     with ceiling function $\tau_\phi$,
     and there are hyperbolic sets $\La_\phi=\pi_\phi(S(\Om,\tau_\phi))$ for the flow
     $\vr^{L+\phi}_t$ of $L+\phi$ restricted to the energy level 
     $E_{L+\phi}^{-1}\{c(L+\phi)\}$ such that
     $$
     \forall \phi\in\cU\qquad
     \mN(L+\phi)\subset 
     \textstyle\bigcap_{t\in\re}\vr^{L+\phi}_t(U)\subset \La_\phi \subset V
     $$
     and the following diagram commutes for all $t\in\re$:
     $$
     \begin{CD}
     S(\Om,\tau_\phi) @> S_t>> S(\Om,\tau_\phi)
     \\
     @V \pi_\phi VV @VV \pi_\phi V
     \\
     \La_\phi @> \vr^{L+\phi}_t >> \La_\phi
     \end{CD}
     $$
     
     Moreover any invariant measure $\mu$ for $L+\phi$ with $\supp(\mu)\subset U$
     lifts to an invariant measure $\nu$ on $S(\Om,\tau_\phi)$ 
     with $(\pi_\phi)_*\,\nu=\mu$. In particular $\mu$ is a 
     minimizing measure for 
     $L+\phi$ iff it is the projection of an invariant probability $\nu$ on 
     $S(\Om,\tau_\phi)$ which minimizes the integral of the function 
     $A_\phi:=(L+\phi)\circ\pi_\phi$.
    
     \end{Theorem}
   
   \pagebreak
   
      \begin{proof}\quad
   
     By Lemma~5.1 in~\cite{CP} for all $\ell\ge 2$ the map 
     $C^\ell(M,\re)\ni\phi\mapsto c(L+\phi)\in\re$ is continuous.
     And by Lemma~5.2 in~\cite{CP} the map
     $C^\ell(M,\re)\ni\phi\mapsto \mN(L+\phi)$ is upper semicontinuous.
          By Corollary~\ref{CSH1} for $\phi\in\cU$ small enough we can 
     identify the energy levels 
     $E^{-1}_{L+\phi}\{c(L+\phi)\}\approx E^{-1}_{L}\{c(L)\}\approx SM$
     with the unit tangent bundle $SM$ 
     and consider their lagrangian flows as perturbations of the
     flow of $L$ on the same manifold $SM$.
     
     Let $\I:=]c(L)-\e,c(L)+\e[$ with $\e>0$ small.
     Let $P_{\phi,k}:SM \to E^{-1}_{L+\phi}\{k\}$
     be the radial projection and let $X_\phi$ be the Lagrangian 
     vector field for $L+\phi$. Let $\fX^1(SM)$ be the
     vector space of  $C^1$ vector fields on $SM$.
     The map $\X:C^\ell(M,\re)\times \I\to \fX^1(SM)$,
     $(\phi,k)\mapsto (dP_{\phi,k})^{-1}\circ X_{\phi}\circ P_{\phi,k}$
     is $C^{\ell-2}$ in a neighbourhood of $(\phi,k)=(0,c(L))$. 
     If we compose this map with the continuous function 
     $\phi\mapsto k=c(L+\phi)$ we obtain a continuous map 
     $C^2(M,\re) \to \fX^1(SM)$,
     $\phi\mapsto \X(\phi,c(L+\phi))$.
     The flow of 
     this vector field is   
     \linebreak
     $\psi^\phi_t:=P_\phi^{-1}\circ \vr^{L+\phi}_t\circ P_\phi$,
     which is smoothly conjugate to the lagrangian flow of $L+\phi$ on
     $E^{-1}_{L+\phi}\{c(L+\phi)\}$, and $\phi\mapsto \psi^\phi_t$ is a
     continuous family of $C^1$ flows on $SM$.
     Then there are neighbourhoods $\cU$ of $0$ and $U\subset V$ of $\mN(L)$
     in $SM$ such that for any $\phi\in\cU$ the set 
     $\bigcap_{t\in\re}\psi^{\phi}_t(\ov U)$ is hyperbolic and
     $P_\phi^{-1}(\mN(L+\phi))\subset U$, using the 
     upper semicontinuity of $\phi\mapsto \mN(L+\phi)$.

     Applying Proposition~\ref{PSSLM} and Proposition~\ref{liftmuSSLM},
     shrinking $\cU$ and $U$ if necessary, we obtain Proposition~\ref{SDL}.
     
     \end{proof}

    \begin{Remark}\label{remell}
    \quad

    In the proof of Theorem~\ref{SDL} the map 
    $\X:C^\ell(M,\re)\times \I\to \fX^{\ell-1}(SM)$
    is conitnuous and then the map 
    $C^\ell(M,\re)\to \fX^{\ell-1}(SM)$, 
    $\phi\mapsto \X(\phi,c(L+\phi))$ is 
    continuous in the $C^{\ell-1}$ topology
    for $\fX^{\ell-1}(SM)$.

    \end{Remark}


\def\cprime{$'$} \def\cprime{$'$} \def\cprime{$'$} \def\cprime{$'$}
\providecommand{\bysame}{\leavevmode\hbox to3em{\hrulefill}\thinspace}
\providecommand{\MR}{\relax\ifhmode\unskip\space\fi MR }
\providecommand{\MRhref}[2]{%
  \href{http://www.ams.org/mathscinet-getitem?mr=#1}{#2}
}
\providecommand{\href}[2]{#2}

\end{document}